\def\imod#1{\allowbreak\mkern10mu({\operator@font mod}\,\,#1)}
\theoremstyle{plain}
\newtheorem*{thm*}{Theorem}
\newtheorem{thm}{Theorem}[section]
\newtheorem{lem}[thm]{Lemma}
\newtheorem*{lem*}{Lemma}
\newtheorem{claim}[thm]{Claim}
\newtheorem{prop}[thm]{Proposition}
\newtheorem*{claim*}{Claim}
\newtheorem{cor}[thm]{Corollary}
\newtheorem{obs}[thm]{Observation}
\theoremstyle{definition}
\newtheorem{define}[thm]{Definition}
\theoremstyle{remark}
\newtheorem*{rem}{Remark}
\def\eps{\varepsilon}
\def\C{\mathcal}
\DeclareMathOperator\con{con}
\newcommand{\vmod}[1]{\,(\textnormal{mod}\,{#1})}
\newcommand{\remove}[1]{}
\newcommand{\enter}{\vspace{\parskip}\\}
\begin{document}

\title{Monochromatic cycle partitions of $2$-coloured graphs with minimum degree $3n/4$}

\author{Shoham Letzter
\thanks{Department of Pure Mathematics and Mathematical Statistics, University of Cambridge, Wilberforce Road, Cambridge CB3\thinspace0WB, UK. e-mail: s.letzter@dpmms.cam.ac.uk.
}}
\maketitle

\begin{abstract}
\setlength{\parskip}{\medskipamount}
\setlength{\parindent}{0pt}
\noindent
Balogh, Bar\'at, Gerbner, Gy\'arf\'as, and S\'ark\"ozy proposed the following conjecture. Let $G$ be a graph on $n$ vertices with minimum degree at least $3n/4$. Then for every $2$-edge-colouring of $G$, the vertex set $V(G)$ may be partitioned into two vertex-disjoint cycles, one of each colour.

We prove that this conjecture holds for $n$ large enough, improving approximate results by the aforementioned authors and by DeBiasio and Nelsen.
\end{abstract}

\section{Introduction}

\subsection{History}

While undergraduates in Budapest, Gerencs\'er and Gy\'arf\'as \cite{gyarfas-gerencser} proved the following simple result: for any $2$-edge-colouring of the complete graph $K_n$, there exists a monochromatic path of length at least $\lceil 2n/3 \rceil$. It is easy to see that this statement is sharp.
In their paper, Gerencs\'er and Gy\'arf\'as observe that a weaker result, asserting the existence of a monochromatic path of length at least $n/2$, can be deduced from the following simple observation: for any red and blue colouring of $K_n$, there is a Hamilton path which is the union of a red path and a blue path.
The latter observation, simple as it is, inspired intensive research.

In a later paper, Gy\'arf\'as \cite{gyarfas} proved that, in fact, more is true. He showed that for any red and blue colouring of $K_n$ the vertices may be covered by a red cycle and a blue one sharing at most one vertex.
Lehel went even further: he conjectured that for every $2$-colouring of $K_n$ the vertex set may be partitioned into two monochromatic cycles of distinct colours. This conjecture first appeared in \cite{ayel}, where it was proved for some special colourings of $K_n$.

Almost twenty years after this conjecture was made, {\L}uczak, R\"odl and Szemer\'edi  \cite{luczak_rodl_szemeredi} proved it for large $n$, using the Regularity Lemma. Ten years later, Allen \cite{allen} proved it for large $n$ avoiding the use of the Regularity Lemma.
Finally, Lehel's conjecture was fully resolved by Bessy and Thomass\'e \cite{bessy_thomasse} with an elegant and short proof. 

\subsection{Conjectures and progress}
In the hope of generalising the above result of Gerencs\'er and Gy\'arf\'as, Schelp \cite{schelp} considered $2$-colourings of graphs which are not necessarily complete. In particular, he conjectured that given a graph $G$ on $n$ vertices with $\delta(G) > 3n/4$, there is a monochromatic path of length at least $2n/3$. Benevides, {\L}uczak, Skokan, Scott and White \cite{benevides_et_al} and  Gy\'arf\'as and S\'ark\"ozy \cite{gyarfas_sarkozy}  proved approximate versions of this conjecture.

Inspired by Schelp's conjecture, Balogh, Bar\'at, Gerbner, Gy\'arf\'as, and S\'ark\"ozy \cite{balogh_et_al} proposed the following conjecture: given a graph $G$ on $n$ vertices with minimum degree $\delta(G) > 3n/4$, for every $2$-colouring of $G$ the vertex set can be partitioned into two monochromatic cycles of distinct colours.
We remark that for the purpose of this conjecture, the empty set, a single vertex and an edge are considered to be cycles.
We note that there are examples of $2$-coloured graphs $G$ on $n$ vertices with $\delta(G) = \lceil 3n/4 - 1 \rceil$ which do not admit such a partition (see Section \ref{sec_extremal_examples}).

In \cite{balogh_et_al}, the authors prove the following approximate result of their conjecture. For every $\eps >0$ there exists $n_0$ such that for every $2$-coloured graph $G$ on $n\ge n_0$ vertices with minimum degree $\delta(G)\ge (3/4 + \eps)n$, there exist vertex-disjoint monochromatic cycles of distinct colours covering all but at most $\eps n$ of the vertices.

Recently, DeBiasio and Nelsen \cite{debiasio} proved the following stronger approximate result of the latter conjecture: for every $\eps>0$ there exists $n_0$ such that, for every $2$-coloured graph $G$ on $n\ge n_0$ vertices and $\delta(G)\ge (3/4 + \eps)n$, the vertex set may be partitioned into two monochromatic cycles of distinct colours.

\subsection{The main result}
Our main aim is to prove that the conjecture of Balogh et al.~\cite{balogh_et_al} holds if $n$ is large enough.
\begin{thm}\label{thm_main}
There exists $n_0$ such that if a graph on $n \ge n_0$ vertices and minimum degree at least $3n/4$ is $2$-coloured then its vertex set may be partitioned into two monochromatic cycles of different colours.
\end{thm}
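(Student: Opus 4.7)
The plan is a stability/extremal-case dichotomy of the kind often used to sharpen approximate minimum-degree results to their tight thresholds. The first step is to pin down the extremal configurations promised in Section~\ref{sec_extremal_examples}: these are the $2$-coloured graphs with $\delta(G) = \lceil 3n/4\rceil - 1$ that admit no valid partition, and they should be close to $2$-coloured graphs that are bipartite in at least one colour with respect to a balanced bipartition $V(G) = A \cupdot B$. Fix a small $\alpha>0$ and call $G$ $\alpha$-extremal if its colouring lies within edit-distance $\alpha n^2$ of such a configuration. The argument then splits into the non-extremal and extremal cases.

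\textbf{Non-extremal case.} If $G$ is not $\alpha$-extremal, apply the two-coloured Szemer\'edi Regularity Lemma and work in the reduced graph $R$ of high minimum degree. The plan is to strengthen the DeBiasio--Nelsen argument by exploiting non-extremality: produce two vertex-disjoint monochromatic connected matchings in $R$, one per colour, whose supports together cover all but $o(n)$ clusters, and convert them via the Blow-up Lemma into two monochromatic cycles covering almost all vertices of $G$. The extra structural flexibility coming from non-extremality should replace the $\eps n$ slack that DeBiasio and Nelsen use in their degree assumption. To pass from an almost-partition to an exact one, plant absorbing paths of each colour in advance and use them to swallow the $o(n)$ leftover vertices.

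\textbf{Extremal case.} If $G$ is $\alpha$-extremal, argue directly from the near-bipartition $A \cupdot B$. Classify each vertex by which side it sits on and by whether its red- and blue-degrees into each side are close to their typical values; the set of ``exceptional'' vertices should have size $O(\sqrt{\alpha}\, n)$. Build a monochromatic cycle of each colour by first using typical vertices as a backbone and then inserting the exceptional ones one at a time, using $\delta(G)\ge 3n/4$ at each step to guarantee an appropriate pair of consecutive neighbours. The tightness of the threshold should correspond to the fact that exactly one unit of extra degree is needed per exceptional vertex to force a consistent insertion.

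\textbf{Main obstacle.} The extremal case will be the heart of the difficulty: the minimum-degree condition has no slack, so every exceptional vertex must be handled precisely, and one must control the red and blue cycles simultaneously so that an insertion into one cycle does not damage the other. Handling each of the possibly several symmetric variants of the extremal configuration is likely to require a separate, delicate case analysis. The non-extremal case should then follow from a fairly standard combination of regularity, connected matchings, and absorption, but ensuring that the absorbing structure is compatible with both colours at once, and with any parity constraints imposed by the two cycles, will still require care.
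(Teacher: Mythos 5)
Your proposal rests on a binary dichotomy (``$\alpha$-extremal'' vs.~not), with the hope that non-extremality supplies enough slack for the regularity/connected-matching/absorption machinery to go through as in DeBiasio--Nelsen, and that the extremal case can be handled by a direct insertion argument around a near-bipartition $A\cupdot B$. This does not match the paper's route, and more importantly the non-extremal half of your plan has a concrete gap. After applying the Regularity Lemma, the reduced graph $\Gamma$ has minimum degree only $(3/4 - 2d - \eps)m$; the loss is unavoidable and is independent of whether the original graph was extremal. So even assuming $G$ is ``far'' from the extremal configurations, you cannot guarantee a perfect matching decomposing into a red and a blue connected matching in $\Gamma$ ``by slack'' --- $\Gamma$ may itself be (close to) one of the tight configurations for the matching lemmas, regardless of what $G$ looked like. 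The paper handles exactly this by invoking \emph{stability versions} of matching theorems (Hall/Tutte/Chv\'atal, Lemmas~\ref{lem_matching_tripartite_density}, \ref{lem_matching_bip_technical}) inside the reduced graph: either the matching exists, or a specific structure of red/blue robust components emerges. That structure then feeds into a further dichotomy (two robust components of the same colour can either be joined by two vertex-disjoint paths, or the graph decomposes across a small cut), and the ``cut'' cases are settled by hand via Lemmas~\ref{lem_red_graph_disconnected_1} and \ref{lem_red_disconnected_2}.

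A second gap is that your extremal classification is too coarse. You posit a single extremal type (near-bipartite in one colour with a balanced bipartition), but the sharpness examples of Section~\ref{sec_extremal_examples} include genuinely distinct families --- in particular configurations where \emph{both} colours are near-bipartite with a common $4$-part structure, and configurations with two monochromatic components of order near $3n/4$ overlapping in roughly $n/2$ vertices. The paper's Lemma~\ref{lem_rough_structure} produces not two but five main structural cases (with sub-cases), reflecting the variety of near-extremal shapes; each is handled in a separate section with its own construction. Your two-case plan would need to be refined to a similar multi-case structural lemma, and ``direct insertion of exceptional vertices'' would need to be carried out compatibly in two colours at once while respecting the parity constraints of weakly robust (bipartite) components --- the paper's device of setting aside balancing sets $A_1, A_2$ inside the bipartition classes before applying regularity is precisely the mechanism for this, and it is not present in your sketch.
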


In \cite{luczak}, \L{}uczak introduced a technique that uses the Regularity Lemma to reduce problems about paths and cycles into problems about connected matchings, which are matchings that are contained in a connected component.
This technique, which we shall describe in more detail in Section \ref{sec_regularity_lem}, has become fairly standard by now and can be used to prove the approximate result of Balogh et al.~\cite{balogh_et_al}.
The second result by DeBiasio and Nelsen \cite{debiasio}, requires further ideas, most notably the ``absorbing technique'' of R\"odl, Ruci\'nski and Szemer\'edi (see \cite{rodl_et_al_absorbing} and \cite{levitt_et_al_absorbing}).
Nevertheless, the stronger conditions on the minimum degree make their proof a great deal easier than ours. In order to prove Theorem \ref{thm_main}, we use a variety of additional ideas and techniques.

We remark that Theorem \ref{thm_main} is sharp. Indeed, for every $n \ge 4$, there exists a $2$-coloured graph on $n$ vertices with minimum degree $\lceil\frac{3n}{4} - 1 \rceil$ admitting no partition into two monochromatic cycles of distinct colours.
We give such extremal examples in the following section.
These examples disprove the conjecture from \cite{debiasio}, that a slightly stronger version of Theorem \ref{thm_main} may hold, namely that the conclusion holds for graphs with minimum degree at least $\frac{3n - 3}{4}$.

The following section consists of some extremal examples for Theorem \ref{thm_main}. In Section \ref{sec_overview} we give an overview of the proof as well as the structure of the rest of this paper.

\section{Sharpness examples} \label{sec_extremal_examples}
Before we turn to the proof of Theorem \ref{thm_main}, we give some extremal examples showing that the theorem is sharp. More precisely, we give examples of $2$-coloured graphs on $n$ vertices with minimum degree $\lceil \frac{3n}{4} - 1 \rceil$ admitting no partition into two monochromatic cycles of distinct colours.
Figures (\ref{figure_sharpness_1}, \ref{figure_sharpness_2}, \ref{figure_sharpness_3}, \ref{figure_sharpness_4}) depict several families of such examples differing in the values of $n \vmod 4$.

In these figures, we use black and grey for the colours of the edges.
The areas coloured in either black or grey denote a complete (or complete bipartite) subgraph of the corresponding colour.
Shaded areas denote complete (or complete bipartite) subgraphs which may be coloured arbitrarily.
White areas are empty subgraphs.
A small dot denotes a single vertex and a larger shape denote a cluster of vertices whose size is written in it.
It is not hard to see that, indeed, each of these figures is an extremal example for Theorem \ref{thm_main}. We leave the details to the reader.

\begin{figure}[H]\centering 
\captionsetup{width=0.8\textwidth}

\includegraphics[scale=.7]{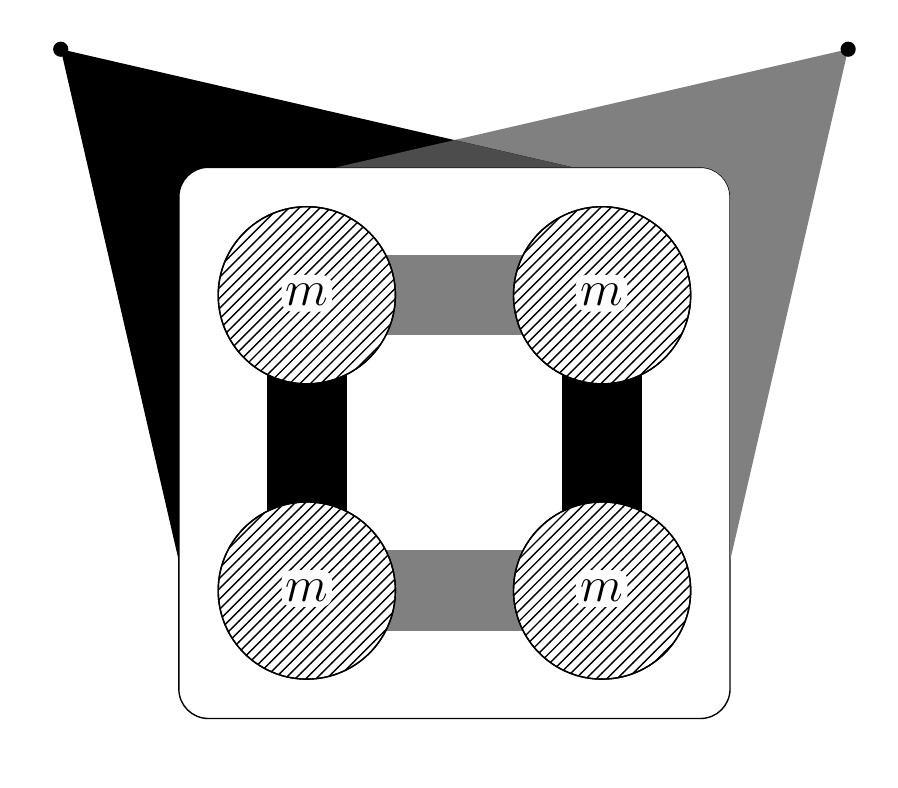} 
\hspace{.5in}
\includegraphics[scale=.7]{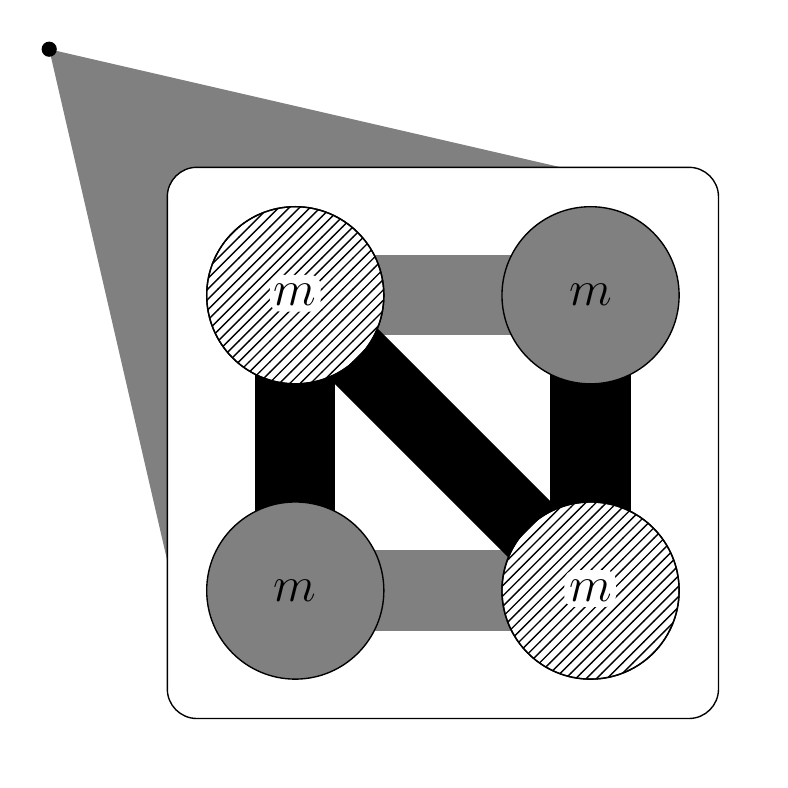}

\caption{Black and grey graphs of orders $4m + 2$ and $4m + 1$ (from left to right) with minimum degrees $3m + 1$ and $3m$ respectively, admitting no partition into a black cycle and a grey one. 
\enter
Other extremal examples may be formed by removing a vertex or two from the left-hand graph, or by removing the extra vertex from the right-hand graph.
}
\label{figure_sharpness_1}
\end{figure}

\begin{figure}[H]\centering 

\includegraphics[scale=.7]{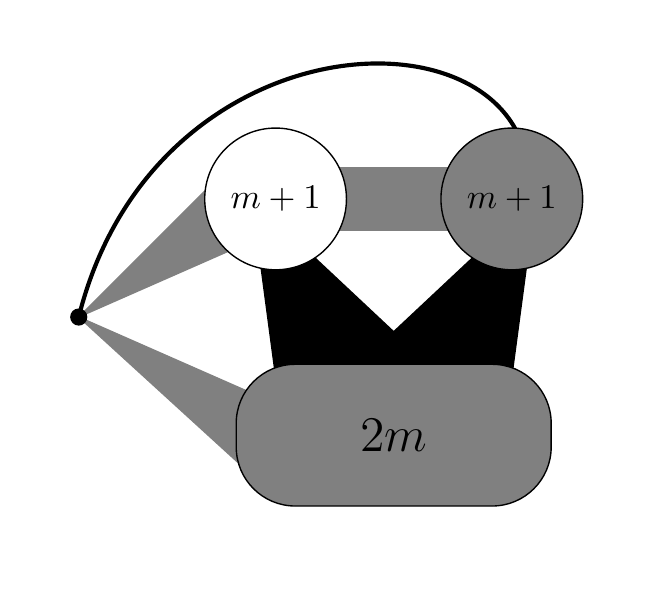}
\includegraphics[scale=.7]{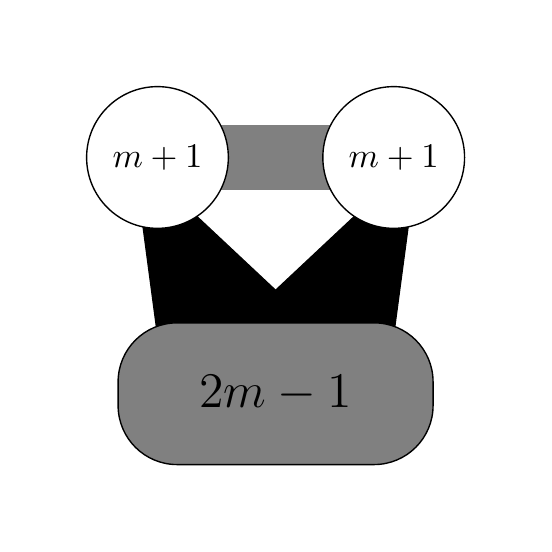}
\includegraphics[scale=.7]{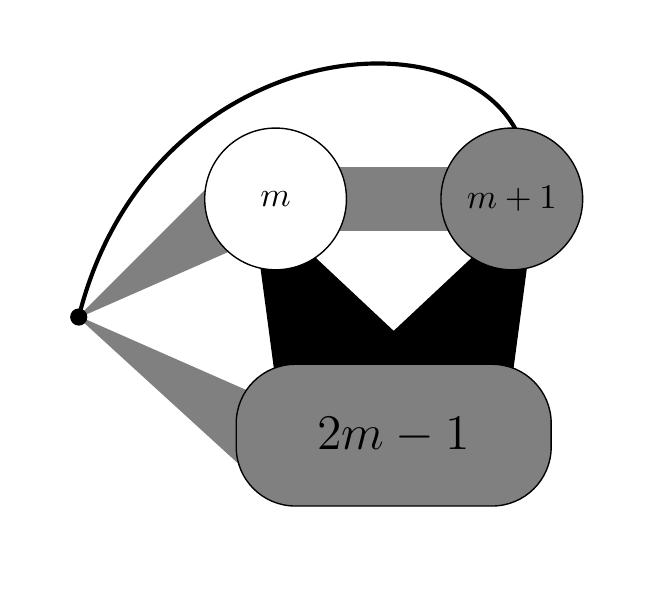}

\caption{Black and grey graphs of orders $4m$, $4m + 1$ and $4m + 2$ (left to right) with minimum degrees $3m - 1$, $3m$ and $3m + 1$ respectively and no partition into two monochromatic cycles of distinct colours.
}
\label{figure_sharpness_2}
\end{figure}

\begin{figure}[h]\centering 

\includegraphics[scale=.8]{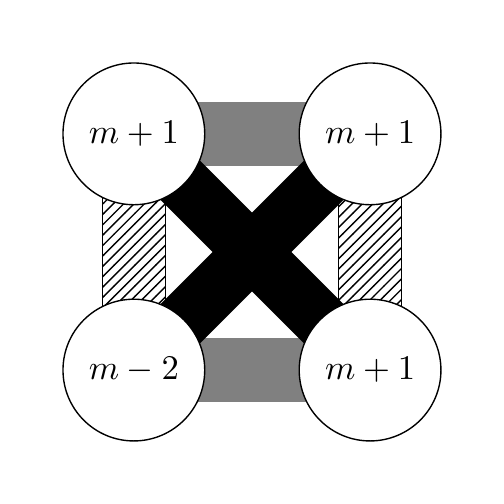}

\caption{A black and grey graph on $4m + 1$ vertices and minimum degree $3m$ with no partition into a grey cycle and a black one.
}
\label{figure_sharpness_3}
\end{figure}

\begin{figure}[H]\centering 
\includegraphics[scale=.8]{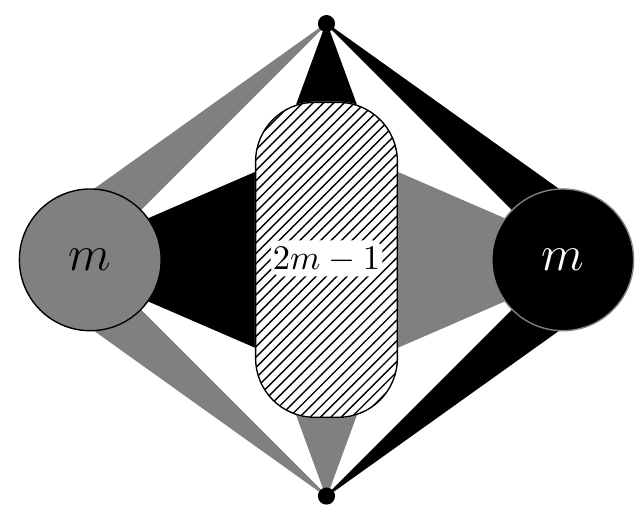}

\caption{A black and grey graph on $4m + 1$ vertices and minimum degree $3m$ with no partition into a black cycle and a grey one.
Any subgraph of this graph, obtained by removing one vertex, is also an extremal example.
}
\label{figure_sharpness_4}
\end{figure}

We remark that there may exist other extremal examples. It may be possible to find all of them by carefully analysing the proof of Theorem \ref{thm_main}, but we make no attempt to do so at this point.

\remove{

Indeed, it is easy to see that a union of a grey cycle and a black one intersects at most three of the four shaded areas.

Note that any subgraph of this graph, obtained by removing one or two vertices, is also an extremal example for Theorem \ref{thm_main}.

Suppose there exists a partition into a grey cycle and a black one.
Without loss of generality, the grey cycle does not intersect the upper part.
Consider the upper right part. In order to cover it as well as the upper left part, at least $m + 1$ vertices from the bottom right part are needed, a contradiction.

Note that both the black and the grey subgraphs are bipartite, whereas the graph has even order. 
Furthermore, the sizes of the parts in both bipartitions are $2m - 1$ and $2m + 2$, implying that any monochromatic cycle leaves at least three vertices uncovered.
Hence, there is no partition into a black cycle and a grey one.

\enter
Suppose e.g., that the leftmost graph may be partitioned into a grey cycle and a black one.
A grey cycle intersects either the lower or the upper sets, while a black cycle does not cover the additional vertex and contains an equal number of vertices from the lower part and the upper part. Comparing the sizes, in order to obtain a cycle partition, we need a grey cycle on the additional vertex and exactly two vertices from the upper part, which is impossible.
\enter
The same argument shows that the rightmost graph cannot be partitioned into a grey cycle and a black one.
\enter
Finally, in order to partition the central graph, we need a grey triangle in the upper set of vertices, which is impossible.

\enter
Suppose that there exists a partition into a black cycle and a grey one. It is easy to see that the black cycle must cover all vertices of the leftmost cluster, for which at least $m$ vertices of the central cluster are needed. A similar argument holds for the grey cycle, thus reaching a contradiction, since the central part is too small.
\enter

}

\section{Outline of the proof and structure of the paper}\label{sec_overview}
In this section we describe the proof of our main theorem (\ref{thm_main}).
We are given a graph $G$ on $n$ vertices and minimum degree $3n / 4$ with a red and blue colouring of the edges.
In what follows, we give an outline of our proof that $V(G)$ may be partitioned into a red cycle and a blue one.

Given an edge coloured graph, a \emph{monochromatic connected matching} is a monochromatic matching which is contained in a connected component of the same colour.
Similarly to the earlier results on our problem (by Balogh et al.~\cite{balogh_et_al} and DeBiasio and Nelsen \cite{debiasio}), as well as many other results in the area, one of the key tools is the technique of reducing problems about cycles to problems about connected matchings using the Regularity Lemma.
This technique was introduced by \L{}uczak \cite{luczak} and since then has become fairly standard.
In our setting, the basic idea, which is described in more detail in Section \ref{sec_regularity_lem}, is as follows. We are given a $2$-coloured graph $G$ and consider the reduced graph obtained by applying the Regularity Lemma. If the reduced graph has a perfect matching consisting of a red connected matching and a blue connected matching, we may use the blow-up lemma \cite{blowup} (or in fact, a much simpler special case), to find two vertex-disjoint monochromatic cycles, a red one and a blue one, which cover almost all of the vertices.

The next ingredient is the ``absorbing method'' of R\"odl, Ruci\'nski and Szemer\'edi (see \cite{rodl_et_al_absorbing} and \cite{levitt_et_al_absorbing}).
As in \cite{debiasio}, in order to apply this method, we use a notion of ``robust subgraphs'', which are defined to be graphs with certain expansion properties (see Section \ref{sec_robust_compts} for the exact definition). Such graphs can be shown to contain short ``absorbing paths'', which are paths that can absorb small sets of vertices.
We observe (see Section \ref{sec_regularity_lem}) that monochromatic connected components in the reduced graph (obtained from the Regularity Lemma) correspond to monochromatic robust subgraphs in the original graph.
This observation allows us to obtain information about the rough structure, by using the Regularity Lemma and finding the corresponding robust subgraphs (see Section \ref{sec_rough_struct}).

After a study of the some properties of the rough ``robust structure'' of the graph, we aim to apply the Regularity Lemma again, in order to find a suitable perfect matching, namely a perfect matching consisting of a connected red matching and a connected blue matching.
This matching is used to find two vertex-disjoint cycles, a red one and a blue one, which cover most of the vertices and have the additional absorbing property implying that the leftover vertices can be inserted into one of these cycles (see Sections (\ref{sec_large_bip_blue_large_red} - \ref{sec_large_blue_non_bip})).

We should like to emphasize that as we prove the sharp result, namely that we only assume that the minimum degree is at least $3n / 4$, rather than $(3/4 + \eps)n$ as in the previous results, new difficulties arise, making our proof much harder.
Firstly, we have to deal with several different cases for the rough robust structure, some of which do not arise when $\delta(G) \ge (3/4 + \eps)n$. Interestingly, these cases require a variety of ideas and techniques, making the proof of the general theorem rather intricate.
Secondly, when applying the Regularity Lemma we cannot guarantee that the minimum degree would be at least $3n / 4$, thus it may not be possible to find a suitable matching in the reduced graph directly.

The combination of the following two ideas helps us with these challenges.
The first idea is to use stability versions of results promising a perfect matching.
These enable us to conclude that if the reduced graph does not have the required perfect matching, it has some specific structure which we can further analyse to find the required monochromatic cycle partition.

The second idea is the following simple yet important observation: given two robust components of the same colour, if they can be connected with two vertex-disjoint paths, they may essentially be treated as one larger component.
In several parts of the proof (see Sections (\ref{sec_large_bip_blue_almost_balanced}, \ref{sec_two_three-quarter-sized}, \ref{sec_four_midsized})), we use this observation to conclude that either we may join two robust components to obtain a larger one, or the graph admits some restrictive structure, for which the desired partition may be found ``by hand'' (see Sections \ref{sec_red_graph_disconnected} and \ref{sec_red_disconnected_2}).
We remark that even at this stage, the proof is rather hard due to the fact that our result is sharp.

\subsection{Structure of the paper}

In the next section, Section \ref{sec_notation}, we introduce the notation that will be used in this paper.
In Section \ref{sec_robust_compts}, we define our notion of robustness and prove some properties of robust components, most notably the existence of absorbing paths.
In Section \ref{sec_regularity_lem}, we state the version of the Regularity Lemma that we use here. We also prove some results about the correspondence between connected components of the reduced graph and robust subgraphs of the original graph and describe the method of converting connect matchings in the reduced graph into cycles in the original graph.
In Section \ref{sec_prelim_results}, we list some results which will be used throughout the proof.

Sections (\ref{sec_rough_struct} - \ref{sec_red_disconnected_2}) are devoted to the proof of Theorem \ref{thm_main}. 
In Section \ref{sec_rough_struct}, we obtain some information about the rough structure and point out how to prove Theorem \ref{thm_main} using the results in subsequent lemmas. In each of Sections \ref{sec_large_bip_blue_large_red} - \ref{sec_large_blue_non_bip}, we consider one of the cases arising from the structural result in Section \ref{sec_rough_struct}.
These cases vary in difficulty and we have to use various techniques used to deal with them.
In Sections \ref{sec_red_graph_disconnected} and \ref{sec_red_disconnected_2}, we prove Lemmas \ref{lem_red_graph_disconnected_1} and \ref{lem_red_disconnected_2}, which are used in earlier sections and prove the main theorem under certain restrictive conditions on the colouring and the structure of the graph.
Finally, Section \ref{sec_conclusion} is devoted to some concluding remarks.

\section{Notation}\label{sec_notation}
We use mostly standard notation.
Write $|G|$ for the order of a graph $G$ and $\delta(G)$ and $\Delta(G)$ for its minimum and maximum degrees respectively. The neighbourhood of a vertex $x \in V (G)$ is denoted by $N_G(x)$ and its degree by $d_G(x) = |N_G(x)|$. Given $A \subseteq V (G)$, we write $N_G(x, A) = N_G(x) \cap A$ and $d_G(x, A) = |N_G(x, A)|$.
We will write, for example, $d(x, A)$ for $d_G(x, A)$ if this is unambiguous.
Given a set of vertices $X \subseteq V(G)$, we write $G[X]$ for the graph induced by $G$ on $X$. Similarly, for disjoint subsets $X, Y \subseteq V(G)$, we write $G[X, Y]$ for the bipartite graph with bipartition $\{X, Y\}$ induced by $G$.
We denote $e_G(X, Y ) = |E(G[X, Y])|$.

Given a graph $G$, we denote a $2$-colouring of $G$ by $E(G) = E(G_B) \cup E(G_R)$, where $G_B, G_R$ are graphs on vertex set $V(G)$ (in proper colouring, the graphs $G_B, G_R$ are edge-disjoint).
The edges of $G_B$ are called blue edges and the edges of $G_R$ are red edges.
We sometimes use $B$ or $R$ for a subscript instead of $G_B$ or $G_R$. For instance, $N_B(x)$ is a shorthand for $N_{G_B}(x)$.

We denote by $(u_1 u_2 \ldots u_k)$ the path on vertices $u_1, \ldots, u_k$ taken in this order. We use the same notation to denote the cycle obtained by adding the edge $(u_k, u_1)$ to the given path. It should be clear from the context if we are dealing with a path or a cycle.
Given paths $P_1, P_2$ which share an end and are otherwise disjoint, we denote by $(P_1P_2)$ the concatenation of the two paths. Similarly, if the paths share both ends but are otherwise disjoint, the same notation denotes the cycle obtained by joining the two paths.

Throughout this paper we omit floors and ceilings whenever the argument is unaffected. The constants in the hierarchies used to state our results are chosen from right to left. For example, the claim that a statement holds for $0 < a, \frac{1}{n} \ll b \ll c \ll 1$ means that there
exist non-decreasing functions $f, g: (0, 1] \rightarrow (0, 1]$ and a constant $c_0$ such that the
statement holds for all $0 < a, b, c \le c_0$ and integers $n$ with $b \le f(c), a \le g(b)$ and $n \ge \frac{1}{g(b)}$. 
We normally do not specify the functions in question.

\section{Robust components and absorbing paths}\label{sec_robust_compts}

Similarly to the proof of DeBiasio and Nelsen \cite{debiasio}, one of the main tools in our proof is the notion of robust subgraphs.
As we shall see, these are graphs with certain expansion properties.
The role of robust subgraphs in our proof is similar to their role in \cite{debiasio}, but our definition is different and is often easier to apply. Nevertheless, the two definitions are in some sense equivalent, as can be seen in Lemma 5.4 in \cite{debiasio}.
After defining a robust subgraph, we state and prove some simple properties of these components. Finally, we prove that robust subgraphs contain ``absorbing paths'', which may absorb small sets of vertices.

\subsection{Definitions of robust subgraphs}

We define two notions of robustness: strong and weak.
The difference between the two is that strong robust subgraphs are far from being bipartite.
It will be easier for our application, though not essential, to define a robust subgraph relative to a fixed ground graph.
The precise definitions are as follows.

Given a graph $G$, vertices $x,y\in V(G)$ and an integer $l$, denote by $\con_{G,l}(x,y)$ the number of paths of length $l + 1$ in $G$ between $x$ and $y$.

\begin{define}\label{def_robust_compt}
Let $G$ be a graph on $n$ vertices.

A subgraph $F$ of $G$ is called $(\alpha,k)$ \emph{strongly robust} if there exists $l\le k$ such that for every pair of vertices $x,y$ in $F$ we have $|con_{F,l}(x,y)|\ge \alpha n^l$.

A subgraph $F$ of $G$ is called $(\alpha,k)$ \emph{weakly robust} if there exists a partition $\{X,Y\}$ of $V(F)$ such that for some $l \le k$ and every $x \in X, y \in Y$ we have $|con_{F',l}(x,y)|\ge \alpha n^l$, where $F' = F[X, Y]$.
\end{define}
As a shorthand, we often omit the parameters $\alpha$ and $k$ when they are clear from the context.
From now on, we use the term ``robust'' to signify either strongly robust or weakly robust with suitable parameters.
We point out that in our context $\alpha$ and $k$ are fixed and $n$ tends to infinity.
We remark that an $(\alpha, k)$ robust subgraph $F$ of a graph $G$ on $n$ vertices has $\delta(F) \ge \alpha n$. In particular, robust subgraph are always dense.

Before discussing some properties of robust subgraphs, let us give a few examples.
Any graph of minimum degree at least $(1/2 + \alpha/2)n$ is $(\alpha, 1)$ strongly robust, because any two vertices have at least $\alpha n$ common neighbours. 
Similarly, the random graph $G(n, \alpha)$, is w.h.p.~$(\alpha^2 / 2, 1)$ strongly robust.
Furthermore, the blow-up of a path of length $k \ge 2$, where every vertex is replaced by a complete graph on $n/k$ vertices, is $(\alpha, k - 1)$ strongly robust for a suitable $\alpha$. 
Similarly, if vertices of a path of length $k$ are replaced by empty graphs, we obtain an $(\alpha, k - 1)$ weakly robust graphs.

\subsection{Properties of robust subgraphs}
We shall make use of some simple properties of robust graphs.
The following lemma states that a robust subgraph remains robust after removing a small number of vertices.
\begin{lem}\label{lem_robust_removing_vertices}
Given $\alpha > 0$ and $k$ an integer, the following holds for small enough $\beta$.
Let $G$ be a graph on $n$ vertices and let $F$ be an $(\alpha, k)$-robust subgraph.
Suppose that $F'$ is obtained from $F$ by removing at most $\beta n$ vertices. Then $F'$ is $(\alpha/2, k)$-robust.
\end{lem}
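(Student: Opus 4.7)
The plan is to reduce this to a direct counting argument. Fix $\beta \le \alpha/(2k)$, and let $S \subseteq V(F)$ be the set of removed vertices, so $|S| \le \beta n$. In each of the two cases (strong or weak), I would use the same value of $l \le k$ that witnesses robustness of $F$, and bound how many paths of length $l + 1$ between any given pair of endpoints are destroyed by removing $S$.

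The key counting step is the following: for any fixed pair $x, y$ and any fixed vertex $v$, the number of paths of length $l + 1$ from $x$ to $y$ in $F$ that use $v$ as an internal vertex is at most $l \cdot n^{l-1}$, because there are $l$ choices for the position of $v$ on the path and at most $n^{l-1}$ choices for the remaining $l - 1$ internal vertices. Summing over $v \in S$, at most $|S| \cdot l \cdot n^{l-1} \le \beta k n^l \le (\alpha/2) n^l$ paths of length $l + 1$ between $x$ and $y$ pass through any vertex of $S$.

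In the strong case, for any $x, y \in V(F')$, the surviving paths are precisely those avoiding $S$, so
\[
\con_{F',l}(x,y) \;\ge\; \con_{F,l}(x,y) - (\alpha/2) n^l \;\ge\; \alpha n^l - (\alpha/2) n^l \;=\; (\alpha/2) n^l,
\]
giving $(\alpha/2, k)$ strong robustness. In the weak case, let $\{X, Y\}$ be the partition witnessing robustness of $F$ and set $X' = X \setminus S$, $Y' = Y \setminus S$. The bipartite graph $F[X', Y']$ is obtained from $F[X, Y]$ by deleting $S$, and the same counting bound applies inside $F[X,Y]$. For $x \in X'$, $y \in Y'$, every path of length $l+1$ from $x$ to $y$ in $F[X,Y]$ that avoids $S$ lives in $F[X', Y']$, and we get $\con_{F[X',Y'],l}(x,y) \ge (\alpha/2) n^l$ as above. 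Finally, since $\delta(F) \ge \alpha n$ (as noted after Definition~\ref{def_robust_compt}), both $X$ and $Y$ have size at least $\alpha n$, and choosing $\beta < \alpha$ ensures $X', Y'$ are nonempty, so $\{X', Y'\}$ is a valid partition witnessing $(\alpha/2, k)$ weak robustness of $F'$.

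There is no real obstacle here: the argument is a one-line union bound after the observation that a single vertex lies on at most $l n^{l-1}$ paths of length $l+1$ between any fixed endpoints. The only mild care needed is in the weak case, to check that the partition does not degenerate when we delete vertices, which follows from the minimum-degree consequence of robustness together with the assumption that $\beta$ is sufficiently small compared to $\alpha$.
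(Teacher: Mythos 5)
Your proof is correct and follows essentially the same approach as the paper's: bound the number of length-$(l+1)$ paths between a fixed pair that pass through the removed set by $l\beta n^l$ and subtract, choosing $\beta$ small relative to $\alpha$ and $k$. Your write-up simply makes the per-vertex counting and the weakly robust case more explicit (including the minor non-degeneracy check on the parts, which the paper leaves implicit), but the argument is the same.
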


\begin{proof}
We prove the lemma under the assumption that $F$ is strongly robust; the proof in case $F$ is weakly robust is analogous.
Let $l \le k$ satisfy $|con_{F, l} (x, y)| \ge \alpha n^l$ for every $x, y \in V(F)$.
For every $x,y\in V(F)$, the number of paths of length $l + 1$ between $x$ and $y$ containing at least one vertex from $V(F) \setminus V(F')$ is at most $l \beta n^l \le \frac{\alpha}{2} n^l$. It follows that $|\con_{F', l}(x, y)| \ge \frac{\alpha}{2} n^l$ for every $x, y \in V(F)$, i.e.~$F'$ is $(\alpha/2, l)$ strongly robust.

\end{proof}

The next lemma shows that a robust subgraph remains robust after removing a graph of small maximum degree.
\begin{lem}\label{lem_robust_removing_edges}
Given $\alpha>0$ and $k$, the following holds for suitably small $\beta$ and large $n$.
Let $G$ be a graph on $n$ vertices and let $F$ be an $(\alpha, k)$-robust subgraph.
Suppose that $F'$ is a subgraph of $F$ such that for every vertex $v\in V(F)$ we have $\deg_{F'}(x) \ge \deg_F(x) - \beta n$. Then $F'$ is $(\alpha/2, k)$-robust.
\end{lem}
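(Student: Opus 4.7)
The plan is to mimic the proof of Lemma \ref{lem_robust_removing_vertices}, but bounding the number of paths that use at least one removed edge rather than at least one removed vertex. Since the vertex-degree deficit $\deg_F(x)-\deg_{F'}(x)$ is at most $\beta n$ at every vertex, a crude union bound over the $l+1$ edge positions of a path should still give a bound of $O(k\beta n^l)$, which is much smaller than $\alpha n^l$ once $\beta$ is chosen small enough compared to $\alpha$ and $k$.

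In more detail, suppose first that $F$ is $(\alpha,k)$ strongly robust, witnessed by some $l\le k$ so that $\con_{F,l}(x,y)\ge\alpha n^l$ for all $x,y\in V(F)$. Fix such $x,y$ and write each counted path as $x=v_0,v_1,\dots,v_{l+1}=y$. For a path in $F$ that is \emph{not} in $F'$, some edge $(v_i,v_{i+1})$ is missing from $F'$. For each position $i\in\{0,\dots,l\}$, I bound the number of such paths by choosing the vertices in order: the $i-1$ vertices before $v_i$ contribute at most $n^{i-1}$ choices, $v_i$ at most $n$ choices (or is fixed when $i=0$ or $i=l$), the next vertex $v_{i+1}$ is constrained to lie in the at most $\beta n$ removed neighbours of $v_i$ (using $\deg_{F'}(v_i)\ge\deg_F(v_i)-\beta n$), and the remaining vertices give at most $n^{l-i-1}$ choices. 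In every case this is at most $\beta n^l$, so the total over positions is at most $(l+1)\beta n^l\le(k+1)\beta n^l$. Taking $\beta$ so that $(k+1)\beta\le\alpha/2$ yields
\[
\con_{F',l}(x,y)\;\ge\;\alpha n^l-(k+1)\beta n^l\;\ge\;\tfrac{\alpha}{2}n^l,
\]
which is exactly $(\alpha/2,k)$ strong robustness of $F'$.

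For the weakly robust case, let $\{X,Y\}$ be the partition witnessing $(\alpha,k)$ weak robustness of $F$, and let $H=F[X,Y]$, $H'=F'[X,Y]$. Since removing non-bipartite edges does not affect $H$, the hypothesis $\deg_{F'}(v)\ge\deg_F(v)-\beta n$ gives $\deg_{H'}(v)\ge\deg_H(v)-\beta n$ for every $v$. The previous counting argument, restricted to alternating paths of length $l+1$ between $x\in X$ and $y\in Y$, then shows $\con_{H',l}(x,y)\ge(\alpha/2)n^l$, establishing that $F'$ is $(\alpha/2,k)$ weakly robust with the same partition.

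There is no real obstacle here beyond bookkeeping: the only subtle point is ensuring that in the weakly robust case the edge-removal bound transfers to the bipartite induced subgraph, which is immediate because removing edges anywhere in $F$ can only remove edges of $F[X,Y]$, and the per-vertex deficit only decreases.
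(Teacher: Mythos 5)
Your proof is correct and takes essentially the same approach as the paper's: in both cases one fixes the witnessing $l\le k$, bounds the number of length-$(l+1)$ paths between $x$ and $y$ that use a removed edge by a union bound over the edge positions (yielding $O(k\beta n^l)$, which your $(l+1)\beta n^l$ bound makes precise), and chooses $\beta$ small enough that at least $\tfrac{\alpha}{2}n^l$ paths survive. Your explicit handling of the weakly robust case — noting that edge removal from $F$ can only decrease degrees in $F[X,Y]$, so the per-vertex deficit transfers — fills in a detail the paper dismisses as "similar."
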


\begin{proof}
We prove this lemma for $F$ strongly robust; the proof for $F$ weakly robust is similar.
Let $l\le k$ be such that $|con_{F, l} (x, y)| \ge \alpha n^l$ for every $x, y \in V(F)$.
Fix some $x,y\in V(F)$.
We consider the family of paths in $F$ of length $l + 1$ between $x$ and $y$ which contain at least one edge outside of $F'$.
There are at most $l\beta n^l$ such paths, i.e. $|con_{F', l} (x, y)| \ge |con_{F', l} (x, y)| - l \beta n^l \ge \frac{\alpha}{2} n^l$ (for small enough $\beta$).
It follows that $F'$ is $(\alpha/2, l)$ strongly robust.

\end{proof}

The following lemma states that a robust subgraph $F$ remains robust after the addition of vertices which have a large neighbourhood in $F$.
\begin{lem}\label{lem_robust_adding_vertices}
Given $\alpha > 0$ and $k$ an integer, the following holds for large enough $n$.
Let $G$ be a graph on $n$ vertices and let $F$ be an $(\alpha, k)$-robust component.
Let $F'$ be a subgraph of $G$ containing $F$, such that every vertex in $V(F')\setminus V(F)$ has at least $\alpha n$ neighbours in $F$.
Then $F'$ is $(\alpha^3/2, k + 2)$-robust.
\end{lem}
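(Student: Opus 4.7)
The plan is to let $l \le k$ witness the $(\alpha, k)$-robustness of $F$ and to set $l^\ast := l + 2 \le k+2$; I will argue that $l^\ast$ witnesses the $(\alpha^3/2, k+2)$-robustness of $F'$. The construction, for arbitrary $x, y \in V(F')$, is a sandwich path $x\, u_1\, u_2\, \cdots\, u_{l+2}\, y$ in $F'$: the middle piece $u_1 u_2 \cdots u_{l+2}$ is a path of length $l+1$ inside $F$, and $u_1 \in N_{F'}(x) \cap V(F)$, $u_{l+2} \in N_{F'}(y) \cap V(F)$ are ``anchors'' in $V(F)$. The key observation enabling the count is that every vertex of $V(F')$ has at least $\alpha n$ neighbours in $V(F)$: for $v \in V(F') \setminus V(F)$ this is exactly the hypothesis of the lemma, and for $v \in V(F)$ it follows from the remark that $\delta(F) \ge \alpha n$.

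In the strongly robust case this immediately gives the count. The number of ordered pairs $(u_1, u_{l+2})$ of distinct vertices of $V(F)$ with $u_1 \in N_{F'}(x)$ and $u_{l+2} \in N_{F'}(y)$ is at least $(\alpha n)^2 - \alpha n$; for each such pair the robustness of $F$ supplies $\con_{F,l}(u_1, u_{l+2}) \ge \alpha n^l$ internal paths, at most $2 l n^{l-1}$ of which pass through $x$ or $y$ in the interior. Multiplying and absorbing lower-order terms for $n$ large yields at least $\alpha^3 n^{l+2}(1 - o(1)) \ge \tfrac{1}{2} \alpha^3 n^{l+2}$ valid length-$(l+3)$ paths from $x$ to $y$ in $F'$, as required.

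For the weakly robust case let $\{X, Y\}$ be the bipartition witnessing $F$'s weak robustness, and extend it to a bipartition $\{X^\ast, Y^\ast\}$ of $V(F')$ by placing each $v \in V(F') \setminus V(F)$ in the class opposite to whichever of $|N_{F'}(v) \cap X|$, $|N_{F'}(v) \cap Y|$ is the larger; each new vertex then retains at least $\alpha n/2$ neighbours in $V(F)$ on the opposite side. The sandwich argument is then repeated inside the bipartite graph $F'[X^\ast, Y^\ast]$, with $u_1$ and $u_{l+2}$ chosen in opposite classes of $\{X, Y\}$ so that the parity of the middle $F[X, Y]$-path matches that demanded by $F$'s weak robustness, and an analogous count yields the claimed bound for $n$ large enough.

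The step I anticipate as the main obstacle is the weakly robust case: the bookkeeping needed to extend the bipartition while keeping enough opposite-side neighbours for every new vertex, together with the parity matching between the sandwiched $F[X, Y]$-path and the classes of $x$ and $y$ in $\{X^\ast, Y^\ast\}$. The strongly robust case is essentially routine once the sandwich construction and the $\delta(F) \ge \alpha n$ observation are in place.
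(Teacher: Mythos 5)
Your proof takes essentially the same route as the paper's: fix $l \le k$ witnessing the robustness of $F$, and for $x, y \in V(F')$ count sandwich walks $x\, u_1 \cdots u_{l+2}\, y$ whose anchors $u_1, u_{l+2}$ lie in $V(F)$ and bracket a length-$(l+1)$ path supplied by $F$'s robustness, using the observation $\delta(F) \ge \alpha n$ so that even the old vertices of $F$ have enough anchors. Your treatment of the weakly robust case, which the paper dispatches with ``the proof is very similar,'' is sound in structure, though one small point: placing a new vertex by majority side only guarantees $\alpha n/2$ cross-neighbours, so the resulting constant is $\alpha^3/8$ rather than the stated $\alpha^3/2$ when both $x$ and $y$ are new vertices — a harmless slack that is equally latent in the paper's omitted case.
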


\begin{proof}
We prove the statement assuming that $F$ is strongly robust; the proof in case $F'$ is weakly robust is very similar and we omit the details.
Let $l\le k$ satisfy $|con_{F, l} (x, y)| \ge \alpha n^l$ for every $x, y \in V(F)$.
Fix some $x, y\in V(F')$. For every $z, w\in V(F)$ such that $z \in N(x)$ and $w \in N(y)$, we have $|con_{F, l} (w, z)| \ge \alpha n^l$.
Thus, since every vertex in $V(F')$ has at least $\alpha n$ neighbours in $F$, we have that the number of walks between $x$ and $y$ in $F'$ with $l + 2$ inner vertices is at least $\alpha^3 n^{l+2}$ for large $n$.
Since there are at most $O(n^{l + 1})$ such walks which are not paths, we have that $|con_{F', l} (x, y)| \ge \frac{\alpha^3}{2} n^{l + 1}$. 
It follows that $H'$ is $(\alpha^3/2, k + 2)$-strongly robust.
\end{proof}
So far we listed and proved several simple properties of robust subgraphs. In the following subsection we state and prove a more interesting property.

\subsection{Absorbing paths}
The main reason robust subgraphs are so useful in our context, is the fact, which was proved by DeBiasio and Nelsen \cite{debiasio} that they contain short ``absorbing paths''. We conclude this section with a proof of this fact.

\begin{lem}\label{lem_absorbing_paths}
Let $\frac{1}{n} \ll \rho \ll \alpha, \frac{1}{k} \ll 1$, let $G$ be a graph on $n$ vertices and let $F$ be an $(\alpha,k)$-robust subgraph of $G$.
Then there exists a path $Q$ in $F$ satisfying the following conditions.
\begin{enumerate}
\item
If $F$ is strongly robust, for every set $W \subseteq V(F) \setminus V(Q)$ of size at most $\rho^2 n$, there exists a Hamilton path in $F[V(Q) \cup W]$ with the same ends as $Q$.
\item
If $F$ is weakly robust with bipartition $\{X, Y\}$, for every $W \subseteq V(F) \setminus V(P)$, with $|W \cap X| = |W \cap Y|\le \rho^2 n$, the graph $F[V(Q) \cup W]$ contains a Hamilton path with the same ends as $Q$.
\end{enumerate}
\end{lem}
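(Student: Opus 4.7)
The plan is to apply the absorbing method of R\"odl, Ruci\'nski and Szemer\'edi in the robust-subgraph setting. Fix $l \le k$ supplied by the robustness of $F$. In the strongly robust case, define a \emph{$v$-absorber} to be a tuple of distinct vertices $(a_0, u_1, \ldots, u_l, b, w_1, \ldots, w_l, a_1)$ in $V(F) \setminus \{v\}$ such that both $(a_0, u_1, \ldots, u_l, b, w_1, \ldots, w_l, a_1)$ and $(a_0, u_1, \ldots, u_l, v, w_1, \ldots, w_l, a_1)$ are paths in $F$, so that the middle vertex $b$ may be ``swapped out'' for $v$ without disturbing the ends. In the weakly robust case with bipartition $\{X,Y\}$, parity forces balanced absorption, so for each pair $(v,v') \in X \times Y$ I would define a $(v,v')$-absorber as an analogous alternating tuple in $F[X,Y]$ whose central segment can be replaced by a slightly longer subpath through $v$ and $v'$.

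The main technical steps are counting and random selection. Using robustness, one shows that each $v$ (resp.\ each pair $(v,v')$) admits at least $\alpha' n^{2l+2}$ absorbers, for some $\alpha' = \alpha'(\alpha, k) > 0$: for fixed endpoints $a_0, a_1$ and fixed neighbours $u_l, w_1 \in N_F(v)$, robustness supplies $\alpha n^l$ paths of length $l$ from $a_0$ to $u_l$ and from $w_1$ to $a_1$ in $F$, and a simple union bound lets us choose these internally disjoint and pick a common neighbour $b \in N_F(u_l) \cap N_F(w_1) \setminus \{v\}$. I would then sample a random family $\mathcal{A}$ of tuples by including each candidate independently with probability $p = C\rho / n^{2l+2}$ for a constant $C$. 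A standard Chernoff plus second-moment computation shows that, with positive probability, $|\mathcal{A}| = O(\rho n)$, every vertex has at least $2\rho^2 n$ absorbers in $\mathcal{A}$, and the number of intersecting pairs of tuples in $\mathcal{A}$ is at most $\rho^2 n$. Deleting one tuple from each intersecting pair produces a pairwise vertex-disjoint collection $\mathcal{A}'$ of absorbers, still with at least $\rho^2 n$ absorbers per vertex (or per pair).

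Finally, I would build $Q$ by greedily concatenating the absorbers in $\mathcal{A}'$, linking consecutive absorbers by short connecting paths. Since only $O(\rho n)$ vertices are used at any stage, Lemma~\ref{lem_robust_removing_vertices} guarantees that the remainder of $F$ is still $(\alpha/2, k)$-robust, which supplies a connecting path of length at most $l+1$ between the two current ends that avoids used vertices. The resulting path $Q$ has the stated absorbing property: given $W$, pair up $W \cap X$ with $W \cap Y$ arbitrarily in the weakly robust case, and assign to each element (resp.\ pair) of $W$ a distinct absorber contained in $Q$, which is possible because $|W| \le \rho^2 n$ while each vertex has at least $\rho^2 n$ absorbers in $\mathcal{A}'$; then swap each designated absorber's middle segment for its ``absorbed'' version to produce a Hamilton path on $V(Q) \cup W$ with the same ends as $Q$. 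The main obstacle is the counting-and-concentration step: the definition of an absorber must simultaneously guarantee that it is a path in the plain configuration, that substituting $v$ yields a path, and that enough such tuples exist at every vertex, and the linking step must respect the used-vertex budget without destroying robustness. In the weakly robust case there is additional bookkeeping to ensure that the bipartition is respected throughout the construction, so that the concatenation remains a path in $F[X,Y]$ and that absorbing swaps preserve parity.
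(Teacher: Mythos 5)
Your overall architecture matches the paper's: build short vertex-absorbing gadgets, sample a random family with high coverage, prune to get vertex disjointness, and stitch the surviving gadgets into a single path using robustness for the connectors. But the gadget you propose does not absorb. Requiring that both $(a_0, u_1, \ldots, u_l, b, w_1, \ldots, w_l, a_1)$ and $(a_0, u_1, \ldots, u_l, v, w_1, \ldots, w_l, a_1)$ be paths and then ``swapping'' $b$ for $v$ yields a path on $(V(Q_j) \setminus \{b\}) \cup \{v\}$, not on $V(Q_j) \cup \{v\}$: the vertex $b$ is expelled. After performing all the designated swaps you obtain a Hamilton path on $(V(Q) \setminus B) \cup W$, where $B$ is the set of discarded centres, rather than on $V(Q) \cup W$ as the lemma requires. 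The absorbed configuration must \emph{retain} every vertex of $Q_j$ while picking up $v$, and your design cannot achieve that.

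The natural repair, dropping $b$ and demanding $u_l w_1 \in E(F)$ so that $v$ is inserted into the edge $u_l w_1$, fails precisely in the weakly robust case: if $u_l, w_1 \in N_F(v)$ then both lie on the opposite side of the bipartition from $v$, so $u_l w_1$ cannot be an edge of the bipartite $F$. This obstruction is what forces the paper's more elaborate gadget. There, Claim~\ref{claim_4l} first fixes a single length $4l-1$ for which many connecting paths exist between every pair of vertices; the gadget is then a spine $(u_1, \ldots, u_{4l})$ interleaved with shortcut paths $P_i$, chosen so that both the base traversal $Q_j$ and a rerouted traversal that passes through $w$ between $u_1$ and $u_{4l}$ are Hamilton paths of $F[V(Q_j)]$ and $F[V(Q_j) \cup \{w\}]$ respectively, with the same ends. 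Your counting, concentration and deduplication steps are sound in outline and close to the paper's, but they sit on top of an absorber that performs a replacement rather than an insertion; you need a gadget that genuinely adds the vertex without removing any.
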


We follow the footsteps of DeBiasio and Nelsen in their proof of Lemma 5.6 from \cite{debiasio}.
The main tool is absorbing method of R\"odl, Ruci\'nski and Szemer\'edi \cite{rodl_et_al_absorbing}.
We shall use ``gadgets'', which we will define to be Hamiltonian graphs that are can absorb a single vertex under the condition that it is adjacent to some of the vertices in the gadget.
By a simple application of the probabilistic method and the robustness of the given graph, we show that there exists a not too large collection of vertex-disjoint gadgets, such that every vertex may be absorbed by a rather large number of them.
From there it will be easy to construct the required path $Q$. 

\begin{proof}[Proof of Lemma \ref{lem_absorbing_paths}]
We start by proving the first part of Lemma \ref{lem_absorbing_paths}.
Suppose that $F$ is $(\alpha, k)$-strongly robust.
In particular, $\delta(F) \ge \alpha n$ so we may apply the following claim.
\begin{claim}\label{claim_disjoint_pairs}
Let $p, \frac{1}{n} \ll \alpha \ll 1$ and let $F$ be a graph on at most $n$ vertices with $\delta(G) \ge \alpha n$.
Then there exists a family $\C{F}$ of disjoint pairs of vertices of $V(F)$ such that the following conditions hold.
\begin{itemize}
\item
$|\C{F}| \le p n$
\item
For every $u \in V(F)$ there are at least $\frac{1}{16}p \alpha n^2$ pairs $(x, y) \in \C{F}$ such that $x, y$ are neighbours of $u$.
\end{itemize}
\end{claim}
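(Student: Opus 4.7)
The plan is to construct $\C{F}$ probabilistically. Set $m = \lfloor pn/2 \rfloor$ and choose a uniformly random sequence $v_1, \ldots, v_{2m}$ of distinct vertices of $V(F)$; let $\C{F} = \{(v_{2i-1}, v_{2i}) : 1 \le i \le m\}$. Then $|\C{F}| = m \le pn$ and the pairs are automatically vertex-disjoint, so the cardinality condition comes for free, and I may focus on verifying the neighbourhood condition.

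The key step is to show that for every fixed $u \in V(F)$ the random variable $X_u = |\{(x,y) \in \C{F} : x, y \in N_F(u)\}|$ is, with high probability, at least the claimed lower bound. A direct calculation using $d_F(u) \ge \alpha n$ gives
\[
\E[X_u] \;=\; m \cdot \frac{d_F(u)(d_F(u) - 1)}{n(n - 1)} \;=\; \Theta(p \alpha^2 n),
\]
so what remains is sharp concentration of $X_u$ around this mean, simultaneously for all $u$.

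For the concentration step, my plan is to view $X_u$ as a function of a uniformly random permutation of $V(F)$ (extend $v_1, \ldots, v_{2m}$ to a full permutation) whose value changes by at most a constant under swapping any two entries, and then apply a bounded-differences inequality for random permutations. This should yield $\P(X_u \le \E[X_u]/2) \le \exp(-\Omega(pn))$, after which a union bound over the at most $n$ choices of $u$ (using $p\gg 1/n$ to make the exponential beat the $n$ factor) leaves positive probability that $\C{F}$ satisfies the neighbourhood condition for every vertex simultaneously, producing the required family.

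The main obstacle will be the concentration argument itself, since the pairs in $\C{F}$ are not independent and a vanilla Chernoff bound does not apply. If permutation concentration feels unwieldy, an equivalent route is the alteration method: sample each unordered pair of vertices of $V(F)$ independently with probability $q \approx 2m/n^2$; by linearity the expected number of sampled pairs inside $N_F(u)$ is $\Theta(p\alpha^2 n)$; then delete one pair from each vertex-conflict (the expected number of conflicts is of lower order than the expected count inside $N_F(u)$) to recover a matching, and verify by concentration that the surviving pairs still meet the required bound.
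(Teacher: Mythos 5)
Your proposal is essentially correct, but your primary route differs from the paper's. The paper uses the alteration method throughout: it samples each unordered pair of $V(F)$ independently with probability $p/n$, applies Chernoff to show that with high probability both $|\C{F}| \le pn$ and every $u$ has at least $\frac{1}{8}p\alpha^2 n$ pairs inside $N(u)$, computes via Markov that with probability at least $1/2$ there are at most $2p^2n$ intersecting pairs, and then deletes one pair from each conflict. The requirement $p \ll \alpha$ ensures $2p^2 n$ is negligible next to $\frac{1}{8}p\alpha^2 n$. Your ``alternative route'' at the end of the proposal is essentially this argument, and it closes cleanly. Your primary route (random permutation plus a bounded-differences inequality for permutations) is a genuinely different, and valid, way to get a disjoint family without an alteration step; the Lipschitz constant under a transposition is $2$, so McDiarmid gives concentration. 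One quibble worth flagging: the exponent you get from bounded differences is $\Omega\bigl((\E X_u)^2/n\bigr) = \Omega(p^2\alpha^4 n)$, not $\Omega(pn)$ as you wrote. This is weaker than the $\Omega(p\alpha^2 n)$ that independent-sampling Chernoff gives, but both still beat the union bound over $\le n$ vertices once $p$ is fixed as a small constant depending only on $\alpha$ and $n$ is taken large, which is the intended reading of the hierarchy (and for $p$ so small that neither exponent wins, the target count $\frac{1}{16}p\alpha^2 n$ is below $1$ and the claim is vacuous). So both routes work; the paper's trades the need for an alteration step for the simpler independent-coordinate Chernoff bound, while yours avoids alteration at the cost of invoking concentration for permutations and accepting the weaker exponent.
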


This claim is a simple application of Chernoff's bound.
We shall pick a family of pairs of vertices randomly and then delete a small number of pairs so as to ensure that the pairs are disjoint.

\begin{proof}[Proof of Claim \ref{claim_disjoint_pairs}]
Let $\C{F}$ be the family of pairs obtained by choosing each pair of vertices in $V(G)$ independently with probability $\frac{p}{n}$.
By Chernoff's bound, we have that with high probability, the following properties hold.
\begin{itemize}
\item
$|\C{F}| \le 2\frac{p}{n} \binom{n}{2} \le p n$.
\item
For every $u \in V(F)$, $\C{F}$ contains at least $\frac{1}{2} \frac{p}{n} \binom{\alpha n}{2} \ge \frac{1}{8} p \alpha^2 n$ pairs $(x, y)$ such that $x, y \in N(u)$.
\end{itemize}
The expected number of pairs of intersecting pairs in $\C{F}$ is at most $(\frac{p}{n})^2 n^3 \le p^2 n$.
It follows by Markov's inequality that with probability at least $1/2$, the number of pairs of intersecting pairs in $\C{F}$ is at most $2 p^2 n$.
In particular, we may pick a family $\C{F}$ which satisfies the above conditions and which has at most $2 p^2 n$ pairs of intersecting pairs.
We obtain a subfamily $\C{F'}$ of $\C{F}$ containing no intersecting pairs by deleting at most $2 p^2 n$ pairs from $\C{F}$.
It is easy to verify that if $p$ is suitably small, $\C{F'}$ satisfies the requirements of the claim.
\end{proof}

Let $\C{F} = \{(x_j, y_j)\}_{j = 1}^N$ be a family of pairs as in Claim \ref{claim_disjoint_pairs} (so $N \le pn$).
We use the following simple technical claim, to avoid divisibility issues.
\begin{claim}\label{claim_4l}
Let $\beta$ be suitably small and $n$ suitably large. Then for some $1 \le l \le k$, there are at least $\beta n^{4l - 2}$ paths of length $4l - 1$ between each pair of vertices in $F$.
\end{claim}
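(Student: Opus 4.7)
The plan is to leverage the strong robustness of $F$ to get paths of length $l_0 + 1$ between every pair of vertices for some $l_0 \le k$, and then adjust this length to have residue $3$ modulo $4$ by concatenating a small number of subpaths. Strong robustness directly yields $l_0 \le k$ with $\con_{F, l_0}(x, y) \ge \alpha n^{l_0}$ for every pair $x, y$. However, $l_0 + 1$ need not be congruent to $3 \pmod 4$, and concatenations of $j$ length-$(l_0+1)$ subpaths give lengths $j(l_0+1)$, whose residues modulo $4$ cannot cover $3$ when $l_0$ is odd. The fix will be to enrich the pool of available subpath lengths to include $l_0 + 2$, using the density of $F$ which is inherited from robustness.

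First I would show that between any $x, y \in V(F)$ there are at least $(\alpha^2/4) n^{l_0+1}$ paths of length $l_0 + 2$ in $F$. This follows by picking a neighbour $z$ of $y$ in $F$ (there are at least $\alpha n - 1$ such choices since $\delta(F) \ge \alpha n$), taking a length-$(l_0+1)$ path from $x$ to $z$ that avoids $y$ (the $O(n^{l_0-1})$ paths through $y$ in their interior are a lower-order correction), and appending the edge $zy$. Each resulting length-$(l_0+2)$ path is produced exactly once, parametrised by its penultimate vertex $z$.

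With paths of lengths $l_0 + 1$ and $l_0 + 2$ in hand, I would choose $a \in \{0, 1, 2, 3\}$ with $a \equiv l_0 \pmod 4$, and build length-$L$ paths from $x$ to $y$, where $L = 3(l_0 + 1) + a$, by concatenating three subpaths (with $a$ of them of length $l_0 + 2$ and the remaining $3 - a$ of length $l_0 + 1$) through two intermediate vertices $z_1, z_2 \in V(F) \setminus \{x, y\}$ with $z_1 \neq z_2$. Then $L \equiv 3 \pmod 4$, so $L = 4l - 1$ for $l = (3l_0 + 4 + a)/4$, and $l \le k$ once $k$ is large enough (which is the case here since $1/k \ll 1$). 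The $\Omega(n^2)$ choices of $(z_1, z_2)$ combined with $\Omega(n^{L_i - 1})$ choices for each of the three subpaths — minus $O(n^{L_i - 2})$ corrections per subpath to rule out vertex overlaps with previously chosen subpaths — yield $\Omega(n^{L-1}) = \Omega(n^{4l-2})$ distinct paths of length $4l - 1$ from $x$ to $y$, since each such path is recovered uniquely from the positions of $z_1$ and $z_2$ (the $L_1$-th and $(L_1 + L_2)$-th vertices). This will give the claim with $\beta = \Theta(\alpha^6)$.

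The main obstacle is the modular arithmetic in the first paragraph: without the density-based parity fix that produces length-$(l_0+2)$ paths, direct concatenation of robustness-guaranteed subpaths cannot in general reach residue $3 \pmod 4$. Everything else is routine counting with standard vertex-disjointness corrections, and the bound $l \le k$ follows from $k \ge 7$ which is comfortably implied by the hierarchy in force.
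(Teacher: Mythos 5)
Your proof is correct, but it takes a genuinely different route from the paper, and the difference is instructive. You observe — correctly — that concatenating copies of the length-$(l_0+1)$ paths guaranteed by strong robustness, joined at shared endpoints, yields lengths $j(l_0+1)$, which miss residue $3 \pmod 4$ whenever $l_0$ is odd. You repair this by manufacturing length-$(l_0+2)$ paths from the degree condition $\delta(F)\ge\alpha n$ and then mixing the two lengths to hit the required residue. The paper sidesteps the modular obstruction entirely: it glues four length-$(l_0+1)$ paths not at shared endpoints but via three \emph{fresh edges} of $F$, one between each consecutive pair. The resulting length is $4(l_0+1)+3$, which is $\equiv 3 \pmod 4$ for every $l_0$, so no residue analysis is needed. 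Counting is essentially the same in both proofs: $(\alpha n)^6$ choices for the three oriented edges (or $\Theta(n^2)$ for your two join vertices), $\Theta(n^{m-1})$ choices per subpath, and an $O(n^{4l-3})$ correction to pass from walks to paths, since the decomposition of a walk into its pieces is recovered from its fixed length. Your route does have one modest advantage: your $l\approx\frac{3}{4}l_0 + O(1)$, so the bound $l\le k$ needs only $k\ge 7$, which you verify explicitly; the paper's $l=l_0+2$ is slightly lax on the inequality $l\le k$ (it can overshoot when $l_0$ is within $2$ of $k$), though this is immaterial under the standing hierarchy in which $k$ is large. In short: same counting engine, but the paper's \enquote{join by edges} trick trivializes the arithmetic that you instead handle by hand with a two-length palette.
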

\begin{proof}
Since $F$ is $(\alpha, k)$ strongly robust, there exists $l \le k$ such that between every $u, v \in V(F)$ there are at least $\alpha n^{l - 2}$ paths in $F$ of length $l - 1$.
We conclude that for every $u, v \in V(F)$ there are at least $\alpha^{10}n^{4l - 2}$ walks of length $4l - 1$ between $u$ and $v$.
Indeed, given $u, v \in V(F)$, there are at least $(\alpha n)^6$ ways to pick edges $e_1, e_2, e_3 \in E(F)$, since we may pick one end of each edge in at least $|F| \ge \alpha n$ ways, and then there are at least $\alpha n$ ways to pick a neighbour.
Denote $e_i = (a_i, b_i)$.
There are $(\alpha n^{l - 2})^4$ ways to pick paths $P_1, P_2, P_3, P_4$ in $F$ of length $l - 1$ with ends $u$ and $a_1$, $b_1$ and $a_2$, $b_2$ and $a_3$, $b_3$ and $v$ respectively.
It follows that there are at least $\alpha^{10} n^{4l - 2}$ walks in $F$ between $u$ and $v$. At most $O(n^{4l - 3})$ of them are not paths, so for large enough $n$, there are at least $\frac{\alpha^{10}}{2} n^{4l -2}$ walks of length $4l - 2$ between $u$ and $v$.
\end{proof}

We shall build vertex-disjoint paths $Q_j$ of length $8l^2 - 4l + 1$ one by one for $j = 1, \ldots, N$ as follows.
Suppose that $Q_1, \ldots, Q_{j - 1}$ are already defined.
We would like to pick paths as follows.
\begin{align*}
&P = (u_1, \ldots, u_{4l}) \text{ a path with ends }x_j, y_j \\
&P_i \text{ a path of length } 4l - 1 \text{ with ends } u_i, u_{i + 3} \text{, for } i = 1, 3, \ldots 4l - 5. \\
&P_{4l - 3} \text{ a path of length } 4l - 1 \text{ with ends } u_{4l - 3}, u_{4l - 1}.
\end{align*}
It is easy to see, by the choice of $l$ according to Claim \ref{claim_4l}, that if $p$ is small enough, we may pick such paths to be vertex-disjoint of all previously defined paths and to have pairwise disjoint interiors.
\begin{equation*}
Q_j = (u_2 u_1 P_1 u_4 u_3 P_3 u_6 \ldots u_{4l - 2} u_{4l - 3} P_{4l - 3} u_{4l - 1} u_{4l}).
\end{equation*}

Suppose that $w \in V(G)$ is a neighbour of $x_j = u_1$ and $y_j = u_{2l}$.
The following path is a path in $F$ with vertex set $V(Q_j) \cup \{w\}$ and same ends as $Q_j$ (this path is illustrated in Figure \ref{figure_absorbing1} together with $Q_j$).
\begin{align*}
(u_2 u_3 P_3 u_6 u_7 P_7 \ldots u_{4l - 5} P_{4l - 5} u_{4l - 2} u_{4l - 1} P_{4l - 3} u_{4l - 3} u_{4l - 4} P_{4l - 7} \ldots u_5 u_4 P_1 u_1 w u_{4l}).
\end{align*}

\begin{figure}[h]\centering 

\includegraphics[scale=1.5]{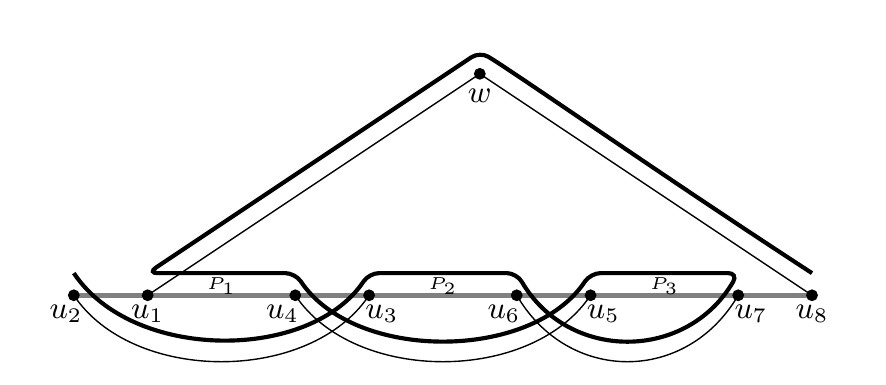}

\caption{An illustration of the absorbing structure for $l = 2$. The path $Q_j$ is represented by the straight line between $u_2$ and $u_8$ which is marked in grey, and the path absorbing $w$ is represented by the bold black path.}
\label{figure_absorbing1}
\end{figure}

Finally, we let $Q$ be a path which contains $Q_1, \ldots, Q_N$ by connecting the ends of the $Q_j$'s with paths of length $4l - 1$.
Denote $\rho = 8 p k^2$, and note that we may pick $p$ small enough such that $\rho^2 \le \frac{1}{16} p \alpha ^2$.
The length of $Q$ is at most $\rho n$ and for every vertex $z \in V(F) \setminus V(Q)$, there are at least $\rho^2 n$ values of $j \in [N]$ such that $x_j, y_j \in N(z)$.
We show that $Q$ has the desired absorbing property.
Let $W$ be a set of at most $\rho^2 n$ vertices in $V(F) \setminus V(Q)$ and denote $W = \{w_1, \ldots, w_M\}$.
We may pick distinct $j_1, \ldots, j_M \in [N]$ such that $x_{j_i}, y_{j_i} \in N(w_i)$.
Recall that for each $i \in [M]$, there is a path $Q_{j_i}'$ in $F$ on vertex set $V(Q_{j_i}) \cup \{w_i\}$ with the same ends as $Q_j$.
By replacing the occurrence of $Q_{j_i}$ by $Q_{j_i}'$ in the path $Q$, we obtain a path on vertex set $V(Q) \cup W$ with the same ends as $Q$.

We now turn to the proof of the second part of Lemma \ref{lem_absorbing_paths}.
Let $F$ be an $(\alpha, k)$ weakly robust component with bipartition $\{X, Y\}$.
The proof will use similar ideas, with some variations which take into account the bipartition of $F$.
A similar argument as in Claim \ref{claim_disjoint_pairs} implies that for small enough $p$ we may find a family $\C{F}$ of disjoint quadruples of vertices of $F$ with the following properties.
\begin{itemize}
\item
$|\C{F}| \le p n$
\item
For every $x \in X, y \in Y$ the number of quadruples $(a, b, c, d)$ such that $a, c \in N(x)$ and $b, d \in N(y)$ is at least $\frac{1}{16} p \alpha^4 n$.
\end{itemize}
Denote $\C{F} = \{(a^j, b^j, c^j, d^j)\}_{j = 1} ^{j = N}$.
For $j = 1, \ldots, N$ we pick a path $Q_j$ as follows.
As before, there exists $1 \le l \le k$ such that $|\con_{F, 4l - 2}(x, y)| \ge \beta n^{4l - 2}$ for every $x \in X, y \in Y$.
Assuming that $Q^1, \ldots, Q^{j - 1}$ were already chosen to be paths of length at most $10l^2$, we may pick paths
\begin{align*}
& (a_1 b_1 \ldots a_{2l} b_{2l}) \text{ a path with ends }a_1 = a^j, b_{2l} = b^j \\
& (c_1 d_1 \ldots c_{2l} d_{2l}) \text{ a path with ends }c_1 = c^j, d_{2l} = d^j 
\end{align*}
which are vertex-disjoint of each other and of previously defined paths.
We may further pick paths of length at most $k$
\begin{align*}
&P_i \text{ a path with ends } 
\left\{
\begin{array}{ll}
a_1, c_{2l} & \text{if } i = 1 \\
a_i, b_{i + 1} & \text{if } 2 \le i \le 2l - 2 \\
a_{2l - 1}, a_{2l} & \text{if } i = 2l - 1
\end{array}
\right.
\\
&Q_i \text{ a path with ends } 
\left\{
\begin{array}{ll}
d_2, d_1 & \text{if }i = 2 \\
d_i, c_{i - 1} & \text{if } 3 \le i \le 2l 
\end{array}
\right.
\\
&R \text{ a path with ends } c_1, b_2.
\end{align*}
which are pairwise vertex-disjoint and are disjoint of previously defined paths.
Let
\begin{align*}
Q^j = (b_1 a_1P_1c_{2l} d_{2l} Q_{2l} c_{2l - 1} \ldots d_3 Q_3 c_2 d_2 Q_2 d_1 c_1 R b_2 a_2 P_2 b_3 \ldots a_{2l - 2} P_{2l - 2} b_{2l - 1} a_{2l - 1} P_{2l - 1} a_{2l} b_{2l}).
\end{align*}
Suppose that $x, y$ satisfy $a^j, c^j \in N(x)$ and $b^j, c^j \in N(y)$. Then the following path is a path in $F$ with vertex set $V(Q^j) \cup \{x, y\}$ with the same ends as $Q^j$ (See figure \ref{figure_absorbing2}).
\begin{align*}
&(b_1 a_2 P_2 b_3 a_4 P_4 b_5  \ldots a_{2l - 2} P_{2l - 2} b_{2l - 1} a_{2l} 
P_{2l - 1} a_{2l - 1} b_{2l - 2} P_{2l - 3} a_{2l - 3} \ldots  b_4 P_3 a_3 b_2 R c_1 x \\
&a_1 P_1 c_{2l} d_{2l - 1} Q_{2l - 1} c_{2l - 2} \ldots d_3 Q_3 c_2 d_1 Q_2 d_2 c_3 Q_4 d_4 \ldots c_{2l - 1} Q_{2l} d_{2l} y b_{2l}).
\end{align*} 
 We may proceed as in the previous part to complete the proof of Lemma \ref{lem_absorbing_paths}.

\end{proof}

\begin{figure}[h]\centering 

\includegraphics[width = \textwidth]{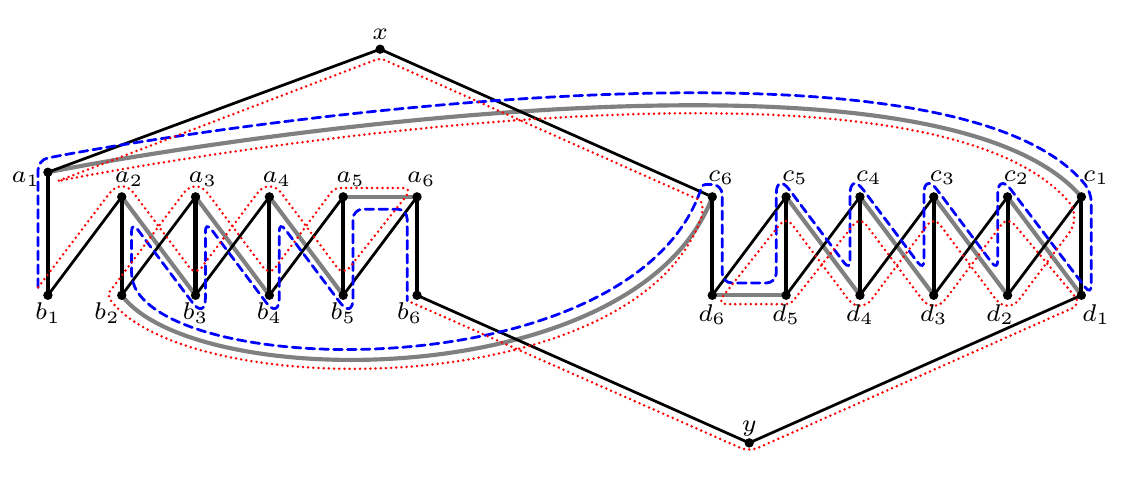}

\caption{An illustration of the absorbing structure for $l = 3$. The black lines represent edges, whereas the grey ones represent paths. The dashed blue line represents the path $Q^j$ and the dotted red one represents the path obtained from $Q^j$ by absorbing $w$.}
\label{figure_absorbing2}
\end{figure}

The proof of Lemma \ref{lem_absorbing_paths} concludes our introduction of the notion of robust subgraphs and their properties.
In order to make use of the properties we established, we shall use Lemmas \ref{lem_reduced_components}, \ref{lem_many_reduced_components} and \ref{lem_robust_regularity} from the next section, Section \ref{sec_regularity_lem}. These lemmas establish the connection between connected components of the reduced graph (obtained by an application of the Regularity Lemma) and robust subgraphs.

\section{The Regularity Lemma}\label{sec_regularity_lem}
In our proof, we shall use Szemer\'edi's Regularity Lemma extensively.
Before stating the version we use, we introduce some notation.
Let $U,W$ be disjoint subsets of vertices of a graph $G$.
The density $d(U,W)$ of edges between $U$ and $W$ is defined to be 
\begin{equation*}
d(U,W)=\frac{e(U,W)}{|U|W|},
\end{equation*}
where $e(U,W)$ is the number of edges between $U$ and $W$.
A bipartite graph with bipartition $U,W$ is said to be \emph{$\eps$-regular} if for every $U'\subseteq U$ and $W'\subseteq W$ with $|U'|\ge \eps |U|$ and $|W'|\ge \eps |W|$, the density $d(U',W')$ satisfies
$|d(U',W') - d(U,W)|\le \eps$.

We use a variant of the so-called degree form of the Regularity Lemma (see \cite{regularity}), which is applicable to $2$-coloured graphs. Furthermore, it will be useful for our purpose to start with a cover of the vertices of a graph and require that the partition obtained by the lemma is a refinement of the initial cover.

\begin{lem}\label{lem_regularity}
For every $\eps>0$ and integer $l$ there exists $M=M(l, \eps)$ such that the following holds.
Let $G$ be a $2$-coloured graph on $n$ vertices, $\C{C}$ a cover of $V(G)$ with at most $l$ parts and $d > 0$.
Then there exists a partition $\{V_0, \ldots, V_m\}$ of $V(G)$ and a subgraph $G'$ of $G$ with vertex set $V(G) \setminus V_0$, such that the following conditions hold.
\begin{enumerate}[(R1)]
\item
$m\le M$.
\item
Every $V_i$ ($i\in [m]$) is contained in one of the parts of $\C{C}$.
\item
$|V_0|\le \eps n$ and $|V_1| = \ldots = |V_m| \le \lceil \eps n \rceil$.
\item \label{itm_reg_deg}
$\deg_{G'}(v) \ge \deg_G(v) - (2d + \eps)n$ for every $v \in V_1 \cup \ldots \cup V_m$.
\item
$e(G'[V_i]) = 0$ for $i\in [m]$.
\item
All pairs $(V_i,V_j)$ are $\eps$-regular in both colours in $G'$, with density in each colour either $0$ or at least $d$.
\end{enumerate}
\end{lem}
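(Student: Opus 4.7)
The plan is to reduce to the standard multicolour degree form of the Regularity Lemma, using the cover $\C{C}$ to fix a starting partition. First, refine $\C{C}$ into a genuine partition $\C{P}_0$ with at most $l$ blocks by assigning each vertex to an arbitrary cover part containing it; each block is then contained in some member of $\C{C}$, which will secure (R2). Choose $\eps'' \le (\eps/10)^2$ and apply the multicolour Szemer\'edi Regularity Lemma with parameter $\eps''$ to $G$, using $\C{P}_0$ as the starting partition (the multicolour version is standard: either run the usual index-increment argument on both colour classes simultaneously, or iterate the single-colour SRL on $G_R$ and then on $G_B$). This yields an equitable partition $\{W_0,W_1,\dots,W_{m'}\}$ refining $\C{P}_0$, with $m'\le M_0(l,\eps'')$, $|W_0|\le \eps'' n$, all $W_i$ of equal size at most $\lceil \eps'' n\rceil$, and all but at most $2\eps''(m')^2$ ordered pairs being $\eps''$-regular in \emph{both} colours.

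Now perform the familiar clean-up to produce the degree form. Call $W_i$ \emph{bad} if it belongs to more than $\sqrt{\eps''}\,m'$ pairs that fail $\eps''$-regularity in one of the colours; double counting gives at most $4\sqrt{\eps''}\,m'$ bad parts. Set $V_0 := W_0 \cup \bigcup_{i\text{ bad}} W_i$, relabel the remaining blocks as $V_1,\dots,V_m$, and define $G'$ by deleting from $G$ every edge that (i) meets $V_0$, (ii) lies inside some $V_i$, (iii) lies in a pair $(V_i,V_j)$ that fails $\eps''$-regularity in its colour, or (iv) is of a colour whose density in its pair is less than $d$. Then (R1), (R2), (R5), (R6) hold by construction, and $|V_0|\le \eps'' n + 4\sqrt{\eps''}\,n \le \eps n$ gives (R3).

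For (R4), each $v\in V_i$ with $i\ge 1$ loses at most $|V_0|\le 5\sqrt{\eps''}\,n$ edges to $V_0$, at most $\eps''\,n$ edges inside its own block, at most $\sqrt{\eps''}\,m'\cdot(n/m') = \sqrt{\eps''}\,n$ edges to non-regular partners (since $V_i$ is good), and at most $d\,n$ edges of each colour in low-density pairs; the total loss is at most $(2d + 7\sqrt{\eps''})\,n \le (2d+\eps)\,n$, as required. There is no real obstacle: the argument is a standard adaptation of the degree form, with the starting partition $\C{P}_0$ enforcing (R2) and the bad-part extraction making (R4) hold uniformly over all of $V_1\cup\ldots\cup V_m$ rather than just on average.
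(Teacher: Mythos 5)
The paper does not actually prove Lemma~\ref{lem_regularity}; it is presented as a known variant of the degree form of the Regularity Lemma with a pointer to \cite{regularity}, so there is no paper proof to compare against. Your derivation is the standard one and is correct: apply the two-colour Regularity Lemma with the partition induced by $\C{C}$ as the initial partition, move to $V_0$ those clusters that lie in too many irregular pairs (this is exactly what is needed so that the degree loss can be bounded \emph{uniformly} over $V_1\cup\dots\cup V_m$ rather than only on average), then delete edges inside clusters, edges meeting $V_0$, edges of irregular pairs, and low-density colour classes. Taking $\eps''\le(\eps/10)^2$ makes the bookkeeping yield $|V_0|\le\eps n$ and a degree loss of at most $(2d+\eps)n$, and refinement of $\C{P}_0$ propagates to give (R2).

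One caveat worth flagging: the parenthetical suggestion that the two-colour Regularity Lemma can be obtained by iterating the one-colour lemma first on $G_R$ and then, starting from the resulting partition, on $G_B$, does not work as stated. The second application produces a much finer partition, with cluster sizes of order $n/M_2$ where $M_2\gg M_1$; the pairs of this finer partition occupy only a fraction of order $M_1/M_2$ of the first-round clusters, which is far below $\eps_1$, so the inherited $\eps_1$-regularity of $G_R$ is destroyed (and trying to pre-compensate makes the choice of parameters circular). The simultaneous index-increment argument on both colour classes, your first option, is the correct and standard route and is all you need; the rest of the argument then goes through as you describe.
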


It is often useful to work with the reduced graph, obtained from the partition given by the Regularity Lemma as follows.
Given a $2$-coloured graph $G$, and parameters $\eps, d, l$, we define the $(\eps, d)$-reduced graph $\Gamma$ as follows.
Let $\{V_0, \ldots, V_m\}$ be the partition obtained by an application of Lemma \ref{lem_regularity}, and let $G'$ be the given subgraph of $G$. 
We take $V(\Gamma)=\{V_1, \ldots, V_m\}$.
A pair $V_i V_j$ is a $t$-coloured edge in $\Gamma$ if it has density at least $d$ in colour $t$ in $G'$.
Note that an edge of $\Gamma$ can have more than one colour.

The following observation shows why it is useful to work with the degree form of the Regularity Lemma.

\begin{obs}\label{obs_min_deg_reduced_graph}
Let $G$ be a $2$-coloured graph on $n$ vertices with $\delta(G) \ge cn$ and let $\Gamma$ be the $(\eps, d)$-reduced graph obtained by applying Lemma \ref{lem_regularity}.
Then $\delta(\Gamma) \ge (c - 2d - \eps)m$, where $m = |\Gamma|$.
\end{obs}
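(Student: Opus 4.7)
The approach is a direct double-counting of the $G'$-edges incident to a fixed cluster. Pick any $V_i \in V(\Gamma)$; my goal is to bound $d_\Gamma(V_i)$ from below. By (R4) combined with the hypothesis $\delta(G) \ge cn$, every $v \in V_i$ satisfies $\deg_{G'}(v) \ge (c - 2d - \eps)n$, and summing over $v \in V_i$, while using (R5) to discard edges inside $V_i$, yields
\[
\sum_{j \ne i} e_{G'}(V_i, V_j) \;\ge\; |V_i|\,(c - 2d - \eps)\,n.
\]

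The crucial point is that (R6) is a dichotomy: for every pair $(V_i, V_j)$ the density in each colour in $G'$ is either $0$ or at least $d$. Consequently, if $V_i V_j \notin E(\Gamma)$, then the densities in both colours fall short of $d$ and therefore vanish, so that $e_{G'}(V_i, V_j) = 0$. Pairs $V_i V_j \in E(\Gamma)$ contribute at most $|V_i||V_j| = |V_i|\,s$ edges, where $s = |V_1| = \ldots = |V_m|$ by (R3). Hence
\[
\sum_{j \ne i} e_{G'}(V_i, V_j) \;\le\; d_\Gamma(V_i) \cdot |V_i|\,s.
\]

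Combining the two inequalities and cancelling $|V_i|\,s$ gives $d_\Gamma(V_i) \ge (c - 2d - \eps)\,n/s$, and since $m s \le n$, i.e.\ $n/s \ge m$, this produces $d_\Gamma(V_i) \ge (c - 2d - \eps)\,m$, as required. There is no real obstacle; the only subtle step is recognising that the ``$0$ or at least $d$'' alternative in (R6) forces non-edges of $\Gamma$ to be genuinely empty in $G'$, rather than merely sparse, which is precisely what allows the clean bound on the right-hand side.
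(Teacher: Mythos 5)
Your proof is correct. The paper states this as an observation without proof, so there is nothing to compare against; your argument is the natural one: lower-bound $\sum_{j \ne i} e_{G'}(V_i, V_j)$ via (R4) and (R5), then observe that the ``$0$ or at least $d$'' dichotomy in (R6) forces $e_{G'}(V_i, V_j) = 0$ for non-edges of $\Gamma$, so that only $\Gamma$-neighbours of $V_i$ contribute, each at most $|V_i|\,s$ edges, and finish with $n/s \ge m$. One could add the trivial remark that when $c - 2d - \eps \le 0$ the conclusion holds vacuously, but this is cosmetic; the argument is complete and fills the gap the paper left implicit.
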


The rest of this section is divided into two parts. In the first part, Subsection \ref{subsec_robust_reg}, we establish the connection between robust subgraph and connected components of a reduced graph. In the second part, Subsection \ref{subsec_conn_matching}, we describe the connection between connected matchings in a reduced graph and cycles in the original graph.

\subsection{From connected components of the reduced graph to robust subgraphs} \label{subsec_robust_reg}

One of our main tools in the proof of Theorem \ref{thm_main} is the following lemma. It gives us the means to obtain a robust subgraph from a connected subgraph of the reduced graph.

\begin{lem} \label{lem_reduced_components}
Let $\alpha, \frac{1}{k}, \frac{1}{n} \ll \eps, \frac{1}{l} \ll 1$ and $d \ge 4\eps$.
Let $G$ be a graph on $n$ vertices and let $\Gamma$ be the $(\eps, d)$-reduced graph obtained by an application of Lemma \ref{lem_regularity}.
Suppose that $\Phi$ is a connected subgraph of $\Gamma$.
Then there exists a subgraph $F$ of $G$ with the following properties.
\begin{enumerate}
\item \label{itm_1_lem_reduced_components}
Let $U$ be the set of vertices contained in clusters of $V(\Phi)$. Then $V(F) \subseteq U$ and $|U \cap V(F)| \ge (1 - \eps)|U|$.
\item \label{itm_2_lem_reduced_components}
$F$ is $(\alpha, k)$-robust.
If $\Phi$ is bipartite, $F$ is weakly robust, otherwise it is strongly-robust.
\end{enumerate}
\end{lem}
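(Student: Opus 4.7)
I would take $F$ to be the subgraph of $G'$ (the regularized subgraph from Lemma \ref{lem_regularity}) induced on $U \cap V(G')$ with a small atypical set removed, and read off the path counts directly from the counting lemma for $\eps$-regular pairs iterated along walks in $\Phi$. Since $\Phi$ is connected with at most $M = M(l,\eps)$ clusters, I first fix a single walk length $\ell \le k$. If $\Phi$ is non-bipartite, pick any odd cycle $C \subseteq \Phi$; combining a shortest walk between any two clusters with traversals of $C$ and of a single incident edge realizes every sufficiently large integer as the length of some walk between every pair of clusters of $\Phi$, and I set $\ell$ to be one such integer, bounded in terms of $M$. If $\Phi$ is bipartite with bipartition $\{A,B\}$, I declare $X$ and $Y$ to be the unions of clusters in $A$ and $B$ (intersected with $V(F)$) and let $\ell$ be an even integer such that every $A$-cluster is joined to every $B$-cluster by a walk of length exactly $\ell+1$ in $\Phi$; back-and-forth along any edge shifts length by $2$ without changing the ends.

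For each candidate walk $W = V_{j_0} V_{j_1} \cdots V_{j_{\ell+1}}$ in $\Phi$ of length $\ell+1$ (there are boundedly many) and each position $0 \le i \le \ell+1$, I call a vertex $v \in V_{j_i}$ \emph{bad for $W$} if the number of walks along $V_{j_0},\ldots,V_{j_{\ell+1}}$ in $G'$ with $v$ in position $i$ falls far below the value $\sim d^{\ell+1}(n/M)^{\ell+1}$ predicted by the counting lemma. Standard regularity counting shows that the set of bad vertices has size at most $\eps|U|$, provided the regularity parameter is taken small enough in terms of $M$ and $\ell$ (most cleanly handled by initially applying the Regularity Lemma with a slightly smaller effective parameter). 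Let $F$ be the subgraph of $G'$ induced on $U \cap V(G')$ with all bad vertices removed; property (\ref{itm_1_lem_reduced_components}) is then immediate.

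Finally, for any $x, y \in V(F)$, pick a walk $W$ in $\Phi$ of length $\ell+1$ joining their clusters, of the correct type in the bipartite case. Since $x$ and $y$ are good for $W$, the counting lemma produces at least $\alpha n^\ell$ tuples $(u_1,\ldots,u_\ell)$ with $u_i \in V_{j_i}$ and $x u_1 u_2 \cdots u_\ell y$ a walk in $G'$, for some constant $\alpha > 0$ depending only on $d$, $\ell$, and $M$; at most $O(n^{\ell-1})$ of these walks fail to be paths and can be discarded. This yields strong robustness when $\Phi$ is non-bipartite, and weak robustness with bipartition $\{X,Y\}$ when $\Phi$ is bipartite, establishing property (\ref{itm_2_lem_reduced_components}). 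The main delicacy is in the first step: the same $\ell$ must serve all pairs of clusters simultaneously, which is exactly what forces the bipartite/non-bipartite case split and produces the weak/strong dichotomy in the conclusion; the subsidiary technicality is controlling the union of atypical sets over the boundedly-many walks needed so that it remains within the allowed $\eps|U|$.
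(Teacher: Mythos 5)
Your overall strategy matches the paper's: delete a small set of atypical vertices from each cluster of $\Phi$, take $F$ to be the induced subgraph of $G'$ on what remains, choose a single uniform walk length $\ell$ using connectivity (an odd cycle in the non-bipartite case, parity shifts along a single edge in the bipartite case), and then lower-bound $\con_{F,\ell}(x,y)$ by counting walks in $G'$ along the fixed cluster sequence via regularity. That architecture is exactly right, and your treatment of the uniform-length step and the bipartite/non-bipartite dichotomy is essentially the paper's Claim \ref{claim_uniform_path_length}.

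The gap is in the atypicality criterion and the final counting step. You declare $v \in V_{j_i}$ bad for $W$ if the number of walks along $V_{j_0},\ldots,V_{j_{\ell+1}}$ with $v$ in position $i$ is much smaller than $d^{\ell+1}(n/M)^{\ell+1}$ --- a count in which \emph{both} endpoints are free. Then you conclude ``since $x$ and $y$ are good for $W$, the counting lemma produces at least $\alpha n^{\ell}$ walks from $x$ to $y$.'' That implication does not follow. Goodness of $x$ at position $0$ controls $\sum_{y' \in V_{j_{\ell+1}}}(\text{walks from }x\text{ to }y')$, and goodness of $y$ at position $\ell+1$ controls $\sum_{x' \in V_{j_0}}(\text{walks from }x'\text{ to }y)$, but these two marginal bounds say nothing about the single entry ``walks from $x$ to $y$'': $x$'s walks could all terminate in a small set of exceptional endpoints that excludes $y$. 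Turning the marginal bounds into a pairwise bound would require a second-moment or meet-in-the-middle argument that you have not supplied, and the set of bad \emph{pairs} $(x,y)$ need not decompose as (bad vertices)$\times V \cup V\times$(bad vertices).

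The paper sidesteps this cleanly with a \emph{degree-based} removal and a \emph{set-to-set} count. It deletes from each $V_i$ the vertices $W_i$ whose $G'$-degree into the union of neighbouring clusters $N_i$ is below $3\eps|N_i|$; by pigeonhole every surviving $x\in V_i$ then has at least $2\eps|V_1|$ $G'$-neighbours in \emph{some} adjacent cluster $V_t$. Given $x,y$, it sets $U_1 = N_{G'}(x)\cap V_t$ and $U_k = N_{G'}(y)\cap V_s$ (both of size $\geq 2\eps|V_1|$) and proves Claim \ref{claim_many_paths}: many paths of the appropriate length between the \emph{sets} $U_1$ and $U_k$. The proof extends step by step, pruning low-degree-into-the-next-cluster vertices along the chosen walk, and at the final step uses $\eps$-regularity of the last pair to lower-bound $e(A, U_k)$ for the current front $A$; crucially this last step needs only that $U_k$ is a set of size $\geq \eps|V_1|$, not that a single vertex receives many walks. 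Prepending $x$ and appending $y$ converts each such path into a path of length $\ell+1$ from $x$ to $y$. You should replace your walk-count atypicality criterion with a degree-into-$N_i$ criterion (or per-edge degree criterion) and count paths between the neighbourhood sets $N_{G'}(x)\cap V_t$ and $N_{G'}(y)\cap V_s$, rather than directly between $x$ and $y$.
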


In order to find the required robust component $F$, we consider the clusters represented by $V(\Phi)$, and for each of the clusters we remove vertices with low degree. The regularity of pairs of clusters which are connected by an edge, together with the choice of $d$, implies that the number of low degree vertices in each cluster is small.
We show that the subgraph induced by the remaining vertices has the required expansion properties, using the regularity of the pairs of clusters.

\begin{proof}[Proof of Lemma \ref{lem_reduced_components}]

Denote $I = \{i \in [m]: V_i \in V(\Gamma)\}$, so $U = \bigcup_{i \in I}V_i$.
For every $i\in I$, denote by $I_i$ the set of indices $j\in I$ such that $V_iV_j\in E(\Phi)$ and let $N_i = \bigcup_{j \in I_i}V_j$.
Let $G'$ be the subgraph of $G$ given by Lemma \ref{lem_regularity}). Let
\begin{equation*}
W_i = \{v\in V_i: \deg_{H}(v,N_i)\le 3\eps |N_i|\}.
\end{equation*}

\begin{claim}
$ |W_i|\le \eps n$ for every $i \in I$.
\end{claim}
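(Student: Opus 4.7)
The plan is to exploit $\eps$-regularity pair-by-pair and then aggregate via a double-counting argument. Fix $i \in I$. For each $j \in I_i$, the pair $(V_i, V_j)$ is $\eps$-regular in $G'$ with density at least $d$ (since $V_i V_j \in E(\Phi) \subseteq E(\Gamma)$, at least one colour contributes density $\ge d$, and the overall density in $G'$ is the sum of the two colour densities). Let
\[ W_{i,j} \defeq \bigl\{\, v \in V_i : \deg_{G'}(v, V_j) < (d - \eps)|V_j| \,\bigr\}. \]
Then $|W_{i,j}| < \eps|V_i|$; otherwise the density between $W_{i,j}$ and $V_j$ would be strictly less than $d - \eps$, contradicting $\eps$-regularity (applied with $W_{i,j}$ and $V_j$, and using $d(V_i, V_j) \ge d$).

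Next I would show that each $v \in W_i$ lies in $W_{i,j}$ for all but a small fraction of the indices $j \in I_i$. Setting $J_v \defeq \{\, j \in I_i : v \notin W_{i,j} \,\}$ and using that the clusters $V_j$ all have the same size (condition (R3)), the inequality $\deg_{G'}(v, V_j) \ge (d - \eps)|V_j|$ for $j \in J_v$ combined with $\deg_{G'}(v, N_i) \le 3\eps|N_i|$ gives
\[ (d - \eps)\, |J_v|\, |V_j| \;\le\; \sum_{j \in J_v} \deg_{G'}(v, V_j) \;\le\; \deg_{G'}(v, N_i) \;\le\; 3\eps |N_i|, \]
so $|J_v|/|I_i| \le 3\eps/(d - \eps)$. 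With $d$ taken slightly larger than $4\eps$ (as afforded by the hierarchy $\eps \ll 1$ and the freedom in choosing $d$ subject to $d \ge 4\eps$), the right-hand side is bounded below $1$ by a constant $1 - \gamma$, so at least a fixed fraction $\gamma > 0$ of the indices $j \in I_i$ satisfy $v \in W_{i,j}$.

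Finally, I would double-count the pairs $(v, j)$ with $v \in W_i$ and $v \in W_{i,j}$. On the one hand this count is at least $\gamma |W_i||I_i|$; on the other hand it is at most $\sum_{j \in I_i} |W_{i,j}| < \eps |V_i| |I_i|$. Combining yields $|W_i| \le (\eps/\gamma)|V_i|$, and since $|V_i| \le \lceil \eps n \rceil$, in particular $|W_i| \le \eps n$ as claimed (and, more usefully for the remaining steps of the lemma, a bound of order $\eps |V_i|$, so that the removed vertices total at most $O(\eps)|U|$). The main subtlety is cosmetic: lining up the constants so that the threshold $3\eps$ in the definition of $W_i$ is meaningfully below $d$. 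This is precisely where the hypothesis $d \ge 4\eps$ enters; strictly speaking the displayed argument needs a small constant gap beyond $4\eps$, which the hierarchy $\eps \ll 1$ comfortably supplies.
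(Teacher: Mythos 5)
Your argument is correct, but it takes a genuinely different and somewhat more roundabout route than the paper's. The paper applies $\eps$-regularity once, directly to the bad set: supposing $|W_i| \ge \eps|V_i|$, regularity of $(V_i, V_j)$ gives $d(W_i, V_j) \ge d(V_i, V_j) - \eps \ge 3\eps$ for every $j \in I_i$, whence $e(W_i, N_i) \ge 3\eps|W_i||N_i|$ and some vertex of $W_i$ has degree at least $3\eps|N_i|$ into $N_i$, contradicting its membership in $W_i$. You instead route through the per-pair atypical sets $W_{i,j}$, show that every $v \in W_i$ lies in $W_{i,j}$ for a constant fraction $\gamma$ of the indices $j \in I_i$, and then double-count pairs $(v,j)$. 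Both are sound, and both share the cosmetic boundary issue at $d = 4\eps$ that you flag (at $d = 4\eps$ exactly, the paper's averaging and your $\gamma$ both degenerate, so one really wants a small constant of slack in $d$). The paper's version is shorter and gives the cleaner bound $|W_i| < \eps|V_i|$; yours gives only $|W_i| < (\eps/\gamma)|V_i|$. That still certifies the claim as literally written (since $|V_i| \le \lceil\eps n\rceil$ and $\eps$ is small), but the downstream conclusion $|V(F)| \ge (1-\eps)|U|$ in the lemma really uses $\sum_i |W_i| \le \eps \sum_i |V_i|$, so the extra factor $1/\gamma$ would need to be absorbed by tightening the threshold $3\eps$ in the definition of $W_i$ or by working with a slightly smaller $\eps$ --- a bookkeeping adjustment rather than a gap.
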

\begin{proof}
Suppose otherwise.
Recall that for every $j \in I_i$, $(V_i,V_j)$ is an $\eps$-regular pair in $G'$ with density at least $4\eps$.
It follows by the definition of a regular pair that $d_{G'} (W_i,V_j)\ge d_{G'}(V_i,V_j)-\eps \ge 3\eps$.
Hence, $e_{G'}(W_i,V_j)\ge 3\eps |W_i||V_j|$ for $j\in I_i$ and $e_{G'}(W_i,N_i)\ge 3\eps|W_i||N_i|$.
It follows that there exists a vertex in $W_i$ which is incident to at least $3\eps|N_i|$ edges into $N_i$, a contradiction to the choice of $W_i$.
\end{proof}

Define $W=\bigcup_{i\in I} W_i$ and $F=G_B[U\setminus W]$.
Note that $F$ satisfies Property (\ref{itm_1_lem_reduced_components}) in Lemma \ref{lem_reduced_components}.
It remains to show that Property (\ref{itm_2_lem_reduced_components}) holds.
We suppose that $\Phi$ is non-bipartite, the proof for the bipartite case follows similarly.
We use the following simple claim.
\begin{claim}\label{claim_uniform_path_length}
Let $G$ be a connected non-bipartite graph on $n$ vertices.
Then there exists $k\le 3n$ such that between every two vertices of $G$ there is a walk of length $k$.
\end{claim}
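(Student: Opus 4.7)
The plan is to exploit the standard fact that in a connected non-bipartite graph, between any two vertices there exist walks of all sufficiently large lengths, and to quantify the threshold so that it is at most $3n$. Since $G$ is non-bipartite, fix an odd cycle $C$ of some length $\ell \le n$ and pick any single vertex $w_0 \in V(C)$. This fixing is done once, before quantifying over pairs.

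For any pair $u, v \in V(G)$, connectedness gives paths $P$ from $u$ to $w_0$ and $Q$ from $w_0$ to $v$, each of length at most $n-1$. Concatenating them produces a walk from $u$ to $v$ of length $|P|+|Q|$; inserting one traversal of $C$ at $w_0$ produces one of length $|P|+|Q|+\ell$. Since $\ell$ is odd, these two walks have lengths of opposite parity.

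Next, any walk can be lengthened by $2$ by traversing a single incident edge back and forth at one of its vertices. Applying this repeatedly to each of the two walks above yields walks from $u$ to $v$ of every length of the form $|P|+|Q|+2s$ and $|P|+|Q|+\ell+2s$ for $s \ge 0$. Since the two arithmetic progressions cover opposite parities, every integer at least $|P|+|Q|+\ell$ is realised as a walk length from $u$ to $v$. The threshold is bounded by $2(n-1)+n \le 3n$ uniformly in $u$ and $v$, so taking $k = 3n$ gives a walk of length exactly $k$ between every pair, as required.

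There is no substantive obstacle here: the one point to keep straight is that $w_0$ and $\ell$ are chosen globally, not per pair, so that the resulting bound $|P|+|Q|+\ell \le 3n$ yields a single $k$ that works simultaneously for all pairs $u, v$; the rest is the standard parity-flipping argument using an odd cycle.
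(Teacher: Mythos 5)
Your proposal is correct and uses essentially the same ideas as the paper's proof: routing each pair through a fixed vertex on an odd cycle to realise both parities, then padding with back-and-forth traversals of a single edge to increment the length by $2$. The only cosmetic difference is that you take $k=3n$ directly by observing that every length beyond $|P|+|Q|+\ell \le 3n-2$ is attainable for every pair, whereas the paper first produces, for each pair, an odd-length walk of length at most $3n$ and then sets $k$ to be the maximum of these lengths and pads the shorter ones up; both yield a single $k \le 3n$ working for all pairs.
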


\begin{proof}
We first show that between every two vertices of $G$ there is a walk of odd length not exceeding $3n$.
Indeed, let $x, y\in V(G)$.
Let $C$ be an odd cycle, and pick some $z\in C$.
Pick some path from $x$ to $z$ and a path from $z$ to $y$.
Combining the two paths, we obtain a walk from $x$ to $y$ of length at most $2n$.
If this walk has even length, we add the cycle $C$ to it. In any case we obtain an odd walk of length at most $3n$. Let $k$ be the length of the longest of these walks. By possibly adding $2$-cycles to the given walks, we obtain walks of length $k$ between each pair of vertices.
\end{proof}

Fix some $k \le 3n$ as in the previous claim.
In the following claim, we show that the existence of a walk of length $k$ in $\Phi$ implies the existence of many paths of the same length in $F$.

\begin{claim}\label{claim_many_paths}
Let $k\le 3m$. There exists $\beta = \beta(l, \eps)$ such that the following holds.
Suppose that $V_{i_1}, \ldots, V_{i_k}$ is a walk in $\Phi$.
Let $U_1$ and $U_k$ be subsets of $V_{i_1} \cap V(F)$ and $V_{i_k} \cap V(F)$ respectively of size at least $2\eps |V_1|$.
Then there exists at least $\beta n^{k+1}$ paths of length $k$ in $F$ between $U_1$ and $U_k$.
\end{claim}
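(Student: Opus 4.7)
Plan. The approach is a standard greedy path-counting argument using the $\eps$-regularity of each consecutive cluster-pair along the walk. For convenience I re-index the walk as $V_{i_0}, V_{i_1}, \ldots, V_{i_k}$, so that a length-$k$ path in $F$ picks one vertex from each of the $k+1$ clusters. The first step is to transfer regularity from $G'$ to $F$: for each edge $V_a V_b$ of $\Phi$ the pair $(V_a, V_b)$ is $\eps$-regular with density at least $d$ in the colour of $\Phi$ in $G'$, and since $|V_a \setminus V(F)| \le \eps |V_a|$ (and likewise for $V_b$), the restricted pair $(V_a \cap V(F), V_b \cap V(F))$ remains $2\eps$-regular with density at least $d - 2\eps$ in $F$. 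Hence for any $T \subseteq V_b \cap V(F)$ with $|T| \ge 2\eps |V_b|$, all but at most $2\eps |V_a|$ vertices of $V_a \cap V(F)$ have at least $(d - 4\eps)|T|$ $F$-neighbours in $T$; moreover, whenever $|S| \ge 2\eps |V_a|$ as well, $e_F(S, T) \ge (d - 4\eps)|S|\,|T|$.

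For the main counting, I build a ``frontier'' greedily. Set $T_0 = U_1$ and, for $j = 1, \ldots, k - 1$, define
\[
T_j = \{v \in V_{i_j} \cap V(F) : d_F(v, T_{j-1}) \ge (d - 4\eps)|T_{j-1}|\}.
\]
Since $|T_0| = |U_1| \ge 2\eps |V_{i_0}|$ and, inductively, $|T_{j-1}| \ge (1 - 3\eps)|V_{i_{j-1}}| \ge 2\eps |V_{i_{j-1}}|$, the first step yields $|T_j| \ge |V_{i_j} \cap V(F)| - 2\eps |V_{i_j}| \ge (1 - 3\eps)|V_{i_j}|$. Applying the edge-count bound to the pair $(T_{k-1}, U_k)$ gives $e_F(T_{k-1}, U_k) \ge (d - 4\eps)|T_{k-1}|\,|U_k|$; for each edge $v_{k-1} v_k$ in this count, unrolling the definitions of the $T_j$'s backward yields at least $\prod_{j=0}^{k-2}(d - 4\eps)|T_j|$ walks $(v_0, \ldots, v_{k-1})$ in $F$ with $v_j \in T_j$, $v_j v_{j+1} \in E(F)$, and $v_0 \in U_1$. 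Combining these bounds, and using $|V_{i_j}| = \Theta(n/M)$ where $M = M(l, \eps)$ together with $k \le 3m \le 3M$, we obtain at least $\beta n^{k+1}$ walks from $U_1$ to $U_k$ for some $\beta = \beta(l, \eps, d) > 0$. The number of walks containing a repeated vertex is at most $\binom{k+1}{2} n^k = o(n^{k+1})$, so at least $\beta n^{k+1}/2$ are genuine paths, whence the claim follows after replacing $\beta$ by $\beta/2$.

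The main technical point is to ensure that the induction hypothesis $|T_{j-1}| \ge 2\eps |V_{i_{j-1}}|$ holds throughout, so that the regularity fact of the first paragraph can be applied at each step. This is automatic because the uniform bound $|T_j| \ge (1 - 3\eps)|V_{i_j}|$ does not deteriorate with $j$ and holds for any $\eps \le 1/5$. The only remaining subtlety—distinguishing walks from paths—is a routine lower-order correction, as noted above.
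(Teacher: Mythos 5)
Your proof is correct and takes essentially the same route as the paper's own: both are greedy path-counting arguments that define a nested family of ``good'' vertex subsets along the walk, propagate degree lower bounds via $\eps$-regularity of consecutive cluster pairs, and discard the $O(n^{k})$ non-path walks at the end. The only difference is cosmetic bookkeeping --- the paper's good sets $Y_j = V_{i_j}\setminus(X_j\cup W_{i_j})$ are defined locally from a forward-looking low-degree bad set $X_j$ and the counting is done by forward extension, whereas your frontiers $T_j$ are defined cumulatively via degree into $T_{j-1}$ and you unroll the count backward from the last edge; incidentally your re-indexing of the walk as $V_{i_0},\ldots,V_{i_k}$ actually matches the claim's stated exponent $n^{k+1}$ and path length $k$ better than the paper's own proof, which keeps the range $[1,k]$ and correspondingly produces paths of length $k-1$ and a bound of $\beta n^{k}$.
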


\begin{proof}
Let $X_j=\{v\in V_{i_j}: \deg_H(v,V_{i_{j+1}})\le 3\eps |V_1|\}$ for $j\in [k-1]$.
As before it is easy to show that $|X_j|\le \eps |V_1|$.
Let
\begin{equation*}
Y_j=\left\{
\begin{array}{ll}
U_1\setminus X_1 & j=1 \\
 V_{i_j}\setminus (X_j\cup W_{i_j})& j\in [2,k-1]
\end{array} \right .
\end{equation*}
Recall that $|W_j|\le \eps |V_1|$ for every $j$.
It follows that for $j\in [k-1]$ and $v\in Y_j$, we have $\deg_H(v,Y_{j+1})\ge \eps |V_1|$.
Thus there are at least $(\eps |V_1|)^{k-2}$ walks of length $k - 3$ between $Y_1$ and $Y_{k-2}$. Fix any such walk $(v=v_1,\ldots, v_{k-2})$. Denote by $A$ the neighbourhood of $v_{k-2}$ in $Y_{k-1}$. Then $|A|\ge \eps |V_1|$ and recall that $|U_k|\ge \eps |V_1|$.
Since $(V_{i_{k - 1}}, V_{i_k})$ is an $\eps$-regular pair in $H$ with density at least $4\eps$, we have $e(A, U_k) \ge 3\eps |A| |U_k| \ge 3 \eps^3 |V_1|^2$. Each such edge completes the above walk into a distinct walk in $H$ between $v_1$ and $U_k$ of length $k - 1$.

We conclude that the total number of walks of length $k - 1$ between $U_1$ and $U_k$ is at least $\eps^{k+1}|V_1|^k \ge \eps^{k + 1}(\frac{1 - \eps}{M})^k n^k$, where $M$ is as in Lemma \ref{lem_regularity}.
Note that the number of such walks which are not paths (namely, a vertex appears more than once) is $O(n^{k-1})$.
It follows that indeed, there are at least $\beta n^k$ paths between $U_1$ and $U_k$ of length $k - 1$, where $\beta$ is a suitable constant.

\end{proof}

Let $x, y \in V(F)$ and suppose that $x \in V_i$ and $y \in V_j$.
Note that the definition of $F$ implies the existence of $t, s \in I$ such that $(V_i, V_t),(V_j, V_s)\in E(\Phi)$ and $x$ and $y$ have at least $2 \eps |V_1|$ neighbours in $H$ in $V_t$ and $V_s$ respectively.
 It follows from Claim \ref{claim_many_paths} that there are at least $\beta n^k$ paths from the neighbourhood of $x$ in $V_t$ to the neighbourhood of $y$ in $V_s$.
This shows that $|con_{F, k} (x, y)| \ge \frac{\beta}{2} n^k$ for every $x, y \in V(F)$, implying that $F$ is $(\beta/2, m)$ strongly robust.

\end{proof}

In fact, we need a stronger version of Lemma \ref{lem_reduced_components}. In our application, since we deal with $2$-coloured graph, we will typically have two collections of connected subgraphs of $\Gamma$, one for each colour, and it would be useful to obtain collections of robust components which preserve containment. For example, if in the reduced graph we have blue components $\Phi_1, \Phi_2$ and red components $\Phi_3, \Phi_4$ satisfying $V(\Phi_1) \cup V(\Phi_2) = V(\Phi_3) \cup V(\Phi_4)$, we would like the corresponding robust subgraphs $F_1, F_2, F_3, F_4$ to satisfy the corresponding equality, namely $V(F_1) \cup V(F_2) = V(F_3) \cup V(F_4)$.
This is achieved by the following lemma. We remark that the proof is very similar to the previous one, so we omit it.

\begin{lem}\label{lem_many_reduced_components} 
Let $\alpha, \frac{1}{k}, \frac{1}{n} \ll \eps, \frac{1}{l} \ll 1$ and $d \ge 6\eps$. Let $G$ be a graph on $n$ vertices with a $2$-colouring $E(G) = E(G_B) \cup E(G_R)$.
Let $\Gamma$ be the $(\eps, d)$-reduced graph obtained by an application of Lemma \ref{lem_regularity} and let $\{V_0, \ldots, V_m\}$ be the corresponding partition of $V(G)$.
Let $\C{P}_B$ and $\C{P}_R$ be collections of disjoint connected subgraphs of $\Gamma_B$ and $\Gamma_R$ respectively. 
Then there exist subsets $U_i\subseteq V_i$ for $i\in [m]$ satisfying the following properties.
\begin{enumerate}[(i)]
\item
$|U_i|\ge (1- 2 \eps)|V_i|$ for $i\in [m]$.
\item
Let $\Phi \in \C{P}_t$, where $t \in \{B, R\}$ and denote $I = \{i \in [m]: V_i \in V(\Phi)\}$.
Then the graph $F = G_t[\bigcup_{i\in I}U_i]$ is $(\alpha,k)$-robust.
If $\Phi$ is bipartite, $F$ is weakly robust, otherwise it is strongly-robust.
\end{enumerate}

\end{lem}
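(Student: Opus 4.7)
The plan is to follow the proof of Lemma \ref{lem_reduced_components} almost verbatim, with the single new feature that at each cluster $V_i$ we must simultaneously accommodate its (at most one) blue super-graph in $\C{P}_B$ and its (at most one) red super-graph in $\C{P}_R$. Disjointness within each collection means there are at most two local cleaning tasks per cluster, and the strengthening of the density hypothesis from $d\ge 4\eps$ to $d\ge 6\eps$ gives just enough slack to absorb the additional deletion.

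Concretely, for each $i\in[m]$, if $V_i$ lies in some $\Phi_B\in\C{P}_B$ let $N_i^B$ be the union of the clusters adjacent to $V_i$ in $\Phi_B$ and set $W_i^B=\{v\in V_i:\,d_{G_B}(v,N_i^B)\le 5\eps|N_i^B|\}$, otherwise set $W_i^B=\emptyset$; define $W_i^R$ analogously. The same $\eps$-regularity computation as in the previous lemma, using that every edge of $\Phi_B$ (resp.\ $\Phi_R$) has density at least $d-\eps\ge 5\eps$ in the appropriate colour of $G'$ when restricted to a large subset, gives $|W_i^B|,|W_i^R|\le\eps|V_i|$. Take $U_i=V_i\setminus(W_i^B\cup W_i^R)$, which yields property (i) since $|U_i|\ge(1-2\eps)|V_i|$.

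For property (ii), fix $\Phi\in\C{P}_t$, let $I=\{i:V_i\in V(\Phi)\}$ and $F=G_t[\bigcup_{i\in I}U_i]$. The only point beyond the previous proof that needs checking is that each $x\in U_i$ still has many neighbours into some adjacent $U_{t'}$: since $x\notin W_i^t$ one has $d_{G_t}(x,N_i^t)\ge 5\eps|N_i^t|$, so by averaging some $V_{t'}$ adjacent to $V_i$ in $\Phi$ satisfies $d_{G_t}(x,V_{t'})\ge 5\eps|V_1|$, and discarding at most $2\eps|V_1|$ further vertices when passing to $U_{t'}$ still leaves $d_{G_t}(x,U_{t'})\ge 3\eps|V_1|$, comfortably above the $2\eps|V_1|$ threshold that feeds Claim \ref{claim_many_paths}. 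With this in hand, the analogues of Claims \ref{claim_uniform_path_length} and \ref{claim_many_paths} (in the bipartite case, fixing a common even walk length between two vertices on the same side and a common odd walk length between two vertices on opposite sides) produce at least $\beta n^k$ paths of length $k$ in $F$ between any prescribed pair of endpoints, yielding the required $(\alpha,k)$-robustness, strong or weak according as $\Phi$ is non-bipartite or bipartite. The only genuine obstacle is book-keeping, namely verifying that the \emph{single} choice of $U_i$ simultaneously supports all of the (at most two) super-graphs that contain $V_i$ without harming the path-counts for either colour; this is precisely what the disjointness of $\C{P}_B$ and $\C{P}_R$ together with the upgraded density bound $d\ge 6\eps$ deliver.
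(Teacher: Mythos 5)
Your proposal is correct and matches the route the paper intends: Lemma \ref{lem_many_reduced_components} is stated in the paper with its proof omitted, precisely because it is a minor bookkeeping variant of the proof of Lemma \ref{lem_reduced_components}, and that is exactly what you supply. The one change that matters — deleting at each cluster the low-degree sets for \emph{both} colours, which costs up to $2\eps|V_i|$ rather than $\eps|V_i|$, and compensating via the raised thresholds made available by $d\ge 6\eps$ in place of $d\ge 4\eps$ — is correctly identified and the degree arithmetic feeding the analogue of Claim \ref{claim_many_paths} checks out.
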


In our proof we shall find robust components and then apply the Regularity Lemma. Therefore we need the following result, stating that given a robust component $F$ in $G$, the corresponding subgraph of $F$ in the reduced graph $\Gamma$ is connected.

\begin{lem}\label{lem_robust_regularity}
Let $\eps, \frac{1}{n} \ll \alpha, \frac{1}{k}, \frac{1}{l} \ll 1$ and let $G$ be a graph on $n$ vertices with a $2$-colouring $E(G) = E(G_B) \cup E(G_R)$.
Suppose that $F$ is an $(\alpha,k)$-robust component of $G_t$, were $t \in \{B, R\}$ and let $\C{C}$ be a cover of $V(G)$ with at most $l$ parts refining $\{V(F),V(G)\setminus V(F)\}$.
Let $\Gamma$ be the $(\eps, d)$-reduced graph obtained by an application of Lemma \ref{lem_regularity}.
Then the $t$-coloured subgraph $\Phi$ of $\Gamma$ spanned by the clusters contained in $V(F)$ is connected.
\end{lem}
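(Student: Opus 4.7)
The plan is to argue by contradiction. Suppose $\Phi$ is disconnected and fix a partition $\{A,B\}$ of $V(\Phi)$ such that no $t$-coloured edge of $\Gamma$ joins $A$ to $B$. Set $U_A=\bigcup_{V_i\in A}V_i$ and $U_B=\bigcup_{V_i\in B}V_i$, both nonempty. Because $\C{C}$ refines $\{V(F),V(G)\setminus V(F)\}$ and each $V_i$ is contained in a single part of $\C{C}$, every cluster is either entirely inside or entirely outside $V(F)$, so $U_A\cup U_B\subseteq V(F)\subseteq V_0\cup U_A\cup U_B$.

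The crucial structural input is that $t$-coloured edges of $G$ between $U_A$ and $U_B$ are sparse at every vertex. For $V_i\in A$ and $V_j\in B$ the absence of a $t$-edge in $\Gamma$ forces the $t$-density of $(V_i,V_j)$ in $G'$ to be $0$, so $G'_t$ contains no edges between $U_A$ and $U_B$. Combined with the degree condition of Lemma~\ref{lem_regularity}, this gives $d_{G_t}(v,U_B)\le d_G(v)-d_{G'}(v)\le(2d+\eps)n$ for every $v\in U_A$, and symmetrically for $U_B$. Throughout, I treat $d$ as a parameter chosen, at the moment of applying Lemma~\ref{lem_regularity}, much smaller than $\alpha$ and $1/k$.

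Next, in the strongly robust case, I would pick any $x\in U_A$ and $y\in U_B$ and invoke $(\alpha,k)$-robustness: for some $l\le k$ there are at least $\alpha n^l$ paths of length $l+1$ in $F$ from $x$ to $y$. At most $l\eps n^l$ of these meet $V_0$, so at least $\alpha n^l/2$ lie entirely inside $U_A\cup U_B$. Each such path has $v_0\in U_A$ and $v_{l+1}\in U_B$, so contains a first crossing edge $v_iv_{i+1}$ with $v_i\in U_A$ and $v_{i+1}\in U_B$. For a fixed crossing position $i\in\{0,\dots,l\}$, the prefix $v_0,\dots,v_i$ contributes at most $n^i$, the crossing edge at most $(2d+\eps)n$ by the sparsity above, and the suffix $v_{i+1},\dots,v_{l+1}=y$ at most $n^{l-i-1}$, giving $(2d+\eps)n^l$; summing over $i$ bounds the total by $(l+1)(2d+\eps)n^l$, which contradicts the lower bound $\alpha n^l/2$ once $d,\eps\ll\alpha/k$.

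For the weakly robust case with bipartition $\{X,Y\}$ of $V(F)$, the pair $(x,y)$ must additionally lie in $X\times Y$. Such a pair with $x,y$ on opposite $U$-sides exists because a short count on the bipartite path-count in the definition forces $|X|,|Y|\ge\alpha^{2/l}n\gg\eps n\ge|V_0|$, so neither $X$ nor $Y$ is absorbed into $V_0$; a brief case analysis on how $X$ and $Y$ meet $U_A$ and $U_B$ then yields the required pair (possibly after swapping $A$ and $B$), after which the same counting goes through. The main obstacle is purely bookkeeping: the parameter $d$ is implicit in the stated hierarchy and must be made small enough that $(l+1)(2d+\eps)<\alpha/2$, and the bipartite case must be massaged so that the chosen pair simultaneously respects $\{X,Y\}$ and lies on opposite sides of the cut.
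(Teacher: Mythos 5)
Your argument is correct, but it is packaged differently from the paper's proof. The paper does not argue by contradiction: it sets $F'=G'_t[U]$, observes that $F'$ is obtained from $F$ by deleting at most $\eps n$ vertices and then a subgraph of maximum degree $O(\eps n)$, invokes Lemmas~\ref{lem_robust_removing_vertices} and~\ref{lem_robust_removing_edges} to conclude that $F'$ remains robust, hence connected, and then notes that every edge of $F'$ joins two clusters (since $e(G'[V_i])=0$) and therefore witnesses a $t$-coloured edge of $\Gamma$, so $\Phi$ is connected. You instead suppose $\Phi$ has a cut $\{A,B\}$, deduce that $G_t$ is $(2d+\eps)n$-sparse at every vertex across $U_A,U_B$, and count that the $\geq\alpha n^{l}$ robust paths between some $x\in U_A$, $y\in U_B$ (all crossing the cut) number at most $(l+1)(2d+\eps)n^{l}$, a contradiction once $d,\eps\ll\alpha/k$. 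The path-counting you carry out inline is essentially the mechanism inside Lemmas~\ref{lem_robust_removing_vertices} and~\ref{lem_robust_removing_edges}, so the underlying idea is the same; the paper's route buys modularity because it reuses those two lemmas, while yours is self-contained and makes the role of the cut explicit. Your treatment of the weakly robust case (noting $|X|,|Y|\geq\alpha n>|V_0|$ to guarantee a pair $(x,y)\in X\times Y$ on opposite sides, possibly after swapping $A$ and $B$) correctly handles the subtlety that clusters need not respect the bipartition $\{X,Y\}$. Your observation that the constraint on $d$ is left implicit is accurate: in every application the paper takes $d$ proportional to $\eps$ (e.g.\ $d=4\eps$ or $d=6\eps$), so the hierarchy $\eps\ll\alpha,1/k$ transfers to $d$.
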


\begin{proof}
Let $G'$ be the corresponding subgraph of $G$ obtained by applying Lemma \ref{lem_regularity}.
Let $U$ be the set of vertices of $F$ which belong to sets $V_i$ contained in $V(F)$.
Denote $F' = G'_t[U]$.
Recall that by Lemma \ref{lem_regularity}, we have that $|V(F) \setminus U|\le \eps n$ and $\deg_{G'}(v) \ge \deg_G(v) - 9\eps n$. In particular $\deg_{F'}(v)\ge \deg_F(v) - 9\eps n$ for every $v \in V(F')$.
Hence $F'$ is obtained by removing at most $\eps n$ vertices of $F$ and then removing a subgraph with maximum degree at most $9\eps n$.
By Lemmas \ref{lem_robust_removing_vertices}, \ref{lem_robust_removing_edges}, we have that $F'$ is $(\alpha/4, k)$-robust in $G'$.
In particular, $F'$ is connected and it follows that $\Phi$ (the $t$-coloured subgraph of $\Gamma$ spanned by clusters contained in $V(F)$) is connected.
\end{proof}

\subsection{From connected matchings to long cycles} \label{subsec_conn_matching}

We shall use the technique of converting connected matchings in the reduced graph into cycles in the original graph. This was introduced by {\L}uczak \cite{luczak}, and since then has become fairly standard (see \cite{balogh_et_al}, \cite{debiasio} and \cite{ruszinko_et_al_improved_c_r}, \cite{ruszinko_et_al}, \cite{gyarfas_sarkozy} and \cite{luczak_rodl_szemeredi}).

For the sake of completeness, we prove the following result, stating that given a connected matching in the reduced graph, there exists a cycle in the original graph through most of the vertices in the clusters and few additional vertices.

\begin{lem}\label{lem_connected_matching}
Let $\eps > 0$ and $d \ge 3\eps$ and let $n$ be suitably large. 
Let $G$ be a graph on $n$ vertices and let $\Gamma$ be the $(\eps, d)$-reduced graph obtained by an application of Lemma \ref{lem_regularity}.
Suppose that $\C{M}$ is a connected matching in $\Gamma$ and denote by $U$ the set of vertices spanned by the clusters of $\C{M}$.
Then $G$ contains a cycle covering at least $(1 - 6\eps)|U|$ of the vertices of $U$.
\end{lem}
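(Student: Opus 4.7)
The plan is to realise each matching edge of $\C{M}$ as a near-spanning path in the corresponding bipartite regular pair, and to stitch these paths into a single cycle using short connecting paths provided by the connectedness of $\C{M}$ in $\Gamma$. Let $\C{M} = \{V_{i_1}V_{j_1}, \ldots, V_{i_s}V_{j_s}\}$ and let $\Phi$ be a connected subgraph of $\Gamma$ containing $\C{M}$. First, I would fix a closed walk $W$ in $\Phi$ that traverses each matching edge exactly once, obtained by taking a spanning tree of $\Phi$ and performing a depth-first tour. Such a walk has total length $O(|\Gamma|)$, hence $o(|U|)$ for large $n$, since $|U| \ge 2|V_1| = \Omega(n/|\Gamma|)$.

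For each matching edge $e_\ell = V_{i_\ell}V_{j_\ell}$, the pair $(V_{i_\ell}, V_{j_\ell})$ is $\eps$-regular with density at least $d \ge 3\eps$ in the subgraph $G'$ output by the Regularity Lemma. Call a vertex of $V_{i_\ell}$ \emph{typical} if it has at least $(d-\eps)|V_1|$ neighbours in $V_{j_\ell}$ (and symmetrically); regularity implies all but at most $\eps|V_1|$ vertices per side are typical. A standard consequence of regularity is that for any prescribed typical endpoints $u \in V_{i_\ell}$ and $v \in V_{j_\ell}$, there exists a path $P_\ell$ in $G'[V_{i_\ell}, V_{j_\ell}]$ from $u$ to $v$ covering all but at most $2\eps|V_1|$ vertices of $V_{i_\ell} \cup V_{j_\ell}$. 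For each transition in $W$ from $e_\ell$ to $e_{\ell+1}$ (indices mod $s$), corresponding to a short $\Gamma$-path $V_{j_\ell} = V_{a_0}, V_{a_1}, \ldots, V_{a_r} = V_{i_{\ell+1}}$ with $r = O(|\Gamma|)$, I would greedily select one vertex from each intermediate cluster $V_{a_t}$, using that each consecutive pair $(V_{a_t}, V_{a_{t+1}})$ is regular of density $\ge 3\eps$ and so contains an edge between any two subsets of size $\ge \eps|V_1|$. This assembles a short connecting path $\pi_\ell$ from a typical endpoint in $V_{j_\ell}$ to a typical endpoint in $V_{i_{\ell+1}}$.

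The main obstacle is coordinating the endpoints: the ends of each $P_\ell$ must match those of $\pi_{\ell-1}$ and $\pi_\ell$, and no vertex may be used twice. I would resolve this by first fixing all the connecting paths $\pi_\ell$---their total vertex usage in any single cluster is $O(|\Gamma|)$, which is far below the $\eps|V_1|$ slack in the pool of typical vertices---and then constructing each $P_\ell$ on the remaining typical vertices of $V_{i_\ell} \cup V_{j_\ell}$, with its two endpoints already prescribed by $\pi_{\ell-1}$ and $\pi_\ell$. Concatenating $P_1 \pi_1 P_2 \pi_2 \cdots P_s \pi_s$ yields a cycle $C$ in $G$. The vertices of $U$ missed by $C$ are only those left out of some $P_\ell$, totalling at most $\sum_{\ell=1}^s 2\eps|V_1| \le 2\eps|U|$, which is well within the claimed $6\eps|U|$ bound.
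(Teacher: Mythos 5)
Your proposal follows the same high-level architecture as the paper: connect the matching edges by short paths using the connectedness of $\C{M}$, reserve small pools of vertices for these connectors, and then fill each cluster pair of the matching with a near-spanning path. The constant-counting is also in the right ballpark.

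The genuine gap is the sentence declaring as a ``standard consequence of regularity'' that a regular pair of density as low as $d \geq 3\eps$ contains a path between two prescribed typical endpoints covering all but $2\eps|V_1|$ vertices. This is not immediate, and it is precisely the technical heart of the lemma. A regular pair with density barely above $\eps$ is a rather sparse bipartite graph, and the existence of a long (let alone near-Hamiltonian) path with prescribed ends does not follow from, say, a simple greedy or degree argument: the pair may locally look quite irregular at the vertex level, and the usual ``typical vertex'' bound only gives minimum degree about $2\eps|V_1|$ in a set of size $\sim |V_1|$, well below the Dirac-type threshold. The paper supplies the missing tool: an elementary observation (Pokrovskiy; also Dudek and Pra\l at) that every graph can be split into a Hamilton path on part of the vertex set plus two equal-sized sets with no edges between them, from which it follows that a balanced bipartite graph with no long path contains two large anticomplete subsets --- impossible in a regular pair of positive density. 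Moreover, rather than prescribing both endpoints of the long path as you do, the paper finds \emph{any} long path $Q_\ell$ and then uses regularity to locate an edge from the reserved neighbourhood sets $A_\ell, B_\ell$ into the first/last $2\eps|V_1|$ vertices of $Q_\ell$, trimming a little more. Your endpoint-prescription version is provable by the same route, but it is strictly harder and you would still need to cite or reprove the long-path-in-a-regular-pair lemma. As written, the key step is asserted rather than established.
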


Let us first sketch the proof.
Using the fact that $\C{M}$ is connected we can connect the matching edges by paths, following a cyclic ordering of the edges in $\C{M}$. We replace these paths by short vertex-disjoint paths in the original graph $G$ between the cluster pairs associated to the edges of $\C{M}$. These connecting paths will be parts of the final cycle. To define the rest of the cycle, remove the internal vertices of these connecting paths, and  in each cluster pair find a long path to close the connecting pairs to a cycle. There are various ways to find these long cycles, e.g.~using the rotation-extension technique of P\'osa \cite{posa} or as a very special case of the Blow-up Lemma \cite{blowup}. We shall use a much simpler result to obtain these cycles.

\begin{proof}[Proof of Lemma \ref{lem_connected_matching}]
Denote by $\{V_0, \ldots, V_m\}$ the corresponding partition of $V(G)$ and $\C{M} = {(V_{i_1}, V_{j_1}), \ldots, (V_{i_m}, V_{j_m})}$.
It is easy to see that we may find paths $P_1, \ldots, P_m$ with the following properties.
\begin{itemize}
\item
The paths $P_l$ are vertex disjoint.
\item
Each path $P_l$ contains at most one vertex from each cluster $V_i$.
\item
Denote the ends of $P_l$ by $x_l, y_l$.
Then
\begin{align*}
& |N(x_l) \cap V_{j_l}| \ge 2\eps |V_1| \\
& |N(y_l) \cap V_{i_{l + 1}}| \ge 2\eps |V_1|.
\end{align*}
\end{itemize}
Given such paths $P_1, \ldots, P_m$, denote $U = V(P_1) \cup \ldots \cup V(P_m)$.
Pick sets 
\begin{align*}
& A_l \subseteq (N(x_l) \cap V_{j_l}) \setminus U \\
& B_l \subseteq (N(y_l) \cap V_{i_{l + 1}}) \setminus U
\end{align*}
of size $\eps |V_1|$.
Note that this is indeed possible because by the choice of $x_l, y_l$ and since for large enough $n$, $|U| \le \eps |V_1|$.
Pick sets 
\begin{align*}
& C_l \subseteq N(x_l) \setminus (U \cup A_l) \\
& D_l \subseteq N(y_l) \setminus (U \cup B_l)
\end{align*}
of size $(1 - 2\eps)|V_1|$ each.
We shall show that for each $l$, $G[C_l, D_l]$ contains a path $Q_l$ missing at most $2\eps |V_1|$ vertices from each set $C_l, D_l$.
Assuming that we may find such paths, it is now easy to construct the required cycle.
By the definition of the reduced graph, the density of edges between subsets of $V_{i_l}$ and $V_{j_l}$ of size at least $\eps |V_1|$ is positive.
Thus there is an edge between $A_l$ and the last $2\eps |V_1|$ vertices of $Q_l$ as well as between $B_l$ and the first $2 \eps |V_1|$ vertices of $Q_{l + 1}$.
Thus by losing at most $4 \eps |V_1|$ vertices from each path $Q_l$ we may use the paths $P_l$ to obtain a cycle $C$ which misses at most $6 \eps |V_1|$ vertices from each cluster in $V(\C{M})$.

It remains to show that such paths $Q_l$ may be found.
We use the following simple but useful claim which was proved independently by Pokrovskiy \cite{pokrovskiy} and Dudek and Pra\l{}at \cite{dudek}.
For the sake of completeness, we include the proof here.

\begin{prop} \label{prop_partition_empty_bip_path}
For every graph $G$ there exist two disjoint sets of vertices $U, W$ of equal size such that $G$ has no edges between $U$ and $W$ and the graph $G \setminus (U \cup W)$ has a Hamilton path.
\end{prop}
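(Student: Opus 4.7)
My plan is to prove this by induction on $n = |V(G)|$, with the cases $n \le 1$ trivial: take $U = W = \emptyset$. For the inductive step, if $G$ contains a Hamilton path, take $U = W = \emptyset$ and we are done; otherwise, let $P = v_1 v_2 \ldots v_k$ be a longest path in $G$ (so $k < n$), and pick any $r \in V(G) \setminus V(P)$. By the maximality of $P$, $r$ is non-adjacent to both $v_1$ and $v_k$, and this non-edge will be the seed of the whole construction.

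The basic idea is to apply the inductive hypothesis to $G - r$, obtaining a partition $(A', U', W')$ of $V(G) \setminus \{r\}$ with $|U'| = |W'|$, no $U'$-$W'$ edges, and $G[A']$ admitting a Hamilton path $Q = w_1 \ldots w_m$, and then to reinsert $r$. If $r$ is adjacent to $w_1$, to $w_m$, or to two consecutive vertices $w_i, w_{i+1}$ of $Q$, then $r$ can be inserted into $Q$, giving a Hamilton path of $G[A' \cup \{r\}]$, and we are done with $(A' \cup \{r\}, U', W')$. Otherwise the natural candidate is $A = A' \setminus \{w_1\}$, $U = U' \cup \{r\}$, $W = W' \cup \{w_1\}$: here $w_2 \ldots w_m$ is still a Hamilton path of $G[A]$, $|U| = |W|$, and the non-edge $r w_1$ follows from the failure of insertion. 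This works provided $r$ has no neighbour in $W'$ and $w_1$ has no neighbour in $U'$.

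The main obstacle is to handle the residual case in which unwanted $G$-edges from $r$ into $W'$ or from $w_1$ into $U'$ cause the candidate to fail. I would first try the four natural variants (choosing endpoint $w_1$ or $w_m$, and placing $r$ in $U$ or $W$); a short case analysis shows that if all four fail, then $r$ must be adjacent to vertices in both $U'$ and $W'$, and both endpoints $w_1, w_m$ must be adjacent to both $U'$ and $W'$ as well. In this tight configuration the plan is to invoke a P\'osa rotation of $Q$: whenever an endpoint of $Q$ has a neighbour in its interior, the rotation produces a new Hamilton path of $G[A']$ with a different endpoint, giving fresh candidates for the exchange with $r$. Iterating this rotation-exchange step, or equivalently combining it with an exchange between $r$ and one of its neighbours in $U' \cup W'$ (pushing that neighbour onto the rerouted Hamilton path in exchange for an endpoint of the old $Q$), should ultimately allow us to absorb $r$ while preserving the balance $|U| = |W|$ and the no-$U$-$W$-edge condition. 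This rotation-exchange argument, which exploits both the extremality of $P$ in $G$ and the structure of the inductive partition of $G - r$, is the technical crux of the proof.
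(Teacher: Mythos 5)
Your inductive approach diverges substantially from the paper's, which uses a short direct algorithm: start with $U = V(G)$, $W = \emptyset$, $P$ the empty path; at each step move a vertex from $U$ onto $P$ when the current endpoint of $P$ has a neighbour in $U$, and otherwise move that endpoint of $P$ into $W$; stop when $|U| = |W|$. The invariant ``no $U$--$W$ edges'' holds because a vertex only enters $W$ when it has no neighbour in $U$, and $U$ only shrinks thereafter; the quantity $|U| - |W|$ drops by exactly one each step, so the process halts with $|U| = |W|$. This is constructive, short, and avoids induction entirely.

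Your proposal, however, has a genuine gap in the inductive step, and I do not think it closes as written. First, the claimed structure after the four natural candidates fail is incorrect: if $r$ has neighbours in both $U'$ and $W'$, then all four candidates already fail with no constraint whatsoever forced on $w_1$ or $w_m$, so you cannot conclude that ``both endpoints $w_1, w_m$ must be adjacent to both $U'$ and $W'$.'' Consequently the tight configuration you intend to attack via rotations is not actually established. Second, the P\'osa rotation-exchange step is only sketched, and it is not clear it terminates or succeeds: the rotation closure of $Q$ inside $G[A']$ depends entirely on the partition produced by the inductive hypothesis, over which you have no control, and nothing prevents every reachable endpoint from having neighbours in both $U'$ and $W'$ while $r$ also does. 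You would need an actual argument (an invariant, a potential function, or a case exhaustion) showing the rotation-exchange process makes progress and cannot cycle, and this is precisely the content you have deferred. Third, the longest path $P$ in $G$ and the non-edges $rv_1$, $rv_k$ that you call ``the seed of the whole construction'' play no role after you pass to $G - r$: the inductive hypothesis hands you some partition of $G - r$ whose path $Q$ has no reason to have $v_1$ or $v_k$ as its endpoints, so the maximality of $P$ is disconnected from everything that follows. As it stands the proposal is an outline of a plausible strategy rather than a proof; I would encourage you to look for a direct, non-inductive argument, as the statement turns out to admit a much shorter one.
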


\begin{proof}
In order to find sets with the desired properties, we apply the following algorithm, maintaining a partition of $V(G)$ into sets $U, W$ and a path $P$. Start with $U = V(G)$, $W = \emptyset$ and $P$ an empty path. At every stage in the algorithm, do the following. If $|U | \le |W|$, stop.
￼￼￼￼￼￼￼￼
Otherwise, if $P$ is empty, move a vertex from $U$ to $P$ (note that $U \neq \emptyset$). If $P$ is non-empty, let $v$ be its endpoint. If $v$ has a neighbour $u$ in $U$, put $u$ in $P$, otherwise move $v$ to $W$.
Note that at any given point in the algorithm there are no edges between $U$ and $W$. Furthermore, the value $|U| - |W|$ is positive at the beginning of the algorithm and decreases by one at every stage, thus at some point the algorithm will stop and will produce sets $U, W$ with the required properties.
\end{proof}

We need the following corollary of Proposition \ref{prop_partition_empty_bip_path}.
\begin{cor}\label{cor_partition_bip_empty_bip_graph}
Let $G$ be a balanced bipartite graph on $n$ vertices with bipartition $V_1, V_2$, which has no path of length $k$. Then there exist $X_i \subseteq V_i$ such that $|X_1| = |X_2| \ge (n - k)/4$ and $G$ has no edges between $X_1$ and $X_2$.
\end{cor}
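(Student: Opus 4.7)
The plan is to derive this directly from Proposition \ref{prop_partition_empty_bip_path}, exploiting the bipartite structure to guarantee that the resulting empty bipartite pair is balanced. First, I would apply the proposition to $G$ to obtain disjoint equal-sized vertex sets $U, W$ with no edges between them in $G$, together with a Hamilton path $P$ on $V(G) \setminus (U \cup W)$. Since $G$ has no path of length $k$, the path $P$ uses at most $k$ edges, so $n - 2|U| \le k$ and hence $|U| = |W| \ge (n-k)/2$.

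The next step is to exploit bipartiteness to split $U$ and $W$ evenly between the two parts. Since $G$ is bipartite with parts $V_1, V_2$, the path $P$ alternates between the two parts, so $|V(P) \cap V_1|$ and $|V(P) \cap V_2|$ differ by at most one. Since $|V_1| = |V_2| = n/2$, it follows that $|(U \cup W) \cap V_1|$ and $|(U \cup W) \cap V_2|$ also differ by at most one; but $|U \cup W| = 2|U|$ is even, so they must be equal.

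Writing $a = |U \cap V_1|$, $b = |U \cap V_2|$, $c = |W \cap V_1|$, $d = |W \cap V_2|$, I have $a + b = c + d = |U|$ and $a + c = b + d$, which together force $a = d$ and $b = c$. Since $G$ is bipartite, the absence of edges between $U$ and $W$ gives two natural balanced empty pairs: $(U \cap V_1, W \cap V_2)$ with both sides of size $a$, and $(W \cap V_1, U \cap V_2)$ with both sides of size $b$. Since $a + b = |U| \ge (n-k)/2$, the larger of these two pairs has both sides of size at least $(n-k)/4$, yielding the required $X_1, X_2$.

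I do not anticipate a substantial obstacle. The only point that requires care is the small parity argument pinning $a = d$ and $b = c$, which genuinely uses both the balanced assumption on $G$ and the evenness of $|U \cup W|$; everything else is bookkeeping on top of Proposition \ref{prop_partition_empty_bip_path}.
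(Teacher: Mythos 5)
Your proof is correct and follows essentially the same route as the paper: apply Proposition~\ref{prop_partition_empty_bip_path}, use the alternation of the Hamilton path together with the balancedness of $G$ and the evenness of $|U\cup W|$ to deduce $|U\cap V_1|=|W\cap V_2|$ and $|U\cap V_2|=|W\cap V_1|$, and take the larger of the two resulting empty pairs. The paper states this a bit more tersely (setting $X_1 = U_1$, $X_2 = W_2$ after assuming $|U_1|\ge|U_2|$), but the argument is the same.
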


\begin{proof}

Let $U, W$ be as in Proposition \ref{prop_partition_empty_bip_path} and let $P$ be a Hamilton path in $G \setminus (U \cup W)$. Note that $P$ must alternate between $V_1$ and $V_2$, thus $|V(P ) \cap V_1| = |V (P) \cap V_2|$ (by the assumptions, the number of vertices in $P$ is even). Denote $U_i = U \cap V_i$ and $W_i =W \cap V_i$ for $i = 1,2$, and assume that $|U_1| \ge |U_2|$. It follows that $|U_1| + |W_1| = |U_2| + |W_2|$. Thus, using the fact that $|U| = |W|$, we have $|U_1| = |W_2| \ge |U|/2 \ge (n - k)/4$. Set $X_1 = U_1$ and $X_2 = W_2$. 
\end{proof}

The graph $G[C_l, D_l]$ is a balanced bipartite graph, satisfying that for every choice of sets $C' \subseteq C_l, D' \subseteq D_l$ of size at least $\eps |V_1|$ each, we have $e(G[C', D']) > 0$.
It follows from Corollary \ref{cor_partition_bip_empty_bip_graph} that $G[C_l, D_l]$ contains a path missing at most $2\eps |V_1|$ vertices from each side.
As explained previously, Lemma \ref{lem_connected_matching} follows.

\end{proof}

In fact, we shall need a slight generalisation of Lemma \ref{lem_connected_matching}.
The usual setting in which we apply the described technique is as follows.
We consider the reduced graph obtained by applying the Regularity Lemma. In the reduced graph, we find a perfect matching consisting of a connected blue matching and a connected red matching, and we use the described technique to find two disjoint cycles, one blue and one red, which together cover almost all of the vertices.

To obtain disjoint cycles, we set aside small sets of each cluster of the red component which will be used to connect the cluster pairs of the red connected matching. We then find a blue cycle as above covering most of the clusters of the blue matching, and then we find a red cycle similarly, using the sets we set aside.

We shall not write the exact statement of the modified version of Lemma \ref{lem_connected_matching}, but let us remark that the constants change slightly. We need $d \ge 6\eps$ and are able to cover $(1 - 9\eps)n$ of the vertices of each cluster in the matchings.

Finally, we point out that we often first find ``absorbing paths'' in the original graph and then apply the Regularity Lemma to the remaining graph. When building the cycles obtained by the connected matchings, we would like them to contain these predefined paths. This is obtained by the same method, enabling us to find a path (rather than a cycle) between the neighbourhoods of the two ends of the absorbing path.

This concludes the introduction of the tools we shall need for our proof of Theorem \ref{thm_main}.
To complete the preliminary material needed for our proof, we list several extremal results in the next section.

\remove{
We shall need a more technical result.

\begin{framed}
COMPLETE THE STATEMENT OF THE LEMMA
\end{framed}
\begin{lem}
Let $\eps > 0$ and let $d \ge 6\eps$ and $n$ be suitably large. 
Let $G$ be a $2$-coloured graph on $n$ vertices, let $(V_0, \ldots, V_m)$ be the partition obtained by an application of Lemma \ref{lem_regularity} and let $\Gamma$ be the corresponding $(\eps, d)$-reduced graph.
Suppose that $\C{M}$ is a connected matching in $\Gamma$. 
Suppose that we have subsets $U_i \subseteq V_i$ such that 
\begin{align*}
|U_i| \ge
\left\{ 
\begin{array}{ll}
(1 - 3\eps)|V_1| & \text{if }V_i \in V(\C{M}) \\
3\eps |V_1| & \text{if } V_i \text{ is in the connected component of } \C{M} 
\end{array}
\right.
\end{align*}
Then $G$ has a cycle $C$ whose vertex set $W$ is contained in $U_1 \cup \ldots \cup U_m$ and satisfies $|W \cap U_i| \ge (1 - 6\eps)|V_1|$ for every $i$ such that $V_i \in V(\C{M})$.

Suppose that we are given in addition  sets $A, B \subseteq V(G)$ whose intersection with the set of vertices spanned by clusters in the connected component of $\C{M}$. Then $G$ has a path $P$ with one end in $A$ and the other in $B$, such that the set of inner vertices of $P$ satisfy the same conditions as the set $W$ above.
\end{lem}
}

\section{Extremal results}\label{sec_prelim_results}

In this section we list a number of extremal results we shall use in our proofs. They concern mainly with the existence of matchings, paths and cycles in graphs with certain structural conditions.

The following is Chv\'atal's theorem \cite{chvatal} giving sufficient conditions on the degree sequence of a graph for containing a Hamilton cycle.

\begin{thm} \label{thm_chvatal}
Let $G$ be a graph on $n \ge  3$ vertices and let $d_1 \le \ldots \le d_n$ be the degree sequence of $G$.
Suppose that $d_i \ge i + 1$ or $d_{n - i} \ge n - i$ for every $i \le n/2$.
Then $G$ contains a Hamilton cycle.
\end{thm}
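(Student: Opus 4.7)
The plan is to prove this by the classical edge-maximality and rotation argument originating with Ore. First I would pass to an edge-maximal counterexample: adding edges can only make the hypothesis easier to satisfy (each $d_i$ is nondecreasing under edge addition), so if a counterexample $G$ exists then we may enlarge it to a non-Hamiltonian graph $G^*$ satisfying the hypothesis such that $G^* + uv$ is Hamiltonian for every non-edge $uv$. Equivalently, for every pair of non-adjacent vertices $u, v$ in $G^*$ there is a Hamilton path from $u$ to $v$.

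Among all non-adjacent pairs $(u,v)$ in $G^*$, I would pick one maximising $d(u) + d(v)$, and assume $d(u) \le d(v)$; write $d = d(u)$. Let $v_1 v_2 \ldots v_n$ be a Hamilton path with $v_1 = u$ and $v_n = v$. The standard rotation observation is: whenever $v_1 v_{i+1} \in E(G^*)$, the edge $v_i v_n$ cannot be present, since otherwise
\begin{equation*}
v_1\, v_{i+1}\, v_{i+2} \cdots v_n\, v_i\, v_{i-1} \cdots v_2\, v_1
\end{equation*}
is a Hamilton cycle. This forces $N(v_n) \cap \{v_i : v_{i+1} \in N(v_1)\} = \emptyset$, and since both sets lie in $\{v_1, \ldots, v_{n-1}\}$ we conclude $d(u) + d(v) \le n-1$, hence $d \le (n-1)/2$.

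Next I would use the extremality of the chosen pair. Each vertex $v_i$ with $v_{i+1} \in N(u)$ is non-adjacent to $v$, so by the choice of $(u,v)$ we have $d(v_i) \le d(u) = d$. This exhibits at least $d$ vertices of degree at most $d$, yielding $d_d \le d$. On the other hand, $d_{n-d} \le d(v) \le n-1-d < n-d$, since the vertex $v$ has degree at most $n-1-d$ by the inequality above, and there are $n-d$ vertices with degree at most $d(v)$ (namely all vertices other than the $d$ largest). Both alternatives in Chvátal's hypothesis therefore fail at $i = d \le n/2$, a contradiction.

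The step that requires the most care is the bookkeeping for $d_{n-d}$: one has to check that the $d$ vertices whose degree one has forced to be $\le n-1-d$ are really enough to ensure $d_{n-d} \le n-1-d$, not merely $d_{n-d} \le d(v)$ for the single vertex $v$. The cleanest way is to note that $d(v) \le n-1-d$ together with the existence of $d$ neighbours of $v$ on the path $v_1,\ldots,v_{n-1}$ still leaves at least one vertex of index $\le n-d$ in the sorted sequence whose degree is bounded by $d(v)$; alternatively one can invert the roles and argue symmetrically from the other end of the path. No new ideas beyond the Ore rotation and the extremality of the pair are needed, and no result from earlier in the paper is invoked.
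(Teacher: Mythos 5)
The paper cites Chv\'atal's theorem as a classical result and does not prove it, so there is no internal proof to compare against; you are filling in a black box. Your overall strategy -- edge-maximal counterexample, Ore rotation, extremality of the non-adjacent pair $(u,v)$ maximising $d(u)+d(v)$ -- is exactly the standard proof, and the first half is correct: the rotation argument gives $d(u)+d(v)\le n-1$ (so $d=d(u)\le(n-1)/2<n/2$), and the $d$ vertices $v_i$ with $v_{i+1}\in N(u)$ are each non-adjacent to $v$, hence by extremality have degree at most $d$, giving $d_d\le d$.

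The gap is in the bound $d_{n-d}<n-d$. Your justification -- ``there are $n-d$ vertices with degree at most $d(v)$ (namely all vertices other than the $d$ largest)'' -- is circular: that parenthetical is just a restatement of the conclusion $d_{n-d}\le d(v)$, not a proof of it, and the $d$ vertices $v_i$ you identified earlier have degree $\le d$, which is the wrong bound and the wrong count for this step. The follow-up suggestion of ``inverting the roles'' also does not close the gap: the $d(v)$ vertices $v_{j+1}$ with $v_j\in N(v)$ are non-adjacent to $u$ and hence have degree $\le d(v)$, but $d(v)+1\le n-d$ with equality only in the extreme case, so you do not directly get $n-d$ vertices this way. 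The clean fix, which you should state explicitly, is to apply the extremality of $(u,v)$ to all non-neighbours of $u$, not just the $v_i$: for every vertex $w\ne u$ with $uw\notin E(G^*)$, the pair $(u,w)$ is non-adjacent, so $d(u)+d(w)\le d(u)+d(v)$, i.e.\ $d(w)\le d(v)$. There are exactly $n-1-d$ such $w$, and $u$ itself has degree $d\le d(v)$, so at least $n-d$ vertices have degree at most $d(v)\le n-1-d<n-d$, giving $d_{n-d}<n-d$. Combined with $d_d\le d$ and $d<n/2$, both alternatives of the hypothesis fail at $i=d$, the desired contradiction. With this repair the proof is complete.
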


In some cases it is easier to use the following version of Chv\'atal's result for bipartite graphs.

\begin{cor}\label{cor_chvatal_bip}
Let $G$ be a balanced bipartite graph on $2n$ vertices with bipartition $\{X, Y\}$.
Let $x_1 \le \ldots \le x_n$ be the degree sequence of $X$ and let $y_1 \le \ldots \le y_n$ be the degree sequence of $Y$.
Suppose that $x_i \ge i + 1$ or $y_{n - i} \ge n - i + 1$ for every $i \in [n]$.
Then $G$ contains a Hamilton cycle. 
\end{cor}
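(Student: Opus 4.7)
The approach is to prove the corollary via a bipartite version of the Bondy--Chv\'atal closure. Define $C(G)$ to be the graph obtained from $G$ by iteratively adding any edge $xy$ with $x \in X$, $y \in Y$, $xy \notin E(G)$, and $d(x) + d(y) \ge n + 1$, until no such edge can be added. I would first establish the bipartite closure lemma: $C(G)$ is Hamiltonian if and only if $G$ is. Given a Hamilton cycle $xv_1 v_2 \ldots v_{2n - 2} yx$ of $G + xy$ that uses the edge $xy$, the cycle alternates between $X$ and $Y$, so $v_{2j - 1} \in Y$ and $v_{2j} \in X$. Among the $n - 1$ odd indices $k \in \{1, 3, \ldots, 2n - 3\}$, the number satisfying $v_k \in N_G(x)$ is exactly $d_G(x)$ while the number satisfying $v_{k - 1} \in N_G(y)$ is at least $d_G(y) - 1$, so the two counts sum to at least $n > n - 1$. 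The corresponding index sets therefore intersect, and at any common index $k$ one can reverse the sub-path $v_1 v_2 \ldots v_{k - 1}$ to obtain a Hamilton cycle of $G$ that avoids the edge $xy$.

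The main step is to show that under the hypothesis, $C(G) = K_{n, n}$, which is Hamiltonian for $n \ge 2$. Suppose for contradiction that some pair $x \in X$, $y \in Y$ is non-adjacent in $C(G)$, and choose such a pair maximising $a + b$, where $a := d_{C(G)}(x)$ and $b := d_{C(G)}(y)$; by the definition of the closure, $a + b \le n$. The boundary cases are ruled out: $a = n$ would force $x$ adjacent to every vertex of $Y$, in particular to $y$; while $a = 0$ (so that $x$ is isolated in $G$) contradicts the hypothesis at $i = 1$, which implies that every $X$-vertex has $G$-degree at least $2$ (either directly from $x_1 \ge 2$, or because $y_{n - 1} \ge n$ forces the top two $Y$-vertices to be adjacent to every vertex of $X$). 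By symmetry $b \in [1, n - 1]$ as well.

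By the maximality of the chosen pair, every non-neighbour $y'$ of $x$ in $C(G)$ satisfies $d_{C(G)}(y') \le b$, and since $G$-degrees are bounded above by $C(G)$-degrees, at least $n - a$ vertices of $Y$ have $G$-degree at most $b$; that is, $y_{n - a} \le b$. Symmetrically $x_{n - b} \le a$. Applying the hypothesis at $i = a$, the first alternative $x_a \ge a + 1$ fails because $a \le n - b$ gives $x_a \le x_{n - b} \le a$, while the second alternative $y_{n - a} \ge n - a + 1$ fails because $y_{n - a} \le b \le n - a$. This contradiction proves $C(G) = K_{n, n}$ and hence that $G$ is Hamiltonian. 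I expect the main obstacle to be the bipartite closure lemma itself, where the rotation argument needs careful bookkeeping of parity in the alternating cycle; once that is in place, the above degree-sequence manipulation is a short deduction.
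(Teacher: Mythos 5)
Your proof is correct, but it takes a genuinely different route from the paper's. The paper gives a one-line reduction: add all $\binom{n}{2}$ edges inside $X$ to obtain a general graph $G'$ on $2n$ vertices, check that the corollary's hypothesis translates directly into Chv\'atal's hypothesis for $G'$ (using Theorem \ref{thm_chvatal}, which the paper has already stated), and observe that any Hamilton cycle in $G'$ must alternate between $X$ and $Y$ (since $|X|=|Y|$) and hence avoids the added edges. You instead develop the bipartite Bondy--Chv\'atal closure from first principles: you prove the rotation lemma (if $d_G(x)+d_G(y)\ge n+1$ for a non-edge $xy$ across the bipartition, then $G+xy$ Hamiltonian implies $G$ Hamiltonian), and then show by an extremal choice of a non-adjacent pair that the hypothesis forces the closure to be $K_{n,n}$. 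Both arguments are sound. Your rotation count is right: the $n-1$ odd indices each contribute a $Y$-vertex $v_k$ to test against $N_G(x)$ (all $d_G(x)$ neighbours appear, since $y$ itself is excluded) and an $X$-vertex $v_{k-1}$ to test against $N_G(y)$ (at least $d_G(y)-1$ appear, since only $v_{2n-2}$ can be missed and $x$ is not a neighbour), so the counts sum to at least $n$ and some index is counted twice. The closure step is also correct: maximality of $a+b$ gives $y_{n-a}\le b$ and $x_{n-b}\le a$, and together with $a+b\le n$ both alternatives of the hypothesis fail at $i=a$. The paper's proof is shorter and leans on the already-available Theorem \ref{thm_chvatal}; yours is longer but entirely self-contained within the bipartite setting and recovers the classical bipartite Chv\'atal criterion directly without embedding into a larger graph. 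One small remark: in both proofs the hypothesis at $i=n$ is vacuous (since $x_n\le n<n+1$ and $y_0$ is undefined), and your argument, like the paper's, only actually invokes the hypothesis for $i\le n-1$, so neither argument is hindered by this edge case.
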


\begin{proof}
Consider the graph $G'$ obtained from $G$ by adding all edges with both ends in $X$. By Theorem \ref{thm_chvatal}, $G'$ contains a Hamilton cycle $C$. As $|X| = |Y|$, the cycle $C$ contains no edges with both ends in $X$, i.e.~$C$ is a Hamilton cycle in $G$.
\end{proof}

The following is a simple result by Erd\H{o}s and Gallai \cite{erdos_gallai}, giving an upper bound on the number of edges in a graph with no path of a given length.

\begin{thm} \label{thm_erdos_gallai}
Let $G$ be a graph on $n$ vertices with no paths of length at least $l + 1$, then $e(G) \le nl/2$.
\end{thm}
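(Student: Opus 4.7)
The plan is to prove this by induction on $n$, following the classical approach to the Erd\H{o}s--Gallai theorem. For the base case $n \le l+1$ the trivial bound $e(G) \le \binom{n}{2} \le nl/2$ is already sufficient, since $n-1 \le l$.

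For the induction step with $n \ge l+2$, I would split into two cases. First, if some vertex $v$ has $d(v) \le l/2$, then $G - v$ also contains no path of length $\ge l+1$, so induction yields $e(G - v) \le (n-1)l/2$, and restoring $v$ contributes at most $l/2$ edges, giving the desired bound. Second, if $\delta(G) > l/2$, I would derive a contradiction by producing a path of length $\ge l+1$. Since the inequality $e(G) \le nl/2$ is additive over connected components, I may assume $G$ is connected, and since the base case handles $|G| \le l+1$, I only need to treat a connected $G$ with $|G| \ge l+2$ and $\delta(G) \ge \lceil (l+1)/2 \rceil$.

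The core sub-claim is the standard fact that a connected graph $H$ with $\delta(H) \ge k$ contains a path on $\min(2k+1, |H|)$ vertices. Applying this with $k = \lceil (l+1)/2 \rceil$ to our $G$ produces a path on $\min(l+2, |G|) = l+2$ vertices, i.e.\ of length $\ge l+1$, contradicting the hypothesis. I would prove the sub-claim by a P\'osa-style rotation. Take a longest path $P = v_0 v_1 \ldots v_p$ in $H$; by maximality $N(v_0), N(v_p) \subseteq V(P) \setminus \{v_0, v_p\}$. If $p \le 2k - 1$, then writing $A = \{i : v_0 v_{i+1} \in E(H)\}$ and $B = \{i : v_p v_i \in E(H)\}$ one has $A, B \subseteq \{1, \ldots, p-1\}$ with $|A| + |B| \ge 2k > p-1$, so $A \cap B \ne \emptyset$, producing an index $i$ yielding a cycle $v_0 v_1 \ldots v_i v_p v_{p-1} \ldots v_{i+1} v_0$ through every vertex of $P$. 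Connectedness of $H$ together with $|H| > p+1$ then provides an edge from this cycle to a vertex outside $V(P)$, from which one extracts a path strictly longer than $P$, contradicting maximality; hence $p \ge 2k$ or $p+1 = |H|$, giving the claimed path.

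The main obstacle is the pigeonhole step in the rotation argument, where one must align the shifted neighborhoods of the two endpoints of $P$ along $P$ to force a common index; everything else in the argument is routine induction and case analysis.
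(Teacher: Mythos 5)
The paper cites this result to Erd\H{o}s and Gallai without reproducing a proof, so there is no in-paper argument to compare against. Your induction-plus-P\'osa-rotation proof is correct and is the standard modern argument for this theorem. Two small slips in the rotation step, neither of which affects the conclusion: since $v_0v_1$ is always an edge of $P$, one has $0 \in A$, so the right index set for $A$ and $B$ is $\{0,1,\ldots,p-1\}$ of size $p$ rather than $\{1,\ldots,p-1\}$; the pigeonhole you then need is $|A|+|B| \ge 2k > p$, which is exactly what the assumption $p \le 2k-1$ supplies. Similarly, $v_0$ may be adjacent to $v_p$, so one should write $N(v_0) \subseteq V(P)\setminus\{v_0\}$ and $N(v_p) \subseteq V(P)\setminus\{v_p\}$ rather than excluding both endpoints from each.
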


A graph on $n$ vertices is called \emph{pancyclic} if for every $l \le n$, $G$ contains a cycle on $l$ vertices. The following result by Bondy \cite{bondy} is a generalisation of Dirac's Theorem, asserting that graphs with large enough minimum degree are pancyclic.
\begin{thm} \label{thm_bondy}
Let $G$ be a graph on $n$ vertices with $\delta(G) > n/2$. Then $G$ is pancyclic.
\end{thm}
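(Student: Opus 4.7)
First I would produce the cycle of length $n$ via Chv\'atal's theorem (Theorem~\ref{thm_chvatal}): writing the degree sequence as $d_1 \le \dots \le d_n$, the hypothesis gives $d_i \ge \delta(G) > n/2 \ge i$ for every integer $i \le n/2$, so $d_i \ge i+1$ holds and Chv\'atal's condition is satisfied. Hence $G$ has a Hamilton cycle $C = v_0 v_1 \dots v_{n-1} v_0$.

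For the remaining lengths, my plan is Bondy's chord analysis. Every chord $v_i v_j$ of $C$ together with the two arcs of $C$ it separates gives two cycles whose lengths sum to $n+2$; explicitly, if the cyclic distance between $v_i$ and $v_j$ along $C$ is $d$, then $G$ has cycles of lengths $d+1$ and $n-d+1$. So it suffices to show that for each $3 \le \ell \le n-1$ some edge of $G$ realises cyclic distance $\ell-1$ or $n-\ell+1$ along $C$.

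The cleanest route is via Bondy's extremal pancyclicity theorem: a Hamiltonian graph on $n$ vertices with $e(G) \ge n^2/4$ is either pancyclic or isomorphic to $K_{n/2,n/2}$. Our hypothesis yields $e(G) \ge n\delta(G)/2 > n^2/4$, so this theorem applies; since $K_{n/2,n/2}$ has minimum degree exactly $n/2$, the strict inequality $\delta(G) > n/2$ rules out the exceptional case and forces $G$ to be pancyclic. (For the \emph{proof} one would proceed directly rather than invoke the meta-theorem, by reverse induction: starting from the Hamilton cycle, I would show that from any cycle of length $\ell+1 \ge 4$ one can extract a cycle of length $\ell$.)

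The main obstacle is exactly the inductive step, i.e.\ showing that if some length $\ell$ is missing from the cycle spectrum then $G$ is forced to be $K_{n/2,n/2}$. The difficulty is that a single vertex's neighbourhood along $C$ only constrains a bounded collection of chord-distances, so one has to aggregate information over all vertices and analyse several paired families of potential chords simultaneously; the rigidity of these constraints leaves only the balanced complete bipartite graph as an extremal configuration, at which point the strict degree hypothesis finishes the proof cleanly.
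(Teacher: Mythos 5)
The paper does not prove this result---it cites Bondy \cite{bondy}---and your reduction to Bondy's extremal pancyclicity theorem (a Hamiltonian graph with $e(G)\ge n^2/4$ is pancyclic or isomorphic to $K_{n/2,n/2}$) is the standard and correct derivation: $\delta(G)>n/2$ gives Hamiltonicity (Dirac already suffices, though Chv\'atal works too), handshaking gives $e(G)>n^2/4$, and the strict degree bound excludes $K_{n/2,n/2}$. Your final paragraph rightly identifies that proving the extremal theorem itself is the substantive work and your sketch there stops short of a proof, but since the paper treats the whole statement as a black-box citation, your approach matches the paper's usage.
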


In the following two subsections, we use the following well known theorem of Tutte, giving a necessary and sufficient condition for having a perfect matching.
\begin{thm} \label{thm_tutte}
Let $G$ be a graph on an even number of vertices.
Then $G$ has a perfect matching if and only if for every set of vertices $U$, the number of odd components of $G\setminus U$ is at most $|U|$.
\end{thm}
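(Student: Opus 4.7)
The necessity direction is immediate. Given a perfect matching $M$ of $G$ and any $U \subseteq V(G)$, each odd component $C$ of $G - U$ cannot be perfectly matched within itself (its order is odd), so $M$ must match at least one vertex of $C$ to a vertex of $U$. Since these matched vertices of $U$ are distinct for distinct odd components, the number of odd components of $G - U$ is at most $|U|$.

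The substantive direction is sufficiency. The plan is to argue by contradiction via an edge-maximal counterexample. Suppose $G$ satisfies the Tutte condition but has no perfect matching. Since $K_{|V(G)|}$ has a perfect matching, we may add edges one at a time (the Tutte condition is preserved under edge-addition: adding edges can only merge components, which does not increase the number of odd components) until we reach a graph $G'$ that still satisfies the Tutte condition, has no perfect matching, but such that $G' + e$ has a perfect matching for every non-edge $e$. Let $U$ be the set of vertices of degree $|V(G')| - 1$ in $G'$.

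The main structural step will be to show that $G' - U$ is a disjoint union of cliques. Suppose otherwise; then some induced path $x - y - z$ has $xz \notin E(G')$, and since $y \notin U$ there exists $w$ with $yw \notin E(G')$. By maximality, $G' + xz$ has a perfect matching $M_1$ containing $xz$, and $G' + yw$ has a perfect matching $M_2$ containing $yw$. Consider the symmetric difference $M_1 \triangle M_2$: each vertex has degree $0$ or $2$, so it decomposes into even alternating cycles, and the cycle $C$ containing $xz$ must also contain $yw$ (else we could splice $C$ out of $M_1$ to obtain a perfect matching of $G'$). Traversing $C$ from $y$ in the appropriate direction, one can use the edge $yz \in E(G')$ as a shortcut to combine the $M_1$-portion on one side with the $M_2$-portion on the other, producing a perfect matching of $G'$ --- a contradiction. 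This symmetric-difference splicing is the trickiest point, since it requires choosing the correct orientation around $C$ depending on the order in which $x, y, z, w$ appear.

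Once the structural claim is established, a perfect matching in $G'$ can be exhibited directly, contradicting the choice of $G'$. The Tutte condition applied to $U$ gives that the number $o$ of odd clique-components of $G' - U$ satisfies $o \le |U|$, and a parity check (the sum of component sizes is $|V(G')| - |U|$, which is even) forces $o \equiv |U| \pmod 2$. One now matches one vertex from each odd clique to a distinct vertex of $U$ (possible since every vertex of $U$ is adjacent to every other vertex); the remaining vertices inside each clique of $G' - U$ form an even-sized clique and can be matched internally; and the remaining $|U| - o$ vertices of $U$ form a clique of even order and can be matched among themselves. This yields a perfect matching of $G'$, completing the contradiction and hence the proof.
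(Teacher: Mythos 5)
The paper only cites Tutte's theorem as a well-known result and gives no proof, so there is nothing in the paper to compare your argument against; I assess the proposal on its own. What you give is the standard proof by an edge-maximal counterexample (essentially Lov\'asz's short argument, as presented in Diestel's \emph{Graph Theory}), and it is correct. Necessity is right; preservation of the Tutte condition under edge addition is right; taking $U$ to be the universal vertices, proving $G'-U$ is a disjoint union of cliques, and then building the matching directly with the parity check $o \equiv |U| \pmod{2}$ are all sound.

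The one step stated too telegraphically is the splicing in the case where $xz$ and $yw$ lie on the same alternating cycle $C$ of $M_1 \triangle M_2$, which you rightly flag as the delicate point. Removing $xz$ and $yw$ from $C$ leaves two arcs, each joining one of $\{x,z\}$ to one of $\{y,w\}$; after possibly swapping the labels $x$ and $z$ (harmless, since both $xy,yz\in E(G')$), one arc $A$ runs from $z$ to $w$. The even cycle used for the swap is $A$ together with the two edges $wy$ and $yz$; this has even length because $A$ begins at $z$ with $z$'s $M_2$-edge and ends at $w$ with $w$'s $M_1$-edge, hence has an even number of edges. The non-$M_2$-edges of this augmented cycle (the $M_1$-edges of $A$ together with $yz$) form a perfect matching of its vertex set using only edges of $G'$, and combined with $M_2$ away from it they yield a perfect matching of $G'$. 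Your ``traversing $C$ from $y$ in the appropriate direction'' must mean starting along the $M_2$-edge $yw$: going the other way, along $y$'s $M_1$-edge, one reaches $x$ or $z$ via an $M_2$-edge, and closing that arc with $xy$ or $yz$ produces an odd cycle, which is useless. You gesture at exactly this subtlety without resolving it, so nothing is conceptually missing, but the parity bookkeeping is the content of the step and is worth writing out explicitly.
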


\subsection{Matchings in tripartite graphs}
In section \ref{sec_large_blue_non_bip} we shall analyse conditions for certain tripartite graph to have a perfect matching.
Here we describe the extremal results we shall need for the analysis.
We use a stability version of the following lemma of DeBiasio and Nelsen \cite{debiasio}.
For the sake of completeness, we prove it here.

\begin{lem}\label{lem_trip_pft_matching_exact}
Let $n$ be even, and let $G$ be a tripartite graph on $n$ vertices with tripartition $\{X_1, X_2, X_3\}$.
Suppose that $|X_i| \le n/2$ and $\deg(x) >3n/4 - |X_i|$ for every $x \in X_i$, $i \in [3]$.
Then $G$ has a perfect matching.
\end{lem}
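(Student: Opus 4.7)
The plan is to invoke Tutte's theorem (Theorem~\ref{thm_tutte}). Suppose for contradiction that $G$ has no perfect matching; then there is a set $U \subseteq V(G)$ with $o(G \setminus U) > |U|$, and since $n$ is even, by parity $o(G \setminus U) \ge |U| + 2$. Write $u_i = |U \cap X_i|$, $u = |U|$, and $r_i = x_i - u_i$, so that $r_1 + r_2 + r_3 = n - u$ and $u \le (n-2)/2$.

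The key local estimate is the following. If $C$ is an odd component of $G \setminus U$ meeting $X_i$ and $v \in C \cap X_i$, then $N_G(v) \subseteq X_j \cup X_k$ (where $\{i,j,k\} = \{1,2,3\}$) and the neighbours of $v$ outside $U$ all lie in $C$, so
\[
\tfrac{3n}{4} - x_i \;<\; \deg_G(v) \;\le\; (u - u_i) + |C \cap (X_j \cup X_k)|,
\]
which rearranges to $|C \cap (X_j \cup X_k)| > \tfrac{3n}{4} - u - r_i$. Specialising to a singleton odd component $\{v\} \subseteq X_i$ gives $u + r_i > 3n/4$. Let $o_i$ count singleton odd components lying in $X_i$ (these are the only odd components confined to a single part, since a tripartite graph has no intra-part edges) and let $t$ count odd components of size at least $3$. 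Then $o_1 + o_2 + o_3 + t \ge u + 2$ and vertex disjointness forces $o_1 + o_2 + o_3 + 3t \le n - u$.

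I case-split on the number of parts carrying a singleton odd component. If all three $o_i \ge 1$, summing the three instances of $u + r_i > 3n/4$ gives $2u + (r_1+r_2+r_3) > 9n/4$, that is $u + n > 9n/4$, so $u > 5n/4$, a clear contradiction; in fact the sharper rearrangement $3u + (n-u) > 9n/4$ yields $u > 5n/8$, already contradicting $u \le (n-2)/2$. If exactly two $o_i$ are positive, say $o_1, o_2 \ge 1$, adding the two instances gives $2u + r_1 + r_2 > 3n/2$, i.e.\ $u + n - r_3 > 3n/2$, hence $u > n/2 + r_3 \ge n/2$, once again contradicting $u \le (n-2)/2$.

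The main obstacle is the remaining case where singleton odd components appear in at most one part (say $X_1$, allowing $o_1 = 0$). Here we must accommodate at least $u + 2 - o_1$ non-trivial odd components. I will apply the local estimate to each such component $C$: if $C \subseteq X_j \cup X_k$ (i.e.\ $C$ avoids the third part, say $X_\ell$), summing the two instances of the local estimate over $i \in \{j, k\}$ yields $|C| > 3n/2 - 2u - r_j - r_k = n/2 - u + r_\ell$, so such a $C$ is forced to be genuinely large. Combining these lower bounds with the vertex budget $n - u$, with the singleton constraint $u + r_1 > 3n/4$ (which together with $r_1 \le x_1 \le n/2$ forces $u > n/4$) when $o_1 \ge 1$, and with $x_i \le n/2$ for all $i$, the bookkeeping forces a contradiction. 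The hypothesis $|X_i| \le n/2$ is essential here: without it, a very large part could absorb arbitrarily many non-trivial odd components and the count would not close.
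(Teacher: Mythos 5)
Your route via Tutte's theorem is genuinely different from the paper's proof of this lemma, which takes a maximal counterexample: since the complete tripartite graph on parts of size at most $n/2$ minus any one edge still has a perfect matching, there is a non-edge $v_1v_2$ whose addition completes a perfect matching $M$, and summing $\deg(v_1)+\deg(v_2)$ over the edges of $M$ (splitting $M$ into $X_1X_2$-edges and the rest) contradicts the degree bound in a few lines. (The paper reserves Tutte for the stability version, Lemma~\ref{lem_matching_tripartite_density}.) Your two- and three-part singleton cases do close, though in the three-part case the correct sum is $3u + (r_1+r_2+r_3) > 9n/4$, i.e.\ $2u+n>9n/4$, giving $u>5n/8$; the intermediate ``$2u+\cdots$'' and ``$u>5n/4$'' are slips which you then correct.

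The real gap is the final case, where ``the bookkeeping forces a contradiction'' is asserted but never carried out, and the constraints you list are not in fact sufficient. The vertex budget does kill the sub-case $o_1\le u$: with $t\ge u+2-o_1$ non-trivial odd components each of size at least $n/2-u+1$ (or at least $3$ when $u=n/2-1$), one checks $(u+2-o_1)(n/2-u+1)>n-u-o_1$ whenever $o_1\le u$. But when $o_1\ge u+1$ (which is compatible with everything you write down, since $o_1\ge u+1 > n/4+1$ and $o_1\le r_1\le n/2$ can both hold alongside $u\le n/2-1$) you may have only one non-trivial odd component, or none at all if $o_1\ge u+2$, and none of the listed inequalities produces a contradiction. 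Closing this case needs an observation you never make: a vertex $v\in(X_2\cup X_3)\setminus U$ cannot be adjacent to any of the $o_1$ singletons in $X_1\setminus U$ (singletons have all their neighbours in $U$), so for $v\in X_2\setminus U$ one has $\deg(v)\le u_1+(r_1-o_1)+x_3 = x_1-o_1+x_3$, which combined with $\deg(v)>3n/4-x_2$ gives $o_1<n/4$, contradicting $o_1>n/4+1$; and if $(X_2\cup X_3)\setminus U=\emptyset$ then $u\ge n-x_1\ge n/2$, contradicting $u\le n/2-1$. Without this the argument, as written, does not go through.
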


\begin{proof}
Suppose to the contrary that $G$ has no perfect matching.
Without loss of generality, we assume that $G$ is a maximal counter example.
It is easy to check that the graph obtained from the complete tripartite graph with partition $\{X_1,X_2,X_3\}$ by removing any edge has a perfect matching.
Thus we may assume that $G$ is not complete as a tripartite graph. Without loss of generality, suppose that $v_1v_2\notin E(G)$ where $v_1\in X_1,v_2\in X_2$.
It follows from the maximality of $G$ that the addition of $v_1v_2$ to $G$ completes a perfect matching $M$ in $G$.
Clearly, $v_1v_2\in M$.

$M$ contains no edge $u_1u_2\neq v_1v_2$ where $u_i\in N(v_i)$ for $i=1,2$, otherwise there exists a perfect matching in $G$ (replace $v_1v_2,u_1u_2$ by $v_1u_1,v_2u_2$).
Denote by $M_1$ the edges of $M$ which have ends in $X_1$ and $X_2$, and let $M_2=M\setminus M_1$.
It follows that for every $e\in M_1\setminus\{v_1v_2\}$, $\deg(v_1,e)+\deg(v_2,e)\le 1$ and for every $e\in M_2$, $\deg(v_1,e)+\deg(v_2,e)\le 2$.

Thus 
\begin{align*}
& \deg(v_1)+\deg(v_2)=\sum_{e\in M}(\deg(v_1,e)+\deg(v_2,e))\le \\
& (|M_1|-1)+2|M_2|=|M|-1+|M_2|=n/2-1+|X_3|.
\end{align*}
For the last equality, we used the fact that $|M_2|=|X_3|$.
By the degree condition on $G$, we have $\deg(v_1) + \deg(v_2) > 3n/2 - |X_1| - |X_2| = n/2 + |X_3|$, a contradiction to the above inequality.
\end{proof}

The following lemma is a stability version of Lemma \ref{lem_trip_pft_matching_exact}.
We prove it by applying Tutte's theore, Theorem \ref{thm_tutte}.
\begin{lem}\label{lem_matching_tripartite_density}
Let $\frac{1}{n} \ll \eps \ll 1$, where $n$ is even.
Let $G$ be a graph on $n$ vertices with tripartition $\{X_1,X_2,X_3\}$ such that $|X_i|\le (1/2-4\eps)n$ and $\deg(x, V(G) \setminus X_i)\ge (3/4 - \eps)n - |X_i|$ for $i \in [3]$ and $x\in X_i$.
Then one of the following holds.
\begin{itemize}
\item
$G$ has a perfect matching.
\item
There exists an independent set $Y$ in $G$ such that $Y \subseteq X_i \cup X_j$ and $|Y \cap X_i|, |Y \cap X_j| \ge (1/4 - 5\eps)n$ for some $1 \le i < j \le 3$.
\end{itemize}
\end{lem}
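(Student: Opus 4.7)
The plan is to apply Tutte's theorem (Theorem \ref{thm_tutte}). If $G$ has no perfect matching, fix a Tutte set $U \subseteq V(G)$; write $u = |U|$ and let $q$ (resp.\ $p$) be the number of odd (resp.\ all) components of $G - U$. Then $q > u$, and since $n$ is even the parity constraint $n \equiv q + u \pmod 2$ upgrades this to $q \ge u + 2$. The degree hypothesis implies $\delta(G) \ge (3/4 - \eps)n - \max_i |X_i| \ge (1/4 + 3\eps)n$. Any vertex $v$ in a component $C$ of $G - U$ satisfies $\deg_G(v) \le (|C| - 1) + u$, so
\[
|C| \ge (1/4 + 3\eps)n - u + 1.
\]
Combined with $\sum_C |C| = n - u$ this forces $p \le 3$; a short check also gives $u \le 1$, so the only remaining possibilities are $(u, p) \in \{(0, 2), (0, 3), (1, 3)\}$.

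The first structural step will be to rule out any component $C$ that meets all three parts. Summing the bounds $|C \setminus X_i| \ge (3/4 - \eps)n - |X_i| - u$ (one from any vertex in $C \cap X_i$) over $i \in \{1, 2, 3\}$ gives $2|C| \ge (5/4 - 3\eps)n - 3u$, and together with the $(1/4 + 3\eps)n$ lower bound on every other component this exceeds $n - u$ for large $n$. Hence every component is bipartite, i.e.\ contained in a union $X_i \cup X_j$ of two parts.

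The cases $(u, p) \in \{(0, 3), (1, 3)\}$ are now eliminated by distributing the three bipartite components over the three pairs $\{X_i, X_j\}$. If all three lie in the same pair $X_1 \cup X_2$, then $X_3 \subseteq U$ forces $|X_1| + |X_2| \ge n - 1$, contradicting $|X_i| \le (1/2 - 4\eps)n$. If two lie in $X_1 \cup X_2$ and the third in $X_1 \cup X_3$ (say), then summing the inequalities $|C \cap X_1| \ge (3/4 - \eps)n - |X_2| - u$ (for the first two) and $|C \cap X_1| \ge (3/4 - \eps)n - |X_3| - u$ (for the third) gives $|X_2| \ge (5/4 - 3\eps)n - 3u$, again contradicting the bound on $|X_2|$. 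Finally, if the three components lie in three distinct pairs, the analogous sum for $X_1$ yields $0 \ge (1/2 - 2\eps)n - O(u)$, a contradiction.

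This leaves $(u, p) = (0, 2)$. Both components $C_1, C_2$ are bipartite, and they cannot lie in the same pair (else the third part is empty and the sizes cannot sum to $n$). Hence, up to relabelling, $C_1 \subseteq X_1 \cup X_2$ and $C_2 \subseteq X_1 \cup X_3$, forcing $X_2 \subseteq C_1$ and $X_3 \subseteq C_2$. Since $C_1$ and $C_2$ are distinct components there are no edges between $X_2$ and $X_3$, so $Y = X_2 \cup X_3$ is independent in $G$. Applying the degree condition to any vertex of $X_2$ gives $|X_1| \ge (3/4 - \eps)n - |X_2|$, hence $|X_2| \ge (3/4 - \eps)n - |X_1| \ge (1/4 + 3\eps)n \ge (1/4 - 5\eps)n$; symmetrically, $|X_3| \ge (1/4 - 5\eps)n$. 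The main difficulty is the case bookkeeping: carrying the slack $u \in \{0, 1\}$ through several counting inequalities so that every contradiction survives for large $n$, and correctly identifying in the final case which two parts lie in different components and thus supply the independent set.
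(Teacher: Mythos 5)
Your opening setup mirrors the paper's: apply Tutte's theorem, set $u = |U|$, use parity to upgrade $q > u$ to $q \ge u + 2$, and note $\delta(G) \ge (1/4 + 3\eps)n$. The fatal flaw is the claim, described as ``a short check,'' that $u \le 1$. The component-size bound $|C| \ge (1/4 + 3\eps)n - u + 1$ is only informative when $u$ is well below $(1/4 + 3\eps)n$; feeding it into $\sum_C |C| = n - u$ together with $q \ge u + 2$ yields (after expanding) an inequality of the shape $u\bigl((1/4+3\eps)n - u\bigr) \le (1/2 - 6\eps)n - 2$, which rules out the \emph{middle} range of $u$ but is perfectly consistent with both $u \le 1$ and $u \gtrsim (1/4 + 3\eps)n$. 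For $u$ beyond roughly $(1/4 + 3\eps)n$, the component-size lower bound is vacuous, arbitrarily many singleton components may appear, and no contradiction follows. You silently dropped this entire regime.

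That regime is not a corner case but the main event. The paper's own proof, after invoking Tutte's theorem, shows that if $G$ has no perfect matching then $\delta(G \setminus S) \le \eps n$, which immediately forces $|S| \ge (1/4 + 2\eps)n$ and eventually $|S| \ge (1/2 - 4\eps)n$; the extremal configurations producing the independent set in the second conclusion (almost-independent $Y_i \subseteq X_i$, $Y_j \subseteq X_j$ of size near $n/4$, with $G[Y_i, Y_j]$ nearly empty) have Tutte sets of size close to $n/2$, with $G - S$ consisting of many tiny components. Your case analysis over $(u,p) \in \{(0,2),(0,3),(1,3)\}$, whatever its local merits, therefore never reaches the structure that actually witnesses failure of the perfect matching, and the argument does not establish the lemma. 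To repair it you would need to follow the paper and analyze the case of a large cut set $S$ directly, for instance by showing $|X_i \setminus S| \le (1/4 + 2\eps)n$ for each $i$ and then extracting the independent set from representatives of the components of $G - S$.
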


\begin{proof}
We assume that $G$ has no perfect matching.
By Tutte's theorem, Theorem \ref{thm_tutte}, there exists a subset $S\subseteq V(G)$ such that the number of odd components of $G\setminus S$ is larger than $S$.

Denote $G'=G \setminus S$.
We show first that $\delta(G') \le \eps n$.
Suppose not. Then the number of components of $G'$ is at most $1 / \eps$, thus $|S| \le 1/ \eps$. We show that $G'$ is connected, contradicting the choice of $S$. Given $u, v \in X_i$, they are non-adjacent to at most $(1/4 + \eps)n$ vertices in $V(G)\setminus X_i$. But $|V(G) \setminus X_i| \ge (1/2 + 4\eps)$, hence $u, v$ have at least $2\eps n \ge  1/\eps$ common neighbours. It follows that indeed, $G'$ is connected.

By the assumptions on $G$, $\delta(G) \ge (3/4 - \eps)n - \max\{|X_1|,|X_2|,|X_3|\}\ge (1/4 + 3\eps)n$. Since $\delta(G')\le \eps n$, we have $|S| \ge (1/4 + 2\eps)n$.

Note that $|S| \le n/2$, because the number of odd component of $G'$ is at most $n - |S|$.
Denote $X_i' = X_i \setminus S$.
We show that $|X_i'| \le (1/4 + 2\eps)$ for $i \in [3]$.
Indeed, suppose that $|X_1'| \ge (1/4 + 2\eps)$. Then for every vertex $u \in  X_2' \cup X_3'$ we have $\deg_{G'}(u) \ge \eps n$. In particular, $u$ is in a component of $G'$ order at least $\eps n$. Furthermore, every non isolated vertex of $X_1'$ is adjacent to some vertex in $X_2'\cup X_3'$, and thus is in a  component of size at least $\eps n$.
Since $X_2' \cup X_3'$ is non empty (e.g.~because $|S| \le n/2$), it follows that $G'$ has at most $(1/4 + \eps)n$ isolated vertices, and the rest of the vertices are in components of order at least $\eps n$.
Hence the number of odd components of $G'$ is at most $(1/4 + \eps)n + 1 / \eps\le |S|$, a contradiction.

Denote $S_i = X_i \cap S$, $i \in [3]$.
Consider the three quantities $|X_1| + |S_2| + |S_3|, |X_2| + |S_1| + |S_3|, |X_3| + |S_1| + |S_2|$.
Their sum is $n + 2|S| \le 2n$.
Without loss of generality, it follows that $|X_1| + |S_2| + |S_3| \le 2n/3$.
Thus for every $u \in X_1'$, 
$$\deg_{G'}(u) \ge (3/4 - \eps)n - |X_1| - |S_2| - |S_3|\ge (1/12 - \eps)n.$$
If in addition $|X_1'| + |X_2'| \ge (1/4 + 2\eps)n$, every vertex in $X_3'$ is in a component of order at least $\eps n$.
But the number of isolate vertices in $X_2'$ s at most $(1/4 + \eps)n$, so the number of odd components in $G'$ is at most $(1/4 + \eps)n + 1 / \eps \le |S|$.
We conclude that $|X_1'| + |X_2'| \le (1/4 + 2\eps)n$.
Recall that $|X_3'| \le (1/4 + 2\eps)n$, implying that $|V(G')| \le (1/2 + 4\eps)n$, i.e.~$|S| \ge (1/2 - 4\eps)n$.
If $X_1' \neq \emptyset$, $G'$ contains a component of order at least $(1/12 - \eps)n$ in $G'$, so there are at most $(5/12 + 5\eps)n + 1 \le |S|$ components in $G'$, a contradiction.

The set $Y$ obtained by picking one vertex from each component of $G'$ is an independent set in $X_2' \cup X_3'$ of size at least $|S|$.
Let $Y_i = Y \cap X_i'$.
Then $G$ contains no $Y_2 - Y_3$. Recall that each vertex in $Y_2$ has at most $(1/4 + \eps)n$ non neighbours in $Y_3$, implying that $|Y_3| \le (1/4 + \eps)n$ and $|Y_2| \ge (1/4 - 5\eps)n$. By symmetry, we also have $|Y_3| \ge (1/4 - 5\eps)n$.
\end{proof}

\subsection{Matchings in bipartite graphs}

The following lemma is a simple consequence of Hall's theorem for perfect matchings in bipartite graphs.
\begin{lem} \label{lem_hall}
Let $G$ be a balanced bipartite graph with bipartition $\{X_1,X_2\}$ on $n$ vertices with $\delta(G)\ge (1/4 - \eps)n$. Then one of the following conditions hold.
\begin{itemize}
\item
$G$ contains a perfect matching.
\item
There are subsets $A_i\subseteq X_i$ of size $(1/4 - \eps) n \le |A_1|, |A_2|\le (1/4 + \eps)n$ such that $G[A_1, A_2]$ contains no edges.
\end{itemize}
\end{lem}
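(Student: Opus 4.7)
The plan is to invoke Hall's theorem: if $G$ has no perfect matching, then since $G$ is balanced, there exists (without loss of generality, on the $X_1$ side) a set $A \subseteq X_1$ with $|N(A)| < |A|$. I would then simply set $A_1 := A$ and $A_2 := X_2 \setminus N(A)$. By construction, there are no edges of $G$ between $A_1$ and $A_2$, so the only thing left is to verify the four size bounds $(1/4-\eps)n \le |A_1|, |A_2| \le (1/4+\eps)n$.

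The four bounds follow from a short chain of inequalities, using only the minimum degree hypothesis $\delta(G) \ge (1/4-\eps)n$ and $|X_1| = |X_2| = n/2$:
\begin{itemize}
\item Since $|N(A_1)| < |A_1|$ forces $A_1 \neq \emptyset$, any vertex of $A_1$ contributes $\delta(G)$ neighbours to $N(A_1)$, so $|A_1| > |N(A_1)| \ge (1/4-\eps)n$.
\item Since $|N(A_1)| < |A_1| \le n/2$, we also get $A_2 \neq \emptyset$; any $v \in A_2$ has $N(v) \subseteq X_1 \setminus A_1$, hence $|A_1| \le n/2 - \delta(G) \le (1/4+\eps)n$.
\item Combining the previous two bounds, $|A_2| = n/2 - |N(A_1)| > n/2 - |A_1| \ge (1/4-\eps)n$.
\item Finally, any $v \in A_1$ satisfies $N(v) \subseteq X_2 \setminus A_2$, giving $|A_2| \le n/2 - \delta(G) \le (1/4+\eps)n$.
\end{itemize}

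There isn't really a ``hard part'' here: the lemma is essentially Hall's theorem packaged with elementary degree counting, and the main thing to be careful about is the order in which the four size bounds are established, since the lower bound on $|A_2|$ uses the upper bound on $|A_1|$ and vice versa. The only mild subtlety is noting that both $A_1$ and $A_2$ are nonempty before applying the degree condition to an arbitrary vertex in each, but this is immediate from $|N(A_1)| < |A_1|$ together with $|A_1| \le n/2$.
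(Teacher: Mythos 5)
Your proof is correct and follows essentially the same route as the paper: invoke Hall's theorem to produce $A_1 \subseteq X_1$ with $|N(A_1)| < |A_1|$, take $A_2 = X_2 \setminus N(A_1)$, and derive all four size bounds from $\delta(G) \ge (1/4-\eps)n$ together with nonemptiness of $A_1$ and $A_2$. Your write-up is if anything more careful than the paper's (the paper summarises the bounds tersely, with what appears to be a typographical slip writing $|A_1|=|A_2|$), and the observation that the lower bound on $|A_2|$ uses the upper bound on $|A_1|$ (and vice versa) is exactly the right thing to spell out.
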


\begin{proof}
Suppose that $G$ contains no perfect matching.
Then by Hall's Theorem there exists $A_1\subseteq X_1$ such that $|N(A_1)|< |A_1|$.
Denote $A_2=X_2\setminus N(A_1)$.
Then $G[A_1,A_2]$ has no edges.
Note that since $\delta(G)\ge (1/4 - \eps)n$, we have $|A_1|> |N(A_1)| \ge(1/4 - \eps)n$.
By symmetry we have that $|A_2|\ge (1/4 - \eps)n$.
Thus $(1/4 - \eps) n \le |A_1|=|A_2|\le (1/4 + \eps)n$.

\end{proof}

In Section \ref{sec_large_blue_non_bip}, we shall also need the following stability result for graphs with a bipartition satisfying certain conditions. The proof is again an applications of Tutte's theorem, Theorem \ref{thm_tutte}.
\begin{lem}\label{lem_matching_bip_technical}
Let $\frac{1}{n} \ll \eps \ll 1$, where $n$ is even.
Let $G$ be a graph on $n$ vertices, and suppose that $\{X_1, X_2\}$ is a partition of $V(G)$ such that $|X_i| \ge (1/2 - \eps)n$ and $\deg(u, X_{3 - i})\ge (3/4-\eps)n - |X_i|$ for every $i \in [3]$, $u \in X_i$.
Then one of the following conditions holds.
\begin{itemize}
\item
$G$ has a perfect matching.
\item
$G$ is not $2$-connected.
\item
There exists an independent set $A_i \subseteq X_i$ (for some $i \in [2]$) of order at least $(1/4 - 4\eps)m$ such that $|N(A_i)| \le (1/4 + 3\eps)n$.
\item
$|X_i| > |X_{3 - i}|$ and $X_i$ contains an independent set of size at least $(1/2 - \eps)n$.
\item
There exists an independent set $A$ of size at least $(1/2 - 6\eps)n$ such that $|A \cap X_i| \ge (1/4 - 9\eps)n$. 
\end{itemize}
\end{lem}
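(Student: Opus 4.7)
The plan is to mimic the Tutte-based proof of Lemma \ref{lem_trip_pft_matching_exact}. Assume $G$ has no perfect matching and is $2$-connected (else the second listed conclusion holds). Theorem \ref{thm_tutte} produces $S \subseteq V(G)$ with $\mathrm{odd}(G-S) > |S|$, and since $n$ is even, a parity argument upgrades this to $\mathrm{odd}(G-S) \ge |S| + 2$. Set $G' := G - S$, $S_i := S \cap X_i$, $X_i' := X_i \setminus S_i$; the key degree fact is that every $u \in X_i'$ has $\deg(u, X_{3-i}) \ge (3/4 - \eps)n - |X_i| \ge (1/4 - 2\eps)n$ neighbours, all lying in $S_{3-i}$ together with $u$'s component in $G'$.

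Classify each component of $G'$ as $X_1$-monopartite, $X_2$-monopartite, or \emph{mixed}, with counts $a$, $b$, $c$. Two consequences of the degree fact: (i) if $a \ge 1$ then $|S_2| \ge (1/4 - 2\eps)n$ (symmetrically for $b$), since a vertex in an $X_1$-monopartite component has all its $X_2$-neighbours in $S_2$; (ii) every mixed component $C$ satisfies $|C \cap X_{3-i}| \ge (1/4 - 2\eps)n - |S_{3-i}|$ for both $i \in \{1,2\}$.

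I split into three cases. If $a, b \ge 1$, then $|S| \ge (1/2 - 4\eps)n$, so $|G'| \le (1/2 + 4\eps)n$ while $\mathrm{odd}(G') \ge (1/2 - 4\eps)n$; since non-singleton components have size $\ge 2$, at least $(1/4 - 6\eps)n$ odd components must be singletons, whose union is an independent set in $G$. Using that monopartite components live on both sides, this set has roughly balanced intersections with $X_1$ and $X_2$, giving the last listed conclusion. If exactly one of $a, b$ is positive---say $a \ge 1, b = 0$---the union $A$ of the $X_1$-monopartite components lies in $X_1'$ with $N_G(A) \cap X_2 \subseteq S_2$; then either $A$ (possibly after combining with additional non-neighbours) supplies the Hall obstruction $A_1 \subseteq X_1$ with $|N(A_1)| \le (1/4 + 3\eps)n$, or $|A|$ is so large that, using $|X_1| > |X_2|$, the large-independent-set-in-the-bigger-side conclusion holds, or $A$ combines with isolated vertices of $X_2'$ to yield the balanced independent set conclusion. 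If $a = b = 0$, then $c \ge |S| + 2$; by (ii), $|G'| \ge c \cdot ((1/2 - 4\eps)n - |S|)$, which for $|S| < (1/2 - 4\eps)n$ forces $c \le 2 + O(\eps)$ once $n$ is large. Combined with $c \ge |S| + 2$ this gives $|S| = 0$ and $c = 2$; but then $G = G'$ is disconnected, contradicting $2$-connectedness.

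The main obstacle will be the asymmetric monopartite case, where three different conclusions could a priori apply and one has to pick the right one based on $|X_1| - |X_2|$, $|S_2|$, and the sizes of the monopartite components. Navigating between the thresholds $(1/4 - 4\eps)n$, $(1/2 - \eps)n$, and $(1/4 - 9\eps)n$ requires care, but all relevant inequalities carry $\Theta(\eps n)$ slack and should go through.
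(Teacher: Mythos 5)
Your component-type decomposition (monopartite vs.\ mixed) is a genuinely different organizing principle from the paper's, which first shows $\delta(G-S)$ must be small (dealing with $2$-connectedness there, not via the parity of Tutte's bound), derives $|S|\ge(1/4-2\eps)n$, and then cases on the sizes of $X_1'=X_1\setminus S$ and $X_2'=X_2\setminus S$. Your case $a=b=0$ is a nice observation and does close, though the intermediate claim ``$c\le 2+O(\eps)$ whenever $|S|<(1/2-4\eps)n$'' is not literally true (e.g.\ $|S|\approx n/4$ gives $c\lesssim 3$ from that inequality alone); what actually works is to combine $c\ge|S|+2$ with $n-|S|\ge c\bigl((1/2-4\eps)n-|S|\bigr)$, which forces $|S|=0$, $c=2$ for $|S|<(1/2-4\eps)n$, and to use the cruder bound $c\le(n-|S|)/2$ against $c>|S|$ when $|S|\ge(1/2-4\eps)n$.

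The genuine gap is in the asymmetric case $a\ge 1,\,b=0$. The ``union $A$ of the $X_1$-monopartite components'' is \emph{not} an independent set --- $G[X_1]$ can have internal edges, so those components need not be singletons --- and the third, fourth and fifth conclusions all demand independence. The patch is to take the isolated vertices of $G-S$ inside $X_1'$ (or one representative per odd component), which is what the paper does. But even then you must separately establish the Hall-obstruction bound $|N(A_1)|\le(1/4+3\eps)n$, which the paper gets by showing $|S|\le(1/4+3\eps)n$ via counting components (non-isolated vertices sit in components of size $\gtrsim\eps n$, so the number of components is at most $|A_1|+O(1/\eps)$); this step is entirely absent from your sketch and is not a triviality. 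Finally, your symmetric case $a,b\ge 1$ is aimed at the fifth conclusion, and there too the singleton count is the wrong tool: you get at most $(1/4-11\eps)n$ singletons on the smaller side, short of the required $(1/4-9\eps)n$. Taking one vertex per odd component gives $|A|\ge|S|\ge(1/2-4\eps)n$, and since $a,b\ge 1$ forces $|S_1|,|S_2|\ge(1/4-2\eps)n$, hence $|X_i'|\le(1/4+3\eps)n$, balance follows cleanly with $|A\cap X_i|\ge(1/4-7\eps)n$. So the skeleton is salvageable, but each case needs the right choice of independent set and the missing neighbourhood bound filled in.
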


\begin{proof}
Suppose that $G$ has no perfect matching.
It follows from Tutte's theorem, Theorem \ref{thm_tutte}, that there exists a set $S$ such that the number of odd components in $G' = G \setminus S$ is larger than $S$.

Denote $S_i = S \cap X_i$ and $X_i' = X_i \setminus S$.
If $\delta(G') \ge \eps n$, the number of components of $G'$ is at most $1/\eps$, implying that $|S| \le 1/\eps$.
In this case, the conditions on the graph $G[X_1, X_2]$ imply that either $G'$ is connected, contradiction our assumption on $S$, or $G'$ consists of two connected  components, implying that $|S| \le 1$, so $G$ is not $2$-connected.

We now assume that $\delta(G') \le \eps n$.
It follows that $|S| \ge (1/4 - 2\eps)n$.
Suppose that $|X_1'| \ge (1/4 + 3\eps)n$ and $X_2' \neq \emptyset$.
Denote by $A_1$ the set of isolated vertices in $X_1'$.
Then every vertex in $X_2' \cup (X_1' \setminus A_1)$ belongs to a component of $G'$ of size at least $\eps n$. Furthermore, $|A_1| \le (1/4 + 2\eps)n$.
It follows that the number of components is at most $|A_1| + 1/\eps \le (1/4 + 3\eps)m$, implying that $|S| \le (1/4 + 3\eps)m$.
But $|S| \ge (1/4 - 2\eps)n$, so $|A_1| \ge |S| - 1/\eps \ge (1/4 - 3\eps)n$.
Clearly, $N(A_1) \subseteq S$, and the third conditions of the lemma holds.
The case $|X_2'| \ge (1/4 + 3\eps)n$ and $X_1' \neq \emptyset$ follows similarly.

Suppose now that $X_2' = \emptyset$.
Note that $|S| < n/2$ because the number of component in $G'$ is at most $n - |S|$.
By our assumption, $X_2 \subseteq S$, so $(1/2 - \eps)n \le |X_2'| \le |S| \le n/2$.
Thus $G[X_1']$ consists of at least $(1/2 - \eps)n$ components, in particular it contains an independent set of size at least $(1/2 - \eps)n$, and the fourth condition holds.

Finally, we assume that $X_1', X_2'$ are non-empty and $|X_1'|, |X_2'| \le (1/4 + 3\eps)n$.
It follows that $|S| \ge (1/2 - 6\eps)n$ and $G'$ contains at least $|S|$ odd components, in particular $X_1' \cup X_2'$ contains an independent set $A$ of size at least $(1/2 - 6\eps)n$. Denote $A_i = X_i' \cap A$. Then $|A_i| \ge (1/4 - 9\eps)n$.

\end{proof}

\subsection{Hamilton cycles in bipartite graphs}
The following result is a stability version of a special case of Chv\'atal's theorem, Theorem \ref{thm_chvatal}, which we shall use in Section \ref{sec_red_graph_disconnected}, for the proof of Lemma \ref{lem_red_graph_disconnected_1} below.

\begin{lem}\label{lem_stability_dirac}
Let $\frac{1}{n} \ll \eps \ll 1$ and let $G$ be a balanced bipartite graph on $n$ vertices with bipartition $\{X_1, X_2\}$.
Suppose that $\delta(G)\ge (1/4 - \eps)n$ and between every two subsets of $X_1$ and $X_2$ of size at least $(1/4 - 3\eps)n$ there are at least $\eps n^2$ edges.
Then $G$ is Hamiltonian. Furthermore, there is a Hamilton path between every pair of points $x_1,x_2$ where $x_1 \in X_1$ and $x_2 \in X_2$.
\end{lem}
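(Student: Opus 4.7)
My plan is to apply Pósa's rotation-extension technique~\cite{posa}, suitably tailored to the bipartite setting. I first reduce to the stronger claim that for every $x_1 \in X_1$ and $x_2 \in X_2$ there is a Hamilton $x_1 x_2$-path; Hamiltonicity then follows by choosing any edge $x_1 x_2 \in E(G)$ (one exists by the density condition) and closing the Hamilton path into a cycle. Note that $n$ must be even, since $|X_1| = |X_2| = n/2$, which is precisely the parity required for such a path between vertices on opposite sides of the bipartition.

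Fix $x_1, x_2$ and let $P = v_1 v_2 \dots v_k$ be a longest path in $G$ starting at $v_1 = x_1$. Suppose for contradiction that $k < n$. Bipartite parity places $v_k$ on one side, say $v_k \in X_2$. Define the rotation closure $S \subseteq X_2 \cap V(P)$ to be the set of vertices $u$ such that some path in $G$ from $x_1$ to $u$ has vertex set $V(P)$; rotations in a bipartite graph preserve the parity of the endpoint's position along $P$, so $S \subseteq X_2$. Maximality of $P$ forces $N(u) \subseteq V(P) \cap X_1$ for every $u \in S$, and the bipartite form of Pósa's lemma gives $|S| \ge (1/8 - \eps) n$. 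To push $|S|$ past the density threshold $(1/4 - 3 \eps) n$, I iterate: each edge from $S$ to some $v_i \in V(P) \cap X_1$ produces, via one rotation, a new candidate endpoint $v_{i+1} \in X_2$ that joins $S$. Whenever $|S| < (1/4 - 3 \eps) n$ but $|V(P) \cap X_1| \ge (1/4 - 3 \eps) n$, the density hypothesis supplies $\ge \eps n^2$ edges between $S$ and $V(P) \cap X_1$, driving many such rotations; after a bounded number of rounds either $|S| \ge (1/4 - 3 \eps) n$, or some $u \in S$ has a neighbor outside $V(P)$, contradicting maximality of $P$.

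Once $|S| \ge (1/4 - 3 \eps) n$, if $|X_1 \setminus V(P)| \ge (1/4 - 3 \eps) n$ the density condition yields an edge from $S$ to $X_1 \setminus V(P)$, again contradicting maximality. Otherwise $V(P)$ nearly covers $X_1$ and, by the balance $|X_1| = |X_2|$, also nearly covers $X_2$; in this extremal regime a closure-and-reopen argument — using density once more to find a cross-edge routing around the few uncovered vertices — produces the contradiction. Finally, given a Hamilton path from $x_1$ to some $v_n \in X_2$, I run one further rotation pass, using density at $v_n$ together with $N(x_2)$, to shift the right endpoint to $x_2$. The main obstacle is the bootstrapping step: since $\delta(G) \ge (1/4 - \eps) n$ sits just below $|X_i|/2$, Pósa's rotation lemma alone yields only $|S| \sim n/8$, well below the density threshold $(1/4 - 3 \eps) n$. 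Bridging this gap via iterated rotations fuelled by the density hypothesis, and then performing the delicate final endpoint-shifting step, form the technical heart of the proof.
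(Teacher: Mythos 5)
Your approach (Pósa rotation-extension) is genuinely different from the paper's, which instead observes that the density hypothesis makes $G$ $(\eps,2)$-weakly robust, builds an absorbing path via Lemma~\ref{lem_absorbing_paths}, applies the Regularity Lemma to the remainder, produces a connected perfect matching in the reduced graph via Lemma~\ref{lem_hall}, converts it to a long cycle through the absorbing path via Lemma~\ref{lem_connected_matching}, and then absorbs the leftover. That said, your argument as written has a real gap in the bootstrapping step.

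You define $S$ as the full rotation closure of the endpoint (with $v_1 = x_1$ held fixed), correctly observe that the bipartite Pósa lemma gives only $|S| \gtrsim (1/8 - \eps)n$, and then claim to ``iterate'': edges from $S$ to positions $v_i \in V(P)\cap X_1$ ``produce a new candidate endpoint $v_{i+1}$ that joins $S$,'' with the density condition driving many such rotations. But this cannot grow $S$: by the very definition of the rotation closure, if $u \in S$ has a neighbour $v_i$ on (some rotated version of) the path, then the resulting endpoint $v_{i\pm 1}$ is \emph{already} in $S$. The density condition does guarantee $\ge \eps n^2$ edges from $S$ into $V(P)\cap X_1$ (once both sets exceed $(1/4-3\eps)n$), but all of those edges land in the Pósa boundary $N_P(S)$, whose rotation-successors are already counted. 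Nothing in the argument forces $|S|$ past the $\approx n/8$ barrier, which is half the $(1/4-3\eps)n$ threshold you need in order to apply the density hypothesis directly to $S$; and you cannot apply the density hypothesis to a set $S$ that is still too small to meet its size requirement. Some additional structural idea is needed (for instance, a cycle-closing/reopening argument once $N(v_1)\cap S \neq \emptyset$, together with an analysis of the case $N(v_1)\cap S = \emptyset$), and none is supplied. The final step --- ``one further rotation pass to shift the right endpoint to $x_2$'' --- has the same problem: rotations from a Hamilton path do not automatically reach a prescribed $x_2\in X_2$, and you would have to re-run the whole (currently incomplete) bootstrapping argument to show the rotation closure of the Hamilton path contains $x_2$.
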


We prove this result by a relatively simple application of the absorbing method of of R\"odl, Ruci\'nski and Szemer\'edi \cite{luczak_rodl_szemeredi}.
In fact, all we need for this proof is Lemma \ref{lem_absorbing_paths}, asserting the existence of short absorbing paths in robust subgraphs, and the Regularity Lemma.
A graph $G$ as in Lemma \ref{lem_stability_dirac} is $(\eps, 2)$-weakly robust, thus by Lemma \ref{lem_absorbing_paths}, it is possible to find an absorbing path $P$ in $G$. 
We consider the reduced graph $\Gamma$, obtained from applying the Regularity Lemma on the graph $G \setminus V(P)$.
We deduce from the conditions of the lemma that $\Gamma$ has an almost perfect matching, implying that $G$ contains a cycle extending $P$ and spanning almost all vertices. The remaining vertices may be absorbed by $P$.

\begin{proof}[Proof of Lemma \ref{lem_stability_dirac}]

It is easy to check from the conditions that $G$ is $(\eps, 2)$ weakly robust with bipartition $\{X_1, X_2\}$.
Indeed, let $x_1 \in X_1, x_2 \in X_2$.
Recall that $|\con_{G,2}(x_1, x_2)|$ is the number of paths of length three in $G$ between $x_1$ and $x_2$. Then $|\con_{G,2}(x_1,x_2)|$ is the number of edges between $N(x_1) \setminus \{x_2\}$ and $N(x_2) \setminus \{x_1\}$, which, by the assumptions in the lemma is at least $\eps n^2$.

It follows from Lemma \ref{lem_absorbing_paths}, that for $\rho > 0$ is small enough, there exists a $\rho^2 n$-absorbing path $P$ in $G$ of length at most $\rho n$.
Namely, if $W\subseteq V(G) \setminus V(P)$ is such that $|W \cap X_1| = |W \cap X_2| \le \rho^2 n$, then $G[V(P) \cup W]$ contains a Hamilton path with the same ends as $P$.
Note that we may assume for convenience that $P$ has one end in $X_1$ and the other in $X_2$.

Pick $\eta > 0$ suitably small.
Apply the regularity lemma, Lemma \ref{lem_regularity}, with the graph $G \setminus V(P)$, the bipartition $\{X_1', X_2'\}$, where $X_i' = X \setminus V(P)$ and parameter $\eta$.
Let $G'$ be the subgraph of $G$ promised by Lemma \ref{lem_regularity} and let  $\{V_0, \ldots, V_m \}$ be the given partition.
Denote by $m_i$ the number of parts $V_j$ (where $j \ge 1$) which are contained in $X_i$ (so $m = m_1 + m_2$).
Without loss of generality, we assume that $m_1 \ge m_2$.
\begin{claim}
$m_1 - m_2 \le 2\eta m$.
\end{claim}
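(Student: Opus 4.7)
My plan is to prove this by exploiting two facts: that $G$ is bipartite so the absorbing path $P$ contributes equally to both sides, and that the Regularity Lemma partition has equal-size clusters, each contained in one of $X_1', X_2'$.

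First I would observe that since $P$ is a path in a bipartite graph $G$ with bipartition $\{X_1, X_2\}$ and one end in each part, $P$ must alternate between $X_1$ and $X_2$ and contain an equal number of vertices from each side. Therefore $|X_1'| = |X_2'|$, where $X_i' = X_i \setminus V(P)$.

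Next I would use the refinement property from the Regularity Lemma (condition (R2), invoked with cover $\{X_1', X_2'\}$): every $V_j$ with $j \ge 1$ lies entirely in $X_1'$ or entirely in $X_2'$, and all these clusters have the common size $|V_1|$. Writing the two sides in two ways gives
\begin{equation*}
|X_i'| = m_i |V_1| + |V_0 \cap X_i'| \quad \text{for } i = 1,2,
\end{equation*}
and subtracting yields $(m_1 - m_2)|V_1| = |V_0 \cap X_2'| - |V_0 \cap X_1'|$, so $(m_1 - m_2)|V_1| \le |V_0| \le \eta n$.

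Finally I would lower bound $|V_1|$. Since $m|V_1| = n - |V_0| - |V(P)| \ge (1 - \eta - \rho)n$, we have $|V_1| \ge (1-\eta-\rho)n/m$, and hence
\begin{equation*}
m_1 - m_2 \le \frac{\eta n}{|V_1|} \le \frac{\eta m}{1 - \eta - \rho} \le 2\eta m,
\end{equation*}
provided $\eta$ and $\rho$ are chosen suitably small, which is the standing assumption. There is no real obstacle here; the only thing to watch is that the cover $\{X_1', X_2'\}$ is passed to Lemma \ref{lem_regularity} so that (R2) forces each cluster into one side, and that the absorbing path $P$ is arranged to have endpoints in different parts as already noted in the paragraph preceding the claim.
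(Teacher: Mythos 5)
Your proof is correct and follows the same route the paper takes: use $|X_1'| = |X_2'|$ (since $P$ alternates and $G$ is balanced), the refinement property of Lemma \ref{lem_regularity} to split the clusters between the two sides, and the bound $(m_1 - m_2)|V_1| \le |V_0|$ combined with a lower bound on $|V_1|$. You simply spell out the intermediate identity $|X_i'| = m_i|V_1| + |V_0 \cap X_i'|$ and the lower bound on $|V_1|$ in a bit more detail than the paper does, but the argument is the same.
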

\begin{proof}
Denote by $n' = |V(G')| = n - |P|$.
Recall that $|V_0| \le \eta n'$, and $|V_1| = \ldots = |V_m| \ge \frac{(1 - \eta) n'}{m}$.
Since $|X_1'| = |X_2'|$, we obtain $(m_1 - m_2) |V_1| \le |V_0| \le \eta n'$.
It follows that $m_1 - m_2 \le \frac{\eta n'}{|V_1|} \le \frac{\eta m}{1 - \eta} \le 2\eta m$.
\end{proof}

Let $\Gamma$ be the $(\eta, 4\eta)$-reduced graph defined by the $m_2$ clusters $V_i$ contained in $X_2$ and by some $m_2$ clusters contained in $X_1$.
Denote by $\{Y_1, Y_2\}$ the bipartition of $\Gamma$.
\begin{claim}
$\Gamma$ is a balanced bipartite graph on $m' = 2m_2$ vertices with
$\delta(\Gamma) \ge (1/4 - 2\eps) m'$.
Furthermore, for every choice of subsets $A_i\subseteq Y_i$, where $|A_1|, |A_2| \ge (1/4 - 2\eps)m$, we have $e(\Gamma[A_1, A_2]) \ge \frac{\eps}{2} (m')^2$.
\end{claim}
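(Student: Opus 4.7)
The plan is to transfer the degree and density conditions from $G$ to $\Gamma$ using property (R4) of the Regularity Lemma together with a standard cluster-counting argument. The balanced-bipartite assertion $|Y_1| = |Y_2| = m_2$ is immediate from the construction: we take all $m_2$ clusters contained in $X_2'$ for $Y_2$, and some $m_2$ of the $m_1 \ge m_2$ clusters in $X_1'$ for $Y_1$, so $m' = 2m_2$.

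For the minimum degree, fix $V_i \in Y_1$ (the case $V_i \in Y_2$ is symmetric) and let $d = d_\Gamma(V_i)$. For every $v \in V_i \subseteq X_1$, all neighbours in $G$ lie in $X_2$; subtracting the at most $\rho n$ vertices of $V(P)$ and the $(2 \cdot 4\eta + \eta)n = 9\eta n$ loss from (R4) yields $\deg_{G'}(v, X_2') \ge (1/4 - \eps - \rho - 9\eta)n$, and since every cluster of $X_2'$ belongs to $Y_2$, these neighbours all lie in $\bigcup_{V_j \in Y_2} V_j$. Summing over $v \in V_i$, and bounding the number of $G'$-edges between $V_i$ and a cluster $V_j \in Y_2$ by $|V_1|^2$ when $V_iV_j \in E(\Gamma)$ and by $4\eta |V_1|^2$ (the density threshold) otherwise, gives
\[
(1/4 - \eps - \rho - 9\eta)n \cdot |V_1| \;\le\; |V_1|^2 (d + 4\eta m_2).
\]
Dividing by $|V_1|^2$ and using $n/|V_1| \ge m \ge m'$ together with $4\eta m_2 = 2\eta m'$ gives $d \ge (1/4 - \eps - \rho - 11\eta)m' \ge (1/4 - 2\eps)m'$ once $\eta, \rho \ll \eps$.

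For the density statement, let $A_i \subseteq Y_i$ with $|A_i| \ge (1/4 - 2\eps)m$ and set $U_i = \bigcup_{V_j \in A_i} V_j \subseteq X_i$. Since $|V_1| \ge (1 - O(\eta))n/m$, we have $|U_i| \ge (1/4 - 3\eps)n$, so by the density hypothesis on $G$, $e_G(U_1, U_2) \ge \eps n^2$, and property (R4) then gives $e_{G'}(U_1, U_2) \ge (\eps - 9\eta)n^2$. On the other hand, bounding every non-edge of $\Gamma[A_1, A_2]$ by $4\eta |V_1|^2$ edges and every edge trivially by $|V_1|^2$ edges, and using $|A_1||A_2| \le (m'/2)^2$, gives
\[
e_{G'}(U_1, U_2) \;\le\; |V_1|^2 \bigl(e(\Gamma[A_1, A_2]) + \eta (m')^2\bigr).
\]
Dividing by $|V_1|^2$ and using $n^2/|V_1|^2 \ge m^2 \ge (m')^2$ yields $e(\Gamma[A_1, A_2]) \ge (\eps - 10\eta)(m')^2 \ge (\eps/2)(m')^2$ for $\eta \ll \eps$. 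The only ``obstacle'' here is bookkeeping: one must fix $\rho, \eta \ll \eps$ at the outset so that all the $O(\eta)$ losses --- from (R4), from removing $V(P)$, and from the $4\eta$ density threshold --- are absorbed into the $\eps$-slack; the counting itself is entirely routine.
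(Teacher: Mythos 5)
Your proposal is correct and spells out precisely the calculation the paper elides; the paper's ``proof'' of this claim is the single sentence ``Both parts of the claim follow from Condition (R4) in Lemma~\ref{lem_regularity} and the definition of~$\Gamma$ as long as $\eta$ and $\rho$ are small enough,'' so your argument is the expected one. Two small bookkeeping points you brush past: (i) in the degree count you implicitly discard neighbours of $v$ landing in $V_0$ (at most $\eta n'$ of them), so an extra $-\eta n$ term should appear; (ii) the case $V_i \in Y_2$ is not exactly symmetric, because $Y_1$ consists of only $m_2$ of the $m_1$ clusters in $X_1'$, so one must also subtract $(m_1 - m_2)|V_1| \le 2\eta n'$ using the earlier claim that $m_1 - m_2 \le 2\eta m$. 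Both corrections are of order $O(\eta n)$ and are absorbed by the same $\eps$-slack, so they do not affect the conclusion.
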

\begin{proof}
Both parts of the claim follow from Condition \ref{itm_reg_deg} in Lemma \ref{lem_regularity} and the definition of $\Gamma$ as long as $\eta$ and $\rho$ are small enough.
\end{proof}

It follows from Lemma \ref{lem_hall}, that $\Gamma$ contains a perfect matching, which is a connected matching as $\Gamma$ is connected.
By Lemma \ref{lem_connected_matching}, we obtain a cycle $C$ in $G$, containing the path $P$ and spanning all but at most $6\eta n \le \rho^2 n$ vertices.
Denote $W = V(G) \setminus V(C)$.
Since $G$ is a balanced bipartite graph, we have $|W \cap X_1| = |W \cap X_2| \le \rho^2 n$.
It follows from the absorbing property of $P$ that the vertices of $W$ may be absorbed into $P$ and thus into $C$ to obtain a Hamilton cycle.

It is easy to modify the proof to obtain a Hamilton path between any given vertices $x_i \in X_i$. 

\end{proof}

\subsection{Monochromatic cycle partitions in $2$-coloured graphs with the red graph almost disconnected}

The following two lemmas, Lemmas \ref{lem_red_graph_disconnected_1} and \ref{lem_red_disconnected_2}, state that a $2$-coloured graph $G$ has the desired partition into a red cycle and a blue one if $G$ admits some restrictive structural property.
We shall use these results several times in the following sections and delay their proofs to the ends of the paper.
We prove Lemma \ref{lem_red_graph_disconnected_1} in Section \ref{sec_red_graph_disconnected} and Lemma \ref{lem_red_disconnected_2} in Section \ref{sec_red_disconnected_2}.
\begin{lem} \label{lem_red_graph_disconnected_1}
Let $\frac{1}{n} \ll \eps \ll 1$ and let $G$ be a graph on $n$ vertices with $\delta(G)\ge 3n/4$ and a $2$-colouring $E(G) = E(G_B) \cup E(G_R)$.
Suppose that $S, T \subseteq V(G)$ satisfy the following conditions.
\begin{itemize}
\item
$S,T$ are disjoint and $|S|, |T| \ge (1/2-\eps)n$.
\item
$\delta(G_B[S,T]) \ge (1/4 - \eps) n$.
\item
For every $S'\subseteq S, T' \subseteq T$ with $|S'|, |T'| \ge (1/4 - 100\eps) n$, we have $e(G_B[S', T']) \ge 25 \eps n^2$.
\end{itemize}
Then $V(G)$ may be partitioned into a red cycle and a blue one.
\end{lem}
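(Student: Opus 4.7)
Let $U = V(G) \setminus (S \cup T)$, so $|U| \le 2\eps n$, and assume without loss of generality that $|S| \ge |T|$; write $k = |S| - |T| \le 2\eps n$. The plan is to pick a small set $V_R \subseteq V(G)$ satisfying $|V_R \cap S| - |V_R \cap T| = k$ on which $G_R$ contains a Hamilton cycle (the red cycle), so that $V_B := V(G) \setminus V_R$ intersects $S$ and $T$ in sets of equal size; a blue Hamilton cycle on $V_B$ is then supplied by Lemma~\ref{lem_stability_dirac} applied to $G_B[S \setminus V_R, T \setminus V_R]$, after inserting any leftover vertices of $U$ into it. The hypothesis $\delta(G) \ge 3n/4$ together with $|S|, |T| \ge (1/2-\eps)n$ yields $d(v, S), d(v, T) \ge (1/4 - 3\eps)n$ for every $v \in V(G)$, and similarly $d(s, S) \ge (1/4 - 3\eps)n$ for each $s \in S$ (and analogously for $T$).

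I classify each $u \in U$ as \emph{blue-absorbable} if both $d_B(u, S) \ge \eta n$ and $d_B(u, T) \ge \eta n$ for some parameter $\eta \gg \eps$, and \emph{red-heavy} otherwise; for a red-heavy $u$ with, say, $d_B(u, T) < \eta n$, one gets $d_R(u, T) \ge (1/4 - 3\eps - \eta)n$, so $u$ has many red neighbours in $T$. Blue-absorbable vertices go into $V_B$: for each such $u$ I fix blue neighbours $s_u \in S$, $t_u \in T$ and arrange that the blue cycle produced by Lemma~\ref{lem_stability_dirac} traverses the edge $s_u t_u$, which is then replaced by the path $s_u u t_u$. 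Red-heavy vertices go into $V_R$ together with a few balancing vertices from $S$ (or $T$): the red cycle threads each red-heavy $u$ between two of its red neighbours on its heavy side and links these pieces by further red edges, terminating in a red balancing sub-path inside $S$ of length chosen so that $|V_R \cap S| - |V_R \cap T| = k$. Since $|V_R| = O(\eps n)$ and the density hypothesis carries the factors $25$ and $100$, the minimum-degree and density hypotheses of Lemma~\ref{lem_stability_dirac} are preserved by $G_B[S \setminus V_R, T \setminus V_R]$ with room to spare.

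The main obstacle is the construction of the red Hamilton cycle on $V_R$ while simultaneously enforcing the balance $|V_R \cap S| - |V_R \cap T| = k$. The hypotheses only control blue edges within $G_B[S, T]$, so red edges within $S$, within $T$, and at $U$ are essentially unconstrained. This calls for a case analysis on where the red-heavy vertices locate their red neighbours (mostly in $S$, mostly in $T$, or mixed) and on how many red edges live inside $S$ and $T$. In degenerate sub-cases---for example, $G[S]$ being entirely blue so that no red balancing path exists inside $S$---one must reclassify a few borderline $U$-vertices, shift the balance through additional blue insertions, or exploit red $S$-$T$ edges guaranteed by the slack in the hypotheses. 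A secondary technical point is to engineer the Hamilton cycle produced by Lemma~\ref{lem_stability_dirac} to pass through the pre-specified edges $\{s_u t_u\}$ needed to absorb the blue-absorbable vertices of $U$; this can be arranged using the Hamilton-path version of that lemma, routing the cycle through the chosen vertex pairs.
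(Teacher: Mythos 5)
Your overall strategy matches the paper's: set aside a short red cycle on the ``leftover'' vertices plus a few balancing vertices, so that the remaining graph is an (almost) balanced bipartite graph on $S \times T$ to which a stability version of Chv\'atal's theorem (Lemma~\ref{lem_stability_dirac}) applies. However, the proposal is a plan, not a proof, and the places where it stops are precisely where all the work in the paper lies.

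First, a concrete technical gap in the blue absorption step. You insert a blue-absorbable $u\in U$ by routing the blue Hamilton cycle through an edge $s_ut_u$ with $s_u\in N_B(u)\cap S$, $t_u\in N_B(u)\cap T$, and replacing it with $s_uut_u$. But the hypotheses do not guarantee that any such blue edge $s_ut_u$ exists. Each $s\in N_B(u)\cap S$ has $d_B(s,T)\geq (1/4-\eps)n$, but since $|N_B(u)\cap T|\approx\eta n\ll n/4$, these neighbourhoods can entirely miss $N_B(u)\cap T$. The paper sidesteps this by first greedily building one short blue path in $G_B[S_1,T_1]$ through all the absorbed vertices (using the high blue degree they have into the opposite side, rather than a common blue edge), and only then invoking Lemma~\ref{lem_stability_dirac} on what remains; see the proof of Claim~\ref{claim_leftover_blue_grpah_ham}.

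Second, and more importantly, the construction of the red cycle with the required balance $|V_R\cap S|-|V_R\cap T|=k$ is exactly the hard part, and your plan explicitly leaves it unresolved: ``This calls for a case analysis\dots In degenerate sub-cases one must reclassify a few borderline $U$-vertices, shift the balance, or exploit red $S$--$T$ edges.'' Note that your suggested escape route of ``red $S$--$T$ edges guaranteed by the slack in the hypotheses'' is not available: the hypotheses constrain only blue degrees and blue density in $G[S,T]$, and $G_B[S,T]$ may in fact be complete bipartite, leaving no red $S$--$T$ edges at all. The paper handles this via Claim~\ref{claim_existence_even_cycles} (Erd\H{o}s--Gallai forces either a blue path of length $k$ or a red cycle of length $k$ inside $S_1$), Bondy's pancyclicity theorem when $S_1$ is blue-edge-free, and a genuinely intricate analysis when two leftover vertices must be threaded into a red cycle (Cases 3--5, including the chain of assumptions (\ref{eqn_no_large_path_forest})--(\ref{eqn_A'_B'_have_no_common_red_neighs}) leading to a contradiction). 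These are not interchangeable options one reaches for depending on the sub-case---they are specific, nontrivial structural arguments, and without carrying them out the statement is not established.
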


\begin{lem}\label{lem_red_disconnected_2}
Let $\frac{1}{n} \ll \eps \ll 1$ and let $G$ be a graph on $n$ vertices with $\delta(G)\ge 3n/4$ and a $2$-colouring $E(G) = E(G_B) \cup E(G_R)$.
Suppose that there exists a partition $\{S, T, X\}$ of $V(G)$ with the following properties.
\begin{itemize}
\item
$|S|, |T| \ge (1/2 - \eps)n$.
\item
$|X| \le 2$ and if $|X| = 2$, there exists $u \in X$ such that $\deg_R(x, S) \le \eps n$ or $\deg_R(x, T) \le \eps n$.
\item
The sets $S$ and $T$ belong to different components of $G_R \setminus X$.
\end{itemize}
Then $V(G)$ may be partitioned into a red cycle and a blue one.
\end{lem}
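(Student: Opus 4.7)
Since $S$ and $T$ lie in different components of $G_R\setminus X$, every edge of $G$ with one end in $S$ and the other in $T$ must be blue. Combined with $\delta(G)\ge 3n/4$ and the size bounds $(1/2-\eps)n\le |S|,|T|\le (1/2+\eps)n+2$ (which follow from $|X|\le 2$), this gives $\delta(G_B[S,T])\ge (1/4-2\eps)n$ on both sides. The plan is to reduce the problem, whenever possible, to Lemma \ref{lem_red_graph_disconnected_1}, which handles essentially the case $X=\emptyset$ under an additional bipartite-density hypothesis on $G_B[S,T]$.

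First I would absorb $X$ into the construction. If $X=\emptyset$ there is nothing to do. If $|X|=1$, say $X=\{u\}$, the bound $\deg_G(u,S\cup T)\ge 3n/4-1$ forces $u$ to have many blue neighbours on at least one of $S$ or $T$, so $u$ can be treated as a vertex of the opposite side when constructing the blue cycle. For $|X|=2$, write $X=\{u,v\}$ where, WLOG, $\deg_R(u,S)\le\eps n$. If $uv\in E(G_R)$, the edge $uv$ itself serves as the red cycle (recall that an edge counts as a cycle) and the task reduces to finding a blue Hamilton cycle on $V(G)\setminus X$. Otherwise, $\deg_R(u,S)\le\eps n$ and $\deg_G(u)\ge 3n/4$ together yield $\deg_B(u,S)\ge (1/4-2\eps)n$, so $u$ can be absorbed into the blue cycle as a $T$-side vertex; an analogous argument using the small number of non-neighbours of $v$ lets me place $v$ either on one side of the blue cycle or as a singleton red cycle.

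Having handled $X$, what remains is a partition of (essentially) $S\cup T$, where $G_B[S,T]$ has minimum degree at least $(1/4-O(\eps))n$ on each side. I split according to whether the density condition of Lemma \ref{lem_red_graph_disconnected_1} holds. If it does, Lemma \ref{lem_red_graph_disconnected_1} applies directly (with small bookkeeping for the absorbed $X$). If not, there exist $A\subseteq S,\ B\subseteq T$ with $|A|,|B|\ge (1/4-100\eps)n$ and $e_B(A,B)<25\eps n^2$; since all $S$-$T$ edges are blue, $e_G(A,B)<25\eps n^2$ as well. A standard double counting, using that each $v\in A$ has at most $n/4$ non-neighbours in $G$, then forces $|A|,|B|=n/4\pm O(\sqrt\eps)n$ and $|S\setminus A|,|T\setminus B|=n/4\pm O(\sqrt\eps)n$, and shows that every vertex of $A$ is adjacent in $G$ to almost all vertices of $V(G)\setminus B$ (and symmetrically for $B$). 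In this extremal configuration, the two halves $G_B[A,T\setminus B]$ and $G_B[S\setminus A,B]$ are each almost complete bipartite, so by Corollary \ref{cor_chvatal_bip} each contains a Hamilton path; these can be linked via the dense blue $S\setminus A$-to-$T\setminus B$ edges to yield a blue Hamilton cycle on $S\cup T$, with $X$ inserted according to the degree arguments above.

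The main obstacle will be the density-fails subcase, where one must simultaneously handle a possible imbalance $|S|\ne|T|$, the two vertices of $X$, and the routing of the blue Hamilton cycle so as to avoid the near-empty $A$-$B$ block. Balancing $|S|$ and $|T|$ may require placing a few vertices of $S$ or $T$ into the red cycle, which in turn requires finding short red cycles inside $G_R[S]$ or $G_R[T]$; those subgraphs need not be very dense, so care is needed. The special degree condition on $u\in X$ is crucial precisely here, as it rules out the pathological configurations where neither the bulk blue cycle nor a compensating red cycle can be safely closed.
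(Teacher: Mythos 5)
Your high-level framework agrees with the paper's: reduce to Lemma \ref{lem_red_graph_disconnected_1} when the blue density condition holds, and otherwise exploit the extremal structure (the "sparse pair" $A,B$ with its two dense blue quadrants). That reduction is correct, and so is the double counting showing $|A|,|B| = n/4 \pm O(\sqrt\eps)n$ and that $G_B[A, T\setminus B]$, $G_B[S\setminus A, B]$ are almost complete bipartite. However, the crux of the lemma is exactly the part you flag as "the main obstacle" and do not resolve, and it is a genuine gap, not a routine deferral.

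The obstruction is balance. Let $k = |S|-|T|$; from $|S|,|T|\ge (1/2-\eps)n$ and $|X|\le 2$ you only get $0 \le k \le 2\eps n + 2$, which can be linear in $n$. A blue Hamilton cycle on $S\cup T$ built as you describe — a Hamilton path in $G_B[A,T\setminus B]$, a Hamilton path in $G_B[S\setminus A,B]$, linked through the dense $(S\setminus A)$-$(T\setminus B)$ pair — must alternate between the $S$-side and $T$-side on every edge that crosses, so it can only close up if the two sides balance out. To fix this one must remove roughly $k$ vertices from $S$ into the red cycle, and those vertices have to be arranged as an actual red cycle (or a red path closable through $y$). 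But the hypotheses give no density control over $G_R[S]$ or $G_R[T]$: these subgraphs could be almost entirely blue, or almost entirely red, or split. The paper handles this with a separate case analysis (via Erd\H{o}s--Gallai, Bondy's pancyclicity theorem, and a Hall-type matching argument in Claim \ref{claim_paths_S2_T1}), distinguishing whether $G_B[S_1]$, $G_B[S_1,S_2]$, $G_R[S]$, etc.\ contain long paths, and producing either a blue path of length $\approx k$ (absorbed into the blue Hamilton cycle) or a red cycle of length $\approx k$. Moreover, since the Hamilton paths must avoid the sparse $A$-$B$ block, even after you have the red cycle, you also need to match the \emph{quadrant-level} imbalances $|A| - |T\setminus B|$ and $|S\setminus A| - |B|$ separately (the paper's subcases 1.1--1.4 on the signs of $|S_i|-|T_i|$), and in the nearly-balanced regimes the argument depends on parity and on finding a $2$-matching across the quadrants, which is non-trivial precisely when the quadrant sizes straddle $n/4$.

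The treatment of $X$ is likewise understated. Your observation that a red edge $uv \in X$ gives a free red cycle is fine but does not help, because a blue Hamilton cycle on $V(G)\setminus X$ still faces the same imbalance. The paper instead pushes one or both vertices of $X$ into $S$ or $T$ according to their red-degree profile, reducing to $X=\emptyset$ or $X=\{y\}$ with $\deg_B(y)\le n/16$, and then in the $\{y\}$ case (subcases 2.1--2.6) crucially uses $y$'s red neighbourhoods $A_i = N_R(y,S_i)$, $B_i = N_R(y,T_i)$ to close a red cycle through $y$ with a prescribed number of vertices on each side. This is where the lemma's hypothesis on one vertex of $X$ having small red degree toward one side is actually consumed, and your sketch does not use it beyond the initial reduction. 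In short: your plan is the right plan, but the parts you label as obstacles are the actual content of the proof, and they need the detailed constructions the paper carries out.
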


This concludes the preliminary material needed for the proof of Theorem \ref{thm_main}.
We are now finally ready to turn to the heart of the proof.

\section{Rough structure}\label{sec_rough_struct}
In this section we make the first step towards our proof of Theorem \ref{thm_main}. 
We use the Regularity Lemma, Lemma \ref{lem_regularity}, to obtain information about the rough structure.

\begin{lem}\label{lem_rough_structure}
Let $\frac{1}{n} \ll \alpha, \frac{1}{k} \ll \eps \ll 1$ and let $G$ be a graph with $\delta(G)\ge 3n/4$.
Let $E(G) = E(G_B) \cup E(G_R)$ be a $2$-colouring of $G$.
Then one of the following assertions holds, where a robust component refers to an $(\alpha, k)$-robust component, possibly with the roles of red and blue reversed.

\begin{enumerate}
\item\label{itm_struct_spanning_non_bip}
There exists a monochromatic strongly robust blue component on at least $(1-\eps)n$ vertices.
\item\label{itm_struct_spanning_blue_bip_large_red}
There exists a weakly robust blue component of order at least $(1 - \eps / 4)n$ and a red strongly robust component of order at least $(1/2 + \eps / 2) n$.
\item\label{itm_struct_spanning_blue_bip_almost_balanced}
There exists a weakly robust blue component with bipartition $\{X_1,X_2\}$ where $|X_1|,|X_2|\ge (1/2-\eps)n$ and for each $i \in [2]$, $e(G_B[X_i]) \le \eps n^2$ and one of the following holds.

\begin{enumerate}
\item \label{itm_struct_blue_bip_balanced_red_strongly_robust}
$G_R[X_i]$ is strongly robust.

\item \label{itm_struct_blue_bip_balanced_red_weakly_robust}
$G_R[X_i]$ is weakly robust with bipartition $\{Y_{i, 1}, Y_{i, 2}\}$ satisfying $|Y_{i, j}| \ge (1/4 - \eps)n$ and $e(G_R[Y_{i, j}]) \le \eps n^2$.

\item \label{itm_struct_blue_bip_balanced_two_red}
There exists a partition $\{Y_{i, 1}, Y_{i, 2}\}$ of $X_i$ such that $|Y_{i, j}| \ge (1/4 - \eps)n$, $G_R[Y_{i, j}]$ is strongly robust and $e(G_R[Y_{i, 1}, Y_{i, 2}]) \le \eps n^2$.
\end{enumerate}

Furthermore, if Condition (\ref{itm_struct_blue_bip_balanced_red_weakly_robust}) holds for $i \in [2]$, then in addition $e(G_R[Y_{1, j}, Y_{2, j}]) \le \eps n^2$ for $j \in [2]$.

\item\label{itm_struct_two_largish_compts}
There exist a blue strongly robust component and a red robust component, each has order at least $(3/4 - \eps)n$ and together the span all but at most $\eps n$ of the vertices.

\item\label{itm_struch_only_midsized_compts}
There exist sets $X_1, X_2, Y_1, Y_2$ of order at least $(1/2 - \eps)n$ such that
\begin{itemize}
\renewcommand\labelitemi{--}
\item
$X_1, X_2$ are disjoint, $Y_1, Y_2$ are disjoint and $X_1 \cup X_2 = Y_1 \cup Y_2$.
\item
$|X_i \cap Y_j| \ge (1/4 - \eps)n$ for $i, j \in [2]$.
\item
$G_B[X_i]$ is robust and $G_R[Y_i]$ is strongly robust.
\end{itemize}
\end{enumerate}
\end{lem}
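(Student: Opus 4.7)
Apply Lemma \ref{lem_regularity} to $G$ with parameters $\eps', d$ chosen so that $1/n, 1/k, \alpha \ll d, \eps' \ll \eps$ and $d \ge 6\eps'$, obtaining a reduced graph $\Gamma$ on $m$ vertices with $\delta(\Gamma) \ge (3/4 - O(\eps'))m$ by Observation \ref{obs_min_deg_reduced_graph}. Lemmas \ref{lem_reduced_components} and \ref{lem_many_reduced_components} translate connected subgraphs of $\Gamma_B, \Gamma_R$ into monochromatic robust subgraphs of $G$ of comparable size, where weak robustness corresponds to bipartite subgraphs and strong robustness to non-bipartite ones. The task thus reduces to a structural analysis of monochromatic components of $\Gamma$. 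Let $B^*, R^*$ be largest blue and red components of $\Gamma$, and, up to swapping colours, assume $|B^*| \ge |R^*|$.

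\textbf{Case 1: $|B^*| \ge (1-\eps/2)m$.} If $B^*$ is non-bipartite, Lemma \ref{lem_reduced_components} immediately gives Assertion \ref{itm_struct_spanning_non_bip}. Otherwise $B^*$ is bipartite with bipartition $\{A_1,A_2\}$, and Lemma \ref{lem_reduced_components} produces a weakly robust blue subgraph with sides $X_1, X_2$ lifting $A_1, A_2$; the bipartite structure of $B^*$ then forces $e(G_B[X_i])\le \eps n^2$. If $\min(|A_1|,|A_2|) < (1/2-\eps/2)m$, vertices on the larger side have red degree in $\Gamma$ at least $(1/4+\eps/2)m$, so they lie in a common red component of size $\ge (1/2+\eps/2)m$, and Lemma \ref{lem_reduced_components} yields Assertion \ref{itm_struct_spanning_blue_bip_large_red}. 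Otherwise $|A_i| \ge (1/2-\eps/2)m$, so $|X_i| \ge (1/2-\eps)n$; since $\Gamma_R[A_i]$ has minimum degree $\ge (1/4-O(\eps))m$, it admits at most two large components. Applying Lemma \ref{lem_many_reduced_components} to the red components inside $A_1$ and $A_2$ simultaneously, we land in sub-case (a), (b), or (c) of Assertion \ref{itm_struct_spanning_blue_bip_almost_balanced} according to whether $\Gamma_R[A_i]$ has one non-bipartite large component, one bipartite large component, or two large components partitioning $A_i$. The additional condition $e(G_R[Y_{1,j},Y_{2,j}])\le \eps n^2$ in sub-case (b) follows by aligning the bipartitions of $\Gamma_R[A_1]$ and $\Gamma_R[A_2]$ via the global min-degree bound.

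\textbf{Case 2: $|B^*| < (1-\eps/2)m$} (so also $|R^*| < (1-\eps/2)m$). For any $v \notin V(B^*)$, its blue degree is at most $m - |B^*| - 1$, so its red degree is at least $|B^*| - (1/4+O(\eps'))m$; this forces $|R^*| \ge |B^*| - (1/4+O(\eps))m$, and symmetrically. Hence both $|B^*|, |R^*| \ge (1/2-\eps)m$. If $|B^*|, |R^*| \ge (3/4-\eps)m$ and $|V(B^*) \cup V(R^*)| \ge (1-\eps)m$, we obtain Assertion \ref{itm_struct_two_largish_compts}; here $B^*$ must be non-bipartite, for otherwise the larger side of its bipartition is a vertex set of size $\ge (3/8-\eps/2)m$ containing no blue edges, on which the min-degree condition forces a red component of size $\ge (1-\eps/2)m$, contradicting $|R^*| < (1-\eps/2)m$. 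Otherwise, analogous degree-counting shows the two largest blue components $B^*, B^{**}$ each have size $\ge (1/2-\eps)m$ and the two largest red components $R^*, R^{**}$ each have size $\ge (1/2-\eps)m$, with $\{V(B^*),V(B^{**})\}$ and $\{V(R^*),V(R^{**})\}$ covering the same set of $\ge (1-O(\eps))m$ vertices; each of the four pairwise intersections $V(B^\sigma) \cap V(R^\tau)$ has size $\ge (1/4-\eps)m$ by the min-degree bound. Applying Lemma \ref{lem_many_reduced_components}, and noting that the red components must be non-bipartite by the same giant-inside argument as above, yields Assertion \ref{itm_struch_only_midsized_compts}.

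\textbf{Main obstacle.} The hardest part is carrying out the case analysis precisely at the critical threshold $\delta(G) = 3n/4$: the compatibility condition $e(G_R[Y_{1,j},Y_{2,j}]) \le \eps n^2$ in Assertion \ref{itm_struct_spanning_blue_bip_almost_balanced}(b), the non-bipartiteness of $B^*$ in Assertion \ref{itm_struct_two_largish_compts}, and the alignment of four intersections of size $\ge (1/4-\eps)m$ in Assertion \ref{itm_struch_only_midsized_compts} all rest on delicate degree-counting arguments that would be immediate under $\delta(G) \ge (3/4+\eps)n$ but demand much greater care at the exact threshold.
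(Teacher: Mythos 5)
The overall skeleton is the same as the paper's: apply the regularity lemma, analyse the largest monochromatic component of the reduced graph $\Gamma$, and translate via Lemmas \ref{lem_reduced_components} and \ref{lem_many_reduced_components}. But two steps in your Case~2 are genuinely wrong, not just unpolished.

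\textbf{First gap.} You derive $|R^*|\ge |B^*|-(1/4+O(\eps))m$ and (symmetrically) $|B^*|\ge |R^*|-(1/4+O(\eps))m$, and then conclude ``Hence both $|B^*|,|R^*|\ge (1/2-\eps)m$.'' That inference is invalid: $|B^*|=|R^*|=0.3m$ satisfies both displayed inequalities. The correct argument, and the one the paper uses, is different. Since $B^*$ is the largest monochromatic component, if $|B^*|<(1/2-O(\eps))m$ then any two vertices $u,v\in V(B^*)$ each have at least $\deg_\Gamma(u)-(|B^*|-1)>(1/4+O(\eps))m>(m-|B^*|)/2$ neighbours outside $B^*$, and every such edge is red; hence $u$ and $v$ have a common red neighbour outside $B^*$, so $V(B^*)$ is contained in a single red component strictly larger than $B^*$, contradicting maximality. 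You need this pigeonhole argument, not the pair of one-sided bounds.

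\textbf{Second gap.} In the subcase $|B^*|,|R^*|\ge(3/4-\eps)m$ you assert that $B^*$ must be non-bipartite, arguing that if $B^*$ were bipartite with larger side $A_1$ of size $\ge(3/8-\eps/2)m$, then ``the min-degree condition forces a red component of size $\ge(1-\eps/2)m$.'' This is false on two counts. Quantitatively, $\delta(\Gamma_R[A_1])\ge \deg_\Gamma(v)-(m-|A_1|)\ge |A_1|-(1/4+O(\eps'))m$, which for $|A_1|\approx(3/8)m$ is only about $m/8\approx |A_1|/3$; this does not force a single giant red component inside $A_1$, let alone one of size $\ge(1-\eps/2)m>|A_1|$. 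Qualitatively, the claim that $B^*$ itself must be non-bipartite is stronger than what is true or needed: Assertion~\ref{itm_struct_two_largish_compts} only needs (after swapping colours if necessary) that \emph{at least one} of the two large components is non-bipartite. The paper proves exactly this weaker statement, by showing that if both $\Phi_1$ and $\Phi_2$ were bipartite, then the intersection $Y=V(\Phi_1)\cap V(\Phi_2)$ (which has size $\ge(1/2-O(\eps))m$) would contain an independent set of size $\ge(1/2-O(\eps))m$, contradicting $\delta(\Gamma)\ge(3/4-O(\eps))m$. That two-component argument is the key idea your proposal is missing. A similar gap is hidden in your final ``otherwise'' branch, where you invoke ``analogous degree-counting'' without the common-neighbour argument that produces the two vertices $u_1,u_2\in V(B^*)$ with disjoint red neighbourhoods outside $B^*$, from which the four sets $X_1,X_2,Y_1,Y_2$ of Assertion~\ref{itm_struch_only_midsized_compts} are built.
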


We remark that in light of the variety of extremal examples for Theorem \ref{thm_main} (see Section \ref{sec_extremal_examples}), it should not be surprising that there is a large number of cases to consider for the rough structure.
Furthermore, it is perhaps useful to note that many of the above cases describe the structure of the extremal examples we gave in Section \ref{sec_extremal_examples}.
For example, the left-hand graph in Figure \ref{figure_sharpness_1} corresponds to Case \ref{itm_struch_only_midsized_compts}, Figures \ref{figure_sharpness_2} and \ref{figure_sharpness_3} correspond to Case \ref{itm_struct_spanning_blue_bip_almost_balanced} and Figure \ref{figure_sharpness_4} corresponds to Case \ref{itm_struct_two_largish_compts}.

This lemma, technical as it seems, is a simple application of Lemmas \ref{lem_reduced_components} and \ref{lem_many_reduced_components}, which imply that monochromatic components in the reduced graph correspond to robust components in the original graph.
Before proving Lemma \ref{lem_rough_structure}, we give a brief overview of the proof.
After applying the Regularity Lemma with suitable parameters, we obtain a reduced graph $\Gamma$, which has minimum degree close to $3m / 4$ where $m = |\Gamma|$.
It is a routine check to verify that either there is a spanning monochromatic component, or there are two monochromatic components of size almost $3m / 4$ spanning the whole vertex set, or for each colour there are two almost half-sized components spanning the whole vertex set.
In the case where there is a spanning monochromatic component, further analysis is needed to show that one of the cases (\ref{itm_struct_spanning_non_bip}, \ref{itm_struct_spanning_blue_bip_large_red}, \ref{itm_struct_spanning_blue_bip_almost_balanced}) holds.

\begin{proof}
Set $\eta = 48\eps$.
Let $\Gamma$ be a $(\eta, 6\eta)$-reduced graph obtained by applying Lemma \ref{lem_regularity} to the graph $G$.
Note that $\delta(\Gamma)\ge (3/4 - 13\eta)m$, where $m = |\Gamma|$.
Without loss of generality, we assume that the largest monochromatic component is blue and denote it by $\Phi_1$.

Suppose first that $\Phi_1$ is a spanning subgraph of $\Gamma$.
If it is in addition non-bipartite, by Lemma \ref{lem_reduced_components}, there is a strongly robust blue component $F_1$ of order at least $(1 - 2\eta)n$, as in (\ref{itm_struct_spanning_non_bip}).

Thus we assume that $\Phi_1$ is bipartite, with bipartition $\{X_1, X_2\}$ where $|X_1| \ge |X_2|$.
If $|X_1| > (1/2 + 26\eta)m$, then $\delta(\Gamma[X_1])\ge (3/4 - 13\eta)m- |X_2| > |X_1|/2$.
It follows that $\Gamma[X_1]$ is a red non bipartite component.
By Lemma \ref{lem_many_reduced_components}, we obtain a weakly robust blue component $F_1$ on at least $(1 - 3\eta)n$ vertices and a red strongly robust component $F_2$ on at least $(1/2 + 23\eta)n$ vertices, as in (\ref{itm_struct_spanning_blue_bip_large_red}).

We assume now that $|X_1| \le (1/2 + 26\eta)m$. Then $(1/2 - 26\eta)m \le |X_1|,|X_2| \le (1/2 + 26\eta)m$.
Denote $\Gamma_i = \Gamma[X_i]$ for $i \in [2]$.
$\Gamma_i$ contains only red edges and $\delta(\Gamma_i)\ge (1/4 - 39\eta)m$.
Then one of the following holds for $i \in [2]$.
\begin{enumerate}
\item\label{itm_3_lem_struct}
$\Gamma_i$ is connected in red and non-bipartite.
\item\label{itm_2_lem_struct}
$\Gamma_i$ is connected and bipartite. Furthermore, it has minimum degree at least $(1/4 - 39\eta)m$.
\item\label{itm_1_lem_struct}
$\Gamma_i$ consists of two red components, each of order at least $(1/4 - 39\eta)m$.
\end{enumerate}

By the definition of a reduced graph and our choice of parameters, the number of blue edges of $G$ which are not present in $G'$ is at most $14 \eta n^2$ and similarly for the red edges.
It follows from Lemma \ref{lem_many_reduced_components} that one of the conditions in (\ref{itm_struct_spanning_blue_bip_almost_balanced}) holds.
Suppose that Condition (\ref{itm_2_lem_struct}) holds for both $\Gamma_1$ and $\Gamma_2$.
Denote the bipartition of $\Gamma_i$ by $\{Y_{i, 1}, Y_{i, 2}\}$.
If there are no edges between $Y_{1, j}$ and $Y_{2, j}$ for $j = 1, 2$, Condition (\ref{itm_struct_spanning_blue_bip_almost_balanced}) us satisfied.
Otherwise, without loss of generality, there is an edge between $Y_{1, 1}$ and $Y_{2, 1}$. Then the red graph $\Gamma_B$ is connected. But we assumed that there $\Gamma$ has no spanning non-bipartite monochromatic component, so there are no edges between $Y_{i, 1}$ and $Y_{3 - i, 2}$ for $i \in [2]$. Thus, up to relabelling of $Y_{i, j}$, Condition (\ref{itm_struct_spanning_blue_bip_almost_balanced}) holds.

We assume now that $\Phi_1$ does not span $\Gamma$.
Denote $s = |\Phi_1|$.
Suppose first that $s > (1/2 + 26\eta)m$.
Denote $U = V(\Gamma) \setminus V(\Phi_1)$.
Then every vertex $u \in U$, is incident to at least $(3/4 - 13\eta)m - (m - s) > s/2$ neighbours.
It follows that every two vertices outside of $V(\Phi_1)$ have a common red neighbour, implying that $U$ is contained in a red component $\Phi_2$ of order at least $(3/4 - 13\eta)m$.
Indeed, pick $u \in U$. Then the red neighbourhood of $u$ in $\Gamma$ is contained in $\Phi_2$ as well as $U$.

By the choice of $\Phi_1$ as the largest connected monochromatic subgraph, we have that $|\Phi_1|\ge (3/4-13\eta)m$.
The components $\Phi_1,\Phi_2$ cover $\Gamma$ and intersect in at least $(1/2 - 26\eta)n$ vertices.
By the following claim, at least one of $\Phi_1,\Phi_2$ is non-bipartite, implying that condition (\ref{itm_struct_two_largish_compts}) holds, by Lemma \ref{lem_many_reduced_components}.
\begin{claim}
At least one of the graphs $\Phi_1, \Phi_2$ is not bipartite.
\end{claim}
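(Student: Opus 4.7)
The plan is to argue by contradiction. Suppose both $\Phi_1$ and $\Phi_2$ are bipartite, with bipartitions $\{A_1,A_2\}$ and $\{B_1,B_2\}$ respectively. Let $I = V(\Phi_1) \cap V(\Phi_2)$, $P_1 = V(\Phi_1) \setminus V(\Phi_2)$, and $P_2 = V(\Phi_2) \setminus V(\Phi_1) = U$; note that $|P_1|, |P_2| \ge 1$, since $\Phi_1$ fails to span $\Gamma$ and $\Phi_2$ is no larger than $\Phi_1$ by choice of $\Phi_1$. The key observation is that bipartiteness tightly restricts where a vertex's neighbours can lie: a vertex $v \in V(\Phi_1) \cap A_i$ has all its blue neighbours in $A_{3-i}$, and its red neighbours are confined either to $B_{3-j}$ (if $v \in B_j$) or to $P_1$ (if $v \in P_1$, since then the red component of $v$ is disjoint from $\Phi_2$, and by the cover $V(\Phi_1) \cup V(\Phi_2) = V(\Gamma)$ is therefore contained in $P_1$). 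Analogous statements hold with the two colours exchanged.

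First I would show that, after relabelling, $P_1 \subseteq A_1$ and $P_2 \subseteq B_1$. Otherwise $P_1$ meets both parts of $\{A_1, A_2\}$; then for any $v_i \in A_i \cap P_1$ one has $N(v_i) \subseteq A_{3-i} \cup P_1$, so using $|A_{3-i} \cup P_1| = |A_{3-i}| + |A_i \cap P_1|$ one obtains $|A_{3-i}| + |A_i \cap P_1| \ge \deg(v_i) + 1 \ge (3/4-13\eta)m + 1$ for $i = 1,2$. Summing yields $|V(\Phi_1)| + |P_1| \ge (3/2-26\eta)m$, which contradicts the trivial bound $|V(\Phi_1)| + |P_1| \le m + (1/4+13\eta)m$ (using $|V(\Phi_2)| \ge (3/4-13\eta)m$) once $\eta$ is sufficiently small (say, $39\eta < 1/4$). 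The analogous argument for $P_2$ yields $P_2 \subseteq B_1$.

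Finally, set $c_{ij} = |A_i \cap B_j \cap I|$. The containments $P_1 \subseteq A_1$ and $P_2 \subseteq B_1$ give $|A_2| = c_{21}+c_{22}$ and $|B_2| = c_{12}+c_{22}$. Applying the neighbourhood bound to any $v \in A_1 \cap P_1$ and any $v \in B_1 \cap P_2$ gives
\[
c_{21}+c_{22}+|P_1| \ge (3/4-13\eta)m, \qquad c_{12}+c_{22}+|P_2| \ge (3/4-13\eta)m.
\]
Adding these and substituting the partition identity $\sum_{i,j} c_{ij} + |P_1|+|P_2| = m$ collapses the sum to $c_{22} \ge (1/2-26\eta)m + c_{11}$. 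In particular $A_2 \cap B_2 \cap I$ is nonempty, so picking $v$ in it and using $N(v) \subseteq A_1 \cup B_1$ gives $m - c_{22} = |A_1 \cup B_1| \ge (3/4-13\eta)m$, i.e.\ $c_{22} \le (1/4+13\eta)m$. These two bounds on $c_{22}$ are incompatible for small $\eta$, providing the required contradiction. The argument is essentially bookkeeping; the only subtlety is choosing the right combination of the cells $A_i \cap B_j$ so that the degree inequalities (each individually loose) add up to something sharp enough to contradict the $3m/4$ minimum degree bound.
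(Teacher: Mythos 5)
Your proof is correct and, at its core, the same argument as the paper's, though you arrive at it through more explicit bookkeeping. The paper picks $x_1 \in P_1$ and $x_2 \in P_2$ directly, observes that $N_B(x_1) \cap Y$ spans no blue edges and $N_R(x_2) \cap Y$ spans no red edges (being confined to single parts of the respective bipartitions), and concludes that their intersection is an independent set of size at least $(1/2 - 26\eta)m$, contradicting $\delta(\Gamma) \ge (3/4-13\eta)m$. Your argument reaches the same contradiction: the set $A_2 \cap B_2 \cap I$ you bound from below (in fact it contains the paper's intersection once the relabelling is fixed) is an independent set, and your final step of bounding $\deg(v)$ for $v$ in that cell is exactly the observation that no independent set can be that large. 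The one genuinely extra ingredient in your version is the preliminary structural lemma that $P_1$ lies entirely within one part $A_1$ of $\Phi_1$'s bipartition (and likewise $P_2 \subseteq B_1$); this is a nice observation, needed to make the cell identities $|A_2| = c_{21} + c_{22}$ and $|B_2| = c_{12} + c_{22}$ exact, but the paper avoids it entirely by working only with the neighbourhood of a single vertex in each of $P_1, P_2$, which is automatically confined to a single part regardless of where that vertex sits.
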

\begin{proof}
Suppose otherwise.
Denote $X_i = V(\Phi_i) \setminus V(\Phi_{3 - i})$ for $i \in [2]$ and $Y = V(\Phi_1) \cap V(\Phi_2)$.
Then for every $x_1 \in X_1,x_2 \in X_2$, the vertices $x_1, x_2$ are non-adjacent in $\Gamma$.
Thus, $x_1$ sends at least $(3/4 - 13\eta)m - |X_1|$ (blue) edges to $Y$ and similarly $x_2$ sends at least $(3/4 - 13\eta)m - |X_2|$ red edges into $Y$.
If both $\Phi_1$ and $\Phi_2$ are bipartite, it follows that $Y$ contains a set $A_1$ of $(3/4 - 13\eta)m - |X_1|$ vertices spanning no blue edges and a set $A_2$ of $(3/4 - 13\eta)m - |X_2|$ spanning no red edges. It follows that $Y$ contains an independent set of size at least $|A_1| + |A_2| - |Y| \ge (3/2 - 26\eta)m - (|X_1| + |X_2| + |Y|) \ge (1/2 - 26\eta)m$.
This is a contradiction to the minimum degree condition on $\Gamma$.
\end{proof}

It remains to consider the case where $s = |\Phi_1| \le (1/2 + 26\eta)m$.
An argument similar to a previous one shows that if $s < (1/2 - 26\eta)m$, every two vertices of $\Phi_1$ have a common red neighbour outside of $\Phi_1$, contradicting the choice of $\Phi_1$ as the largest monochromatic component.
Thus we have that $(1/2 - 26\eta)m \le s\le (1/2 + 26\eta)m$.
Note that we may find $u_1, u_2\in V(\Phi_1)$ which have no common red neighbour outside of $\Phi_1$ (otherwise there is a red component of order larger than $|\Phi_1|$ contradicting our choice of $\Phi_1$).
Denote by $X_i$ the set of red neighbours of $u_i$ outside of $V(\Phi_1)$.
Let $Y_i$ be the red neighbourhood of $X_i$ in $\Phi_1$.
It follows from the minimum degree condition and the order of $\Phi_1$ that $|X_i|,|Y_i|\ge (1/4 - 39\eta)m$.
Furthermore, the sets $X_1,X_2,Y_1,Y_2$ are disjoint, there are no red edges in between $X_1 \cup Y_1$ and $X_2 \cup Y_2$ and no blue edges between $X_1\cup X_2$ and $Y_1\cup Y_2$. In particular, there are no edges between $X_i$ and $Y_{3-i}$. Considering the minimum degree conditions and the size of the various sets, it follows that the blue subgraph $\Phi_2 = \Gamma_B \setminus V(\Phi_1)$ is connected.
Similarly, $\Gamma_R[X_i \cup Y_i]$ is connected. Moreover, of the four components, there cannot be both a red and a blue bipartite component. Condition (\ref{itm_struch_only_midsized_compts}) follows.

\end{proof}

\subsection*{Proof of the main theorem}

We now prove Theorem \ref{thm_main}, using Lemma \ref{lem_rough_structure} and other results which we shall state and prove in subsequent sections.

\begin{thm*}[\ref{thm_main}]
There exists $n_0$ such that if a graph on $n \ge n_0$ vertices and minimum degree at least $3n/4$ is $2$-coloured then its vertex set may be partitioned into two monochromatic cycles of different colours.
\end{thm*}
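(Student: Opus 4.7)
The plan is to apply Lemma \ref{lem_rough_structure} with parameters $\frac{1}{n}\ll \alpha,\frac{1}{k}\ll \eps\ll 1$ to the given $2$-coloured graph $G$ and then handle each of the five resulting structural cases separately, with the case-work distributed across Sections \ref{sec_large_bip_blue_large_red}--\ref{sec_large_blue_non_bip}. In every case the overall strategy is the same: locate the monochromatic robust subgraphs provided by Lemma \ref{lem_rough_structure}, use Lemma \ref{lem_absorbing_paths} to place short absorbing paths of each colour inside them, then apply the Regularity Lemma (Lemma \ref{lem_regularity}) to the graph with these paths removed. In the resulting reduced graph $\Gamma$ one aims to find a perfect matching consisting of a connected blue matching and a connected red matching, whose cluster sets are contained in the appropriate robust components; Lemma \ref{lem_connected_matching} (and its modification described after it) then converts this matching into two vertex-disjoint monochromatic cycles, one of each colour, that extend the absorbing paths and cover almost all vertices. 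The remaining $o(n)$ vertices are swallowed by the absorbing paths via Lemma \ref{lem_absorbing_paths}, producing the required partition.

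In most cases the existence of the desired connected-matching partition in $\Gamma$ will be extracted from the extremal matching results of Section \ref{sec_prelim_results}: Lemma \ref{lem_trip_pft_matching_exact} and Lemma \ref{lem_matching_tripartite_density} when the robust structure is tripartite (Cases \ref{itm_struct_spanning_blue_bip_almost_balanced} and \ref{itm_struch_only_midsized_compts}), Lemma \ref{lem_hall} and Lemma \ref{lem_matching_bip_technical} in the bipartite situations, and Lemma \ref{lem_stability_dirac} for Hamilton-type statements when one robust component is essentially a balanced bipartite graph. Lemmas \ref{lem_reduced_components}, \ref{lem_many_reduced_components} and \ref{lem_robust_regularity} ensure that connected components of $\Gamma$ and robust subgraphs of $G$ correspond, so that matchings inside the reduced graph translate faithfully back.

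The stability flavour of those matching results is essential: whenever $\Gamma$ fails to admit the sought-after matching, they force $G$ into one of a handful of very restrictive configurations. Each such configuration either (i)~allows one to merge two robust components of the same colour via two vertex-disjoint connecting paths, so that the structure collapses into a simpler case and the argument can be rerun on a coarser robust partition (this is the ``joining'' observation emphasised in the overview, used in Sections \ref{sec_large_bip_blue_almost_balanced}, \ref{sec_two_three-quarter-sized} and \ref{sec_four_midsized}), or (ii)~produces precisely the hypothesis of Lemma \ref{lem_red_graph_disconnected_1} or Lemma \ref{lem_red_disconnected_2}, which yields the desired partition directly. Cases \ref{itm_struct_spanning_non_bip} and \ref{itm_struct_spanning_blue_bip_large_red}, in which a single colour already dominates the graph, are the easiest and follow essentially the DeBiasio--Nelsen scheme; the main intricacy lies elsewhere.

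The principal obstacle is Case \ref{itm_struct_spanning_blue_bip_almost_balanced}, where the blue graph is nearly a balanced bipartition $\{X_1,X_2\}$ and each $G_R[X_i]$ possesses one of three possible red structures, with additional compatibility constraints between the two sides. Here the sharpness of the hypothesis $\delta(G)\ge 3n/4$ bites hardest: the degree condition gives exactly the threshold needed for the extremal matching lemmas, so one cannot afford any slack. Handling this case requires carefully interleaving the absorbing-path construction with the bipartite matching analysis on each side, choosing the two monochromatic cycles so that any residual imbalance between $X_1$ and $X_2$ is compensated by the red cycle crossing between them, and then peeling off subcases via the stability statements until either a cycle partition is constructed explicitly or Lemma \ref{lem_red_graph_disconnected_1} / Lemma \ref{lem_red_disconnected_2} applies. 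Cases \ref{itm_struct_two_largish_compts} and \ref{itm_struch_only_midsized_compts} are then handled by analogous but simpler arguments, completing the proof.
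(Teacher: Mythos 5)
Your overall framing is right: apply Lemma \ref{lem_rough_structure}, then dispatch each of its five cases with absorbing paths, regularity, connected matchings, the ``joining'' observation, and fallbacks to Lemmas \ref{lem_red_graph_disconnected_1}/\ref{lem_red_disconnected_2}. But you have a genuine gap in Case \ref{itm_struct_spanning_non_bip}, which you dismiss as ``the easiest'' and lump together with Case \ref{itm_struct_spanning_blue_bip_large_red} under the DeBiasio--Nelsen scheme. The paper treats Case \ref{itm_struct_spanning_non_bip} as the \emph{hardest} case (Lemma \ref{lem_large_blue_non_bip}, Section \ref{sec_large_blue_non_bip}), and for good reason: there Lemma \ref{lem_rough_structure} only hands you a \emph{blue} strongly robust component $F$ on $(1-\eps)n$ vertices and nothing about the red structure. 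Your blanket plan of ``place short absorbing paths of each colour inside them'' cannot even be started, since there is no red robust subgraph in which to embed a red absorbing path. In the paper this forces the introduction of the set $Z$ of vertices with small blue degree into $F$, a dichotomy on $|Z|$, and---in the main sub-case---Proposition \ref{prop_large_blue_cases_matching}, which classifies how $\Gamma_B$ (or $\Gamma_B$ augmented by red components) can fail to have a perfect matching at the sharp $3n/4$ threshold. The tripartite and bipartite stability lemmas (\ref{lem_trip_pft_matching_exact}, \ref{lem_matching_tripartite_density}, \ref{lem_matching_bip_technical}) are deployed precisely here, not in Cases \ref{itm_struct_spanning_blue_bip_almost_balanced} or \ref{itm_struch_only_midsized_compts} as you claim; those cases instead run on Menger's theorem for joining robust components, Chv\'atal's theorem for perfect matchings in $\Gamma$, and the two ``red graph almost disconnected'' lemmas. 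So the fix is to recognise that Case \ref{itm_struct_spanning_non_bip} requires an entirely separate analysis of the red colour via the reduced graph, with several sub-cases that fall back on Lemmas \ref{lem_four_midsized} and \ref{lem_red_graph_disconnected_1}; as written, your proposal silently skips the most delicate part of the proof.
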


\begin{proof} [Proof of Theorem \ref{thm_main}]
Let $\frac{1}{n} \ll \eps \ll 1$ and let $G$ be a graph on $n \ge n_0$ vertices with minimum degree at least $3n / 4$ and a red and blue colouring of the edges.
By Lemma \ref{lem_rough_structure}, we may assume that one of the Cases (\ref{itm_struct_spanning_non_bip} - \ref{itm_struch_only_midsized_compts}) from the statement of the lemma hold.
It remains to conclude that in each of these cases, we may find a partition of $V(G)$ into a red cycle and a blue one.
We prove this for each of the above cases using lemmas appearing in Sections (\ref{sec_large_bip_blue_large_red} - \ref{sec_large_blue_non_bip}).

We start by resolving Case (\ref{itm_struct_spanning_blue_bip_large_red}), which is perhaps the easiest to deal with, in Lemma \ref{lem_large_blue_bip_large_red} in Section \ref{sec_large_bip_blue_large_red}.

Case (\ref{itm_struct_spanning_blue_bip_almost_balanced}) is dealt with in Sections \ref{sec_large_bip_blue_almost_balanced}, \ref{sec_large_blue_bip_two_midsized_red}.
Lemmas \ref{lem_blue_bip_balanced_two_red_not_both_bip}, \ref{lem_large_blue_bip_three_red} and \ref{lem_blue_bal_bip_four_red} share some similarities and are used to prove all possible combinations of Cases (\ref{itm_struct_blue_bip_balanced_red_strongly_robust}, \ref{itm_struct_blue_bip_balanced_red_weakly_robust}, \ref{itm_struct_blue_bip_balanced_two_red}) except for the case where Condition (\ref{itm_struct_blue_bip_balanced_red_weakly_robust}) holds for both graphs in question.
The proof of Theorem \ref{thm_main} in the latter case can be deduced from Lemma \ref{lem_blue_bal_bip_two_red_bip} and is of different nature.

Case (\ref{itm_struct_two_largish_compts}) can be resolved by Lemma \ref{lem_two_three_quarter_sized} in Section \ref{sec_two_three-quarter-sized}.
It shares some ideas with the proofs in Section \ref{sec_large_bip_blue_almost_balanced}, but requires further analysis.
Case (\ref{itm_struch_only_midsized_compts}) is dealt with by Lemma \ref{lem_four_midsized} in Section \ref{sec_four_midsized}.
Case (\ref{itm_struct_spanning_non_bip}) turns out to be hardest to deal with, thus we prove it last in Lemma \ref{lem_large_blue_non_bip} in Section \ref{sec_large_blue_non_bip}.
\end{proof}

\section{Large weakly robust blue component}\label{sec_large_bip_blue_large_red}
In this section we deal with Case (\ref{itm_struct_spanning_blue_bip_large_red}) of Lemma \ref{lem_rough_structure}.
This is perhaps the easiest case to deal with. We recommend the reader to follow the proof here carefully, since the methods appearing here will be used in later sections, often in less detail.

In order prove Theorem \ref{thm_main} in Case (\ref{itm_struct_spanning_blue_bip_large_red}), we prove the following lemma. Note that we make the further assumption that the given robust components cover all the vertices of $G$ (rather than almost all of them). This can be easily be justified by Lemma \ref{lem_robust_adding_vertices}, which states that given a robust subgraph $F$, the graph obtained by adding vertices of large degree into $F$ remains robust.

\begin{lem}\label{lem_large_blue_bip_large_red}
Let $\frac{1}{n} \ll \eps, \alpha, \frac{1}{k} \ll 1$ and let $G$ be a graph of order $n$ with $\delta(G) \ge 3n / 4$ and a $2$-colouring $E(G) = E(G_B) \cup E(G_R)$.
Suppose that $F_1, F_2$ satisfy the following conditions.
\begin{itemize}
\item
$F_1\subseteq G_B$ is $(\alpha,k)$ weakly robust with bipartition $\{X, Y\}$ and $|F_1|\ge (1-\eps)n$.
\item
$F_2\subseteq G_R$ is $(\alpha,k)$-strongly robust and $|F_2|\ge (1/2+2\eps)n$.
\item
$V(G) = V(F_1) \cup V(F_2)$.
\end{itemize}
Then $V(G)$ may be partitioned into a blue cycle and a red cycle.
\end{lem}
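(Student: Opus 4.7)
The plan is the standard absorbing-plus-regularity recipe: reserve short absorbing paths, apply the Regularity Lemma, convert a pair of connected matchings into cycles, and absorb the leftovers. Write $V_1 = V(F_1) \cap V(F_2)$, $V_2 = V(F_1) \setminus V(F_2)$ and $V_3 = V(F_2) \setminus V(F_1)$, so $|V_3| \le \eps n$, $|V_1| \ge (1/2+\eps)n$ and $|V_2| \le (1/2-\eps)n$. The aim is to build a blue cycle covering $V_2$ together with a chosen subset of $V_1$ (hence living inside $F_1$), and a red cycle covering $V_3$ together with the rest of $V_1$ (inside $F_2$).

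First I would use Lemma \ref{lem_absorbing_paths} to reserve a blue absorbing path $Q_B \subseteq F_1$, which can absorb any $\{X, Y\}$-balanced set of at most $\rho^2 n$ vertices, and a red absorbing path $Q_R \subseteq F_2$ placed inside $V_1$, which can absorb any set of at most $\rho^2 n$ vertices. Then I would apply Lemma \ref{lem_regularity} to $G \setminus (V(Q_B) \cup V(Q_R))$ with the cover refining both $\{V_1, V_2, V_3\}$ and $\{X, Y\}$, so that every cluster of the reduced graph $\Gamma$ lies in one of the types $X \cap V_i$, $Y \cap V_i$ ($i = 1, 2$) or $V_3$. By Lemma \ref{lem_robust_regularity} the clusters inside $V(F_1)$ span a connected bipartite blue subgraph of $\Gamma$ and those inside $V(F_2)$ span a connected red subgraph, and Observation \ref{obs_min_deg_reduced_graph} gives $\delta(\Gamma) \ge (3/4 - o(1))m$.

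The core step is to find a perfect matching of $\Gamma$ whose blue edges $M_B$ lie in the bipartite blue component, match $X$-clusters to $Y$-clusters and cover every $V_2$-cluster, and whose red edges $M_R$ lie in the red component and cover the $V_3$-clusters together with the remaining $V_1$-clusters. Given such a matching, the variant of Lemma \ref{lem_connected_matching} sketched after its proof produces a blue path extending $Q_B$ and a red path extending $Q_R$ that together cover all but $O(\eps n)$ vertices and split correctly between $V(F_1)$ and $V(F_2)$. Because $M_B$ is $\{X, Y\}$-balanced, the leftover on the blue side is also balanced and is absorbed into $Q_B$; the red-side leftover has no balance constraint and is absorbed into $Q_R$. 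Closing each path into a cycle then yields the required partition.

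The main obstacle I anticipate is verifying the balance condition forced by the bipartite blue component: covering every $V_2$-cluster with $X$-to-$Y$ blue edges requires $\bigl||V_2 \cap X| - |V_2 \cap Y|\bigr|$ to be at most $\min(|V_1 \cap X|, |V_1 \cap Y|)$ after a small correction for the absorbing paths. I would verify this using $\delta(G) \ge 3n/4$ together with the strong robustness of $F_2$: if, say, $|Y| < (1/2 - O(\eps))n$, then every $y \in Y$ would have red degree more than $n/4$ inside $X \cup V_3$, and the resulting mass of red edges within $X$ would let me extend $F_2$ to a strongly robust red component absorbing the excess $X$-vertices onto the red side, eliminating the imbalance. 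Once the balance is in place, existence of $M_B$ in the bipartite blue component follows from Corollary \ref{cor_chvatal_bip} applied inside $\Gamma$ (using $\delta(\Gamma) \ge (3/4 - o(1))m$), and $M_R$ then comes from Theorem \ref{thm_chvatal} applied to what remains in the red component.
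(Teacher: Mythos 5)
Your plan follows the same overall recipe as the paper's proof (reserve absorbing paths, apply regularity, convert connected matchings to disjoint cycles, absorb), but it diverges on a key technical choice and that divergence opens a genuine gap.

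You refine the regularity cover by $\{X,Y\}$ and insist that the blue matching $M_B$ pair $X$-clusters with $Y$-clusters. This creates exactly the balance obstruction you identify: covering all $V_2$-clusters with bipartite blue edges requires roughly $|V_2 \cap X| \le |Y|$ and $|V_2 \cap Y| \le |X|$, and the hypotheses of the lemma do not guarantee this (note $|Y|$ may be as small as $\alpha n$ while $|V_2 \cap X|$ can approach $(1/2-\eps)n$). Your proposed remedy — if $|Y|$ is small then vertices of $Y$ have large red degree into $X \cup V_3$, so extend $F_2$ — is not justified: what you would actually need is that vertices of $V_2 \cap X$ have many red edges into $V(F_2)$, and the degree condition does not force this (their surplus degree beyond $|Y|$ may live in blue edges inside $G_B[X]$, which are not edges of $F_1$ but are still legitimate blue edges). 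The paper avoids this issue entirely by using the coarser cover $\{V(F_1), V(F_2)\}$: clusters may then meet both $X$ and $Y$, the blue reduced component $\Phi_1$ is not forced to be bipartite, and a straight application of Chv\'atal's theorem to $\Phi_1 \cup \Phi_2$ produces an unrestricted perfect matching. The bipartiteness of $F_1$ is only used at the absorption stage.

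The absorption stage contains a second, independent gap. You assert that because $M_B$ is $\{X,Y\}$-balanced, ``the leftover on the blue side is also balanced.'' That does not follow: the connected-matching-to-cycle conversion loses an $O(\eta)$ fraction of each cluster, and even with the same fraction lost everywhere, the $X$-leftover and the $Y$-leftover need not be \emph{exactly} equal — but Lemma~\ref{lem_absorbing_paths} for weakly robust graphs demands exact equality $|W\cap X|=|W\cap Y|$. The paper resolves this by reserving, before the regularity application, buffer sets $A_1 \subseteq X \cap V(F_2)$ of size $\rho^2 n/2$ and $A_2 \subseteq Y$ of size $\rho^2 n/4$, then at the end choosing $A_1' \subseteq A_1$ so that $A_1' \cup B_1$ and $A_2 \cup B_2$ have equal size (possible because $|B| \le \rho^2 n/4$) and routing the surplus $(A_1 \setminus A_1')\cup B_3$, which lies in $V(F_2)$, into the red absorbing path. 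You would need some comparable buffering device; without it the final absorption step does not close.

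===END===
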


This case is the most straightforward of the various cases arising from Lemma \ref{lem_rough_structure}. 
We apply Lemma \ref{lem_absorbing_paths} to find vertex disjoint absorbing paths $P_i$ in $F_i$ for $i \in [2]$. Then, using the regularity lemma and the connected matching method, we find two vertex-disjoint monochromatic cycles containing the paths $P_1, P_2$ and covering almost all of the vertices.
Finally, we use the absorption property to insert the remaining vertices into the paths $P_1, P_2$ so as to obtain the desired monochromatic cycle partition.

\begin{proof}[Proof of Lemma \ref{lem_large_blue_bip_large_red}]

We use Lemma \ref{lem_absorbing_paths} to build absorbing paths for $F_1$ and $F_2$. 
More precisely, given a suitably small $\rho>0$, there exists vertex disjoint paths $P_i\subseteq F_i$ for $i \in [2]$ satisfying the following conditions, where $U = V(P_1) \cup V(P_2)$.
\begin{itemize}
\item
$|P_i|\le \rho n$.
\item
For every set $W\subseteq V(F_1)\setminus U$, with $|W\cap X|=|W\cap Y|\le \rho^2 n$, the graph $F_1[V(P_1)\cup W]$ has a Hamilton path with the same ends as $P_1$.
\item
For every set $W\subseteq V(F_2)\setminus U$ of size at most $\rho^2 n$, the graph $F_2[V(P_2)\cup W]$ has a Hamilton path with the same ends as $P_2$.
\end{itemize}
Indeed, by Lemma \ref{lem_absorbing_paths}, we may find a $\rho^2n$-weakly-absorbing path $P_1$ in $F_1$ of length at most $\rho n$.
By Lemma \ref{lem_robust_removing_vertices}, the subgraph $F_2'=F_2\setminus V(P_1)$ is robust (with suitable parameters), thus we may find a $\rho^2n$ absorbing path $P_2$ in $F_2'$ of length at most $\rho n$.

Using the regularity lemma, we shall find vertex-disjoint monochromatic cycles, containing the paths $P_1$ and $P_2$ and covering most vertices of $G$.
We then cover the remaining vertices using the absorption properties of the paths $P_1$ and $P_2$. However, the fact that $P_1$ is weakly-absorbing presents a technical difficulty which we overcome as follows.

Recall that $\{X, Y\}$ is the bipartition of $F_1$.
Without loss of generality, suppose that $|X\cap V(F_2)|\ge n/8$. Note that $|Y|\ge \alpha n$ by the minimum degree condition on $F_1[X, Y]$.
Pick subsets $A_1\subseteq X \cap V(F_2)$ and $A_2 \subseteq Y$, disjoint of $U$, such that $|A_1|=\rho^2n/2$ and $|A_2|=\rho^2n/4$, and denote $A = A_1 \cup A_2$.

Apply the Regularity Lemma, Lemma\ref{lem_regularity}, to the graph $G \setminus (U \cup A)$ with parameter $\eta=\rho^2/16$ and the cover $\{V(F_1), V(F_2)\}$.
Let $\Gamma$ be the corresponding $(\eta, 6\eta)$-reduced graph.
Note that $\delta(G \setminus (U \cup A))\ge (3/4-3\rho)n$. It follows from observation \ref{obs_min_deg_reduced_graph} that for a suitable choice of $\rho$ we have $\delta(\Gamma)\ge (3/4-\eps/2)m$, where $m=|\Gamma|$.

The blue subgraph of $\Gamma$ determined by the clusters contained in $V(F_1)$ is connected by Lemma \ref{lem_robust_regularity}. Let $\Phi_1$ be the connected component of $\Gamma_B$ containing that subgraph. Similarly, let $\Phi_2$ be the connected component of $\Gamma_R$ containing the clusters which are contained in $V(F_2)$.
Note that $\Phi_1$ and $\Phi_2$ cover $V(\Gamma)$. Furthermore, $|\Phi_1|\ge (1-3\eps/2)m$ and $|\Phi_2|\ge (1/2+3\eps/2)m$.
Let $\Gamma'$ be the union of these graphs.
\begin{claim}\label{claim_pft_matching_case_2}
$\Gamma'$ has a perfect matching.
\end{claim}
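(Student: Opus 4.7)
The plan is to apply Tutte's theorem (Theorem~\ref{thm_tutte}). Assume for contradiction that $\Gamma'$ has no perfect matching, so there exists $S \subseteq V(\Gamma')$ with $o(\Gamma' \setminus S) > |S|$. I decompose $V(\Gamma) = A \cupdot W \cupdot Z$, where $A = V(\Phi_1) \cap V(\Phi_2)$, $W = V(\Phi_1) \setminus V(\Phi_2)$ and $Z = V(\Phi_2) \setminus V(\Phi_1)$; the size bounds on $\Phi_1, \Phi_2$ give $|A| \ge m/2$, $|W| \le (1/2 - 3\eps/2)m$ and $|Z| \le 3\eps m/2$. Since $F_1$ is weakly robust, $\Phi_1$ is bipartite, with classes $\{X_\Gamma, Y_\Gamma\}$ say. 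Every edge of $\Gamma'$ is therefore either a blue $\Phi_1$-edge between $X_\Gamma$ and $Y_\Gamma$ (with both endpoints in $A \cup W$) or a red $\Phi_2$-edge (with both endpoints in $A \cup Z$).

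Next I estimate degrees in $\Gamma'$. The blue edges at a vertex $v \in Z$ belong to a blue component different from $\Phi_1$, hence lie in $Z$; symmetrically, the red edges at $v \in W$ lie in $W$. Combining with $\delta(\Gamma) \ge (3/4 - \eps/2)m$ one gets $\deg_{\Gamma'}(v) \ge (3/4 - 2\eps)m$ for $v \in A \cup Z$ and $\deg_{\Gamma'}(v) \ge (1/4 + \eps)m$ for $v \in W$. In particular any two vertices of $A \cup Z$ share at least $(1/2 - 4\eps)m$ common neighbours in $\Gamma'$, which is the key connectivity input.

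I then analyse $S$ in two regimes. If $|S| < (1/8)m$, the common-neighbour bound keeps $(A \cup Z) \setminus S$ inside a single component of $\Gamma' \setminus S$; for any $v \in W \setminus S$ the red neighbours of $v$ lie in $W$, so $v$ has at least $(3/4 - \eps/2)m - |W| - 1$ blue neighbours in $A$, far more than $|S|$, and hence attaches to that component. Thus $\Gamma' \setminus S$ is connected, giving $o(\Gamma' \setminus S) \le 1 \le |S|$; we may assume $m$ is even throughout by pushing a single cluster into $V_0$ during the Regularity step if necessary. If $|S| \ge (1/8)m$, I count odd components of $\Gamma' \setminus S$ directly: the common-neighbour argument still places all of $(A \cup Z) \setminus S$ in one component (contributing at most one odd component), and any further odd component lies entirely in $W \setminus S$, where it is a bipartite subgraph of $\Phi_1$. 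An isolated vertex of $W \setminus S$ loses all of its $\ge (1/4+\eps)m$ blue neighbours to $S$, forcing $|S \cap X_\Gamma| \ge (1/4+\eps)m$ or $|S \cap Y_\Gamma| \ge (1/4+\eps)m$, and any non-trivial odd component in $W$ has unequal bipartition sides and so ``charges'' an extra unit to the corresponding side of $S$; a simple arithmetic count using $|W| \le (1/2 - 3\eps/2)m$ then yields $o(\Gamma' \setminus S) \le |S|$, the desired contradiction.

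The principal obstacle is the potential bipartite imbalance of $\Phi_1$ in the second regime, which could a priori create many small odd components inside $W$; the remedy is the observation that the large red degree inside $V(\Phi_2)$ glues $A \cup Z$ into a single component irrespective of $S$, so every ``extra'' odd component is forced into the comparatively small set $W$ and is moreover pinned to a specific class of $\Phi_1$, allowing it to be charged against $S \cap X_\Gamma$ or $S \cap Y_\Gamma$.
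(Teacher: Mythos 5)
Your proof goes through Tutte's theorem, whereas the paper deduces the perfect matching from Chv\'atal's theorem (Theorem~\ref{thm_chvatal}): it observes that the $\ge m/2$ vertices of $V(\Phi_1)\cap V(\Phi_2)$ retain \emph{all} their $\Gamma$-edges in $\Gamma'$ and hence have $\Gamma'$-degree $\ge (3/4-\eps/2)m$, while every remaining vertex has $\Gamma'$-degree $\ge (1/4+\eps)m$, and this degree sequence trivially satisfies Chv\'atal's condition. So the two routes are genuinely different: the paper's is a three-line degree-sequence check, while yours attempts a structural Tutte analysis.

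There are, however, two real gaps in your argument. First, you assert that $\Phi_1$ is bipartite because $F_1$ is weakly robust. This does not follow. Weak robustness (Definition~\ref{def_robust_compt}) only asks that the \emph{bipartite subgraph} $F_1[X,Y]$ have many short $X$--$Y$ paths; it does not forbid blue edges inside $X$ or inside $Y$. Moreover, the Regularity Lemma is applied with the cover $\{V(F_1),V(F_2)\}$, so the clusters need not refine $\{X,Y\}$, and a single cluster can meet both sides. Thus $\Phi_1$ may well contain odd cycles, and the classes $X_\Gamma, Y_\Gamma$ that your second regime relies on need not exist. Second, even granting bipartiteness, the ``charging'' step is not a complete count: you would have to track, over all odd components in $W\setminus S$ simultaneously, how their unbalanced sides accumulate against $|S\cap X_\Gamma|$ and $|S\cap Y_\Gamma|$ without double-counting, and you also need to cover the range $(1/2-4\eps)m\le |S|<m/2$ where the common-neighbour argument no longer places $(A\cup Z)\setminus S$ in a single component and the trivial bound $o(\Gamma'\setminus S)\le m-|S|$ is too weak. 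Both of these are sidestepped by simply invoking Chv\'atal on the degree sequence, as the paper does.
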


\begin{proof}
Denote  $V_1=V(\Phi_1)\cap V(\Phi_2)$ and $V_2=V(\Phi_1) \triangle V(\Phi_2)$. Note that $V_1,V_2$ partition $V(\Gamma)$ and $|V_1|\ge m/2$.

Recall that $\delta(\Gamma)\ge (3/4-\eps/2)m$.
It follows that for every $v\in V_1$, we have $\deg_{\Gamma'}(v)\ge(3/4-\eps/2)m$ because all edges of $\Gamma$ incident to $v$ are in $\Gamma'$.
Vertices not in $\Phi_1$ have blue degree at most $\frac{3}{2}\eps m$. Similarly, vertices not in $\Phi_2$ have red degree at most $(1/2-3\eps/2)m$.
It follows that for every $v \in V_2$, we have $\deg_{\Gamma'}(v)\ge(1/4+ \eps)m$.

We conclude from Theorem \ref{thm_chvatal} that $\Gamma$ has a Hamilton cycle and in particular a perfect matching.
\end{proof}

\begin{rem}
In the last claim, we implicitly assumed that $m$ is even. It is indeed possibly to make this further assumption in the Regularity Lemma. We shall make this assumption whenever convenient without stating so explicitly.  
\end{rem}

By Claim \ref{claim_pft_matching_case_2}, $\Gamma$ has a perfect matching consisting of a blue connected matching in $\Phi_1$ and a red connecting matching in $\Phi_2$. Thus, we may use Lemma \ref{lem_connected_matching} to obtain a blue cycle $C_1$ and a red cycle $C_2$ which are disjoint, each $C_i$ contains the respective absorbing path $P_i$ and together they cover all but at most $7\eta n\le \rho^2n/4$ vertices of $V(G)\setminus A$.

We now show how to absorb the leftover vertices into the cycles $C_1,C_2$.
Let $B$ be the set of vertices which are not contained in the cycles $C_1, C_2$ or in the set $A$ (so $|B| \le \rho ^2 n / 4$).
Denote $B_1 = X \cap B$, $B_2 = Y \cap B$, and $B_3 = B \setminus V(F_1)$.
Recall that $A_1\subseteq X\cap V(F_2)$ and $A_2\subseteq Y$ are disjoint of the cycles $C_1,C_2$ and have sizes $\rho^2n/2$ and $\rho^2n/4$ respectively.
It follows that $\rho^2n/4\le |A_2\cup B_2|\le \rho^2n/2$. Thus we may choose $A_1'\subseteq A_1$ such that $A_1'\cup B_1$ and $A_2\cup B_2$ are of equal size which is most $\rho^2n/2$. Recall that the path $P_1$, which is contained in $C_1$ is $\rho^2n$-absorbing in $F_1$, so these sets can be absorbed by $P_1$ and thus by $C_1$.
We remain with the vertices $(A_1\setminus A_1')\cup B_3$. There are at most $\rho^2 n$ of them and they belong to $F_2$, so we may absorb them into $P_2$ and thus into $C_2$.
This completes a partition of $V(G)$ into a red and a blue cycle.
\end{proof}

Now that we have proved Theorem \ref{thm_main} in Case (\ref{itm_struct_spanning_blue_bip_large_red}) of Lemma \ref{lem_rough_structure}, we are ready to consider harder cases.

\section{Large weakly robust blue graph with almost balanced bipartition} \label{sec_large_bip_blue_almost_balanced}

In this section we consider Condition (\ref{itm_struct_spanning_blue_bip_almost_balanced}) of Lemma \ref{lem_rough_structure}, where we have a large weakly robust blue component with an almost balanced bipartition $\{X_1, X_2\}$.
There are three possibilities for each of the red graphs $G_R[X_i]$.
In this section we focus on the case where at least one of these red subgraphs satisfies conditions (\ref{itm_struct_blue_bip_balanced_red_strongly_robust}) or (\ref{itm_struct_blue_bip_balanced_two_red}).
The remaining case, with both red graphs $G_R[X_1], G_R[X_2]$ satisfying condition (\ref{itm_struct_blue_bip_balanced_red_weakly_robust}) requires a proof of different nature and is thus dealt with in the following section.

The main idea in the various cases arising here is that if two red robust components may be joined by two vertex-disjoint red paths, then they can essentially be treated as one bigger component, at which case we may use the argument of the previous section to finish the proof.
If that is not possible, we deduce that the red graph $G_R$ may be disconnected by removing a small number of vertices into two almost half-sized subgraphs.
This case may be resolved by Lemmas \ref{lem_red_graph_disconnected_1} and \ref{lem_red_disconnected_2}.

The following lemma deals with the case where one of the red graphs in question satisfies condition (\ref{itm_struct_blue_bip_balanced_red_strongly_robust}) and the other satisfies (\ref{itm_struct_blue_bip_balanced_red_strongly_robust}) or (\ref{itm_struct_blue_bip_balanced_red_weakly_robust}).

\begin{lem} \label{lem_blue_bip_balanced_two_red_not_both_bip}
Let $\frac{1}{n} \ll \eps, \alpha, \frac{1}{k} \ll 1$ and let $G$ be a graph of order $n$ with $\delta(G) \ge 3n / 4$ and a $2$-colouring $E(G) = E(G_B) \cup E(G_R)$.
Suppose that $F_1, F_2, F_3$ satisfy the following assertions.
\begin{itemize}
\item
$F_1\subseteq G_B$ is $(\alpha,k)$ weakly robust and with bipartition $\{X, Y\}$, where $|X| , |Y| \ge (1/2 - \eps)n$ and $e(G_B[X]), e(G_B[Y]) \le \eps n^2$.
\item
$F_2 \subseteq G_R$ is $(\alpha,k)$-strongly robust and $V(F_2) = X$.
\item
$F_3 \subseteq G_R$ is $(\alpha, k)$-robust with $V(F_3) = Y$.
\end{itemize}
Then $V(G)$ may be partitioned into a blue cycle and a red cycle.
\end{lem}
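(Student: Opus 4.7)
I would split into two cases according to whether the red components $F_2$ and $F_3$ can be joined in $G_R$ by two vertex-disjoint paths; in each case the proof reduces to a previously established result.

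First, suppose such paths $P_1, P_2$ exist, with each $P_i$ having one endpoint in $V(F_2)=X$ and the other in $V(F_3)=Y$. I would fold $F_2, F_3, P_1, P_2$ into one larger red component $F'$ whose vertex set is $X \cup Y$ together with the internal vertices of $P_1, P_2$. Because $F_2$ is strongly robust (and hence non-bipartite), one can show by an argument modelled on Lemmas~\ref{lem_robust_removing_vertices}--\ref{lem_robust_adding_vertices} that $F'$ is $(\alpha', k')$-strongly robust for suitable $\alpha'$ and $k'$: to exhibit many paths of a fixed length between any two vertices of $F'$, one concatenates paths guaranteed by the robustness of $F_2$ and $F_3$ with the short connectors $P_1, P_2$, using $F_2$'s non-bipartiteness to fix parities when $F_3$ is only weakly robust. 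Next, each $z \in Z:=V(G)\setminus(X\cup Y)$ satisfies $\deg_G(z)\ge 3n/4$, and since $e(G_B[X])$ and $e(G_B[Y])$ are small, $z$ has either at least $\alpha n$ blue neighbours in $V(F_1)$ or at least $\alpha n$ red neighbours in $V(F')$; Lemma~\ref{lem_robust_adding_vertices} then lets me absorb $z$ into $F_1$ or $F'$ accordingly, so that afterwards $V(F_1)\cup V(F') = V(G)$. The conclusion now follows by applying the argument of Lemma~\ref{lem_large_blue_bip_large_red}: build absorbing paths in $F_1$ and $F'$ via Lemma~\ref{lem_absorbing_paths}, apply the Regularity Lemma, use Chv\'atal's theorem (Theorem~\ref{thm_chvatal}) on the reduced graph to find a perfect matching split into a connected blue matching and a connected red matching, turn these into two disjoint monochromatic cycles via Lemma~\ref{lem_connected_matching}, and use the absorbing property to mop up the few leftover vertices.

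In the remaining case, no two vertex-disjoint red $V(F_2)$--$V(F_3)$ paths exist, so by Menger's theorem there is a set $W\subseteq V(G)$ with $|W|\le 1$ whose removal separates $V(F_2)$ from $V(F_3)$ in $G_R$. I would assemble a partition $\{S,T,X^*\}$ of $V(G)$ suited to Lemma~\ref{lem_red_disconnected_2}: put $V(F_2)\setminus W\subseteq S$, $V(F_3)\setminus W\subseteq T$, $W\subseteq X^*$, and assign each $z\in Z\setminus W$ to $S$ or $T$ depending on which side of the red separation it connects to, placing $z$ in $X^*$ only if it has significant red degree to both sides. A short argument shows that at most one such ``both-sides'' vertex can exist, for otherwise two of them could be used, together with red edges on each side, to build two vertex-disjoint red $X$--$Y$ paths avoiding $W$, contradicting the case assumption. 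Hence $|X^*|\le 2$, the extra vertex (if present) satisfies the required red-degree condition of Lemma~\ref{lem_red_disconnected_2}, and $|S|, |T|\ge (1/2-2\eps)n$ follows from $|X|,|Y|\ge (1/2-\eps)n$. Lemma~\ref{lem_red_disconnected_2} then produces the desired partition of $V(G)$ into a red cycle and a blue one.

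The main obstacle is in the first case: verifying the precise strong robustness of the merged red graph $F'$ with uniform parameters $(\alpha', k')$, in particular handling the case where $F_3$ is only weakly robust so that paths between two vertices on the same side of its bipartition must borrow non-bipartiteness from $F_2$ through one of the linking paths $P_1, P_2$. One must also adapt the proof of Lemma~\ref{lem_large_blue_bip_large_red} to the present setting, where the robust components only cover $V(F_1)=X\cup Y$ until the vertices of $Z$ are absorbed. The second case is comparatively routine once the bound $|X^*|\le 2$ is in place.
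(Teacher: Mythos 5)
Your Case~2 (a single red cut vertex) matches the paper: both invoke Lemma~\ref{lem_red_disconnected_2} after observing that the ``both-sides'' vertices are limited to the Menger separator, and your extra bookkeeping to show $|X^*|\le 2$ is sound (in fact $|X^*|\le 1$ already holds).

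Case~1, however, has a genuine gap. You propose to merge $F_2$, $F_3$, $P_1$, $P_2$ into a single strongly robust red component $F'$ and then apply the argument of Lemma~\ref{lem_large_blue_bip_large_red}. But in the case under consideration the red connectivity between $X=V(F_2)$ and $Y=V(F_3)$ may be \emph{exactly} two: there can exist two vertices whose removal disconnects $X$ from $Y$ in $G_R$. Every red path between some $x\in X$ and some $y\in Y$ must then pass through this $2$-element separator, so the number of such paths of any fixed length $l+1$ is $O(n^{l-1})$. The concatenation count you describe makes this explicit: a path that traverses one of the connectors $P_i$ contributes at most $n^{l_2-1}\cdot n^{l_3-1}$ paths where $l_2+|P_i|+l_3=l+1$, hence $O(n^{l-1-|P_i|})=O(n^{l-2})$, far below the $\alpha' n^{l}$ required by Definition~\ref{def_robust_compt}. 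So $F'$ is \emph{not} $(\alpha',k')$-strongly robust in general, the absorbing-path machinery of Lemma~\ref{lem_absorbing_paths} does not apply to it, and your reduction to Lemma~\ref{lem_large_blue_bip_large_red} breaks. You flag this as the ``main obstacle,'' but it is not a technicality to be chased down: in the worst case the merged graph genuinely fails the definition.

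The paper avoids this entirely by \emph{not} merging. It keeps three robust components $F_1'$, $F_2'$, $F_3'$ (extending $F_1,F_2,F_3$ to cover $V(G)$ minus the connectors) and builds \emph{three} vertex-disjoint absorbing paths $Q_1\subseteq G_B$, $Q_2,Q_3\subseteq G_R$, where for $i\in\{2,3\}$ the absorbing path $P_i'$ in $F_i'$ is concatenated with the Menger connector $P_i$ so that $Q_2$ and $Q_3$ each have one end in $F_2'$ and the other in $F_3'$. After the Regularity Lemma one obtains one large blue component $\Phi_1$ and two half-sized red components $\Phi_2,\Phi_3$ in the reduced graph; Chv\'atal's theorem yields a perfect matching split among them; and the red cycle is then assembled as two long arcs — one inside $F_2'$ and one inside $F_3'$ — stitched together by $Q_2$ and $Q_3$. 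This uses the two Menger connectors exactly twice, which is all the connectivity guarantees, and never requires the merged red graph to be robust. If you want to keep a merging viewpoint, the correct statement is that $F_2$ and $F_3$ together with two connectors yield a red subgraph whose \emph{reduced graph} is connected (and in which a connected matching can be converted to a single cycle), not that the vertex-level graph is robust.
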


To prove this lemma, we show that we may either join the two robust components $F_2, F_3$ or we can finish using Lemma \ref{lem_red_graph_disconnected_1}.

\begin{proof}[Proof of Lemma \ref{lem_blue_bip_balanced_two_red_not_both_bip}]

By Menger's theorem, one of the following holds.
\begin{enumerate}
\item \label{itm_1_lem_blue_bal_bip_two_red}
There exist two vertex-disjoint red paths $P_2,P_3$, each having one end in $F_2$ and the other in $F_3$. 
\item \label{itm_2_lem_blue_bal_bip_two_red}
There exists  $u \in V(G)$ such that $F_2 \setminus \{u\}$ and $F_3 \setminus\{u\}$ are disconnected in $G_R \setminus \{u\}$.
\end{enumerate}

In Case (\ref{itm_2_lem_blue_bal_bip_two_red}), it is easy to deduce that the required monochromatic cycle partition exists from Lemma \ref{lem_red_disconnected_2}.

It remains to consider Case (\ref{itm_1_lem_blue_bal_bip_two_red}).
The idea is to use the paths $P_2$ and $P_3$ so as to essentially connect the two components $F_2,F_3$ into one large component. We achieve this as follows.

Note that we may assume that the internal vertices of $P_2,P_3$ belong to $V(G)\setminus (V(F_2)\cup V(F_3))$. It follows that $|P_2|, |P_3| \le 2 \eps n$ and the components $F_1,F_2,F_3$ remain robust after removing the vertices of the paths $P_2,P_3$.
Note that every vertex has either large (say, at least $\alpha n$) blue degree into $F_1$, or large red degree into either $F_2$ or $F_3$.
It follows that we may extend these components to cover the remaining vertices.  Namely, using Lemma \ref{lem_robust_adding_vertices}, we obtain $(\alpha/2,k+2)$-robust components $F_1',F_2',F_3'$ which extend the given components and cover $V(G)\setminus (V(P_2)\cup V(P_3))$. Denote by $\{X',Y'\}$ the bipartition of $F_1'$.

By Lemma \ref{lem_absorbing_paths}, $F_i'$ contains a $\rho^2 n$-absorbing path $P_i'$ of length at most $\rho n$ for each $i\in [3]$. We may assume that the paths $P_1',P_2',P_3'$ are vertex disjoint. For $i\in \{2,3\}$, we may connect $P_i$ with $P_i'$ in $F_i$ to a path $Q_i$ using at most $k + 2$ additional vertices. For convenience, we denote $Q_1=P_1'$.
To conclude, the paths $Q_1\subseteq G_B$ and $Q_2,Q_3\subseteq G_R$ are vertex-disjoint paths, such that $Q_i$ is $\rho^2 n$-absorbing in $F_i'$. Furthermore, each of $Q_2,Q_3$ has one end in $F_2'$ and one in $F_3'$.

Recall that $F_1$ is weakly robust and that $F_2$ is strongly robust. It is perhaps not evident from the definitions that a strongly robust graph is weakly robust, but for our purpose, the place where the difference is important is in the definition of an absorbing path. But clearly, a weakly absorbing path is strongly robust. Thus, it suffices to consider the case where $F_3'$ is weakly robust with bipartition $\{Z_1,Z_2\}$.

Similarly to the proof in Section \ref{sec_large_bip_blue_large_red}, in order to overcome the technical issues arising when dealing with weakly robust components, we pick sets $A_1,A_2,A_3$ such that
\begin{itemize}
\item
$A_1,A_2,A_3$ are pairwise vertex disjoint and do not intersect $V(Q_1)\cup V(Q_2)\cup V(Q_3)$.
\item
$A_1\subseteq Z_1\cap Y$, $A_2\subseteq Z_2\cap Y$ and $A_3\subseteq X\cap V(F_2)$.
\item
$|A_1|=\rho^2 n /16$, $|A_2|=\rho^2 n / 4$ and $|A_3|=\rho^2 n /2$.
\end{itemize}

We now consider the $(\eta, 6\eta)$-reduced graph obtaining by applying the Regularity Lemma, Lemma \ref{lem_regularity}, to the graph obtained from $G$ by removing the vertices in the $Q_i$'s and $A_i$'s (with the cover $\{V(F_1'), V(F_2'), V(F_3')\}$).
The reduced graph $\Gamma$ consists of a large blue component $\Phi_1$ (containing almost all vertices) and two disjoint almost half-sized connected red subgraphs $\Phi_2,\Phi_3$.
It is easy to verify, similarly to the proof of Claim \ref{claim_pft_matching_case_2}, using Theorem \ref{thm_chvatal}, that $\Gamma$ has a perfect matching consisting of edges in $\Phi_1, \Phi_2$ and $\Phi_3$.

By Lemma \ref{lem_connected_matching}, there exist a blue cycle $C_1$ and a red cycle $C_2$ such that 
\begin{itemize}
\item
$C_1$ and $C_2$ are vertex-disjoint and do not intersect $A = A_1 \cup A_2 \cup A_3$.
\item
They cover all but at most $4\eta n$ vertices of $V(G)\setminus A$.
\item
$C_1$ contains the path $Q_1$ and $C_2$ contains the paths $Q_2,Q_3$.
\end{itemize}
Let us elaborate slightly more on how to obtain the required cycles by pointing out that $C_2$ may be obtained by connecting the ends of $Q_2,Q_3$ by two paths, one in $F_2'$ and the other in $F_3'$.

Let $B$ be the set of vertices which do not belong to the cycles $C_1, C_2$ or to $A$ (so $|B| \le  7\eta n \le \rho^2 n / 16$) and denote
\begin{align*}
&B_1 = B \cap Z_1,\quad B_2 = B \cap Z_2 \\
&B_3 = (B \cap X')\setminus (B_1 \cup B_2) \\
&B_4 = (B \cap Y')\setminus (B_1 \cup B_2) \\
&B_5 = B \setminus (B_1 \cup \ldots \cup B_4).
\end{align*}
We perform the following steps in order to absorb $B$.
\begin{itemize}
\item
We have $|B_2|\le \rho^2 n /16 \le |A_1\cup B_1| \le \rho^2 n /8$ and $|A_2|=\rho^2 n /4$.
Thus we may choose $A_2'\subseteq A_2$ such that $|A_1\cup B_1|=|A_2'\cup B_2|$.
The vertices $A_1\cup B_1\cup A_2'\cup B_2$ can be absorbed into $Q_3$.
\item
Similarly, we may choose $A_3'\subseteq A_3$ such that $|(A_2\setminus A_2')\cup B_3|=|A_3'\cup B_4|$.
The vertices $(A_2\setminus A_2')\cup B_3\cup A_3'\cup B_4$ can be absorbed into $Q_1$.
\item
The remaining vertices $(A_3\setminus A_3') \cup B_5$ can be absorbed into $Q_2$.
\end{itemize}
This completes the partition of $V(G)$ into a blue cycle and a red one.

\end{proof}

The following lemma deals with the case where one of the red graphs in question satisfies Condition (\ref{itm_struct_blue_bip_balanced_two_red}) in Lemma \ref{lem_rough_structure} and the other satisfies one of the other two conditions.

\begin{lem}\label{lem_large_blue_bip_three_red}
Let $\frac{1}{n} \ll \eps, \alpha, \frac{1}{k} \ll 1$ and let $G$ be a graph of order $n$ with $\delta(G) \ge 3n / 4$ and a $2$-colouring $E(G) = E(G_B) \cup E(G_R)$.
Suppose that $F_1, F_2, F_3, F_4$ satisfy the following assertions.
\begin{itemize}
\item
$F_1\subseteq G_B$ is $(\alpha,k)$ weakly robust and with bipartition $\{X, Y\}$, where $|X| , |Y| \ge (1/2 - \eps)n$.
\item
$F_2 \subseteq G_R$ is $(\alpha,k)$-robust and $V(F_2) = X$.
\item
$F_3, F_4 \subseteq G_R$ are $(\alpha, k)$ strongly robust components of order at least $(1/4 - \eps)n$ whose vertex sets partition $Y$.
Furthermore, $e(G_R[V(F_3), V(F_4)]) \le \eps n^2$.
\end{itemize}
Then $V(G)$ may be partitioned into a blue cycle and a red cycle.
\end{lem}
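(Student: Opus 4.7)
The plan follows the strategy of Lemma \ref{lem_blue_bip_balanced_two_red_not_both_bip}: use Menger's theorem on $G_R$ to separate into a regime with sufficient red connectivity---where a single red cycle can be built traversing all three red robust components via connecting paths---and a regime with a small vertex cut in $G_R$, which is handled by Lemma \ref{lem_red_disconnected_2}.

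First I would apply Menger's theorem to the pair $V(F_3), V(F_4)$ in $G_R$. If two vertex-disjoint red paths exist between them, apply Menger again to $V(F_2)$ versus $V(F_3) \cup V(F_4)$. If two disjoint red paths exist in both applications, proceed as in Lemma \ref{lem_blue_bip_balanced_two_red_not_both_bip}: build absorbing paths inside each of $F_2, F_3, F_4$ via Lemma \ref{lem_absorbing_paths}, combine them with the four connecting red paths into absorbing structures spanning the three components, and apply the Regularity Lemma with a perfect matching argument in the reduced graph (using Theorem \ref{thm_chvatal}) to produce the desired disjoint blue and red cycles; leftover vertices are absorbed via the absorbing structures. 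If the second Menger application instead yields a single vertex $v$ separating $V(F_2)$ from $V(F_3) \cup V(F_4)$, invoke Lemma \ref{lem_red_disconnected_2} with $X_{\text{cut}} = \{v\}$, $S = V(F_2) \setminus \{v\}$ and $T = (V(F_3) \cup V(F_4)) \setminus \{v\}$; both have size at least $(1/2 - 2\eps)n$.

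In the remaining case, a single vertex $u$ separates $V(F_3)$ from $V(F_4)$ in $G_R$, and the natural split is unbalanced. Since $F_2$ is red-robust on $X$, the set $X \setminus \{u\}$ lies in a single component of $G_R \setminus \{u\}$, joining either the $V(F_3)$-side, the $V(F_4)$-side, or forming a third component. In the third-component case $\{u\}$ already gives a balanced cut suitable for Lemma \ref{lem_red_disconnected_2}. Otherwise, without loss of generality $X \setminus \{u\} \subseteq W_3$, and further Menger analysis between $V(F_2)$ and $V(F_4)$ forces $u$ to be the single-vertex separator there as well; so $\{u\}$ cuts $V(F_4)$ from $V(F_2) \cup V(F_3)$, but $|V(F_4)|$ is too small for a direct application. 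Here one either identifies a second cut vertex $u'$ inside the large component so that $X_{\text{cut}} = \{u, u'\}$ gives balanced sizes and satisfies the red-degree condition of Lemma \ref{lem_red_disconnected_2} (using the sparsity $e(G_R[V(F_3), V(F_4)]) \le \eps n^2$ to bound the relevant degree), or one constructs the partition directly: since for every $v \in V(F_4) \setminus \{u\}$ the minimum degree $3n/4$ together with $d_R(v, V(F_2) \setminus \{u\}) = 0$ gives $d_B(v, X) \ge (1/4 - \eps)n$, one can build a blue Hamilton cycle in $G_B[X_B, V(F_4)]$ for a carefully chosen $X_B \subseteq X$ with $|X_B| = |V(F_4)|$, and a red cycle on the remainder via the merged red structure on $V(F_2) \cup V(F_3)$.

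The hard part will be the sub-case analysis in the last regime: showing that in every Menger-consistent configuration either enough red connectivity is available to form one red cycle through all three components, or that a suitable cut of size at most $2$ exists with the red-degree condition for Lemma \ref{lem_red_disconnected_2}, or that the direct bipartite construction of a blue Hamilton cycle on $X_B \cup V(F_4)$ together with a red cycle on the complementary vertex set succeeds.
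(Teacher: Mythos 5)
Your proposal diverges from the paper's proof in a way that creates a genuine gap. The paper does not start by applying Menger between $F_3$ and $F_4$: since the hypothesis already says $e(G_R[V(F_3), V(F_4)]) \le \eps n^2$, those two components are essentially red-disconnected by assumption, and it is not productive to branch on their connectivity. Instead the paper applies Menger between $F_2$ and $F_3$, and separately between $F_2$ and $F_4$. Either there are two vertex-disjoint red paths from $F_2$ into the \emph{same} $F_i$ (which lets you merge $F_2$ with that $F_i$ and then finish exactly as in Lemma \ref{lem_blue_bip_balanced_two_red_not_both_bip}, with one large and one small red robust component), or else there is a cut $U$ of size at most $2$ separating $F_2$ from $F_3 \cup F_4$ in $G_R$. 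In the latter case, the crucial consequence is that $G_R[X \setminus U, Y \setminus U]$ is \emph{empty}, so almost all of the (dense, by the minimum degree condition) bipartite graph between $X$ and $Y$ is blue; this is exactly what is needed for Lemma \ref{lem_red_graph_disconnected_1}, which you do not invoke. Note that Lemma \ref{lem_red_graph_disconnected_1} asks only for blue density conditions and is the right tool here; Lemma \ref{lem_red_disconnected_2}, which you use instead, imposes an additional and potentially unavailable red-degree hypothesis on the cut vertices when $|U|=2$.

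The concrete gap is in your final sub-case, where a single vertex $u$ separates $V(F_3)$ from $V(F_4)$ and (after joining $F_2$'s component to $F_3$'s side) the resulting red split is $\approx n/4$ versus $\approx 3n/4$. Your first fallback -- finding a second cut vertex $u'$ to rebalance -- cannot work: removing $u'$ from the large side creates a \emph{third} red component rather than moving vertices to the $F_4$ side, so $\{u, u'\}$ does not yield a two-way balanced cut. Your second fallback -- building a blue Hamilton cycle in $G_B[X_B, V(F_4)]$ with $|X_B| = |V(F_4)| \approx n/4$ -- fails on degrees. You correctly derive $\deg_B(v, X) \ge (1/4 - O(\eps))n$ for $v \in V(F_4)$, but $|X| \approx n/2$ while $|X_B| \approx n/4$, so $\deg_B(v, X_B)$ could be as small as $\deg_B(v, X) - |X \setminus X_B| \approx 0$; there is no way to choose $X_B$ so that the bipartite graph is guaranteed to have the degree $\ge |X_B|/2$ needed for Hamiltonicity. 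The way out of this dead end is to observe, as the paper does, that once $F_2$ is 2-cut from $F_3 \cup F_4$ in $G_R$, the near-emptiness of $G_R[X,Y]$ makes $G_B[X,Y]$ satisfy the hypotheses of Lemma \ref{lem_red_graph_disconnected_1}, after which no further red case analysis is needed.
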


The proof of this lemma is similar to the previous one. Either $F_2$ may be joined to one of $F_3, F_4$ or we may finish using Lemma \ref{lem_red_graph_disconnected_1}.

\begin{proof}[Proof of Lemma \ref{lem_large_blue_bip_three_red}]

Similarly to the previous case, two possibilities arise.
\begin{enumerate}
\item \label{itm_1_lem_blue_bal_bip_three_red}
There exist vertex-disjoint red paths $P_1, P_2$ with one end in $F_2$ and either both have the other end in $F_3$ or both have the other end in $F_4$. 
\item \label{itm_2_lem_blue_bal_bip_three_red}
There exists a set $U$ of size at most $2$ such that $F_1 \setminus U$ and $(F_2 \cup F_3) \setminus U$ are disconnected in $G_R \setminus U$.
\end{enumerate}

In Case (\ref{itm_2_lem_blue_bal_bip_three_red}), the required monochromatic cycle partition exists by Lemma \ref{lem_red_graph_disconnected_1}. Indeed, we may find $S \subseteq X \setminus U$ and $T \subseteq Y \setminus U$ such that the three conditions in the lemma hold, for some parameter $\eta = \eta(\eps)$.
In particular, the third condition holds because most vertices in $Y$ have degree at most $(1/4 + \sqrt{\eps})n$ in $G[Y]$, and thus have degree at least $(1/2 - \sqrt{\eps})n$ into $X$.

Case (\ref{itm_1_lem_blue_bal_bip_three_red}) may be dealt with similarly to the previous proof of Lemma \ref{lem_blue_bip_balanced_two_red_not_both_bip}, we omit further details.
\end{proof}

The following lemma deals with the remaining case, where both red graphs satisfy condition (\ref{itm_struct_blue_bip_balanced_two_red}) in Lemma \ref{lem_rough_structure}.

\begin{lem} \label{lem_blue_bal_bip_four_red}
Let $\frac{1}{n} \ll \eps, \alpha, \frac{1}{k} \ll 1$ and let $G$ be a graph of order $n$ with $\delta(G) \ge 3n / 4$ and with a $2$-colouring $E(G) = E(G_B) \cup E(G_R)$.
Suppose that $F_1, F_2, F_3, F_4, F_5$ satisfy the following conditions.
\begin{itemize}
\item
$F_1\subseteq G_B$ is $(\alpha,k)$ weakly robust and with bipartition $\{X, Y\}$, where $|X| , |Y| \ge (1/2 - \eps)n$.
\item
$F_2, F_3 \subseteq G_B$ are $(\alpha, k)$ strongly robust components of order at least $(1/4 - \eps)n$ whose vertex sets partition $X$.
\item
$F_4, F_5 \subseteq G_B$ are $(\alpha, k)$ strongly robust components of order at least $(1/4 - \eps)n$ whose vertex sets partition $Y$.
\item
$e(G_R[V(F_2), V(F_3)]), e(G_R[V(F_4), V(F_5)]) \le \eps n^2$.
\end{itemize}
Then $V(G)$ may be partitioned into a blue cycle and a red cycle.
\end{lem}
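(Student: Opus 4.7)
The plan is to follow the Menger-based strategy used in Lemmas \ref{lem_blue_bip_balanced_two_red_not_both_bip} and \ref{lem_large_blue_bip_three_red}: try to merge the four red strongly robust components $F_2, F_3, F_4, F_5$ into fewer, larger red robust structures so as to reduce to an earlier lemma, and otherwise extract a small red cut and apply Lemma \ref{lem_red_disconnected_2}.

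First I apply Menger's theorem to each of the pairs $(F_2, F_3)$ and $(F_4, F_5)$ in $G_R$: for each pair either there are two vertex-disjoint red paths between its two components, or there is a single vertex $u$ whose removal places them in different components of $G_R \setminus \{u\}$. If both pairs admit two disjoint red paths, I choose four mutually vertex-disjoint red paths---two per pair---with internal vertices outside $F_2 \cup \ldots \cup F_5$, picking them greedily using the minimum degree $3n/4$ condition. Applying Lemma \ref{lem_robust_adding_vertices} and absorbing the $O(\eps n)$ vertices originally outside $F_2 \cup \ldots \cup F_5$ (which have large red degree into one side by the minimum degree condition), the combined structure $F_2 \cup F_3$ with its two connecting paths and the leftover vertices of $X$ becomes a red strongly robust component spanning $X$; similarly for $F_4 \cup F_5$ on $Y$. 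This places us in the hypotheses of Lemma \ref{lem_blue_bip_balanced_two_red_not_both_bip}, finishing the proof.

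If, without loss of generality, the pair $(F_2, F_3)$ has a red cut vertex $u$, let $A, B$ be the components of $G_R \setminus \{u\}$ containing $F_2 \setminus \{u\}$ and $F_3 \setminus \{u\}$ respectively. Since each of $F_4, F_5$ is red-connected and of size much larger than $1$, each of $F_4, F_5$ lies entirely in $A$ or entirely in $B$. If $F_4, F_5$ split across the two sides, then after placing the $O(\eps n)$ unassigned vertices appropriately the partition $\{A, B, \{u\}\}$ of $V(G)$ satisfies $|A|, |B| \ge (1/2 - O(\eps))n$ with $A, B$ red-disconnected in $G_R \setminus \{u\}$, so Lemma \ref{lem_red_disconnected_2} delivers the partition.

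The main obstacle is the sub-case where $F_4, F_5$ lie on the same side of the cut, say both in $A$ together with $F_2$, so $|B| \approx n/4$ and Lemma \ref{lem_red_disconnected_2} does not apply directly. Here the cut condition forces every $v \in F_3 \setminus \{u\}$ to have no red neighbour in $F_4 \cup F_5 = Y$; combined with the $3n/4$ minimum degree and the accompanying bound $e(G_B[X]) \le \eps n^2$ inherited from Lemma \ref{lem_rough_structure}, such a vertex $v$ is blue-adjacent to all but $O(\sqrt{\eps}\,n)$ of $Y$, so $G_B[F_3, Y]$ is near-complete bipartite. I then redo the Menger analysis on $(F_4, F_5)$: in the good case I merge $F_2 \cup F_4 \cup F_5$ via two disjoint red paths and Lemma \ref{lem_robust_adding_vertices} into a single red strongly robust component on $V(G) \setminus F_3$, then use Lemma \ref{lem_absorbing_paths} and Lemma \ref{lem_connected_matching} to build a blue cycle on $F_3 \cup Y_0$ for a carefully chosen $Y_0 \subseteq Y$ of size $|F_3|$ together with a red cycle covering the remainder; if $(F_4, F_5)$ is also in its bad case, the resulting picture (two red cut vertices, one separating each of $\{F_2, F_3\}$ and $\{F_4, F_5\}$) is essentially symmetric and admits an analogous hands-on resolution, completing the proof.
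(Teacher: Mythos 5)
Your plan diverges substantially from the paper's and has real gaps. The paper introduces a four-vertex bipartite auxiliary graph $H$ on $\{2,3,4,5\}$ with an edge $ij$ (for $i\in\{2,3\}$, $j\in\{4,5\}$) whenever $e(G_R[V(F_i),V(F_j)])\ge \eps n^2$; it then splits into cases according to whether $H$ has a cherry, a perfect matching, a single edge, or no edges, and the only cut it ever considers separates $V(F_2)\cup V(F_4)$ from $V(F_3)\cup V(F_5)$ --- two sides of size roughly $n/2$, so Lemma \ref{lem_red_disconnected_2} or Lemma \ref{lem_red_graph_disconnected_1} always applies directly. You instead apply Menger within each side, between $F_2$ and $F_3$ and between $F_4$ and $F_5$, and this leads into two problems that the paper's diagonal decomposition deliberately avoids.

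First, the step where you assert that two disjoint red $F_2$--$F_3$ paths can be chosen ``with internal vertices outside $F_2\cup\ldots\cup F_5$'' is unjustified. That outside set has size $O(\eps n)$, and the only hypothesis you have is the \emph{upper} bound $e(G_R[V(F_2),V(F_3)])\le\eps n^2$, which may be $0$. Menger in $G_R$ between $F_2$ and $F_3$ produces paths that will generally run through $F_4\cup F_5 = Y$, and once the bridges use vertices of $Y$, the merged ``red component on $X$'' is no longer contained in $X$ and disjoint from $Y$, so the hypotheses of Lemma \ref{lem_blue_bip_balanced_two_red_not_both_bip} (which demand $V(F_2)=X$ and $V(F_3)=Y$) are not met. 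There is also no reason the four paths you choose are mutually disjoint.

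Second, the sub-case you flag as hard --- a single red cut vertex $u$ leaving $F_3$ alone on one side of size $\approx n/4$ and $F_2,F_4,F_5$ on the other --- is genuinely outside the reach of both Lemma \ref{lem_red_disconnected_2} (which needs both sides of size $\ge (1/2-\eps)n$) and Lemma \ref{lem_red_graph_disconnected_1} (same). Your intended fix uses $e(G_B[X])\le\eps n^2$, which is not a hypothesis of Lemma \ref{lem_blue_bal_bip_four_red} as stated, and then closes with ``essentially symmetric'' and ``an analogous hands-on resolution'' for the remaining sub-sub-case, which is not a proof. The paper's approach via $H$ never produces a quarter-sized isolated piece: in the one-edge and zero-edge cases for $H$ the cut separates one $\{F_i,F_j\}$ pair (half-sized) from the complementary pair (half-sized), and in the cherry case three of the four components are merged into a $\approx 3n/4$ red component, which is handled as in the earlier sections.
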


To prove this lemma we follow similar ideas to the previous results in this section. We show that either at least three of the four components $F_2, F_3, F_4, F_5$ may be joined or we may finish using Lemma \ref{lem_red_graph_disconnected_1} or Lemma \ref{lem_red_disconnected_2}.

\begin{proof}[Proof of Lemma \ref{lem_blue_bal_bip_four_red}]
We consider four cases.
In order to be able to distinguish between them, we define a graph $H$ on vertex set $\{2, 3, 4, 5\}$ with an edge $(i, j)$, where $i \in \{2, 3\}, j \in \{4, 5\}$ if $e(G_R[F_i, F_j]) \ge \eps n^2$ (so $H$ is a bipartite graph on four vertices with bipartition $\{[2, 3], [4, 5]\}$.
Clearly, one of the following conditions.
\begin{enumerate}
\item \label{itm_graph_H_two_path}
$H$ contains a path of length $2$.
\item \label{itm_graph_H_two_matching}
$H$ consist of two vertex-disjoint edges.
\item \label{itm_graph_H_one_edge}
$H$ has exactly one edge.
\item \label{itm_graph_H_no_edges}
$H$ is the empty graph. 
\end{enumerate}
In Case (\ref{itm_graph_H_two_path}), without loss of generality suppose that $(2, 4), (2, 5) \in E(H)$.
This means that $e(G_R[F_2, F_4]), e(G_R[F_2, F_5]) \ge \eps n^2$.
In particular, there exist four vertex-disjoint edges $e_1, e_2 \in G_R[F_2, F_4], e_3, e_4 \in G_R[F_2, F_5]$.
We deduce that the components $F_2, F_4, F_5$ may be joined to form a red component of size at least $(3/4 - 3\eps)n$.
We may now finish the proof of Lemma \ref{lem_blue_bal_bip_four_red} similarly to the previous lemmas in this section.

We now suppose that Case (\ref{itm_graph_H_two_matching}) holds. Without loss of generality, $E(H) = \{(2, 4), (3, 5)\}$.
If there are two vertex-disjoint red paths $P_1, P_2$ between $V(F_2) \cup V(F_4)$ and $V(F_3) \cup V(F_5)$, we conclude that the four component $F_2, F_3, F_4, F_5$ may be joined (note that to join $F_2$ to $F_4$ and $F_3$ to $F_5$ we use edges between them which can be chose so as to not intersect the given paths $P_1, P_2$).
Otherwise, there is a vertex $u \in V(G)$ such that $G_R \setminus \{u\}$ is disconnected, with $F_2, F_4$ in one component and $F_3, F_5$ in another.
The proof of Lemma \ref{lem_blue_bal_bip_four_red} can be completed by Lemma \ref{lem_red_disconnected_2}.

We now assume that Case (\ref{itm_graph_H_one_edge}) holds.
Without loss of generality, $e(G_R[V(F_2), V(F_4)]) \ge \eps n^2$.
Consider the graph $G' = G_R[V(F_2) \cup V(F_4), V(F_3) \cup V(F_5)]$.
If it contains a matching of size at least $5$, then the one of $F_2, F_4$ may be joined to one of $F_3, F_5$. Since there are many edges between $F_2$ and $F_4$, it follows that we may form a red component using $F_2, F_4$ and one of $F_3, F_5$.
The proof may be completed as before.
Thus we assume that the above graph has no matching of size $5$.
It follows that we may remove four vertices from $G'$ so as to disconnect $F_2$ and $F_4$ from $F_3$ and $F_5$.
Thus there exists a set $U \subseteq V(G)$ of size at most $3 \eps n$ such that $F_2$ and $F_4$ are disconnected from $F_3$ and $F_5$.
Lemma \ref{lem_blue_bal_bip_four_red} now follows from Lemma \ref{lem_red_graph_disconnected_1} (note that most vertices in $V(F_3) \cup V(F_5)$ have red degree at most about $n / 4$).

Finally, we consider Case (\ref{itm_graph_H_no_edges}).
Denote $U = V(F_2) \cup V(F_3) \cup V(F_4) \cup V(F_5)$. Let $W$ be the union of $V(G) \setminus U$ with the set of vertices in $V(G)$ which have degree at least $2 \sqrt{\eps} n$ in $G_R[V(F_2) \cup V(F_3), V(F_4) \cup V(F_5)]$.
Since $H$ is the empty graph, we have $|U| \le 5\sqrt{\eps}n$, thus we may complete the proof by Lemma \ref{lem_red_graph_disconnected_1}.
\end{proof}

In order to finish the proof of Theorem \ref{thm_main} under the assumption that Condition (\ref{itm_struct_spanning_blue_bip_almost_balanced}) from Lemma \ref{lem_rough_structure}, we need to consider the case where both graphs in question satisfy Condition (\ref{itm_struct_blue_bip_balanced_red_weakly_robust}).
This is done in the next section, Section \ref{sec_large_blue_bip_two_midsized_red}.

\section{Almost balanced large weakly robust blue component and two half-sized weakly robust red components}\label{sec_large_blue_bip_two_midsized_red}

In this section we consider Case (\ref{itm_struct_spanning_blue_bip_almost_balanced}) from Lemma \ref{lem_rough_structure}, where both graphs $G_R[X_1]$ and $G_R[X_2]$ satisfy condition (\ref{itm_struct_blue_bip_balanced_red_weakly_robust}).
Namely, we have a large weakly robust blue component with an almost balanced bipartition $\{X_1, X_2\}$. Furthermore, $G_R[X_i]$ is weakly robust for $i \in [2]$ with an almost balanced bipartition $\{Y_{i, 1}, Y_{i, 2}\}$ such that $e(G_R[Y_{i, 1}, Y_{i, 2}]) \le \eps n^2$.

\begin{lem} \label{lem_blue_bal_bip_two_red_bip}
Let $\frac{1}{n} \ll \eps \ll 1$ and let $G$ be a graph of order $n$ with $\delta(G) \ge 3n / 4$ and with a $2$-colouring $E(G) = E(G_B) \cup E(G_R)$.
Suppose that there exists four disjoint sets $Y_{i, j}$, $i, j \in [2]$ with the following properties.
\begin{itemize}
\item
$|Y_{i, j}| \ge (1/4 - \eps)n$ for $i, j \in [2]$.
\item
$e(G[Y_{i, j}]) \le \eps n^2$.
\item
$e(G_B[Y_{i, 1}, Y_{i, 2}]) \le \eps n^2$ for $i \in [2]$.
\item
$e(G_R[Y_{1, j}, Y_{2, j}]) \le \eps n^2$ for $j \in [2]$.
\end{itemize}
\end{lem}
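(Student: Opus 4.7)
The hypotheses describe a near $4$-partite structure. Write $A = Y_{1,1}$, $B = Y_{1,2}$, $C = Y_{2,1}$, $D = Y_{2,2}$, and let $X_1 = A \cup B$, $X_2 = C \cup D$, $Y_1 = A \cup C$, $Y_2 = B \cup D$. The hypotheses, combined with $\delta(G) \ge 3n/4$, force $G_B$ to be essentially bipartite with bipartition $\{X_1, X_2\}$ and $G_R$ essentially bipartite with bipartition $\{Y_1, Y_2\}$: the ``row'' pairs $A$-$B$, $C$-$D$ are dense red, the ``column'' pairs $A$-$C$, $B$-$D$ are dense blue, while the two ``diagonal'' pairs $A$-$D$ and $B$-$C$ have unspecified colour. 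Let $p_{AD}$ and $p_{BC}$ denote the blue density on the two diagonals. A routine averaging argument shows that all but $O(\sqrt{\eps}\,n)$ vertices of each $Y_{i,j}$ have essentially complete neighbourhoods, in the appropriate colours, into the other three parts.

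The plan is a case analysis on $(p_{AD}, p_{BC})$. Fix a constant $\eta$ with $\eps \ll \eta \ll 1$, say $\eta = 400\eps$. If $\min(p_{AD}, p_{BC}) \ge \eta$, apply Lemma \ref{lem_red_graph_disconnected_1} with $S = X_1$ and $T = X_2$: both the minimum degree condition on $G_B[S,T]$ and the requirement of at least $25\eps n^2$ blue edges between every pair $(S', T')$ of size $(1/4 - 100\eps)n$ hold, since blue edges are abundant between every pair of parts including the diagonals (the worst case is when $S'$ lies essentially in $A$ and $T'$ in $D$, which needs precisely $p_{AD} \ge \eta$). Symmetrically, if $\max(p_{AD}, p_{BC}) \le 1 - \eta$, the same lemma applied with the colours of $G_B$ and $G_R$ swapped and with $S = Y_1$, $T = Y_2$ produces the partition. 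Hence, after possibly exchanging the indices $j = 1$ and $j = 2$, the remaining case is $p_{AD} < \eta$ and $p_{BC} > 1 - \eta$.

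In this remaining case both $G_B$ and $G_R$ are connected, bipartite, and meet their full expected densities: $G_B$ contains the dense pairs $A$-$C$, $B$-$D$, $B$-$C$, while $G_R$ contains the dense pairs $A$-$B$, $C$-$D$, $A$-$D$. In particular, $G_B$ is weakly robust with bipartition $\{X_1, X_2\}$ and $G_R$ is weakly robust with bipartition $\{Y_1, Y_2\}$ for suitable robustness parameters. I would follow the template of the proof of Lemma \ref{lem_blue_bip_balanced_two_red_not_both_bip}: use Lemma \ref{lem_absorbing_paths} to find vertex-disjoint weakly absorbing paths $P_B \subseteq G_B$ and $P_R \subseteq G_R$; reserve small balancing sets inside each $Y_{i,j}$ so that leftover imbalances in both the $(X_1, X_2)$ and the $(Y_1, Y_2)$ bipartitions can be corrected; apply the Regularity Lemma to what remains; and find in the reduced graph a perfect matching that splits into a connected blue matching and a connected red matching, with cluster counts $(\alpha, \beta, \gamma, \delta)$ from $(A, B, C, D)$ assigned to the blue side satisfying the two bipartite constraints $\alpha + \beta = \gamma + \delta$ and $\alpha + \gamma = \beta + \delta$. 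Such a balanced split exists because the four cluster counts are all close to $m/4$, and the matchings themselves are built from the available dense pairs ($A$-$C$, $B$-$C$, $B$-$D$ for blue; $A$-$B$, $A$-$D$, $C$-$D$ for red). Lemma \ref{lem_connected_matching} then turns the matching into disjoint blue and red cycles containing $P_B$ and $P_R$, and the $O(\eps n)$ leftover vertices are absorbed using the absorbing properties of $P_B$ and $P_R$ together with the balancing sets. The main obstacle is precisely this last absorbing step, where two independent bipartitions must be balanced simultaneously when reinserting the leftover vertices; this forces the introduction of four balancing sets (one per $Y_{i,j}$), analogous to the three sets used in the proof of Lemma \ref{lem_blue_bip_balanced_two_red_not_both_bip}.
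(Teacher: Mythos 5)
Your first half—identifying the four parts, noting the two diagonal densities $p_{AD}$ and $p_{BC}$, and showing that unless both are extreme Lemma~\ref{lem_red_graph_disconnected_1} applies (in one colour or the other)—matches the paper's opening moves, including the reduction to the case where $G_B[A,D]$ is almost empty and $G_B[B,C]$ is almost complete. From that point on, however, you take a genuinely different route from the paper, and the route you take has a gap that your own last sentence already hints at but does not resolve.

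The paper does \emph{not} use the Regularity Lemma or absorbing paths here. It instead extends $A,B,C,D$ (called $A_1,\dots,A_4$ there) to a partition of $V(G)$, writes down the linear system
$m_1 = k_1 + l_1 + l_2$, $m_2 = k_1 + k_2 + l_3$, $m_3 = k_2 + k_3 + l_1$, $m_4 = k_3 + l_2 + l_3$
for the number of vertices each monochromatic path segment should take from each part, solves it exactly (this is where the divisibility by $2$ enters, forcing a separate argument for $n$ odd), and builds the blue cycle as $P_1P_2P_3P_4$ along the blue path $A_1\!-\!A_2\!-\!A_3\!-\!A_4$ and the red cycle as $Q_1Q_2Q_3Q_4$ along the red path $A_3\!-\!A_1\!-\!A_4\!-\!A_2$. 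This gives exact control over how many vertices each cycle takes from each part, which is what makes the construction close up.

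Your proposed replacement for this step—weakly absorbing paths $P_B,P_R$, balancing sets, Regularity Lemma, connected matchings—is not analogous to the proof of Lemma~\ref{lem_blue_bip_balanced_two_red_not_both_bip} in the way you claim. In that lemma one of the three components ($F_2$) is \emph{strongly} robust, and its absorbing path soaks up the entire residue left over after the two weak balances are settled; this is what makes the three-balancing-set scheme close. In your hard case both monochromatic graphs are essentially bipartite, so \emph{both} absorbing paths are only weakly absorbing, and there is no unconstrained ``sink''. You then need the leftover counts $(a,b,c,d)$ in $(A,B,C,D)$ to split as $(a_1,\dots,d_1)$ for $P_B$ and $(a_2,\dots,d_2)$ for $P_R$ with $a_1+b_1=c_1+d_1$ and $a_2+c_2=b_2+d_2$; this forces $a_1-d_1=\tfrac12(a-b+c-d)$, so $a+b+c+d$ must be even, and more generally the parities of the leftover and of the cycle lengths must cooperate. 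The Regularity Lemma gives you cycle lengths only up to $O(\eta n)$, with no control mod $2$, and the near-bipartite but not exactly bipartite structure means you cannot assume the monochromatic cycles you build have even length. None of this is addressed in your proposal, and it is precisely why the paper's own proof is an explicit construction with a separate argument for $n$ odd rather than a regularity argument. So while your overall plan is plausible as a sketch, the simultaneous-balancing step is a genuine gap, not merely a detail to be filled in.
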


We notice that a graph with the given conditions has a rather specific structure. Namely, the sets $Y_{i, j}$ span few edges, whereas the graphs $G_R[Y_{i, 1}, Y_{i, 2}]$ and $G_B[Y_{1, j}, Y_{2, j}]$ are almost complete.
By Lemma \ref{lem_red_graph_disconnected_1}, we conclude that we may finish the proof unless say $G_R[Y_{1, 1}, Y_{2, 2}]$ and $G_B[Y_{1, 2}, Y_{2, 1}]$ are almost complete.
In the latter case we construct the required partition into a red cycle and a blue one ``by hand''.

\begin{proof} [Proof of Lemma \ref{lem_blue_bal_bip_two_red_bip}]

The conditions imply that for some $\eta = \eta(\eps)$, we can find disjoint sets $S_1, S_2, T_1, T_2$ such that the following holds.

\begin{itemize}
\item
$|S_i|,|T_i|\ge (1/4-\eta)n$ for $i\in [2]$.
\item
$\delta(G_R[S_1,S_2])\ge (1/4-\eta)n$ and $\delta(G_R[T_1,T_2])\ge (1/4-\eta)n$.
\item
$\delta(G_B[S_i,T_i])\ge (1/4-\eta)n$ for $i\in [2]$.
\item
$e(G[S_i]), e(G[T_i]) \le \eta n^2$.
\end{itemize}

Denote $S = S_1 \cup S_2$ and $T = T_1 \cup T_2$.
Consider the graph $G_B[S, T]$.
If for every $S' \subseteq S$ and $T' \subseteq T$ with $|S'|, |T'| \ge (1/4 - 200\eta)n$ we have $e(G_B[S', T']) \ge 50\eta n^2$, Lemma \ref{lem_red_graph_disconnected_1} implies that $G$ may be partitioned into a blue cycle and a red one.
Thus we may assume that there exist subsets $S' \subseteq S$ and $T' \subseteq T$ of size $(1/4 - 200 \eta)n$ such that $e(G_B[S', T']) \le 50\eta n^2$.

\begin{claim}
Either $|S_1 \cap S'|, |T_2 \cap T'| \le 10 \sqrt{\eta}n$ or $|S_2 \cap S'|, |T_1 \cap T'| \le 10 \sqrt{\eta}$.
\end{claim}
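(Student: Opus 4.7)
The plan is to control the product $|S_i \cap S'|\cdot |T_i \cap T'|$ for each $i \in [2]$ via the blue minimum degree conditions, and then read off the dichotomy from simple arithmetic. Write $s_i := |S_i \cap S'|$ and $t_i := |T_i \cap T'|$ for $i \in [2]$. First I note that since the four sets $S_1, S_2, T_1, T_2$ are pairwise disjoint and each has size at least $(1/4 - \eta)n$, we get the matching upper bound $|T_i| \le n - 3(1/4 - \eta)n = (1/4 + 3\eta)n$. Combined with $\delta(G_B[S_i, T_i]) \ge (1/4 - \eta)n$, this implies that every vertex of $S_i$ has at most $4\eta n$ blue non-neighbours inside $T_i$, so every $v \in S_i \cap S'$ has at least $t_i - 4\eta n$ blue neighbours in $T_i \cap T'$.

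Summing this lower bound over $v \in S'$ (and only counting edges into the ``matching'' side $T_i$) gives
\[
e(G_B[S', T']) \;\ge\; \sum_{i \in [2]} s_i(t_i - 4\eta n) \;\ge\; s_1 t_1 + s_2 t_2 - 4\eta n \cdot |S'| \;\ge\; s_1 t_1 + s_2 t_2 - 2\eta n^2,
\]
using $|S'| \le |S_1| + |S_2| \le n/2$. Combining with the hypothesis $e(G_B[S', T']) \le 50\eta n^2$ yields the key inequality
\[
s_1 t_1 + s_2 t_2 \;\le\; 52\eta n^2.
\]

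Set $\delta := 10\sqrt{\eta}$, so that $\delta^2 n^2 = 100\eta n^2 > 52\eta n^2$. Then for each $i \in [2]$, at least one of $s_i \le \delta n$ or $t_i \le \delta n$ must hold, otherwise $s_i t_i$ alone already exceeds the right-hand side. A four-way case analysis now finishes: since $s_1 + s_2 = |S'| \ge (1/4 - 200\eta)n$, which for $\eta$ small is strictly larger than $2\delta n = 20\sqrt{\eta}n$, it is impossible to have both $s_1, s_2 \le \delta n$; by the symmetric argument we cannot have both $t_1, t_2 \le \delta n$. The only remaining possibilities are $s_1, t_2 \le \delta n$ or $s_2, t_1 \le \delta n$, which are precisely the two alternatives of the claim.

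There is no real obstacle here beyond bookkeeping. The only thing to watch is that the slack between the bound $\sqrt{52\eta}$ coming from the product inequality and the target $10\sqrt{\eta}$ of the statement needs to absorb the $O(\eta n^2)$ error arising from $|T_i|$ potentially exceeding $(1/4 - \eta)n$ by up to $4\eta n$; with the constants as chosen, this goes through for all sufficiently small $\eta$.
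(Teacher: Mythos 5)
Your proof is correct and follows essentially the same argument as the paper's: use disjointness of the four parts to bound $|T_i| \le (1/4 + 3\eta)n$, deduce that every vertex of $S_i$ has at most $4\eta n$ blue non-neighbours in $T_i$, and conclude that $s_1 t_1 + s_2 t_2 \le O(\eta) n^2$, which is incompatible with both $s_i$ and $t_i$ exceeding $10\sqrt{\eta}n$ for any $i$. The paper phrases this as a contradiction restricted (after an implicit symmetry reduction) to the case $|S_1 \cap S'|, |T_1 \cap T'| \ge 10\sqrt{\eta}n$, whereas you spell out the full four-way case analysis using $s_1 + s_2 = |S'|$ and $t_1 + t_2 = |T'|$; the key degree estimate is identical.
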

\begin{proof}
Suppose to the contrary that $|S_1 \cap S'|, |T_1 \cap T'| \ge 10 \sqrt{\eta}n$.
For every $u \in S_1$, the number of vertices in $T_1$ which are not blue neighbours of $u$ is at most $4 \eta n$. If $S' \cap S_1$ and $T' \cap T_1$ both have size at least $10 \sqrt{\eta}$, we deduce $e(G_B[S', T']) \ge 90\eta n$, a contradiction.
\end{proof}
By the above claim, without loss of generality, we may assume that $|S_2 \cap S'|, |T_1 \cap T'| \le 10 \sqrt{\eta}n$, so $G_B[S_1, T_2]$ is almost empty, and $G_R[S_1, T_2]$ is almost complete.
Similar arguments imply that we may assume that $G_B[S_2, T_1]$ is almost complete.

We deduce that there exists $\rho = \rho(\eps)$ such that we may find disjoint sets $A_1, A_2, A_3, A_4$ satisfying the following conditions.
\begin{itemize}
\item
$|A_1|, |A_2|, |A_3|, |A_4| \ge (1/4 - \rho)n$.
\item
$\delta(G_B[A_1, A_2]), \delta(G_B[A_2, A_3]), \delta(G_B[A_3, A_4]) \ge (1/4 - \rho)n$.
\item
$\delta(G_R[A_1, A_3]), \delta(G_R[A_1, A_4]), \delta(G_R[A_2, A_4]) \ge (1/4 - \rho)n$.
\end{itemize}

For the time being, we assume that $n$ is even.
We obtain a partition $\{A_1', A_2', A_3', A_4'\}$ of $V(G)$ by adding each vertex $u \in V(G) \setminus (A_1 \cup A_2 \cup A_3 \cup A_4)$, to one of the sets $A_1, A_2, A_3, A_4$ as follows.
\begin{align*}
\text{If}\quad
\begin{array}{l}
\deg_B(u, A_2) \ge n/32 \\
\deg_B(u, A_3) \ge n/32 \\ 
\deg_R(u, A_2) \ge n/32 \\ 
\deg_R(u, A_3) \ge n/32 \\
\end{array} 
\quad
\text{ add } 
u 
\text{ to }
\quad
\begin{array}{l}
A_3 \\
A_2 \\
A_4 \\
A_1
\end{array}
\end{align*}
Note that every vertex will be added to one of the $A_i$'s.

Denote $m_i = |A_i|$.
We will find values $k_1, k_2, k_3$ and $l_1, l_2,l _3$ such that there exist a partition of $V(G)$ into a blue cycle $C_1$ and a red cycle $C_2$ with the following properties.
\begin{align*}
|V(C_1) \cap A_i| = 
\left\{
\begin{array}{ll}
k_1 & i = 1 \\
k_1 + k_2 & i = 2 \\
k_2 + k_3 & i = 3 \\
k_3 & i = 4
\end{array}
\right.
\qquad
|V(C_2) \cap A_i| = 
\left\{
\begin{array}{ll}
l_1 + l_2 & i = 1 \\
l_3 & i = 2 \\
l_1 & i = 3 \\
l_2 + l_3 & i = 4
\end{array}
\right.
\end{align*}
To that end, we find blue paths $P_1, P_2, P_3, P_4$ forming a blue cycle $C_1 = P_1P_2P_3P_4$ such that the following assertions hold. 
\begin{itemize}
\item
$P_1 \in G_B[A_1', A_2']$, its ends are in $A_2'$ and it has $k_1$ vertices in $A_1'$.
\item
$P_2, P_4 \in G_B[A_2', A_3']$, both have one end in $A_2'$ and the other in $A_3'$ and together they have $k_2 + 1$ vertices in $A_2'$.
\item
$P_3 \in G_B[A_3', A_4']$, its ends are in $A_3'$ and it has $k_3$ vertices in $A_4'$.
\end{itemize}

Similarly, we find red paths $Q_1, Q_2, Q_3, Q_4$ forming a red cycle $C_2 = Q_1Q_2Q_3Q_4$ such that $C_1$ and $C_2$ partition $V(G)$ and the following assertions hold.
\begin{itemize}
\item
$Q_1 \in G_R[A_1', A_3']$, its ends are in $A_1'$ and it has $l_1$ vertices in $A_3'$.
\item
$Q_2, Q_3 \in G_R[A_1', A_4']$, both have one end in $A_1'$ and the other in $A_4'$ and together they have $l_2 + 1$ vertices in $A_1'$.
\item
$Q_4 \in G_R[A_2', A_4']$, its ends are in $A_2'$ and it has $l_3$ vertices in $A_2'$.
\end{itemize}

The values $k_1, k_2, k_3$ and $l_1, l_2, l_3$ clearly need to satisfy the following system of equations.
\begin{align*}
\begin{array}{l}
m_1 = k_1 + l_1 + l_2 \\
m_2 = k_1 + k_2 + l_3 \\
m_3 = k_2 + k_3 + l_1 \\
m_4 = k_3 + l_2 + l_3
\end{array}
\end{align*}
Which may be solved as follows.
\begin{align*}
&k_1 = k_3 + \frac{1}{2}(m_1 + m_2 - m_3 - m_4) \\
&l_1 = l_3 + \frac{1}{2}(m_1 - m_2 + m_3 - m_4) \\
&k_2 = -(k_3 + l_3) + m_4 \\ 
&l_2 = -(k_3 + l_3) + m_3 - \frac{1}{2}(m_1 - m_2 + m_3 - m_4).
\end{align*}
Since $n$ is even, if $k_3, l_3$ are integers then so are $k_1, k_2, l_1, l_2$.
Note that  the following inequalities hold.
\begin{align*}
\begin{array}{l}
k_3 - 2\rho n \le k_1 \le k_3 + 2\rho n \\
l_3 - 2\rho n \le l_1 \le l_3 + 2\rho n
\end{array}
\qquad
\begin{array}{l}
k_2 \ge (1/4 - \rho)n - (k_3 + l_3) \\
l_2 \ge (1/4 - 3\rho)n - (k_3 + l_3).
\end{array}
\end{align*}
We pick $l_3 = k_3 = 12\rho n$. It follows that $10 \rho n \le l_1, k_1 \le 14\rho n$.
We now choose the paths $P_1, P_2, P_3$ as follows.
Let $P_2$ be any edge $u_2u_3 \in G_B[A_2, A_3]$. Greedily pick paths $P_1$ in $G_B[A_1, A_2]$ starting with $u_2$ and ending in some $u_1 \in A_2$, such that $P_1$ contains $k_1$ vertices from $A_1$. Similarly, let $P_3$ be a path in $G_B[S_3, A_4]$ with $k_3$ vertices from $A_4$ and ends $u_3$ and $u_4$ where $u_4$ is some vertex in $A_4$.

We now construct $Q_1, Q_2, Q_3$ as follows (we implicitly ensure that these paths are disjoint of the paths $P_1, P_2, P_3$).
Let $Q_2$ be any edge $v_2v_3 \in G_R[A_1, A_4]$.
Pick $Q_1$ to be a path in $G_R[A_1', A_3]$ containing all vertices $A_1' \setminus A_1$, with ends $v_2$ and $v_1 \in A_1$, and with $l_1$ vertices from $A_3'$.
We remark that such a path exists. Indeed, we have $|A_1' \setminus A_1| \le 5\rho n$ and any two vertices in $A_1' \setminus A_1$ may be connected using at most three additional vertices from $A_1 \cup A_3$. Thus we may find a path with at most $10\rho n$ vertices from $A_1$ starting with $v_2$ and containing the vertices $A_1' \setminus A_1$. We extend it arbitrarily to the desired length.
Similarly, pick a path $Q_3 \in G_R[A_2, A_4']$ containing the vertices $A_4' \setminus A_4$, with ends $v_3$ and $v_4 \in A_4$ and with $l_3$ vertices in $A_2$.

Denote by $U$ the set of inner vertices in the paths $P_1P_2P_3$ and $Q_1Q_2Q_3$ and denote $A_i'' = A_i' \setminus U$ for $i \in [4]$.
By the definition of the paths $P_i$ and $Q_i$, $i \in [3]$ we obtain the following equalities.
\begin{align*}
&|A_1''| = m_1 - k_1 - l_1 = l_2 \\
&|A_2''| = m_2 - k_1 - l_3 = k_2 \\
&|A_3''| = m_3 - k_3 - l_1 = k_2 \\
&|A_4''| = m_4 - k_3 - l_3 = l_2.
\end{align*}
It is easy to verify that $G_B[A_2'', A_3'']$ contains a Hamilton path $P_4$ with ends $u_1, u_4$ and that $G_R[A_1'', A_4'']$ contains a Hamilton path $Q_4$ with ends $v_1, v_4$.

It remains to consider the case where $n$ is odd.
If there exists a vertex $u$ which has blue neighbours $v_1 \in A_1 \cup A_3$ and $v_2 \in A_2 \cup A_4$, we consider the graph $G \setminus \{u\}$ and partition it into a red cycle and a blue path with ends $v_1, v_2$. This may be done by the same arguments as when $n$ is even.
Thus we assume that no vertex has blue neighbour in both $A_1 \cup A_3$ and $A_2 \cup A_4$. By symmetry, we may also assume that no vertex has red neighbours in both $A_1 \cup A_2$ and $A_3 \cup A_4$.
It follows that every vertex has neighbours in at most three of the sets $A_1, A_2, A_3, A_4$, so it sends at least $(1/4 - 2\rho)n$ edges to each of these three sets.
We extend the sets $A_1, A_2, A_3, A_4$ as follows. For each $u \in V(G) \setminus A_1 \cup A_2 \cup A_3 \cup A_4$, 
\begin{align*}
\text{if}\quad
\begin{array}{l}
\deg_B(u, A_2), \deg_R(u, A_3) \ge (1/4 - 2\rho)n \\
\deg_B(u, A_1), \deg_R(u, A_4) \ge (1/4 - 2\rho)n \\ 
\deg_B(u, A_4), \deg_R(u, A_1) \ge (1/4 - 2\rho)n \\ 
\deg_B(u, A_3), \deg_R(u, A_2) \ge (1/4 - 2\rho)n \\
\end{array} 
\quad
\text{ add } 
u 
\text{ to }
\quad
\begin{array}{l}
A_1 \\
A_2 \\
A_3 \\
A_4
\end{array}
.
\end{align*}
It is easy to verify that the following claim holds.
\begin{claim}
The sets $A_1', A_2', A_3', A_4'$ partition $V(G)$ and satisfy the following conditions.
\begin{align*}
&\delta(G_B[A_1', A_2']), \delta(G_B[A_2', A_3']), \delta(G_B[A_3', A_4']) \ge (1/4 - 2\rho)n.\\
&\delta(G_R[A_1', A_3']), \delta(G_R[A_1', A_4']), \delta(G_R[A_2', A_4']) \ge (1/4 - 2\rho)n.
\end{align*}
\end{claim}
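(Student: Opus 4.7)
The proof is a routine case analysis. Two facts drive it. First, since $\delta(G)\ge 3n/4$ and each $A_j$ has size at most $(1/4+\rho)n$, any vertex $u$ that has no neighbour in a single $A_j$ automatically has at least $(1/4-2\rho)n$ neighbours in each of the other three $A_i$. Second, by the hypotheses recorded just before the claim, the blue neighbourhood of every vertex lies entirely in $A_1\cup A_3$ or entirely in $A_2\cup A_4$, and the red neighbourhood lies entirely in $A_1\cup A_2$ or entirely in $A_3\cup A_4$. Combining the two observations leaves exactly four possible neighbourhood patterns for an external vertex $u$.

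I would enumerate the four patterns; in each one the ``missing'' set $A_j$ is uniquely determined and the colours of the edges from $u$ to the other three sets are forced. For example, if the blue neighbours of $u$ lie in $A_2\cup A_4$ and the red neighbours lie in $A_3\cup A_4$, then $u$ has no edge to $A_1$, the edges to $A_2$ are blue, and the edges to $A_3$ are red; the degree bound then yields $\deg_B(u, A_2), \deg_R(u, A_3) \ge (1/4 - 2\rho)n$, which assigns $u$ to $A_1$ by the extension rule. The remaining three patterns are symmetric and send $u$ to $A_2$, $A_3$, $A_4$ respectively, so every external vertex is placed in exactly one set, and the $A_i'$ therefore partition $V(G)$. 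No two of the four rules can fire simultaneously, since that would require blue or red neighbours on both sides of one of the two colour partitions, which is forbidden.

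For each of the six minimum-degree conditions, take a representative pair, say $A_1'$ and $A_2'$. Given $u \in A_1'$, either $u \in A_1$, in which case the original hypothesis $\delta(G_B[A_1, A_2]) \ge (1/4 - \rho)n$ together with $A_2 \subseteq A_2'$ gives $\deg_B(u, A_2') \ge (1/4 - 2\rho)n$; or $u$ was added to $A_1$ by the rule, which directly supplies $\deg_B(u, A_2) \ge (1/4 - 2\rho)n$. The symmetric argument from the $A_2'$ side bounds $\deg_B(u, A_1')$ for $u \in A_2'$, and the remaining five pairs are handled identically by permuting the colour–index roles. The only mild obstacle is keeping the symmetries straight; there is no substantive difficulty beyond that.
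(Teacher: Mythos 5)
There is a genuine gap in the verification of the degree conditions, hidden in the phrase ``the remaining five pairs are handled identically by permuting the colour--index roles.'' The six pairs are \emph{not} symmetric with respect to the extension rule. For a vertex placed in $A_i$, the rule only certifies its blue degree to one of $A_i$'s blue neighbours on the path $A_1$--$A_2$--$A_3$--$A_4$ and its red degree to one of $A_i$'s red neighbours on the path $A_3$--$A_1$--$A_4$--$A_2$. Checking the four rules against the six pairs, the rule directly covers $(A_1',A_2')$, $(A_3',A_4')$ in blue and $(A_1',A_3')$, $(A_2',A_4')$ in red, but it says nothing about the two ``middle'' pairs $(A_2',A_3')$ in blue and $(A_1',A_4')$ in red. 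For instance, a vertex added to $A_2'$ by the rule has $\deg_B(u,A_1),\deg_R(u,A_4)\ge(1/4-2\rho)n$ certified, but $\deg_B(u,A_3)$ is left uncontrolled.

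This is visible in your own pattern analysis, which you carried out only for one case. In that case (blue neighbourhood in $A_2\cup A_4$, red in $A_3\cup A_4$, so $u\to A_1$), you correctly note that edges to $A_2$ are forced blue and edges to $A_3$ forced red, but you omit the fourth set: edges from $u$ to $A_4$ may be of \emph{either} colour, since $A_4$ lies on the allowed side of both the blue split ($A_4\subseteq A_2\cup A_4$) and the red split ($A_4\subseteq A_3\cup A_4$). Nothing stops all $u$--$A_4$ edges from being blue, in which case $\deg_R(u,A_4')=0$ and $\delta(G_R[A_1',A_4'])\ge(1/4-2\rho)n$ fails. Analogously, a pattern-(b) vertex placed in $A_2'$ may have no blue edges to $A_3$, breaking $\delta(G_B[A_2',A_3'])$. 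In each of the four patterns there is exactly one $A_j$ lying on the common allowed side of both colour constraints, and for that $A_j$ the mixed-colour possibility defeats the needed bound; these ambiguous sets are precisely $A_4,A_3,A_2,A_1$ for $u$ placed in $A_1',A_2',A_3',A_4'$ respectively, i.e.\ exactly the two uncovered pairs. You therefore need either a separate argument for $(A_2',A_3')$ blue and $(A_1',A_4')$ red, or to check that the downstream use of the claim really only requires the four pairs that the rule does certify.
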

Without loss of generality, suppose that $|A_1'| \ge n/4$. Then in fact, $|A_1'| > n/4$, because $n$ is odd.
Hence $G[A_1']$ contains an edge $uv$. If it is blue, $u$ has blue neighbours in both $A_1'$ and $A_2'$. If it is red, $u$ contains red neighbours in both $A_1'$ and $A_4'$.
We may continue as before, to partition $G \setminus \{u\}$ into a monochromatic cycle and a monochromatic path.

\end{proof}

The proof of Lemma \ref{lem_blue_bal_bip_two_red_bip} completes the proof of our main Theorem under the assumption that Case (\ref{itm_struct_spanning_blue_bip_almost_balanced}) from Lemma \ref{lem_rough_structure} holds.

 \section{Two monochromatic robust components of size almost $3n/4$}\label{sec_two_three-quarter-sized}

In this section, we consider Case (\ref{itm_struct_two_largish_compts}) from Lemma \ref{lem_rough_structure}.
Similarly to Section \ref{sec_large_bip_blue_large_red}, we may assume that the given robust components cover $V(G)$.

\begin{lem} \label{lem_two_three_quarter_sized}
Let $\frac{1}{n} \ll \eps, \alpha, \frac{1}{k} \ll 1$ and let $G$ be a graph of order $n$ with $\delta(G) \ge 3n / 4$ and a $2$-colouring $E(G) = E(G_B) \cup E(G_R)$.
Suppose that $F_1, F_2$ satisfy the following assertions.
\begin{itemize}
\item
$F_1\subseteq G_B$ is $(\alpha,k)$ strongly robust and $|F_1| \ge (3/4 - \eps)n$.
\item
$F_2\subseteq G_R$ is $(\alpha,k)$-robust and $|F_2| \ge (3/4 - \eps)n$.
\item
$V(G) = V(F_1) \cup V(F_2)$.
\end{itemize}
Then $V(G)$ may be partitioned into a blue cycle and a red cycle.
\end{lem}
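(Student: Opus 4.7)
The plan follows the template of Lemma \ref{lem_large_blue_bip_large_red}, adapted to the more balanced regime here. From $V(G) = V(F_1) \cup V(F_2)$ and $|F_i| \ge (3/4 - \eps)n$ we obtain $|V(F_1) \cap V(F_2)| \ge (1/2 - 2\eps)n$ and $|V(F_1) \setminus V(F_2)|, |V(F_2) \setminus V(F_1)| \le (1/4 + \eps)n$. A useful preliminary step is to enlarge $F_1$ and $F_2$ via Lemma \ref{lem_robust_adding_vertices} wherever possible: every $v \in V(F_1) \setminus V(F_2)$ has at least $(1/2 - \eps)n$ neighbours in $V(F_2)$, so either $v$ has $(1/4 - \eps/2)n$ red neighbours there (and may be absorbed into $F_2$) or it has that many blue neighbours there; symmetrically for the other side. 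After this pre-processing the overlap $V(F_1) \cap V(F_2)$ is as large as possible.

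Next I would use Lemma \ref{lem_absorbing_paths} to build vertex-disjoint absorbing paths $P_1 \subseteq F_1$ and $P_2 \subseteq F_2$ of length at most $\rho n$, together with (in the event that $F_2$ is only weakly robust) disjoint buffer sets $A$ chosen exactly as in Section \ref{sec_large_bip_blue_large_red} to correct bipartite parity imbalances at the absorption step. I would then apply the Regularity Lemma to $G \setminus (V(P_1) \cup V(P_2) \cup A)$ using the three-part partition cover $\{V(F_1) \setminus V(F_2),\, V(F_1) \cap V(F_2),\, V(F_2) \setminus V(F_1)\}$, so that every cluster lies entirely in one part. By Lemma \ref{lem_robust_regularity}, the clusters inside $V(F_1)$ span a connected blue subgraph of $\Gamma$ and those inside $V(F_2)$ span a connected red subgraph; let $\Phi_1, \Phi_2$ denote the blue and red connected components of $\Gamma$ containing them. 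Writing $W_1 = V(\Phi_1) \cap V(\Phi_2)$, $W_2 = V(\Phi_1) \setminus V(\Phi_2)$ and $W_3 = V(\Phi_2) \setminus V(\Phi_1)$, the cover guarantees that $W_1, W_2, W_3$ partition $V(\Gamma)$, with $|W_1| \ge (1/2 - O(\eps))m$ and $|W_2|, |W_3| \le (1/4 + O(\eps))m$.

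The crux is to find a perfect matching in the multigraph $\Gamma' := \Phi_1 \cup \Phi_2$: such a matching is automatically the union of a blue connected matching in $\Phi_1$ and a red connected matching in $\Phi_2$, which is exactly what a variant of Lemma \ref{lem_connected_matching} needs to produce two vertex-disjoint monochromatic cycles $C_1 \supseteq P_1$ and $C_2 \supseteq P_2$ covering all but $O(\eta n)$ vertices. A short computation gives $\deg_{\Gamma'}(v) = \deg_\Gamma(v) \ge (3/4 - \eps)m$ for $v \in W_1$ (all edges from $v$ lie in $\Phi_1 \cup \Phi_2$ since these are components); while for $v \in W_2$ the red edges from $v$ must stay inside $V(\Gamma) \setminus V(\Phi_2) = W_2$, hence $\deg_{\Gamma'}(v) \ge \deg_\Gamma(v) - |W_2| \ge (1/2 - O(\eps))m$, and symmetrically for $W_3$. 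Moreover $\Gamma'$ has no $W_2$--$W_3$ edges. Applying Chvátal's theorem (Theorem \ref{thm_chvatal}) to this degree sequence -- the bottom $|W_2| + |W_3|$ degrees at least $(1/2 - O(\eps))m$ and the top $|W_1|$ degrees at least $(3/4 - O(\eps))m$ -- yields a Hamilton cycle and in particular a perfect matching. Lemma \ref{lem_connected_matching} then provides the two cycles, and the absorbing paths together with the buffer sets swallow the remaining vertices to complete the partition.

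The main obstacle is verifying Chvátal's inequality at the upper end: since $\delta(\Gamma') \ge (1/2 - O(\eps))m$ sits marginally below the Dirac threshold $m/2$, the argument really depends on $|W_1| > m/2$ so that for $i$ close to $m/2$ the index $m - i$ falls inside the high-degree bulk $W_1$ and we can use $d_{m-i} \ge (3/4 - O(\eps))m$. The pre-processing step is designed to push $|W_1|$ past $m/2$; if it still fails, then a substantial portion of $V(F_1) \setminus V(F_2)$ is blue-heavy into $V(F_2)$ (and symmetrically), giving a rigid bipartite configuration which one can exploit via the Menger dichotomy used in Lemma \ref{lem_blue_bip_balanced_two_red_not_both_bip}: either the two components can be merged through two vertex-disjoint paths, or there is a separator placing us in the setting of Lemma \ref{lem_red_disconnected_2}.
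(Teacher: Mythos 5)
Your overall strategy matches the paper's: build absorbing paths in $F_1$ and $F_2$, apply regularity, attempt to find a perfect matching in $\Gamma' = \Phi_1 \cup \Phi_2$ via Chv\'atal's theorem, and fall back to a Menger-type dichotomy when Chv\'atal fails. Your degree computation for $\Gamma'$ -- degree $\ge (3/4 - O(\eps))m$ on $W_1 = V(\Phi_1) \cap V(\Phi_2)$, degree $\ge (1/2 - O(\eps))m$ on $W_2, W_3$, and no $W_2$--$W_3$ edges -- is correct and is exactly what the paper uses. You also correctly identify that Chv\'atal only closes the argument when $|W_1| > m/2$, and that this may genuinely fail.

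The gap is in your fallback. When $|W_1| \le m/2$, you propose the Menger dichotomy and then, in the separator case, appeal to Lemma \ref{lem_red_disconnected_2}. But that lemma requires a partition $\{S, T, X\}$ with $|S|, |T| \ge (1/2 - \eps)n$, i.e.\ a near-balanced red disconnection. In the present situation the Chv\'atal failure (via Lemma \ref{lem_many_reduced_components}) produces sets $V_1 \subseteq V(F_1) \setminus V(F_2)$, $V_2 \subseteq V(F_2) \setminus V(F_1)$ of size only $\approx n/4$ and $V_0 \subseteq V(F_1) \cap V(F_2)$ of size $\approx n/2$, and a separator of size $\le 2$ disconnects $V_1$ from $V_2 \cup V_0$ in red (and $V_2$ from $V_1 \cup V_0$ in blue). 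After removing the separator, the red component containing $V_1$ and the blue component containing $V_2$ each have at most $n/4 - 1$ vertices. So the red graph splits roughly $n/4$ vs.\ $3n/4$, which falls well outside the hypotheses of Lemma \ref{lem_red_disconnected_2}, and there is no way to massage this into the balanced form that lemma demands. The paper handles this separator case by a separate direct construction: it partitions the remainder $W$ (the $\approx n/2$ vertices outside both small pieces) into $W_1, W_2$ of the correct cardinalities using Theorem \ref{thm_erdos_gallai} to extract a short blue path balancing the parities, and then builds the two monochromatic Hamilton cycles in $G_B[U_1', W_1]$ and $G_R[U_2', W_2]$ via Corollary \ref{cor_chvatal_bip} (this is Claim \ref{claim_partition_two_largish_compts} and the surrounding argument). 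Your pre-processing via Lemma \ref{lem_robust_adding_vertices} is a sensible optimization but, as you note yourself, does not force $|W_1| > m/2$; and when it does not, the wrong lemma is invoked, so a genuinely different argument is needed there.
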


We proceed as before, building absorbing paths, and considering the reduced graph on the remaining vertices, where we have a blue component and a red one, each with almost $3/4$ of the vertices. If a perfect matching can be found using the edges in these component, we continue as before to obtain the required partition into cycles. If no such perfect matching exists, we conclude that the graph $G$ satisfies some structural conditions which enable us to either find the required partition ``by hand'', or to join two components in a similar way to previous cases.

\begin{proof}[Proof of Lemma \ref{lem_two_three_quarter_sized}]
Let $Q_1,Q_2$ be disjoint $\rho^2 n$-absorbing paths in $F_1,F_2$ respectively of length at most $\rho n$.
As pointed out in Section \ref{sec_large_bip_blue_almost_balanced} we may assume that $F_2$ is weakly robust. Denote its bipartition by $\{X, Y\}$.

Without loss of generality, suppose that $|X \cap V(F_1)|\ge n/8$ and $|Y| \ge \alpha n$.
Fix sets $A_1\subseteq X \cap V(H_1)$ of order $\rho^2 n/2$ and $A_2\subseteq Y$ of order $\rho^2 n /4$.
Finally, apply Lemma \ref{lem_regularity} to the graph $G$ with the vertices in $Q_1, Q_2, A_1, A_2$ removed, with the cover $\{V(F_1), V(F_2)\}$ and with small enough $\eta$. Consider the $(\eta, 6\eta)$-reduced graph $\Gamma$.

Note that, assuming $\rho, \eta$ are small enough, we have $\delta(\Gamma)\ge (3/4 - \eps)m$, where $m = |\Gamma|$. Furthermore, there is a blue component $\Phi_1$ and a red component $\Phi_2$ which cover $V(\Gamma)$ and are of order at least $(3/4 - 2\eps)m$ each.
We consider the subgraph $\Gamma'$ of $\Gamma$ spanned by the blue edges in $\Phi_1$ and the red ones in $\Phi_2$.

Consider the following claim.
\begin{claim}
One of the following conditions holds.
\begin{enumerate}
\item
$\Gamma'$ has a perfect matching.
\item \label{itm_2_claim_two_largish_compts}
The following holds for some $\eta, \beta, l$ depending only on $\eps, \alpha, k$.
There exist subsets $V_1\subseteq V(F_1)\setminus V(F_2)$, $V_2\subseteq V(F_2)\setminus V(F_1)$ and $V_0 \subseteq V(F_1) \cap V(F_2)$ such that 
\begin{itemize}
\renewcommand\labelitemi{--}
\item
$|V_1|, |V_2| \ge (1/4 - \eta)n$ and the graphs $G_R[V_1], G_B[V_2]$ are $(\beta, l)$ strongly robust.
\item
$|V_0| \ge (1/2 - \eta)n$ and $G_B[V_1 \cup V_0]$ and $G_R[V_2 \cup V_0]$ are $(\beta, l)$-robust, with at least one of them being strongly-robust.
\end{itemize}
\end{enumerate}

\end{claim}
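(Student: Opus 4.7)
The plan is to prove the claim by applying Tutte's theorem to $\Gamma'$ and analyzing the resulting witness. Suppose $\Gamma'$ has no perfect matching; since we may take $m$ to be even, Theorem \ref{thm_tutte} supplies a set $S \subseteq V(\Gamma)$ with $q(\Gamma' \setminus S) \geq |S| + 2$. The key structural observation is that $\Gamma'$ respects a natural tripartition $V_0^\Gamma = V(\Phi_1) \cap V(\Phi_2)$, $V_1^\Gamma = V(\Phi_1) \setminus V(\Phi_2)$ and $V_2^\Gamma = V(\Phi_2) \setminus V(\Phi_1)$: a blue edge between $V_1^\Gamma$ and $V_2^\Gamma$ would force $V_2^\Gamma \subseteq V(\Phi_1)$ and a red one would force $V_1^\Gamma \subseteq V(\Phi_2)$, so no such edges exist in $\Gamma$; moreover all edges of $\Gamma'$ within $V_1^\Gamma$ are blue and all those within $V_2^\Gamma$ are red. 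From $|V(\Phi_i)| \geq (3/4 - O(\eps))m$ we have $|V_0^\Gamma| \geq (1/2 - O(\eps))m$ and $|V_1^\Gamma|, |V_2^\Gamma| \in [(1/4 - O(\eps))m, (1/4 + O(\eps))m]$.

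I will first locate a single ``hub'' component $C_0$ of $\Gamma' \setminus S$ absorbing $V_0^\Gamma \setminus S$. Every $u \in V_0^\Gamma$ has $\deg_{\Gamma'}(u) = \deg_\Gamma(u) \geq (3/4 - \eps)m$, so any two vertices of $V_0^\Gamma \setminus S$ share many common neighbours in $\Gamma' \setminus S$, placing them in a single component $C_0$ of size $\geq (3/4 - \eps)m - |S|$. Setting $W_i = (V_i^\Gamma \setminus S) \setminus C_0$, every component of $\Gamma' \setminus S$ other than $C_0$ is a component of $\Gamma_B[W_1]$ or of $\Gamma_R[W_2]$ by the tripartition. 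Combining $q \leq 1 + |W_1| + |W_2|$, $|W_1| + |W_2| \leq m - |S| - |C_0| \leq (1/4 + \eps)m$ and $q \geq |S| + 2$ forces $|S| \leq (1/4 + \eps)m - 1$. Any $v \in W_1$ has $\deg_B(v) \geq (1/2 - 3\eps)m$ and no blue neighbour in $V_2^\Gamma$ or $V_0^\Gamma \setminus S$, so $|V_0^\Gamma \cap S| \geq (1/4 - O(\eps))m$ (and analogously if $W_2 \neq \emptyset$); hence $|S \cap (V_1^\Gamma \cup V_2^\Gamma)| \leq O(\eps)m$.

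To harvest isolated vertices, note that an odd component of $\Gamma_B[W_1]$ of size at least three uses at least three vertices, so writing $I_i$ for the number of isolated vertices in $\Gamma_B[W_1]$ (resp.\ $\Gamma_R[W_2]$) yields $q(\Gamma_B[W_1]) \leq (2I_1 + |W_1|)/3$ and similarly for $W_2$; Tutte's inequality then gives $I_1 + I_2 \geq (1/4 - O(\eps))m$. The main obstacle I anticipate is the case of imbalance, say $I_1 \ll I_2$, in which $|W_1|$ must be small so that most of $V_1^\Gamma \setminus S$ lies in $C_0$. I plan to handle this by a dual argument: the vanishing of $|W_1|$ together with the minimum-degree condition and the absence of edges between $V_1^\Gamma$ and $V_2^\Gamma$ forces a dense red structure on $V_1^\Gamma$ directly, or alternatively a different Tutte witness $S'$ (obtained by adjusting $S$ within $V_1^\Gamma \cup V_2^\Gamma$) recovers balance. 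Either way, one obtains $I_1, I_2 \geq (1/4 - \eta)m$.

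Finally, each $u \in I_1$ satisfies $\deg_B(u, V_1^\Gamma) \leq |V_1^\Gamma \cap S| = O(\eps)m$ while $\deg_\Gamma(u, V_1^\Gamma) \geq (1/4 - O(\eps))m$ (using the minimum degree of $\Gamma$ and the fact that $V_1^\Gamma$ is closed under red neighbours), hence $\deg_R(u, V_1^\Gamma) \geq (1/4 - O(\eps))m$, making $\Gamma_R[I_1]$ almost complete and therefore connected and non-bipartite. Applying Lemma \ref{lem_reduced_components} to $\Gamma_R[I_1]$ then yields a $(\beta, l)$-strongly-robust $G_R[V_1]$ with $V_1 \subseteq V(F_1) \setminus V(F_2)$ of size at least $(1/4 - \eta)n$, and symmetrically one obtains $G_B[V_2]$ strongly robust on $V_2 \subseteq V(F_2) \setminus V(F_1)$. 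The middle set $V_0 \subseteq V(F_1) \cap V(F_2)$ is extracted from the clusters of $V_0^\Gamma$ via Lemma \ref{lem_many_reduced_components} applied simultaneously to $\Phi_1$ and $\Phi_2$, yielding $|V_0| \geq (1/2 - \eta)n$ and $(\beta, l)$-robust subgraphs $G_B[V_0 \cup V_1]$ and $G_R[V_0 \cup V_2]$; since $V_0 \cup V_1 \subseteq V(F_1)$ and $F_1$ is strongly robust, Lemma \ref{lem_robust_removing_vertices} ensures $G_B[V_0 \cup V_1]$ is strongly robust, as required.
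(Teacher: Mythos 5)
The paper proves this claim via Chv\'atal's theorem (Theorem~\ref{thm_chvatal}), not Tutte's. Since all vertices of $\Gamma'$ in $U_0 := V(\Phi_1)\cap V(\Phi_2)$ have $\Gamma'$-degree at least $(3/4-2\eps)m$ while those in $U_1\cup U_2$ have degree at least $(1/2-4\eps)m$, a failure of Chv\'atal's condition forces a set $W$ of at least $(1/2-4\eps)m$ vertices of $U_1\cup U_2$ with $\Gamma'$-degree at most roughly $m/2$. Because $|U_1|,|U_2|\le (1/4+O(\eps))m$, this $W$ automatically splits evenly: $|W\cap U_1|,|W\cap U_2|\ge (1/4-O(\eps))m$. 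Low blue degree forces high red degree for $W\cap U_1$, and since red neighbours of $U_1$ stay in $U_1$, $\Gamma_R[W\cap U_1]$ is nearly complete; similarly for $\Gamma_B[W\cap U_2]$. Lemma~\ref{lem_many_reduced_components} then converts this into the required robust structure at once.

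Your Tutte-based route is genuinely different and plausible in outline, but as written it has two serious gaps. First, the \emph{imbalance problem}: you derive $I_1+I_2\ge (1/4-O(\eps))m$, but since each $I_i$ is individually capped by $|V_i^\Gamma|\le (1/4+O(\eps))m$, this inequality alone does not rule out $I_1$ being tiny while $I_2\approx m/4$. You explicitly write ``I plan to handle this by a dual argument\ldots Either way, one obtains $I_1,I_2\ge(1/4-\eta)m$''; this is a promissory note, not an argument, and it is exactly where the Tutte approach is harder than the Chv\'atal approach. The Chv\'atal argument gets balance for free because the low-degree set $W$ has size almost $m/2$ and each side can only absorb about $m/4$ of it. You would need to do genuine extra work here (e.g., show that if $I_1$ is small then almost all of $V_1^\Gamma\setminus S$ lies in $C_0$ and extract a large low-degree set on that side by another means), and it is not clear the sketched ``dual argument'' closes this.

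Second, the final deduction is incorrect: you claim that since $V_0\cup V_1\subseteq V(F_1)$, Lemma~\ref{lem_robust_removing_vertices} gives that $G_B[V_0\cup V_1]$ is strongly robust. But $|V(F_1)\setminus(V_0\cup V_1)|$ can be as large as roughly $n/4$, whereas Lemma~\ref{lem_robust_removing_vertices} only tolerates the removal of $\beta n$ vertices for a small constant $\beta$. The strong robustness must come, as in the paper, from $\Gamma_B[W_0\cup W_1]$ or $\Gamma_R[W_0\cup W_2]$ being non-bipartite and then applying Lemma~\ref{lem_many_reduced_components} to the whole family at once (which also ensures the vertex subsets $U_i$ of the clusters are chosen consistently across the four robust pieces; applying Lemma~\ref{lem_reduced_components} and Lemma~\ref{lem_many_reduced_components} separately, as you do, risks producing incompatible $V_0, V_1, V_2$). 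Finally, a smaller point: the existence of a single hub $C_0$ absorbing $V_0^\Gamma\setminus S$ requires a bound on $|S|$ that you only establish \emph{after} assuming $C_0$ exists. This circularity is repairable (one can show that if two big components both met $V_0^\Gamma$ then their sizes alone would exceed $m-|S|$), but it needs to be spelled out.
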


\begin{proof}
Let $U_1 = V(\Phi_1) \setminus V(\Phi_2)$, $U_2 = V(\Phi_2) \setminus V(\Phi_1)$ and $U_0 = V(\Phi_1 \cap \Phi_2)$.
Note that the vertices in $U_1 \cup U_2$ have degree at least $(1/2 - 4\eps)m$ in $\Gamma'$, whereas $|U_0|\ge (1/2 - 4\eps)m$ and the vertices in $U_3$ have degree at least $(3/4-2\eps)m$ in $\Gamma'$.
If $\Gamma'$ has no prefect matching, it follows from Theorem \ref{thm_chvatal} that $|U_1 \cup U_2|\ge m/2$ and that the set $W$ of vertices $U_1 \cup U_2$ with degree at most $(1/2 + \eps)m$ has size at least $(1/2 - 4\eps)m$.
Denote $W_1 = W \cap U_1$ and $W_2 = W \cap U_2$.

We claim that $|W_1|, |W_2| \ge (1/4 - 13\eps)n$.
Indeed, the vertices in $W_1$ have blue degree at most $(1/2 + \eps)m$, so they have red degree at least $(1/4 - 5\eps)m$. Note that the red neighbourhood of $W_1$ is contained in $U_1$. It follows that $|U_1| \ge (1/4 - 5\eps)n$ and similarly $|U_2| \ge (1/4 - 5\eps)n$. Since $|U_0| \ge (1/2 - 4\eps)m$, we have that $|U_1|, |U_2| \le (1/4 + 9\eps)m$. Thus $|W_1|, |W_2| \ge (1/4 - 13\eps)m$.
It is easy to verify that $\Gamma_R[W_1]$ and $\Gamma_B[W_2]$ are connected and non bipartite (in fact, they are almost complete).

Let $W_0$ be the intersection of the blue neighbourhood of $W_1$ in $U_0$ and the red neighbourhood of $W_2$ in $U_0$.
By the definition of the sets $U_i$, there are no edges between $U_1$ and $U_2$, there are no red edges between $U_1$ and $U_0$ and no blue edges between $U_2$ and $U_0$.
It follows that the each vertex in $U_1$ has at least 
$(1/2 - 10\eps)m$ blue neighbours in $U_0$, and by the analogous argument for $U_2$, we have $|W_0| \ge (1/2 - 20\eps)m$.
Note that $\Gamma_B[W_0 \cup W_2]$ and $\Gamma_R[W_0 \cup W_2]$ are connected and it is not hard to see that at least one of them is non-bipartite.
It follows from Lemma \ref{lem_many_reduced_components} that condition (\ref{itm_2_claim_two_largish_compts}) holds.
\end{proof}

Similarly to the proofs in Section \ref{sec_large_bip_blue_almost_balanced}, one of the following holds.
\begin{enumerate}
\item \label{itm_two_three_quarters_1}
There are two vertex-disjoint blue paths between $V_2$ and $V_1 \cup V_0$.
\item \label{itm_two_three_quarters_2}
There are two vertex-disjoint red paths between $V_1$ and $V_2 \cup V_0$.
\item \label{itm_two_three_quarters_3}
There exists a set $X$ of size at most $2$ such that the sets $V_2$ and  $V_1 \cup V_0$ are not connected in $G_B \setminus X$ and the sets $V_1$ and $V_2 \cup V_0$ are not connected in $G_R \setminus X$.
\end{enumerate}

In the first two cases we proceed as in Section \ref{sec_large_bip_blue_almost_balanced} to join say the two blue component to form an almost spanning component, which together with the large red component ($V_2 \cup V_0$) may be used to find the required cycle partition.
Thus we assume that the Case (\ref{itm_two_three_quarters_3}) holds.

Consider the graph $G' = G \setminus X$.
Let $U_1$ be the component of $V_1$ in $G'_R$ and let $U_2$ be the blue component of $V_2$ in $G'_B$.
Then $e(G[U_1, U_2]) = 0$, so $|U_1|, |U_2| \le n/4 - 1$.

Denote $W = V(G') \setminus (U_1 \cup U_2)$ (so $|U| \ge n/2$).
We define $U_1', U_2'$ as follows.
For each $x \in X$, if $\deg_B(x, W) \ge n/8$, put $x$ into $U_1$. Otherwise we have $\deg_R(x, W) \ge n/8$, and we put $x$ into $U_2$.

Note that $e(G[W]) \ge n^2 / 16$. We may assume without loss of generality that $e(G_B[W]) \ge n^2 / 32$.
Denote $|W| = |U_1'| + |U_2'| + k$ and note that $0 \le k \le 5 \eta n$.

\begin{claim} \label{claim_partition_two_largish_compts}
There exists $\theta = \theta(\eps)$ such that $W$ may be partitioned into sets $W_1, W_2$ satisfying the following conditions.
\begin{itemize}
\item
$|W_1| = |U_1'| + k$ and $|W_2| = |U_2'|$.
\item
$G_B[W_1]$ contains a blue path of length $k$.
\item
The graphs $G_B[U_1', W_1]$ and $G_R[U_2', W_2]$ have minimum degree at least $n / 32$ and all but at most $\theta n$ vertices have degree at least $(1/4 - \theta)n$.
\end{itemize}
\end{claim}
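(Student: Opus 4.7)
The plan is to build the partition $W=W_1\sqcup W_2$ in three steps: first, force to the blue side any $w\in W$ whose red degree into $U_2'$ is below $n/32$ (and symmetrically force to $W_2$ any vertex whose $U_1'$-blue degree is below $n/32$); second, reserve a blue path of length $k$ inside $W_1$; third, distribute the remaining vertices of $W$ uniformly at random to reach the prescribed sizes $|W_1|=|U_1'|+k$ and $|W_2|=|U_2'|$.

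The central observation is that for every $w\in W$, $\deg_G(w)\ge 3n/4$ together with $|U_1|,|U_2|\le n/4-1$, $|X|\le 2$, and $|W|\le n/2+O(\eta n)$ yields $\deg(w,U_1)+\deg(w,U_2)\ge n/4-O(\eta n)$. Since $e(G[U_1,U_2])=0$ and $U_1,U_2$ are red/blue components of $G\setminus X$, every edge from $w$ to $U_1$ is blue and every edge from $w$ to $U_2$ is red, so $|N_B(w)\cap U_1'|+|N_R(w)\cap U_2'|\ge n/4-O(\eta n)$. Define $F_1:=\{w\in W:|N_R(w)\cap U_2'|<n/32\}$ and $F_2:=\{w\in W:|N_B(w)\cap U_1'|<n/32\}$; the inequality above shows $F_1\cap F_2=\emptyset$ and that every $w\in F_1$ has $|N_B(w)\cap U_1'|\ge 7n/32$. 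Comparing the lower bound $e_R(U_2',W)\ge |U_2|(n/2-1)$ with the upper bound $|F_1|\cdot(n/32)+|W\setminus F_1|\cdot|U_2'|$ gives $|F_1|=O(\eta n)$, and symmetrically $|F_2|=O(\eta n)$. The same edge count with threshold $(1/4-\theta)n$ in place of $n/32$ yields $|E_i|=O(\eta n/\theta)$, where $E_1:=\{w\in W:|N_B(w)\cap U_1'|<(1/4-\theta)n\}$ and $E_2$ is the symmetric set. Taking $\theta$ to be a suitable function of $\eps$, for instance $\theta=\sqrt{\eta}$, guarantees $|E_1|,|E_2|\le\theta n$.

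Now, since $e(G_B[W])\ge n^2/32$, $|F_2|=O(\eta n)$ and $k\le 5\eta n$, the graph $G_B[W\setminus F_2]$ still has at least $n^2/64$ blue edges, so Theorem~\ref{thm_erdos_gallai} produces a blue path $P$ of length $k$ disjoint from $F_2$. Place $F_1\cup V(P)\subseteq W_1$ and $F_2\subseteq W_2$ (these sets are disjoint and have total size $O(\eta n)\le \min(|W_1|,|W_2|)$), then distribute $W\setminus(F_1\cup F_2\cup V(P))$ uniformly at random between $W_1$ and $W_2$ to match the prescribed sizes. By construction every $w\in W_1$ lies outside $F_2$, so $|N_B(w)\cap U_1'|\ge n/32$; the vertices of $W_1$ with $U_1'$-blue degree below $(1/4-\theta)n$ lie in $E_1$, hence number at most $\theta n$. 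The analogous statements hold on the $W_2$ side. For the $U_i'$-side, a Chernoff/hypergeometric tail bound controls $|N_B(u)\cap W_1|$ for $u\in U_1'$ (and symmetrically $|N_R(u)\cap W_2|$ for $u\in U_2'$): the ratio $|W_1\setminus(F_1\cup V(P))|/|W\setminus(F_1\cup F_2\cup V(P))|$ equals $\tfrac12\pm O(\theta)$, so $|N_B(u)\cap W_1|$ concentrates around $\tfrac12\deg_B(u,W)-O(\theta n)\ge n/4-O(\theta n)$ for $u\in U_1$ (using $\deg_B(u,W)\ge n/2$), and around $n/16$ for the at most two vertices of $X\cap U_1'$ (for which only the $n/32$ bound is required). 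A union bound over $u\in U_1'\cup U_2'$ then fixes a good outcome.

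The step I expect to be the main obstacle is the simultaneous choice of $\theta$: it must be large enough to absorb both the edge-counting error in $|E_i|=O(\eta n/\theta)$ and the Chernoff deviation, and small enough that the $(1/4-\theta)n$ high-degree condition remains useful in the subsequent application. Careful bookkeeping of the contributions of the forced sets $F_1\cup V(P)$ and $F_2$ to the $U_i'$-side degrees, alongside the random remainder, is also required so that the uniform Chernoff bound goes through; the fact that $|F_1\cup F_2\cup V(P)|=O(\eta n)$ is essential here, and is exactly where the bounds on $|F_1|,|F_2|$ and $k$ from the preceding analysis of $G'$ are exploited.
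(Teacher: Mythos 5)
Your proof is correct and follows essentially the same plan as the paper's: identify the small set of vertices forced to one side, extract a blue path of length $k$ avoiding the wrong forced set, and split the remainder to meet the size constraints, with double counting controlling the exceptional sets. The one place you add something beyond the paper is the random assignment plus Chernoff for the $U_i'$-side degree conditions — the paper simply says ``pick any partition,'' which does not by itself guarantee that the at most two vertices of $X\cap U_1'$ (whose blue degree into $W$ is only $\ge n/8$) retain $\ge n/32$ blue neighbours in $W_1$; your randomized distribution handles this cleanly, whereas the paper's argument implicitly requires choosing the free vertices with those $X$-vertices in mind.
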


It is easy to conclude from Claim \ref{claim_partition_two_largish_compts} that $G$ may be partitioned into a blue cycle and a red one.
Indeed, the graph $G_R[U_2', W_2]$ is Hamiltonian (e.g.~by Corollary \ref{cor_chvatal_bip}).
We claim that $G_B[U_1' \cup W_1]$ is also Hamiltonian, completing the partition of $V(G)$ into a red cycle and a blue one. Indeed, take any path $P$ in $G_B[W_1]$ of length $k$. Denote its ends by $u, v$ and let $W_1'$ be the set obtained from $W_1$ by removing the inner vertices of $P$.
It is easy to see (e.g.~by Corollary \ref{cor_chvatal_bip}) that $G_B[U_1', W_1']$
has a Hamilton path with ends $u, v$.
It remains to prove Claim \ref{claim_partition_two_largish_compts}.

\begin{proof}[Proof of Claim \ref{claim_partition_two_largish_compts}]
Recall that $(1/4 - \eta)n \le |U_1|, |U_2| \le n/4 - 1$.
Denote by $W'$ the set of vertices which have at least $\sqrt{\eta}n$ non-neighbours in either $U_1$ or $U_2$.
Note that each vertex in $U_1$ has at most $\eta n$ non-neighbours in $W$. It follows that $|W'| \le \sqrt{\eta}n$.

Let $W_1'$ be the set of vertices in $W'$ with (blue) degree at least $n / 16$ into $U_1'$, and let $W_2' = W' \setminus W_1'$. So the vertices in $W_2'$ have degree at least $n / 16$ into $U_2'$.

Note that $e(G_B[W \setminus W']) \ge n^2 / 64$. It follows from Theorem \ref{thm_erdos_gallai} that $W \setminus W'$ contains a blue path $P$ of length $k$.

Pick any partition $\{W_1, W_2\}$ with the following properties.
\begin{itemize}
\item
$|W_1| = |U_1'| + k$ and $|W_2| = |U_2'|$.
\item
$W_1' \cup V(P) \subseteq W_1$ and $W_2' \subseteq W_2$.
\end{itemize}
Such a partition satisfies the required conditions of Claim \ref{claim_partition_two_largish_compts} with $\theta = \sqrt{\eta}$. 
\end{proof}

\end{proof}

\section{Four half-sized robust components}\label{sec_four_midsized}
In this section we consider Case (\ref{itm_struch_only_midsized_compts}) from Lemma \ref{lem_rough_structure}.
\begin{lem} \label{lem_four_midsized}
Let $\frac{1}{n} \ll  \alpha, \frac{1}{k} \ll \eps \ll 1$ and let $G$ be a graph of order $n$ with $\delta(G) \ge 3n / 4$ and a $2$-colouring $E(G) = E(G_B) \cup E(G_R)$.
Suppose that $F_1, F_2, F_3, F_4$ satisfy the following conditions.
\begin{itemize}
\item
$F_1, F_2 \subseteq G_B$ are vertex-disjoint $(\alpha, k)$-robust components on at least $(1/2 - \eps)n$ vertices.
\item
$F_3, F_4 \subseteq G_R$ are vertex-disjoint $(\alpha, k)$-strongly robust components on at least $(1/2 - \eps)n$ vertices.
\item
$V(F_1) \cup V(F_2) = V(F_3) \cup V(F_4)$.
\item
$|V(F_i) \cap V(F_j)| \ge (1/4 - \eps)n$ for $i \in [2], j \in [3, 4]$.
\end{itemize}
Then $V(G)$ may be partitioned into a blue cycle and a red one.
\end{lem}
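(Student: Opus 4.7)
The plan is to follow the template of Sections~\ref{sec_large_bip_blue_almost_balanced}--\ref{sec_two_three-quarter-sized}: we try to stitch each pair of same-coloured robust components together via two vertex-disjoint monochromatic paths, and if this is impossible for either colour, Lemma~\ref{lem_red_disconnected_2} finishes the proof. Concretely, first apply Menger's theorem in $G_R$ to the pair $\{V(F_3),V(F_4)\}$: either two internally vertex-disjoint red paths $R_1,R_2$ exist between $V(F_3)$ and $V(F_4)$, or some single vertex $u$ separates them in $G_R$, and Lemma~\ref{lem_red_disconnected_2} with $S=V(F_3)$, $T=V(F_4)$, $X=\{u\}$ applies. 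Perform the analogous step in $G_B$: either we obtain two vertex-disjoint blue paths $B_1,B_2$ between $V(F_1)$ and $V(F_2)$, or Lemma~\ref{lem_red_disconnected_2} with the colours swapped finishes the proof.

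Assume both positive alternatives hold. Delete the interiors of the four bridges $B_1,B_2,R_1,R_2$; each $F_i$ remains robust by Lemma~\ref{lem_robust_removing_vertices}. Use Lemma~\ref{lem_absorbing_paths} to pick pairwise vertex-disjoint absorbing paths $A_i\subseteq F_i$ of length $O(\rho n)$, and, using the robustness of each $F_i$, concatenate $A_1,B_j,A_2$ into blue paths $Q_j^B\subseteq G_B[V(F_1)\cup V(F_2)]$ for $j=1,2$, and similarly build red paths $Q_j^R\subseteq G_R[V(F_3)\cup V(F_4)]$. The handful of vertices outside $V(F_1)\cup V(F_2)$ (resp.\ outside $V(F_3)\cup V(F_4)$) are incorporated into the appropriate robust component via Lemma~\ref{lem_robust_adding_vertices}, and small balancing sets are reserved as in the proof of Lemma~\ref{lem_large_blue_bip_large_red} to compensate for the bipartitions of any weakly robust $F_i$.

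Now apply the Regularity Lemma to $G$ minus the vertices used so far, with the cover $\{V(F_1),V(F_2),V(F_3),V(F_4)\}$. In the reduced graph $\Gamma$ one has $\delta(\Gamma)\ge (3/4-\eps)m > m/2$, so $\Gamma$ admits a perfect matching $M$ by Dirac's theorem, and by Lemma~\ref{lem_robust_regularity} the clusters in each $V(F_i)$ span a connected monochromatic subgraph $\Phi_i$. Any edge of $\Gamma$ between the ``diagonal'' quadrants $V(F_1)\cap V(F_3)$ and $V(F_2)\cap V(F_4)$ (or between $V(F_1)\cap V(F_4)$ and $V(F_2)\cap V(F_3)$) would have to be blue across the two distinct blue pieces, or red across the two distinct red pieces --- both impossible in the setting of Case~\ref{itm_struch_only_midsized_compts} of Lemma~\ref{lem_rough_structure}. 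Thus every edge of $M$ lies inside some $V(F_i)$ and can be assigned to the colour of the corresponding $\Phi_i$, splitting $M$ into two connected blue matchings (in $\Phi_1$ and $\Phi_2$) and two connected red matchings (in $\Phi_3$ and $\Phi_4$). The bridges $B_1,B_2$ glue the two blue pieces and $R_1,R_2$ glue the two red pieces, so the standard connected-matching conversion of Lemma~\ref{lem_connected_matching} produces two vertex-disjoint monochromatic cycles containing the $Q_j^B,Q_j^R$ and covering all but $O(\eta n)$ vertices. The absorbing property of the $A_i$ combined with the balancing sets handles the leftover, completing the partition.

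The main obstacle is the routine but delicate bookkeeping: orchestrating four absorbing paths, four bridge paths, and the balancing sets so that the final absorption respects the bipartitions of any weakly robust $F_i$, and checking that the monochromatic classification of every matching edge is consistent with the required connectivity of $\Phi_i$. The underlying ideas are direct adaptations of those developed in Sections~\ref{sec_large_bip_blue_large_red}--\ref{sec_large_bip_blue_almost_balanced}.
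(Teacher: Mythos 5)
Your high-level strategy (join the two same-coloured components with bridge paths, or fall back to Lemma~\ref{lem_red_disconnected_2}) is the same as the paper's, but the central technical step is missing. Applying Menger's theorem separately in $G_R$ and in $G_B$ gives you two internally disjoint red paths $R_1,R_2$ between $F_3$ and $F_4$, and two internally disjoint blue paths $B_1,B_2$ between $F_1$ and $F_2$, but it does \emph{not} give you four \emph{simultaneously} vertex-disjoint paths. The interiors of all four bridges must live in the small set $V(G)\setminus(V(F_1)\cup V(F_2)) = V(G)\setminus(V(F_3)\cup V(F_4))$, so $B_i$ can share interior vertices with $R_j$; deleting ``the interiors of the four bridges'' then destroys a path you were counting on. Ensuring this simultaneous disjointness is exactly where the paper's proof puts its ``main effort'': it defines \emph{blue} vertices (sending at least $\alpha n$ blue edges to both $F_1$ and $F_2$) and \emph{red} vertices analogously, and proves via a counting argument (Claim~\ref{claim_red_blue_vertices}) that if the four disjoint paths do not exist, then there is a $2$-vertex cut $\{u,v\}$ in $G_B$ or in $G_R$ with a degree condition on $v$, which is what Lemma~\ref{lem_red_disconnected_2} actually requires. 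Your trichotomy (two disjoint paths per colour versus a $1$-vertex cut) is too coarse for this.

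There is also a secondary problem in the regularity phase. Your claim that ``any edge of $\Gamma$ between the diagonal quadrants would have to be blue across the two distinct blue pieces, or red across the two distinct red pieces --- both impossible'' is not justified: the hypotheses of Lemma~\ref{lem_four_midsized} do not say that $F_1,F_2$ are connected components of $G_B$ (only that they are vertex-disjoint robust subgraphs), and after applying the Regularity Lemma there may well be blue edges of $\Gamma$ joining, say, a cluster in $V(F_1)\cap V(F_3)$ to a cluster in $V(F_2)\cap V(F_4)$. The right way to finish once the four disjoint bridges exist is, as the paper does, to observe that the bridges merge $F_1\cup F_2$ into a single large (weakly) robust blue component and $F_3\cup F_4$ into a single large strongly robust red component, after which one is in the situation handled in Section~\ref{sec_large_bip_blue_almost_balanced}/Lemma~\ref{lem_large_blue_bip_large_red}: a large blue component and a large red component, with a perfect matching in the union of the two monochromatic reduced components obtained via Chv\'atal's criterion (Theorem~\ref{thm_chvatal}). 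You do not need, and should not try to argue, that an arbitrary perfect matching in $\Gamma$ avoids the diagonal quadrants.
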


We follow similar ideas to previous sections.
If there exist four vertex-disjoint paths, two of which are blue and connect $F_1$ with $F_2$ and two are red and connect $F_3$ with $F_4$, then we may continue as in previous sections, by essentially having two robust components, one red and one blue, which both span almost all the vertices.
The main effort in this case goes into showing that if such paths do not exist, the desired partition may be found by Lemma \ref{lem_red_disconnected_2}.

\begin{proof}[Proof of \ref{lem_four_midsized}]
We extend the components $F_i$ as follows.
For every vertex $v$ not in any of the components, if there exist $i\in \{1,2\},j\in \{3,4\}$ such that $v$ sends at least $\alpha n$ blue edges to $F_i$ and at least $\alpha n$ red edges to $F_j$, we add $v$ to $F_i$ and $F_j$. Note that if no such $i,j$ exist, then $v$ either blue degree at least $(3/4 - 3\eps)n$ or red degree at least $(3/4 - 3\eps)n$.

Note that the obtained components satisfy the conditions above (though with relaxed parameters $\alpha, k$ in the definition of robustness and with say $2\eps$ instead of $\eps$). We abuse notation by denoting the modified components by $F_1,F_2,F_3,F_4$.
So in addition to the above conditions, we have that every vertex not in $V(F_1)\cup V(F_2)$ has either blue degree or red degree at least $(3/4 - 3\eps)n$.
 
We claim that one of the following assertions holds.
\begin{enumerate}
\item \label{itm_1_four_mid_sized}
There exist vertex-disjoint paths $P_1, P_2, P_3, P_4$ such that $P_1, P_2$ are blue paths from $F_1$ to $F_2$ and $P_3, P_4$ are red paths from $F_3$ to $F_4$.
\item \label{itm_2_four_mid_sized}
There exist two vertices $u, v$ such that $F_1, F_2$ belong to different connected components of $G_B \setminus \{u, v\}$.
Furthermore, $v$ sends at most $\eps n$ blue edges to either $F_1$ or $F_2$.
\item \label{itm_3_four_mid_sized}
There exist two vertices $u, v$ such that $F_3, F_4$ belong to different connected components of $G_R \setminus \{u, v\}$.
Furthermore, $v$ sends at most $\eps n$ red edges to either $F_3$ or $F_4$.
\end{enumerate}

Condition (\ref{itm_1_four_mid_sized}) implies that we may connect $F_1$ and $F_2$ using $P_1, P_2$ to obtain a large robust blue component, and similarly we may connect $F_3$ and $F_4$ using the paths $P_3, P_4$ to obtain a large strongly robust red component.
We may continue as in Section \ref{sec_large_bip_blue_almost_balanced} to obtain the desired partition of $V(G)$ into a red cycle and a blue one.

If one of Conditions (\ref{itm_2_four_mid_sized}, \ref{itm_3_four_mid_sized}) holds, we may find the desired partition into a red cycle and a blue one by Lemma \ref{lem_red_disconnected_2}.

It remains to prove that indeed, one of the above three cases holds.
We call a vertex \emph{blue} if it sends at least $\alpha n$ blue edges to both $F_1$ and $F_2$. Similarly, a vertex is \emph{red} if it sends at least $\alpha n$ red edges to both $F_3$ and $F_4$.
We show that either one of the above three conditions holds, or there are at least four vertices which are either blue or red.

If true, we conclude that one of the three conditions above holds. 
It is easy to verify that if there exist four vertices, two of them red and two blue, we may find paths as Condition (\ref{itm_1_four_mid_sized}).
Thus we may assume that there is at most one red vertex.
This implies that either Condition (\ref{itm_3_four_mid_sized}) holds, or there are three vertex disjoint red paths $P_1, P_2, P_3$ between $F_3$ and $F_4$. We may assume that the inner vertices of these path are not in $V(F_3) \cup V(F_4)$.
If there exist four blue vertices $u_1, u_2, u_3, u_4$, without loss of generality, $u_1, u_2 \notin V(P_1) \cup V(P_2)$ and we may find two vertex-disjoint blue paths between $F_1$ and $F_2$ which are disjoint of $P_1, P_2$ using the blue vertices $u_1, u_2$ (we can take them to be of length $2$ and be centred at $u_1$ and $u_2$).
If there exist three blue vertices $u_1, u_2, u_3$ and a different red vertex $v$, without loss of generality, the path $P_1$ has length $2$, is centred at $v$ and avoids $u_1, u_2, u_3$.
We may further assume that $P_2$ does not contain $u_1, u_2$.
It follows that we may find two vertex-disjoint blue paths, disjoint of $P_1, P_2$, between $F_1$ and $F_2$.

It remains to show that indeed, if Conditions (\ref{itm_1_four_mid_sized} - \ref{itm_3_four_mid_sized}) do not hold, there are at least four vertices which are either red or blue.
Clearly, each vertex $v \in V(G) \setminus (F_1 \cup F_2)$ is either red or blue.
Let $l$ be the number of these vertices.
Denote 
\begin{align*}
& A_1 = V(F_1) \cap V(F_3) \\
& A_2 = V(F_1) \cap V(F_4) \\ 
& A_3 = V(F_2) \cap V(F_3) \\
& A_4 = V(F_2) \cap V(F_4).
\end{align*}
Consider the following claim.
\begin{claim} \label{claim_red_blue_vertices}
Suppose that $|A_i| \ge n/4 - 1 + k$ where $1 \le k \le 4$. Then either one of the Conditions (\ref{itm_1_four_mid_sized} - \ref{itm_3_four_mid_sized}) from above holds or $A_i$ contains at least $k$ vertices which are either blue or red.
\end{claim}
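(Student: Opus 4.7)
The plan is to prove the claim by contradiction: take $i = 1$ without loss of generality and suppose $|A_1| \ge n/4 - 1 + k$ while $A_1$ contains at most $k - 1$ blue or red vertices. Let $B \subseteq A_1$ collect the remaining non-blue, non-red vertices, so $|B| \ge |A_1| - (k - 1) \ge n/4$. The key observation is that for each $v \in B$, since $v$ lies in both robust components $F_1$ and $F_3$ (which have minimum degree $\ge \alpha n$), the failure of $v$ being blue forces $\deg_B(v, V(F_2)) < \alpha n$, and the failure of being red forces $\deg_R(v, V(F_4)) < \alpha n$; in particular $\deg(v, A_4) < 2\alpha n$ for every $v \in B$.

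The first step is to dispose of the case where $F_1, F_2$ lie in distinct components of $G_B$ (or symmetrically for red): in the blue case, any $v \in V(F_2)$ has $\deg_B(v, V(F_1)) = 0 \le \eps n$ and $\{u, v\}$ already separates $F_1$ from $F_2$ in $G_B$ for every $u$, verifying Condition~(\ref{itm_2_four_mid_sized}); the red case yields Condition~(\ref{itm_3_four_mid_sized}). Assuming none of (\ref{itm_1_four_mid_sized}), (\ref{itm_2_four_mid_sized}), (\ref{itm_3_four_mid_sized}) holds I may therefore suppose that each pair of robust components shares a monochromatic component. An edge count on $(A_1, A_4)$ then gives $|A_4| \le 2\alpha n |B|$: the lower bound $e(A_1, A_4) \ge k|A_4|$ uses $\deg(u, A_1) \ge |A_1| - (n/4 - 1) = k$ for each $u \in A_4$, while the upper bound $e(A_1, A_4) \le (k-1)|A_4| + 2\alpha n |B|$ bounds the contribution of the at most $k - 1$ blue-or-red vertices trivially and the contribution of $B$ via the preceding observation.

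The main obstacle, and the heart of the proof, is to convert this sparsity into one of Conditions (\ref{itm_1_four_mid_sized}), (\ref{itm_2_four_mid_sized}), (\ref{itm_3_four_mid_sized}). The plan is to run the analogous edge-count along the other diagonals: each $A_j$ either contains enough blue-or-red vertices to finish the claim directly at index $j$, or admits a large non-blue-non-red subset $B_j$ for which $\deg(v, A_{\sigma(j)}) < 2\alpha n$, where $\sigma$ is the ``opposite'' pairing. This concentrates the blue edges between $V(F_1), V(F_2)$ and the red edges between $V(F_3), V(F_4)$ on a small number of ``bridge'' vertices of high monochromatic degree to both sides. Every blue bridge in $V(F_1)$ is itself a blue vertex of the claim, and every red bridge is a red vertex, so if enough bridges exist in the right combinations they yield the four vertex-disjoint paths required by Condition (\ref{itm_1_four_mid_sized}). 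Otherwise, Menger's theorem applied to $G_B$ and $G_R$, together with the failure of (\ref{itm_2_four_mid_sized}) and (\ref{itm_3_four_mid_sized}), forces $2$-cuts between $F_1, F_2$ and between $F_3, F_4$ whose endpoints should all have strong monochromatic degree to both sides; but a careful accounting, exploiting $\alpha \ll \eps$ so that the $\alpha n$-slack from the counting never approaches the $\eps n$ threshold in the statement of (\ref{itm_2_four_mid_sized}), (\ref{itm_3_four_mid_sized}), shows that one of these endpoints must violate the requirement, delivering the desired contradiction. This final rerouting and bridge-accounting step is the most delicate part of the argument.
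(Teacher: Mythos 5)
Your opening observation is correct: for $v\in A_1=V(F_1)\cap V(F_3)$ that is neither blue nor red, indeed $\deg_B(v,V(F_2))<\alpha n$ and $\deg_R(v,V(F_4))<\alpha n$, hence $\deg(v,A_4)<2\alpha n$. But the inequality you extract, $|A_4|\le 2\alpha n|B|$, is vacuous: with $|A_4|$ and $|B|$ both of order $n/4$, the right-hand side is of order $\alpha n^2$, which dwarfs $|A_4|$ for large $n$. No contradiction follows, and no structural information is gained beyond the per-vertex degree bound you started from. Everything after this --- the ``other diagonals'', the ``bridges'', and the appeal to Menger --- is a plan rather than an argument; no inequality is written down that would close the case, and you yourself flag the decisive step as the most delicate part without carrying it out. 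As written, the proof has a genuine gap immediately after the (vacuous) edge count.

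The paper's argument for Claim~\ref{claim_red_blue_vertices} uses a matching/vertex-cover dichotomy, and that is the missing ingredient. If the uncoloured bipartite graph $G[A_1,A_4]$ has a matching of size $7$, then combined with the two vertex-disjoint red $F_3$--$F_4$ paths supplied by the failure of Condition~(\ref{itm_3_four_mid_sized}), one selects two blue matching edges disjoint from those paths to verify Condition~(\ref{itm_1_four_mid_sized}). Otherwise there is a set $U\subseteq A_1\cup A_4$ of at most $6$ vertices meeting every edge of $G[A_1,A_4]$; the minimum degree condition forces $|U\cap A_1|\ge k$ (every $u\in A_4\setminus U$ has at least $k$ neighbours in $A_1$, all of which must lie in $U\cap A_1$), and the edge count $e(G[A_1,A_4\setminus U])\ge k(|A_4|-6)$, now carried entirely by $U\cap A_1$, forces at least $k$ vertices of $U\cap A_1$ to have at least $n/25$ neighbours in $A_4$. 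Each such vertex is then blue or red. The crucial step you lack is this localisation of the $A_1$--$A_4$ edges onto $O(1)$ vertices of $A_1$: it converts an edge count into a count of \emph{high-degree} vertices, something a bound of the form $\deg(v,A_4)<2\alpha n$ summed over all of $B$ cannot do, since $2\alpha n\cdot|B|$ is not small.
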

Using the claim, if the above conditions do not hold and there are at most three vertices which are either blue or red, we have $l \le 3$ and the total number of vertices is at most $4(n/4 - 1) + (3 - l)  + l \le n - 1$, a contradiction. It remains to prove the claim.

\begin{proof}[Proof of Claim \ref{claim_red_blue_vertices}]
Without loss of generality, $i = 1$.
Note that if $G[A_1, A_4]$ has a matching of size $7$, Condition (\ref{itm_1_four_mid_sized}) holds.
Indeed, let $M$ be such a matching. If in such a matching at least two edges are red and at least two are blue, condition (\ref{itm_1_four_mid_sized}) holds, since each edge connects $F_1$ and $F_2$ as well as $F_3$ and $F_4$.
Thus we may assume that at most one edge is red.

Recall that there exist vertex-disjoint red paths $P_1, P_2$ between $F_3$ and $F_4$ (otherwise Condition (\ref{itm_3_four_mid_sized}) holds). We assume that the inner vertices of $P_1$ and $P_2$ do not belong to $V(F_3 \cup F_4) = V(F_1 \cup F_2)$, so each of the paths $P_1$ and $P_2$ intersects at most two edges of $M$.
It follows that there exist two blue edges $e_1, e_2 \in M$ which are disjoint of $P_1$ and $P_2$, and Condition (\ref{itm_1_four_mid_sized}) holds.
It remains to consider the case where $G[A_1, A_4]$ has no matching of size $7$.

We deduce that there is a set $U \subseteq A_1 \cup A_4$ of at most six vertices which intersects each of the edges of $G[A_1, A_4]$.
Note that by the minimum degree conditions, every vertex in $A_4$ has at least $k$ neighbours in $A_1$. Hence, $|U \cap A_1| \ge k$. Denote $A_4' = A_4 \setminus U$. Then
\begin{align*}
e(G[A_1, A_4']) \ge k(|A_4| - 6).
\end{align*}
We conclude that at least $k$ vertices in $U \cap A_1$ have at least $n/25$ neighbours in $A_4$.
Indeed, otherwise we have
\begin{align*}
e(G[A_1, A_4']) \le (k - 1)|A_4| + 6n / 25 < k(|A_4| - 6),
\end{align*}
a contradiction.

Each of the vertices in $A_1$ with at least $n/25$ neighbours in $A_4$ is either red or blue. E.g.~if it sends at least $\alpha n$ blue edges to $A_4$ is blue.\end{proof}
\end{proof}

\section{Large strongly robust blue component} \label{sec_large_blue_non_bip}

In this section, we resolve Case (\ref{itm_struct_spanning_non_bip}) from Lemma \ref{lem_rough_structure}, which is the last remaining case.

\begin{lem}\label{lem_large_blue_non_bip}
Let $\frac{1}{n} \ll \eps, \alpha, \frac{1}{k} \ll 1$ and let $G$ be a graph of order $n$ with $\delta(G) \ge 3n / 4$ and a $2$-colouring $E(G) = E(G_B) \cup E(G_R)$.
Suppose that $F$ is a blue $(\alpha, k)$-strongly robust component on at least $(1 - \eps)n$ vertices.
Then $V(G)$ may be partitioned into a blue cycle and a red cycle.
\end{lem}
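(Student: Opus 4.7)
The plan is to mimic the absorbing-path-plus-connected-matching template used in Sections \ref{sec_large_bip_blue_large_red}--\ref{sec_four_midsized}, but now with a much more delicate stability analysis since the red side is a priori unconstrained. I would first invoke Lemma \ref{lem_absorbing_paths} to extract a strongly-absorbing blue path $P \subseteq F$ of length at most $\rho n$ (with $\rho$ much smaller than $\eps, \alpha$), and then apply the Regularity Lemma to $G \setminus V(P)$ with the cover $\{V(F), V(G) \setminus V(F)\}$ and a parameter $\eta \ll \rho$. By Lemma \ref{lem_robust_regularity}, the clusters lying inside $V(F)$ span a connected non-bipartite blue subgraph $\Phi$ of the reduced graph $\Gamma$ on all but at most $2\eta m$ clusters, while the remaining clusters correspond to the few vertices outside $V(F)$. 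The target is a perfect matching of $\Gamma$ that splits as a connected blue matching inside $\Phi$ together with a connected red matching inside some red component $\Psi$ of $\Gamma_R$; converting via Lemma \ref{lem_connected_matching} and absorbing the leftover using $P$ would then yield the desired partition into a blue and a red cycle.

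The next step is to produce such a matching. The cleanest scenario is when $\Phi$ itself contains a perfect matching (equivalently, a Hamilton cycle, by Chvátal's condition, Theorem \ref{thm_chvatal}), in which case the red cycle can be taken empty and we are done. Otherwise, applying Tutte's theorem (Theorem \ref{thm_tutte}) together with the bound $\delta(\Gamma) \ge (3/4 - \eta')m$ localizes the obstruction: one identifies a small set $S \subseteq V(\Gamma)$ and a subset $A$ of ``bad'' clusters that $\Phi$ cannot match, and tries to cover $A$ by a connected red matching inside some red component $\Psi$. Lemmas \ref{lem_trip_pft_matching_exact} and \ref{lem_matching_tripartite_density} are tailored to exactly this situation: after forming the tripartite auxiliary graph with parts ``clusters lying only in $\Phi$'', ``clusters lying only in $\Psi$'', and ``clusters lying in both'', a perfect matching exists unless a large independent bipartite pair appears, which in turn gives concrete structural information about $G$.

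The main obstacle is the proliferation of extremal failures at the sharp threshold $\delta(G) \ge 3n/4$. When the stability lemmas refuse to produce a matching, the resulting structural information should force $G$ to fall into one of the regimes already handled by Lemmas \ref{lem_red_graph_disconnected_1} and \ref{lem_red_disconnected_2}, namely, the red graph is almost disconnected into two halves of size roughly $n/2$ by a cut of at most two vertices, with the blue bipartite graph between the halves being essentially complete. I expect the heart of the proof to be a case analysis driven by: (i) the number of vertices lying outside $V(F)$ and their blue and red degrees, (ii) the sizes and parities of the red components of $\Gamma_R$, and (iii) whether the bipartite obstruction straddles the two almost-halves evenly or is skewed. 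In the residual awkward cases where none of the off-the-shelf lemmas applies directly, I would build the partition explicitly by peeling off a short red path or cycle of length one, two, or three covering a small ``problem set'' (typically the vertices outside $V(F)$), and verifying via Corollary \ref{cor_chvatal_bip} or Lemma \ref{lem_stability_dirac} that the blue subgraph on the complement contains a Hamilton cycle extending $P$.
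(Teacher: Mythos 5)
Your high-level template --- absorbing path, Regularity Lemma, connected matching, then a stability analysis of the perfect matching problem in $\Gamma$ that either succeeds or forces a restrictive structure --- is the right one, and you correctly anticipate that Tutte's theorem and the tripartite/bipartite stability lemmas (\ref{lem_matching_tripartite_density}, \ref{lem_matching_bip_technical}) drive the case analysis. However, there are two genuine gaps. First, your treatment of the vertices $Z := V(G) \setminus V(F_1)$ with essentially no blue degree into $F$ is inconsistent. You say that when $\Gamma_B$ alone has a perfect matching ``the red cycle can be taken empty'', but these $Z$-vertices cannot be absorbed by a blue absorbing path in $F_1$ (absorbing paths only absorb vertices of the robust graph they live in), and they have blue degree $O((\alpha+\eps)n)$, so they \emph{must} be on the red cycle. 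Moreover $|Z|$ can be as large as $\eps n$, so ``peeling off a short red path or cycle of length one, two, or three'' does not work. The paper makes an explicit dichotomy on $|Z|$: when $Z$ is large, you keep the $Z$-clusters in $\Gamma$, get a red component of order roughly $3m/4$, and need a careful greedy construction to route the red cycle through the $Z$-vertices not covered by the matching; when $Z$ is small, you set $Z$ aside in a short red path before regularizing $G[V(F_3)]$, but still have to thread that path through whatever red structure you find. Neither sub-case is the empty red cycle.

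Second, the stability analysis does not always land in the regimes of Lemmas \ref{lem_red_graph_disconnected_1} and \ref{lem_red_disconnected_2}. When $\Gamma_B$ has no perfect matching and no single red component $\Psi$ makes $\Gamma_B \cup \Psi$ Hamiltonian, the actual case analysis (Proposition \ref{prop_large_blue_cases_matching} in the paper) is driven by the red component sizes (no large red component / half-sized / tripartition / four quarter-sized), and one of its terminal outcomes is \emph{four} disjoint quarter-sized sets in a cross pattern, which is passed back to Lemma \ref{lem_four_midsized} rather than to the red-disconnected lemmas. Your auxiliary tripartition ``clusters only in $\Phi$ / only in $\Psi$ / both'' also does not match the setup: when $|Z|$ is small, the regularization is done inside $F$, so $\Gamma_B$ is connected on all of $\Gamma$ and there is no ``outside $\Phi$''; the tripartition one actually uses in Lemma \ref{lem_matching_tripartite_density} is determined by how $V(\Gamma)$ splits among the red components, not by $\Phi/\Psi$ membership. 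Without the four-component branch and the correct $Z$-dichotomy, the sketch as written cannot be completed.
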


In our proof of Lemma \ref{lem_large_blue_non_bip}, we extend $F$ to include all vertices which send a fairly large number of blue edges into $F$, and denote the set of remaining vertices by $Z$.
We consider two cases, according to the size of $Z$.

The case where $Z$ is relatively small turns out to be harder. We apply the Regularity Lemma on $F$ and prove a structural result on the reduced graph, focusing on ways to obtain perfect matchings.
In each of the cases for the structure of the reduced graph, we can
partition almost all of the vertices into a red cycle and a blue one.
The vertices of $Z$ may be covered using their large degree and the leftover vertices of $F$ may be absorbed as usual.

In the case where $Z$ is large, the reduced graph can easily be seen to have a perfect matching consisting of a connected blue matching and a connected red matching.
We have to be slightly more careful than usual when obtaining cycles from the connected matching so as to cover $Z$. The leftover vertices of $F$ can be absorbed as usual.

\begin{proof} [Proof of Lemma \ref{lem_large_blue_non_bip}]

Let $F_1$ be the graph obtained by adding to $F$ the vertices in $G$ with at least $\alpha n$ blue neighbours in $F$.
By Lemma \ref{lem_robust_adding_vertices}, $F_1$ is $(\alpha^3/2, k+2)$ strongly robust.
Denote $Z = V(G)\setminus V(F_1)$. Note that 
$$\deg_R(z) \ge (3/4 - \eps - \alpha)n\ge(3/4 - 2\eps)n \text{ for every }z \in Z.$$
It follows that every two vertices in $Z$ have at least $(1/2 - 4\eps)n$ red neighbours in common.
We consider two cases according to the size of $Z$.
Pick $\beta > 0$ by Lemma \ref{lem_robust_removing_vertices} so that $F_1$ remains $(\alpha^3/4, k + 2)$ strongly robust after removing at most $\beta n$ vertices.

\subsection*{Case 1: \normalfont \normalsize $|Z|\ge \beta n$}

Let $Q$ be a $\rho^2 n$-absorbing path in $F_1$ of length at most $\rho n$.
Pick a suitably small $\eta$, and let $G'$ be the subgraph obtained by applying Lemma \ref{lem_regularity} with the graph $G \setminus V(Q)$, the partition $\{V(F_1),Z\}$ and parameters $\eta$ and $d = 6\eta$.
Let $\Gamma$ be the corresponding reduced graph.
As usual, $\delta(\Gamma) \ge (3/4 - 2\eps)m$, where $m=|\Gamma|$.

Let $\Phi_1$ be the blue subgraph spanned by the clusters which are contained in $V(F_1)$. Then $|\Phi_1| \ge (1 - 2\eps)m$ and $\Phi_1$ is connected by Lemma \ref{lem_robust_regularity}. The vertices in clusters  contained in $Z$ have red degree at least $(3/4 - 4\eps)m$ in $\Gamma$ (note that since $|Z|\ge \beta  n \ge 2\eta n$, there are such clusters). In particular, $\Gamma$ contains a red component $\Phi_2$ of order at least $(3/4 - 4\eps)m$.
It is easy to check that $\Phi_1 \cup \Phi_2$ has a perfect matching, e.g.~by Theorem \ref{thm_chvatal}.

Let $U_1$ be the set of vertices in $G'$ belonging to clusters of the blue matching and let $U_2$ be the set of vertices in $G'$ belonging to clusters of the red matching.
We obtain the required partition into a blue cycle and a red cycle as follows.

Suppose first that $|U_2|\le 3n/8$.
Fix two vertices $z_1,z_2 \in Z$.
Let $W$ be a set of vertices in $G'$ containing $3\eta$ of the vertices of each cluster in $V(F_1)$.
Clearly, $z_1,z_2$ each have many neighbours in the set of vertices belonging to the clusters of $\Phi_2$. Thus, by Lemma \ref{lem_connected_matching}, there is a red path $P_1$ in $G'$ between $z_1$ and $z_2$ spanning at least $(1 - 6\eta)$ of the vertices of $U_2 \setminus W$ and using at most $m^2$ other vertices.

Denote by $Z'$ the set of vertices in $Z$ which are not covered by this path. Note that $|Z'| \le 9\eta n$. Furthermore, every two vertices of $Z'\cup \{z_1,z_2\}$ have at least $n/16$ common red neighbours in $V(G') \setminus U_2$.
It is thus possible to find a path in $G' \setminus (V(P_1) \cup U_1)$ between $z_1$ and $z_2$, containing $Z$ and using at most $200 \eta$ of the vertices of each cluster.
Indeed, such a path can be constructed greedily. Suppose $Z' = \{z_3, \ldots, z_t\}$.
For $i = 2, \ldots, t$ we find a common red neighbour of $z_i, z_{i+1}$ (where subscripts are taken modulo $t$) which was not used before and which does not belong to a cluster in $U_2$ with at least $200\eta$ of its vertices already used. Note that this would give the required red path, completing $P_1$ to a red cycle $C$.
In each step, there are at least $n/16$ possible choices, out of which at most $9\eta n$ were already used, and at most $9\eta n/200\eta < n/20$ belong to clusters for which at least $200 \eta$ of its vertices are used. Thus it is possible to choose a suitable vertex.

We now construct a blue cycle which is disjoint of $C$, contains $Q$ and misses at most $210\eta$ of the vertices vertices of each cluster in $U_1$, using Lemma \ref{lem_connected_matching} (we use the vertices of $W$ to connect cluster pairs, as described after the statement of Lemma \ref{lem_connected_matching}).
The missing vertices may be absorbed by $Q$, completing the desired cycle partition.

Suppose now that $|U_2| \ge 3n/8$.
By Lemma \ref{lem_connected_matching}, there exist a blue cycle $C_1$ and a red cycle $C_2$ which are disjoint and $C_i$ covers all but at most $9 \eta$ of the vertices of $U_i$ for $i \in [2]$.
In particular, the red cycle has length at least $5n/16$.
Let $Z'$ be the set of vertices of $Z$ which are not covered by either of the cycles.
We show how to obtain a red cycle $C_2'$ such that $Z \subseteq V(C_2')\subseteq V(C_2) \cup Z$ and $|V(C_2') \setminus V(C_2)| \le 60 \eta n$.
To that end, we claim that the vertices of $Z'$ can be inserted one by one, such that in each stage at most $20$ of the original vertices of $C$ are removed, none of them from $Z$. If $z$ cannot be inserted as explained, the number of red neighbours of $z$ in the cycle is at most $40|Z| + n/20$.
But every vertex $z \in Z'$ has at least $(1 / 16 - 3\eps)n$ red neighbours in the cycle obtained from $C_2$, as long as it has length at least $(5 / 16 - 60\eta)n$, implying that $z$ may indeed be inserted.
There are at most $20|Z'| + 9\eta n \le \rho^2 n$ vertices missing from $V(C_1)\cup V(C_2')$, all of them from $V(F_1)$. They can be absorbed by $Q$.

\subsection*{Case 2: \normalfont \normalsize $|Z|\le \beta n$}
If $|Z|\ge 3$, fix three vertices $z_1, z_2, z_3\in Z$.
Let $P$ be a red path containing $Z\setminus\{z_1, z_2, z_3\}$ (but avoiding $z_1, z_2, z_3$) with at most $\beta n$ vertices of $F_1$ (note that $P$ may be constructed greedily).
If $|Z|\le 3$, we take $P$ to be empty.
Let $F_2 = F_1\setminus V(P)$.
$F_2$ is $(\alpha/4, k+2)$ strongly robust by the choice of $\beta$.
Apply Lemma \ref{lem_absorbing_paths} to obtain a $\rho^2 n$-absorbing path $Q$ in $F_2$ of length at most $\rho n$, and denote $F_3 = F_2\setminus V(Q)$. 
$F_3$ is $(\alpha^3/8, k+2)$ strongly robust, by Lemma \ref{lem_robust_removing_vertices}.
Furthermore, $|F_3|\ge (1 - 2\eps)n$ and the vertices of $Z$ have at least $(3/4 - 4\eps)n$ red neighbours in $F_3$.

Apply the Regularity Lemma, Lemma \ref{lem_regularity}, to the graph $G[V(F_3)]$ with a suitably small parameter $\eta$.  Let $\Gamma$ be the corresponding $(\eta, 4\eta)$-reduced graph.
We have $\delta(\Gamma) \ge (3/4 - 2\eps)m$ and by Lemma \ref{lem_robust_regularity}, $\Gamma_B$ is connected.

We shall use the following proposition.
\begin{prop} \label{prop_large_blue_cases_matching}
One of the following assertions holds.
\begin{enumerate}
\item
$\Gamma_B$ has a perfect matching.
\item
There exists a red component $\Phi$ on at least $(1/2 - 50\eps)m$ vertices such that $\Gamma_B \cup \Phi$ has a perfect matching.
\item
There exist disjoint subsets $X_1, X_2 \subseteq V(G)$ of size at least $(1/4 - 500\eps)m$ such that
\begin{itemize}
\renewcommand\labelitemi{--}
\item
$X_1 \cup X_2$ is independent in $\Gamma_B$.
\item
$\Gamma_R[X_i]$ is connected and $\Gamma_B \cup \Gamma_R[X_i]$ has a perfect matching for $i \in [2]$.
\end{itemize}
\item
There exist disjoint subsets $X_1, X_2, Y_1, Y_2$ of size at least $(1/4 - 500\eps)m$ such that 
\begin{itemize}
\renewcommand\labelitemi{--}
\item
$\Gamma_R[X_i \cup Y_i]$ is connected and non-bipartite for $i \in [2]$.
\item
$\Gamma_B[X_1 \cup X_2]$ and $\Gamma_B[Y_1 \cup Y_2]$ are connected.
\end{itemize}
\end{enumerate}
\end{prop}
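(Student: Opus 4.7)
The plan is to apply Tutte's theorem (Theorem~\ref{thm_tutte}) to $\Gamma_B$. If $\Gamma_B$ has a perfect matching, Case~1 holds and we are done. Otherwise, Tutte produces a barrier $S \subseteq V(\Gamma)$ with $q(\Gamma_B - S) > |S|$; let $C_1, \ldots, C_p$ denote the components of $\Gamma_B - S$ in non-increasing order of size, and set $m = |\Gamma|$. Since any edge between distinct components of $\Gamma_B - S$ cannot be blue, the hypothesis $\delta(\Gamma) \ge (3/4-2\eps)m$ yields, for every $v \in C_i$,
\[ d_R\bigl(v,\, V(\Gamma) \setminus (S \cup C_i)\bigr) \;\ge\; (3/4-2\eps)m - |C_i| - |S| + 1. \]
A first round of counting (together with $q > |S|$) forces $|S| < m/2$, and we may assume $S$ is a minimal barrier.

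The core of the argument will be a case split on $|C_1|$. If $|C_1| \ge (1/2 + 50\eps)m$, the inequality above gives every vertex of $\bigcup_{i\ge 2} C_i$ at least $(1/4 + \Omega(\eps))m$ red neighbours in $C_1$, so they all lie in a common red component $\Phi$ containing most of $C_1$, with $|\Phi| \ge (1/2 - 50\eps)m$. I would then re-apply Tutte to $\Gamma_B \cup \Phi$: any would-be barrier for this augmented graph must still separate $\bigcup_{i\ge 2} C_i$ from $C_1$, but this is ruled out by the abundance of $\Phi$-edges between them, so $\Gamma_B \cup \Phi$ has a perfect matching and Case~2 holds.

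If $|C_1| < (1/2 + 50\eps)m$, then since every $|C_i|$ is bounded and the total is $m - |S| \ge m/2$, I would greedily merge the components into two blocks $A_1, A_2 \subseteq V(\Gamma) \setminus S$ with $|A_1|, |A_2| \ge (1/4 - O(\eps))m$ and no blue edges between them. The min-degree condition then forces many red edges both within each $A_i$ and across $A_1, A_2$. If both $\Gamma_R[A_1]$ and $\Gamma_R[A_2]$ are connected, set $X_i = A_i$: a second Tutte argument on $\Gamma_B \cup \Gamma_R[X_i]$, using the red connectivity together with the min-degree bound to rule out barriers, verifies Case~3. Otherwise at least one $\Gamma_R[A_i]$ splits in red, and each $A_i$ inherits a bipartite-like red partition $A_i = X_i \cup Y_i$; the blue connectedness of $\Gamma_B[X_1 \cup X_2]$ and $\Gamma_B[Y_1 \cup Y_2]$ can be read off by routing through $S$, giving the four-set structure of Case~4.

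The main obstacle will be the second Tutte step in Cases~2 and~3: one must show that adding the red edges of $\Phi$ (or of $\Gamma_R[X_i]$) to $\Gamma_B$ really does destroy every deficient barrier of the augmented graph. This requires tracking how the new red edges reduce odd-component counts after removing an arbitrary vertex set, and is the source of the loose constants ($50\eps$ in Case~2 and $500\eps$ in Cases~3 and~4) appearing in the conclusion.
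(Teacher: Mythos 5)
Your approach is genuinely different from the paper's: you apply Tutte's theorem directly to $\Gamma_B$ and analyse the resulting barrier $S$ and components $C_1,\ldots,C_p$, whereas the paper never touches the Tutte barrier of $\Gamma_B$ at all --- it first classifies the \emph{red} component structure (Claim~\ref{claim_prop_large_blue_cases_matching} shows one of four configurations of red component sizes must occur) and then, in each configuration, applies a dedicated stability lemma (Lemma~\ref{lem_matching_tripartite_density} for the tripartition case, Lemma~\ref{lem_matching_bip_technical} for the others, plus Chv\'atal for Claim~\ref{claim_prop_large_red}). That said, the proposal as written has several genuine gaps and I do not think it can be completed as sketched.

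The most serious gap is the ``second Tutte step'' in your Case~2 branch. You assert that any barrier for the augmented graph $\Gamma_B \cup \Phi$ ``must still separate $\bigcup_{i\ge 2} C_i$ from $C_1$,'' but this is false: after adding the red edges of $\Phi$ the deficient barrier can move entirely, and in general has nothing to do with the old set $S$. Proving that $\Gamma_B \cup \Phi$ has a perfect matching requires a fresh structural argument; the paper does this via Claim~\ref{claim_prop_large_red} (Chv\'atal) and then Lemma~\ref{lem_matching_bip_technical} --- and notably, when that lemma fails, the outcome is \emph{not} a perfect matching but one of the other structural alternatives (the ``four half-sized monochromatic components'' outcome), which is precisely why Case~4 of the proposition exists. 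Your proposal treats this step as a technical annoyance responsible for the loose constants, when it is in fact the entire content of the proof in the large-red-component regime.

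The second gap is the greedy merging in your Case~3/4 branch. From $|S| < m/2$ you get $\sum_{i\ge 2} |C_i| = m - |S| - |C_1|$, which when $|C_1|$ is just below $(1/2 + 50\eps)m$ and $|S|$ is close to $m/2$ can be as small as $O(\eps m)$ --- far too small to split $V(\Gamma) \setminus S$ into two blocks each of size $(1/4 - O(\eps))m$ with $C_1$ on one side. You would need an additional counting argument (using $p > |S|$ and the minimum degree of $\Gamma$) to constrain $|S|$ when $|C_1|$ is in this middle range, and it is not clear the resulting cases then line up with the proposition's four conclusions. Moreover, even once the blocks $A_1, A_2$ are formed, the conclusion in Case~3 requires $\Gamma_R[X_i]$ to be connected and $\Gamma_B \cup \Gamma_R[X_i]$ to have a perfect matching; neither of these is established by the construction you describe, and establishing them requires essentially the same stability machinery the paper invokes.
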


Before proving Proposition \ref{prop_large_blue_cases_matching}, we show how to complete the proof of Lemma \ref{lem_large_blue_non_bip} in this case using the proposition.

\subsubsection*{$\Gamma_B$ has a perfect matching}
Suppose that $\Gamma_B$ has a perfect matching.
Recall that $\Gamma_B$ is connected, so this matching is connected.
We complete $Z \cup V(P)$ to a red cycle $C_1$ using at most four additional vertices of $F_3$.
By Lemma \ref{lem_connected_matching}, there exists a blue cycle $C_2$, disjoint of $C_1$ which extends the absorbing path $Q$ and contains all but at most $6\eta n \le \rho^2 n$ vertices of $F_3$. The remaining vertices can be absorbed by $Q$ to obtain a blue cycle $C_2'$.
The cycles $C_1,C_2'$ form the required cycle partition.

\subsubsection*{An almost half-sized red component whose union with $\Gamma_B$ has a perfect matching}
Suppose that $\Phi$ is a red component of size at least $(1/2 - 50\eps)m$ and $\Gamma_B \cup \Phi$ has a perfect matching.
Define a red path $P'$ as follows.
If $|Z| \ge 2$, extend $P$ to a path $P'$ containing $Z$ with ends $z_1, z_2 \in Z$ (we may do this using at most three additional vertices).
If $|Z| = 1$, take $P'$ to be the path $(z)$ where $Z = \{z\}$.
If $Z = \emptyset$, take $P'$ to be the empty path.
Note that every vertex of $Z$ sends many red edges to the set of vertices contained in the clusters defined by $V(\Phi)$.
By Lemma \ref{lem_connected_matching}, there exist vertex-disjoint cycles $C_1, C_2$ such that $C_1$ is blue and contains the absorbing path $Q$ and $C_2$ is red and contains the path $P'$.
Furthermore, the cycles $C_1, C_2$ cover all but at most $\rho^2 n$ vertices belonging to $F_3$, which may be absorbed by $Q$, completing the desired partition into a red cycle and a blue one.

\subsubsection*{Two almost quarter-sized red components whose union with $\Gamma_B$ has a perfect matching}
Let $X_1, X_2 \subseteq V(\Gamma)$ be disjoint sets of size at least $(1/4 - 500\eps)m$ satisfying the following conditions.
\begin{itemize}
\item
$X_1 \cup X_2$ is independent in $\Gamma_B$.
\item
$\Gamma_R[X_i]$ is connected and $\Gamma_B \cup \Gamma_R[X_i]$ has a connected matching for $i \in [2]$.
\end{itemize}
Note that we may assume that $\Gamma_R[X_1 \cup X_2]$ is not connected, since otherwise we may proceed as in the previous case.

Let $U_i$ be the set of vertices contained in the clusters in $X_i$.
Note that $|U_i| \ge  (1/4 - 501\eps)n$.
We define a path $P'$ as follows.
If $|Z| \ge 3$, without loss of generality, $z_1, z_2$ send at least $2\eta n$ red edges into $U_1$. Extend $P$ to a path $P'$ containing $Z$ with ends $z_1, z_2$ (using at most three additional vertices).
If $|Z| = 1$, denote $Z = \{z\}$ and take $P' = (z)$, and suppose without loss of generality that $z$ has at least $2 \eta n$ red neighbours in $U_1$.
If $Z = \emptyset$, take $P'$ to be the empty path.
As before, since $\Gamma_B \cup \Gamma_R[X_1]$ has a perfect matching, Lemma \ref{lem_connected_matching} implies that there exist disjoint cycles $C_1, C_2$ such that $C_1$ is blue and contains $Q$ and $C_2$ is red and contains $P'$, and together they cover all but at most $\rho ^2 n$ vertices of $F_3$, which may be absorbed by $C_1$.

It remains to consider the case where $|Z| = 2$. Denote $Z = \{z_1, z_2\}$.
If for some $i \in [2]$, both $z_1, z_2$ have at least $2 \eta n$ red neighbours in $U_i$, we may continue as before by taking $P'$ to be a path of length $2$ connecting $z_1, z_2$.
Thus we assume that $$\deg_R(z_1, U_2), \deg_R(z_2,U_1) \le 2\eta n \le \eps n.$$
Recall that $\deg_R(z_i) \ge (3/4 - 4\eps)n$ for $i \in [2]$.
It follows that $|U_1|, |U_2| \le (1/4 + 5\eps)n$.
Let $W$ be the set of common red neighbours of $z_1$ and $z_2$ in $V(G) \setminus (U_1 \cup U_2)$.
Then $|W| \ge (1/2 - 20\eps)n$.

Suppose that there exists a vertex $w_1 \in W$ with at least $4\eta n$ red edges into $U_1 \cup U_2$. Without loss of generality, $w_1$ has at least $2\eta n$ red neighbours in $U_1$. Take $P' = (w_1 z_2 w_2 z_1)$ for some $w_2 \in W$ and continue as before (when $|Z| \neq 2$) to conclude that the desired partition exists.

We now assume that every vertex in $W$ has at most $\eps n$ red neighbours in $U_1 \cup U_2$.
Furthermore, by the definition of the reduced graph, every vertex in $U_1 \cup U_2$ has at most $9\eta n \le \eps n$ red neighbours in $W$.
Note that since $e(\Gamma[X_1, X_2]) = 0$, the graph $G_B[U_1 \cup U_2, W]$ is almost complete.
It follows that we may apply Lemma \ref{lem_red_graph_disconnected_1} with parameter $1002 \eps$, to conclude that $V(G)$ may be partitioned into a red cycle and a blue one. 

\subsubsection*{Four half-sized monochromatic components}
Suppose that $X_1, X_2, Y_1, Y_2$ are disjoint sets of size at least $(1/4 - 500\eps)m$ with the following properties.
\begin{itemize}
\item
$\Gamma_R[X_i \cup Y_i]$ is connected and non-bipartite for $i \in [2]$.
\item
$\Gamma_B[X_1 \cup X_2]$ and $\Gamma_B[Y_1 \cup Y_2]$ are connected.
\end{itemize}
We conclude from Lemma \ref{lem_many_reduced_components} that there exist sets $U_1, U_2, W_1, W_2 \subseteq V(G)$ of order at least $(1/2 - \theta)n$, where $\theta = 502\eps$, such that
\begin{itemize}
\item
$U_1, U_2$ are disjoint, $W_1, W_2$ are disjoint and $U_1 \cup U_2 = W_1 \cup W_2$.
\item
$|U_i \cap W_j| \ge (1/4 - \theta)n$ for $i, j \in [2]$.
\item
$G_B[U_i]$ is $(\gamma, l)$-robust and $G_R[W_i]$ is $(\gamma, l)$-strongly robust for $i \in [2]$, where $\gamma = \gamma(\eps, \alpha, k)$ and $l = l(\eps, \alpha, k)$.
\end{itemize}
By Lemma \ref{lem_four_midsized}, $V(G)$ may be partitioned into a red cycle and a blue one.
This completes the proof of Lemma \ref{lem_large_blue_non_bip}, under the assumption that Proposition \ref{prop_large_blue_cases_matching} holds. 
We prove it in the following subsection.

\end{proof}

\subsection*{Proof of Proposition \ref{prop_large_blue_cases_matching}}
In this subsection, we prove Proposition \ref{prop_large_blue_cases_matching}.
We shall consider four cases according to the sizes of the red components in $\Gamma$.
In each of these cases, we apply Lemmas \ref{lem_matching_bip_technical} or \ref{lem_matching_tripartite_density} to gain additional structural information about the graph $\Gamma$ in case $\Gamma_B$ has no perfect matching (otherwise we are done).
This information will enable us to show that one of the conditions in the proposition holds.

\begin{proof}[Proof of Proposition \ref{prop_large_blue_cases_matching}]
The following claim reduced the proof to the following four cases.
\begin{claim} \label{claim_prop_large_blue_cases_matching}
One of the following conditions holds.
\begin{enumerate}
\item \label{itm_prop_no_large_red}
$\Gamma_R$ has no components of order at least $(1/4 - 4\eps)m$.
\item \label{itm_prop_half-size_red}
$\Gamma_R$ contains a component of order at least $(1/2 - 50\eps)m$.
\item \label{itm_prop_tripartition}
There exists a tripartition $\{X_1, X_2, X_3\}$ of $V(\Gamma)$ such that $|X_i| \le (1/2 - 24\eps)m$ and every red component is contained in one of the red sets $X_i$.
\item \label{itm_prop_four_quarter-sized}
$\Gamma_R$ contains four vertex-disjoint components of order at least $(1/4 - 36\eps)m$.
\end{enumerate}
\end{claim}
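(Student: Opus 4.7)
The plan is to prove the claim by a bin-packing argument on the sizes of the connected components of $\Gamma_R$. Let $R_1, R_2, \ldots, R_t$ denote the connected components of $\Gamma_R$, arranged so that $|R_1| \ge |R_2| \ge \cdots \ge |R_t|$; their sizes sum to $m$. If $|R_1| < (1/4 - 4\eps)m$ then condition~(1) holds, and if $|R_1| \ge (1/2 - 50\eps)m$ then condition~(2) holds, so henceforth I assume
\[
(1/4 - 4\eps)m \;\le\; |R_1| \;<\; (1/2 - 50\eps)m.
\]
If at least four indices $i$ satisfy $|R_i| \ge (1/4 - 36\eps)m$, then condition~(4) holds, so I may further assume $|R_i| < (1/4 - 36\eps)m$ for every $i \ge 4$.

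I now work towards condition~(3). The case $t \le 3$ is trivial: set $X_j = V(R_j)$ for $j \le t$ and $X_j = \emptyset$ for $j > t$, so that $|X_j| \le |R_1| < (1/2 - 50\eps)m \le (1/2 - 24\eps)m$. So assume $t \ge 4$, and build the tripartition greedily: place $R_1, R_2, R_3$ into distinct bins $X_1, X_2, X_3$, and then for $i = 4, 5, \ldots, t$ add $R_i$ as a block to whichever bin currently has the smallest total. Every red component is placed entirely in some $X_j$, so only the size bound $|X_j| \le (1/2 - 24\eps)m$ requires verification.

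Initially, each bin holds a single $R_j$ of size strictly less than $(1/2 - 50\eps)m \le (1/2 - 24\eps)m$. Suppose inductively that the bound holds just before adding some $R_i$ with $i \ge 4$; let $\ell$ be the load of the bin it is added to, and let $T$ be the sum of the three bin loads at that moment. The minimality of the chosen bin gives $\ell \le T/3$, and since the total load at the end of the algorithm is $m$ we have $T + |R_i| \le m$, so
\[
\ell + |R_i| \;\le\; \frac{T}{3} + |R_i| \;\le\; \frac{m - |R_i|}{3} + |R_i| \;=\; \frac{m + 2|R_i|}{3} \;<\; \frac{m + 2(1/4 - 36\eps)m}{3} \;=\; (1/2 - 24\eps)m,
\]
where the strict inequality uses $|R_i| < (1/4 - 36\eps)m$. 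The other bins are unchanged by the step, so the bound is preserved throughout, and condition~(3) holds.

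The argument is entirely combinatorial and depends only on the multiset of component sizes, so I do not expect any serious obstacle. The only real point of care is that the four numerical constants $4\eps, 24\eps, 36\eps, 50\eps$ in the statement are coordinated so that both the initial placement of $R_1, R_2, R_3$ and the greedy step above respect the bin capacity $(1/2 - 24\eps)m$; in particular $(m + 2(1/4 - 36\eps)m)/3 = (1/2 - 24\eps)m$ is exactly the equality that makes the greedy bound tight.
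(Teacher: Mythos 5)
Your proof is correct, and it takes a genuinely different and cleaner route than the paper's. The paper runs a hand-crafted case analysis on the sorted component sizes $s_1 \ge s_2 \ge \cdots$, narrowing $s_1$, $s_1+s_2$, $s_2+s_3$, $s_3+s_4$ to intervals and in each regime exhibiting an explicit tripartition such as $\{R_1,\ R_2\cup R_5\cup\cdots,\ R_3\cup R_4\}$, checking the size bound by hand each time. You replace all of this with one greedy (longest-processing-time) bin-packing argument: seed the three bins with $R_1, R_2, R_3$ and pour each remaining component, of size $< (1/4-36\eps)m$, into the currently lightest bin; the estimate $\ell + |R_i| \le \tfrac{1}{3}\bigl(m + 2|R_i|\bigr) < (1/2-24\eps)m$ closes the induction in a single step, and the identity $\tfrac{1}{3}\bigl(m + 2(1/4-36\eps)m\bigr) = (1/2-24\eps)m$ you note explains exactly why these two constants occur together in the statement. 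Your argument also sidesteps what appears to be a gap in the paper's own proof: in the sub-case $s_3+s_4 \le (1/2-24\eps)m$ the paper proposes the partition $\{[1],\ [2]\cup[5,l],\ [3,4]\}$, whose middle block has size $m - s_1 - s_3 - s_4$, and this exceeds $(1/2-24\eps)m$ whenever $s_4$ is small (for instance $s_1=s_2=s_3\approx m/4$ with the remaining mass split into many tiny components, all earlier constraints being satisfied); the greedy argument handles precisely this situation.
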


\begin{proof}
Let $s_1 \ge \ldots \ge s_l$ be the orders of components in $\Gamma_R$.
We may assume that $(1/4 - 4\eps)m \le s_1 \le (1/2 - 50\eps)m$, otherwise one of the first two conditions holds.

If $s_1 + s_2 \ge (1/2 + 24\eps)m$, we take the tripartition of $V(\Gamma)$ corresponding to the partition $\{[1], [2], [3, \ldots, l]\}$ of $[s]$.
Thus we can assume that $s_1 + s_2 \le (1/2 + 24\eps)n$.

If $s_3 \le 50\eps m$, we may find the desired partition as follows.
Note that $s_2 \le (1/4 + 12\eps)m$ and pick $i$ to be the minimal such that $s_2 + \ldots + s_i > 3m/8$.
Since $s_i \le s_3 \le 50\eps m$, we have $s_2 + \ldots + s_i \le (3/8 + 50\eps)m$ and $s_{i + 1} + \ldots + s_l \le (3/8 - 54\eps)m$.
It follows that the partition $\{[1], [2, i], [i + 1, l]\}$ satisfies the requirements.
Thus, we may assume that $s_3 \ge 50\eps m$.

If $s_1 + s_2 \le (1/2 - 24\eps)m$, we have $s_1 + s_2 \ge (1/4 + 46\eps)m$.
Let $i$ be minimal such that $s_3 + \ldots + s_i \ge (1/4 - 12\eps)m$. Since $s_i \le s_2 \le (1/4 - 12\eps)m$, we have $s_3 + \ldots + s_i \le (1/2 - 24\eps)m$.
The partition $\{[2], [3, i], [i + 1, l]\}$ satisfies the requirements.
We may now assume that $(1/2 - 24\eps)m \le s_1 + s_2 \le (1/2 + 24\eps)m$. 

Since $s_3 \ge 50\eps m$, we have $s_1 + s_2 + s_3 \ge (1/2 + 24\eps)m$.
If $s_2 + s_3 \le (1/2 - 24\eps)m$, the partition $\{[1], [2, 3], [4, l]\}$ satisfies the requirements.
Otherwise, $s_2 + s_3 \ge (1/2 - 24\eps)m$. It follows that $s_1 \ge s_2 \ge (1/4 - 12\eps)m$, hence $s_1 \le (1/4 + 36\eps)m$ and $s_3 \ge (1/4 - 60\eps)m$.
If $s_3 + s_4 \le (1/2 - 24\eps)m$, we may take the partition $\{[1], [2] \cup [5, l], [3,4]\}$.
Otherwise, we have $s_4 \ge (1/4 - 36\eps)m$, and the required four components exist.

\end{proof}
We prove Proposition \ref{prop_large_blue_cases_matching} in each of the four cases in Claim \ref{claim_prop_large_blue_cases_matching}.
The second case, where there is a large red component, turns out to be hardest and we leave it to the end of the proof.

\subsubsection*{No large red components}
In Case (\ref{itm_prop_no_large_red}) of the previous claim, we have $\deg_B(\Gamma) \ge m/2$, implying that $\Gamma_B$ has a perfect matching.

\subsubsection*{Tripartition}
In Case (\ref{itm_prop_tripartition}) of Claim \ref{claim_prop_large_blue_cases_matching}, there exists a tripartition $\{X_1, X_2, X_3\}$ of $V(\Gamma)$ such that $|X_i| \le (1/2 - 24\eps)m$ and every red component is contained in one of the red sets $X_i$.
We assume that $\Gamma_B$ contains no perfect matching.
By Lemma \ref{lem_matching_tripartite_density}, without loss of generality, there exist subsets $Y_1 \subseteq X_1, Y_2 \subseteq X_2$ such that $|Y_1|, |Y_2| \ge (1/4 - 10\eps)m$ and $Y_1 \cup Y_2$ is independent in $\Gamma_B$.

It follows that 
\begin{equation} \label{eqn_Y_i_large_min_deg}
\delta(\Gamma_R[Y_i]) \ge |Y_i| - 12\eps n.
\end{equation}
In particular, $\Gamma_R[Y_i]$ is connected.
We show that $\Gamma_B \cup \Gamma_R[Y_i]$ has a perfect matching for $i \in [2]$.
Suppose to the contrary that $\Phi = \Gamma_B \cup \Gamma_R[Y_1]$ has no perfect matching.
By Lemma \ref{lem_matching_tripartite_density}, it follows that there exist subsets $Z_i \subseteq X_i$ and $Z_j \subseteq X_j$ of size at least $(1/4 - 10 \eps)n$ such that $Z_1 \cup Z_2$ is independent in $\Phi$, for some $1 \le i \neq j \le 3$.

Suppose first that say $i = 2$.
Then the intersection of $Y_2$ and $Z_2$ is non-empty. 
Let $u \in Y_2 \cup Z_2$.
Note that $u$ has at most $(1/4 + 2\eps)m$ non-neighbours in $X_1 \cup X_3$, at least $(1/4 - 10\eps)m$ of which belong to $Y_1$, so $|Z_1 \cap Y_1| \ge (1/4 - 22 \eps)m$, a contradiction to (\ref{eqn_Y_i_large_min_deg}).

It remains to consider the case where $i = 1$, $j = 3$.
As before, we may pick $u \in Y_1 \cap Z_1$. But $u$ has at least $|Z_3| + |Y_2| \ge (1/2 - 20\eps)m$ non-neighbours in $\Gamma$, a contradiction.

\subsubsection*{Almost equipartition into four parts}
In Case (\ref{itm_prop_four_quarter-sized}) of \ref{claim_prop_large_blue_cases_matching}, $\{X_1, X_2, X_3, X_4\}$ is a partition of $V(G)$ such that every red component is contained in one of the parts $X_i$ and $|X_i| \ge (1/4 - 36\eps)m$. The following claim finishes the proof of Proposition \ref{prop_large_blue_cases_matching} in this case.
\begin{claim}
One of the following assertions holds.
\begin{enumerate}
\item
$\Gamma_B$ has a perfect matching.
\item
For some $1 \le i < j \le 4$, there exist subsets $Y_i \subseteq X_i$ and $Y_j \subseteq X_j$ of size at least $(1/4 - 500\eps)m$ such that
\begin{itemize}
\renewcommand\labelitemi{--}
\item
$\Gamma_R[Y_l]$ is connected for $l \in \{i, j\}$.
\item
$Y_i \cup Y_j$ is independent in $\Gamma_B$.
\item 
$\Gamma_B \cup \Gamma_R[Y_l]$ has a perfect matching for $l \in \{i, j\}$.
\end{itemize} 
\end{enumerate}
\end{claim}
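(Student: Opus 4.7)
The plan is to assume $\Gamma_B$ has no perfect matching and apply Tutte's theorem to produce $S \subseteq V(\Gamma)$ with $q(\Gamma_B \setminus S) \ge |S|+1$. Since each $|X_i| \le m - 3(1/4 - 36\eps)m \le (1/4 + 108\eps)m$ and every red edge of $\Gamma$ stays inside some $X_i$, each vertex has blue degree at least $\deg_\Gamma(v) - (|X_i|-1) \ge (1/2 - 110\eps)m$ to the complement of its own part. Therefore any vertex isolated in $\Gamma_B \setminus S$ forces $|S| \ge (1/2 - 110\eps)m$, which combined with $|S| \le (m-1)/2$ pins $|S|$ within $O(\eps m)$ of $m/2$. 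The standard Tutte counting (each non-isolated odd component has size at least $3$) then gives $|I| \ge 2|S| - m/2 + 3/2 \ge (1/2 - 220\eps)m$ for the set $I$ of isolated vertices of $\Gamma_B \setminus S$.

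Next I will exploit that every edge of $\Gamma$ with endpoints in two distinct parts is blue, so $I$, being $\Gamma_B$-independent, is in fact $\Gamma$-independent. For any non-empty $I_i := I \cap X_i$ and any $v \in I_i$, the set $I \setminus I_i$ consists entirely of $\Gamma$-non-neighbours of $v$, so $|I_i| \ge |I| - (1/4 + 2\eps)m \ge (1/4 - 222\eps)m$. Together with $|I| \le (1/2 + O(\eps))m$ this forces exactly two of the four $I_i$'s to be non-empty; after relabelling I call them $I_1, I_2$ and set $Y_l := I_l$. The size bound $|Y_l| \ge (1/4 - 500\eps)m$ and the $\Gamma_B$-independence of $Y_1 \cup Y_2$ are then immediate. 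For the connectedness of $\Gamma_R[Y_l]$, I observe that $v \in Y_l$ has $\deg_R(v) \ge \deg_\Gamma(v) - |S| \ge (1/4 - 2\eps)m$, so its red non-neighbours inside $X_l$ number at most $|X_l| - 1 - \deg_R(v) = O(\eps m)$. In particular, any smaller red component of $X_l$ has size at most $|X_l| - |C_l| = O(\eps m) < \deg_R(v)$, forcing $v$ to lie inside the big red component $C_l$; moreover $\delta(\Gamma_R[Y_l]) \ge |Y_l| - 1 - O(\eps m) > |Y_l|/2$, making $\Gamma_R[Y_l]$ Hamiltonian and in particular connected.

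It remains to verify that $G'' := \Gamma_B \cup \Gamma_R[Y_l]$ has a perfect matching for each $l \in \{1,2\}$, and I expect this to be the main obstacle. My approach is to apply Tutte once more, exploiting that $\deg_{G''}(v) \ge (3/4 - O(\eps))m$ for $v \in Y_l$ (which rules out isolated $Y_l$-vertices in $G'' \setminus S'$ for any Tutte violator $S'$ of size $\le m/2$) and $\deg_{G''}(v) \ge (1/2 - 110\eps)m$ elsewhere. Repeating the analysis of the first paragraph on $G''$ would show that any set $I'$ of isolated vertices in $G'' \setminus S'$ satisfies $|I'| \ge (1/2 - O(\eps))m$, is $\Gamma$-independent across distinct parts, is disjoint from $Y_l$, and is concentrated in exactly two parts. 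If one of those parts is $X_l$, then $I' \cap X_l \subseteq X_l \setminus Y_l$ has size $O(\eps m) \ll (1/4 - O(\eps))m$, an immediate contradiction. The delicate case will be when $I'$ is contained in two parts from $\{X_1,X_2,X_3,X_4\} \setminus \{X_l\}$ and in particular overlaps substantially with $Y_{3-l}$; here I plan to use the Hall-type bound $|N_{\Gamma_B}(T)| \ge (1/2 - 110\eps)m > |T|$ for every non-empty $T \subseteq Y_{3-l}$, together with the fact that $\Gamma_B[X_a,X_b]$ is almost complete (each vertex has blue degree $\ge |X_b| - O(\eps m)$ into the opposite part), to show that the alleged obstruction is incompatible with the size constraints on $S'$ and $I'$ forced by Tutte.
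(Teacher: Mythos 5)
Your handling of the first two bullets is essentially sound, with one misstatement to fix: the isolated set $I$ is not ``$\Gamma$-independent,'' since vertices of $I$ lying in a common part $X_i$ may well be joined by red edges --- indeed, your conclusion that $\Gamma_R[Y_l]$ is Hamiltonian directly contradicts global $\Gamma$-independence. What you actually use, correctly, is only that $I$ has no $\Gamma$-edges between \emph{distinct} parts, which follows because cross-part edges of $\Gamma$ are blue and $I$ is $\Gamma_B$-independent. The genuine gap, however, is the third bullet, which you flag as the main obstacle but do not close.

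Your Tutte re-run on $G''=\Gamma_B\cup\Gamma_R[Y_l]$ does correctly place the new isolating set $I'$ in two parts $X_a,X_b$ with $a,b\ne l$, and you rule out $l\in\{a,b\}$. But the sketch only then discusses the sub-case where $I'$ overlaps $Y_{3-l}$. You say nothing about $\{a,b\}\cap\{l,3-l\}=\emptyset$ (for instance $I'\subseteq X_3\cup X_4$ when $l=1$ and $Y_1,Y_2\subseteq X_1, X_2$), where the Hall-type bound you invoke for $T\subseteq Y_{3-l}$ is vacuous and $I'$ has no a priori relation to $Y_1,Y_2$ whatsoever. The supporting claim that ``$\Gamma_B[X_a,X_b]$ is almost complete'' is also false: a vertex of $X_a$ may put all of its $(1/4+2\eps)m$ allowed $\Gamma$-non-neighbours inside $X_b$, and in fact $\Gamma_B[I'\cap X_a, I'\cap X_b]$ is empty by construction. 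The missing idea is that the pair $(Y_i,Y_j)$ in the claim must be \emph{chosen} so that any would-be obstruction $I'$ of $G''$ is forced to meet $Y:=Y_i\cup Y_j$; for such a $v\in I'\cap Y$ one has $N_B(v)\cap(I'\cup Y)=\emptyset$, so $\deg_R(v)\ge\deg_\Gamma(v)-(m-|I'\cup Y|)$ exceeds the size of $v$'s own part, which is impossible since red edges stay within parts. The paper engineers this collision by running Lemma~\ref{lem_matching_bip_technical} (its packaged stability form of Tutte's theorem) on $\Gamma_B$ with bipartition $\{X_1\cup X_2, X_3\cup X_4\}$, taking $Y$ on the larger side (re-deriving $Y$ on the other side if it falls on the smaller one), and then running the lemma again on $\Gamma_B\cup\Gamma_R[Y_i]$: the lemma's non-matching conclusions either place the new obstruction on the larger side or force it to straddle both sides, in either case colliding with $Y$. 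Your raw-Tutte derivation reaches equivalent structural data, so the difference from the paper is packaging rather than substance --- but the side selection and the collision argument are exactly the pieces your sketch does not supply.
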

\begin{proof}
Consider Lemma \ref{lem_matching_bip_technical} with the graph $\Gamma_B$, the bipartition $\{X_1 \cup X_2, X_3 \cup X_4\}$ and parameter $72 \eps$.
Assuming that $\Gamma_B$ has no perfect matching, it is easy to verify that the first three conditions cannot hold. It follows that $\Gamma_B$ has an independent set $Y$ of size at least $(1/2 - 432\eps)m$.
This implies that $\delta(\Gamma_R[Y]) \ge (1/4 - 434\eps)m$, implying that $\Gamma_R[Y]$ is the union of two red connected subgraphs of order at least $(1/4 - 434\eps)m$.
Without loss of generality, $Y \subseteq X_1 \cup X_2$. Denote $Y_i = Y \cap X_i$.
If $|X_1| + |X_2| \ge |X_3| + |X_4|$, the graph $\Gamma_B \cup \Gamma_R[Y_i]$ has a perfect matching for $i \in [2]$, by Lemma \ref{lem_matching_bip_technical}.
Otherwise, $\Gamma_B[X_1 \cup X_2, X_3 \cup X_4]$ is almost complete, and by Lemma \ref{lem_matching_bip_technical}, if $\Gamma_B$ has no perfect matching, $X_3 \cup X_4$ contains an independent set $Y$ of size at least $(1/2 - 432\eps)m$.
Denoting $Y_i = Y \cap X_i$ for $i \in [3, 4]$, we conclude as before that $|Y_i| \ge (1/4 - 434\eps)m$,  $\Gamma_R[Y_i]$ is connected and $\Gamma_B \cup \Gamma_R[Y_i]$ has a perfect matching for $i \in [3, 4]$.
\end{proof}

\subsubsection*{Large red component}
We now consider Case (\ref{itm_prop_half-size_red}) of Claim \ref{claim_prop_large_blue_cases_matching}, which the last remaining case.
Let $\Phi_1$ be a red component on at least $(1/2 - 50\eps)m$ vertices and denote $X_1 = V(\Phi_1)$, $X_2 = V(\Gamma) \setminus X_1$ and $\Gamma_1 = \Gamma_B \cup \Phi_1$.

The following claim is a simple application of Theorem \ref{thm_chvatal}.
\begin{claim}\label{claim_prop_large_red}
One of the following assertions holds.
\begin{enumerate}
\item
$\Gamma_1$ has a perfect matching.
\item
$X_2$ contains a red component of order at least $(1/2 - 4\eps)m$.
\item
$|X_2| \ge m/2$, and $\Gamma_R[X_2]$ contains two red components of order at least $(1/4 - 52\eps)m$.
\end{enumerate}
\end{claim}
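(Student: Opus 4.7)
The plan is to apply Chv\'atal's theorem (Theorem~\ref{thm_chvatal}) to $\Gamma_1$ in the hope of forcing a Hamilton cycle, hence a perfect matching, which is condition~1. For this I first need the degree sequence of $\Gamma_1$. Because $\Phi_1$ is a red component with $V(\Phi_1)=X_1$, every red edge incident to a vertex of $X_1$ lies in $\Phi_1\subseteq\Gamma_1$, so $\deg_{\Gamma_1}(v)=\deg_\Gamma(v)\ge(3/4-2\eps)m$ for every $v\in X_1$. For $v\in X_2$ the red edges through $v$ all go to $X_2$, giving $\deg_{\Gamma_1}(v)=\deg_\Gamma(v)-\deg_R(v,X_2)\ge (3/4-2\eps)m-(|C(v)|-1)$, where $C(v)$ denotes the red component of $v$ in $\Gamma_R[X_2]$. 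I may assume $m$ is even (cf.\ the remark after Claim~\ref{claim_pft_matching_case_2}).

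If the hypothesis of Chv\'atal's theorem is satisfied for $\Gamma_1$, then $\Gamma_1$ is Hamiltonian and so has a perfect matching, giving condition~1. Otherwise there exists $i\le m/2$ with $d_i\le i$ and $d_{m-i}\le m-i-1$. Since $i\le m/2<(3/4-2\eps)m$, the set $L\defeq\{v:\deg_{\Gamma_1}(v)\le i\}$ is contained in $X_2$ and has $|L|\ge i$, and every $v\in L$ satisfies $\deg_R(v,X_2)\ge(3/4-2\eps)m-i$ and hence lies in a red component of $\Gamma_R[X_2]$ of order at least $(3/4-2\eps)m-i+1$.

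I then split on the value of $i$. If $i\le(1/4+2\eps)m$, this lower bound is at least $(1/2-4\eps)m$, which gives condition~2. If $i>(1/4+2\eps)m$, then also $m-i-1<(3/4-2\eps)m$, so $M\defeq\{v:\deg_{\Gamma_1}(v)\le m-i-1\}$ is contained in $X_2$ and $|X_2|\ge |M|\ge i+1$; each $v\in M$ lies in a red component of $\Gamma_R[X_2]$ of order at least $i+2-(1/4-2\eps)m$. Assuming condition~2 fails (no red component of $X_2$ has order $\ge(1/2-4\eps)m$), the lower bounds from $L$ and $M$, together with the upper bound on the size of each component, force the $L$- and $M$-vertices to be spread across two distinct red components of $X_2$, each of order at least $(1/4-52\eps)m$.

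The main obstacle in this plan is getting the full bound $|X_2|\ge m/2$ in condition~3 when $i$ is in the middle range, i.e.\ not close to $m/2$. My approach is to handle this case by showing directly that $|X_2|<m/2$ already forces a perfect matching, and hence condition~1: if $|X_2|<m/2$ then $|X_1|>m/2$, and every $v\in X_2$ has at least $(1/4-2\eps)m$ blue neighbours in $X_1$; a Hall-type argument in $\Gamma_B[X_2,X_1]$ (using the degree bound to control the deficiency) produces a matching saturating $X_2$, while the unmatched vertices of $X_1$ span a dense subgraph of $\Gamma[X_1]$ that admits a perfect matching by Dirac's theorem. Combined with the Chv\'atal analysis above, this exhausts the cases and yields the claim.
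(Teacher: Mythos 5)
Your approach is the same as the paper's --- apply Chv\'atal's theorem to $\Gamma_1$ and, from a violating index $i$, read off low-degree vertices whose large red degree into $X_2$ produces the required red components --- but it contains a calculational slip that manufactures a nonexistent obstacle and leads you to append an unnecessary and unverified patch.

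A Chv\'atal violation at $i\le m/2$ gives $d_{m-i}\le m-i-1$, which says that the $m-i$ lowest-degree vertices all have degree at most $m-i-1$; hence $|M|\ge m-i$, not $|M|\ge i+1$. Since $i\le m/2$ this already gives $|M|\ge m/2$, and as $M\subseteq X_2$ the bound $|X_2|\ge m/2$ in condition~3 is automatic. Moreover, $m-i\le|M|\le|X_2|\le (1/2+50\eps)m$ forces $i\ge(1/2-50\eps)m$, so there is no ``middle range'' at all: every $v\in M$ has $\deg_R(v,X_2)\ge(3/4-2\eps)m-(m-i-1)=i+1-(1/4+2\eps)m\ge(1/4-52\eps)m+1$ (note also the sign in your expression: the subtracted term should be $(1/4+2\eps)m$, not $(1/4-2\eps)m$), while every $v\in L$ has $\deg_R(v,X_2)\ge(3/4-2\eps)m-i\ge(1/4-2\eps)m$. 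Hence all of $L\cup M$ lies in red components of $\Gamma_R[X_2]$ of order at least $(1/4-52\eps)m$. As $|X_2|\le(1/2+50\eps)m<3(1/4-52\eps)m$ there are at most two such components, and since $|L\cup M|\ge m/2>(1/2-4\eps)m$, if condition~2 fails these vertices cannot all lie in a single component, yielding condition~3. This is exactly the paper's dichotomy (its ``at least $m/2$ vertices of degree at most $(1/2+50\eps)m$'' is the $M$-side of the same argument).

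The Hall-type appendix is therefore unneeded, and it is also not sound as written. For $W\subseteq X_2$ with $(1/4-2\eps)m<|W|<(1/4+2\eps)m$ the minimum-degree bound gives only $|N_{\Gamma_B}(W,X_1)|\ge(1/4-2\eps)m$, which may fall short of $|W|$; and the non-neighbour argument (a vertex of $X_1$ outside $N(W)$ has $\Gamma$-degree at most $m-1-|W|$) only rules out Hall violations with $|W|\ge(1/4+2\eps)m$. So Hall's condition is not verified on that range, and ``controlling the deficiency'' does not by itself deliver a matching saturating $X_2$.
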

\begin{proof}
Suppose that $\Gamma_1$ does not have a perfect matching.
In particular, $\Gamma_1$ has no Hamilton cycle.
Note that
\begin{align*}
\deg_{\Gamma_1}(u) \ge
\left\{
\begin{array}{ll}
(3/4 - 2\eps)m & u \in X_1 \\
(3/4 - 2\eps)m - (n - |X_1|) & u \in X_2.
\end{array}
\right.
\end{align*}
It follows from Theorem \ref{thm_chvatal} that either there are at least $m/2$ vertices of degree at most $(1/2 + 50\eps)m$ or there are at least $(1/4 + 2\eps)m$ vertices of degree at most $(1/4 + 2\eps)m$.

Suppose that the former holds and let $Y$ be the set of vertices of degree at most $(1/2 + 50\eps)m$ in $\Gamma_1$.
Then $\deg_R(u, X_2) \ge (1/4 - 52\eps)m$ for every $u \in Y$.
It follows that either $\Gamma_R[Y]$ is connected, or it consists of two connected components of order at least $(1/4 - 52\eps)m$.

Suppose now that the latter holds, and let $Y$ be the set of vertices of degree at most $(1/4 + 2\eps)m$ in $\Gamma_1$.
Then $\deg_R(u, X_2) \ge (1/2 - 4\eps)m$ for every $u \in Y$, and in particular, $X_2$ contains a red component of order at least $(1/2 - 4\eps)m$.

\end{proof}

In order to prove Proposition \ref{prop_large_blue_cases_matching}, we may assume that $\Gamma_1$ has no perfect matching.
We consider first the third case of Claim \ref{claim_prop_large_red}, where $|X_2| \ge m/2$ and $X_2$ contains two red components of size at least $(1/4 - 52\eps)m$.
Denote by $Y_1$ and $Y_2$ the vertex sets of these red component.
Since $\Gamma_1 = \Gamma_B \cup \Gamma_R[X_1]$ has no perfect matching, it follows from Lemma \ref{lem_matching_bip_technical} that $X_2$ contains an independent set of size at least $(1/2 - 50\eps)m$, implying that $\Gamma[Y_1, Y_2]$ is almost empty and $\Gamma[X_1, X_2]$ is almost complete.
It is easy to conclude from Lemma \ref{lem_matching_bip_technical} that $\Gamma_B \cup \Gamma_R[Y_i]$ has a perfect matching for $i \in [2]$.

In order to complete the proof of Proposition \ref{prop_large_blue_cases_matching} in this case, it remains to consider the case where $\Gamma_1$ has no perfect matching and $X_2$ contains a red component of order at least $(1/2 - 4\eps)m$.
The following claim completes the proof of Proposition \ref{prop_large_blue_cases_matching}.

\begin{claim} \label{claim_large_blue_two_midsized_red}
One of the following conditions holds.
\begin{enumerate}
\item
Either $\Gamma_1$ or $\Gamma_2$ has a perfect matching.
\item \label{itm_2_claim_large_blue_two_midsized_red}
There exist disjoint sets $A_i, B_i \subseteq X_i$ of size at least $(1/4 - 500\eps)m$ such that 
\begin{itemize}
\renewcommand\labelitemi{--}
\item
$\Gamma_R[A_i \cup B_i]$ is connected and non bipartite for $i \in [2]$.
\item 
$\Gamma_B[A_i \cup B_{3 - i}]$ is connected for $i \in [2]$
\end{itemize}
\end{enumerate}
\end{claim}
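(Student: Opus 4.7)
The plan is to apply Lemma \ref{lem_matching_bip_technical} to each of $\Gamma_1=\Gamma_B\cup\Phi_1$ and $\Gamma_2=\Gamma_B\cup\Phi_2$ under the assumption that neither admits a perfect matching, and then assemble the resulting independent sets into the four sets $A_1,B_1,A_2,B_2$ required by the claim. Let $Y_1=V(\Phi_1)=X_1$ and $Y_2=V(\Phi_2)\subseteq X_2$; both have size at least $(1/2-50\eps)m$, and since each $\Phi_j$ is a red component of $\Gamma_R$, every red edge at $Y_j$ lies inside $Y_j$. Consequently for any $u\in Y_j$ we get $\deg_{\Gamma_B}(u,V(\Gamma)\setminus Y_j)=\deg_\Gamma(u,V(\Gamma)\setminus Y_j)\ge (3/4-2\eps)m-|Y_j|$, and an analogous lower bound holds for $u\notin Y_j$; so the hypotheses of Lemma \ref{lem_matching_bip_technical} are met for both $\Gamma_j$ with bipartition $\{Y_j,V(\Gamma)\setminus Y_j\}$.

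Next I would sift through the conclusions of Lemma \ref{lem_matching_bip_technical}. Any set $U$ independent in $\Gamma_j$ that lies entirely inside $Y_j$ is independent in all of $\Gamma$ (it has no blue edges, no $\Phi_j$-edges, and no other red edges escape $Y_j$), so the minimum degree $\delta(\Gamma)\ge(3/4-2\eps)m$ forces $|U|\le(1/4+2\eps)m$; this eliminates the ``independent set of size $(1/2-\eps)m$'' conclusion in the $Y_j$ side and restricts the small-neighbourhood conclusion to the opposite side. The ``not $2$-connected'' conclusion is ruled out using the fact that every vertex of $V(\Gamma)\setminus Y_j$ sends at least $(1/4-O(\eps))m$ blue edges across into $Y_j$, which is too much connectivity for a single cut vertex. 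What survives for each $j\in[2]$ is a large blue-independent set $A^{(j)}$ of $\Gamma$ with $|A^{(j)}\cap Y_j|,\,|A^{(j)}\cap(V(\Gamma)\setminus Y_j)|\ge(1/4-O(\eps))m$, and with $A^{(j)}\cap Y_j$ additionally $\Phi_j$-independent.

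I would then set $A_1=A^{(1)}\cap X_1$, $A_2=A^{(1)}\cap X_2$, $B_1=A^{(2)}\cap X_1$ and $B_2=A^{(2)}\cap Y_2\subseteq X_2$, discarding the at most $54\eps m$ stray vertices of $A^{(2)}$ lying in $Z=X_2\setminus Y_2$. For the property $\Gamma_R[A_i\cup B_i]$ connected and non-bipartite: each vertex of the blue-independent set $A_i\subseteq X_i$ has few blue edges within $X_i$, so the $\Gamma$-minimum degree forces at least $(1/4-O(\eps))m$ red neighbours inside $X_i$, pushing $B_i$ into the red neighbourhood of $A_i$ and connecting $\Gamma_R[A_i\cup B_i]$; non-bipartiteness is then checked by noting that any proposed bipartition must separate $A_i$ from $B_i$ but the red minimum degree inside each of $A_i,B_i$ (coming again from blue-independence and the degree bound) prevents a two-colouring. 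For $\Gamma_B[A_i\cup B_{3-i}]$ connected: $A_i$ and $B_{3-i}$ lie on opposite sides of the partition $(X_1,X_2)$, and the cross-blue degree bound $(1/4-O(\eps))m$ produces a dense, hence connected, bipartite blue graph between them.

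The main obstacle I anticipate is disjointness: $A_1,B_1\subseteq X_1$ and $A_2,B_2\subseteq X_2$ must be made pairwise disjoint while keeping size at least $(1/4-500\eps)m$. Since $A^{(1)}$ and $A^{(2)}$ are two blue-independent sets of $\Gamma$, their intersection $A^{(1)}\cap A^{(2)}\cap X_i$ is itself blue-independent; if the intersection is large one obtains a still larger blue-independent set inside $X_i$ whose red neighbourhood structure already yields the claim directly, and if it is small one trims each of the four sets by $O(\eps)m$ vertices to force disjointness, absorbing the loss into the slack of $500\eps$. Controlling this overlap, together with ruling out the degenerate ``not $2$-connected'' subcase cleanly, is the one place where some extra care is needed; the rest is routine extraction of structure from the $(3/4-2\eps)m$ minimum-degree hypothesis on $\Gamma$.
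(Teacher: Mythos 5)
There is a genuine gap at the step where you identify ``what survives'' from Lemma \ref{lem_matching_bip_technical}. Having ruled out the large independent set inside $Y_j$ (the paper's conditions $4$ and $3$ on the $Y_j$ side), you conclude that what remains is an independent set $A^{(j)}$ with $|A^{(j)} \cap Y_j|, |A^{(j)} \cap (V(\Gamma) \setminus Y_j)| \ge (1/4 - O(\eps))m$, i.e.\ the fifth conclusion of the lemma. But that conclusion is \emph{also} ruled out, by a slightly stronger form of the degree argument you already invoke: if $A^{(j)}$ is $\Gamma_j$-independent and $u \in A^{(j)} \cap Y_j$, then since all red edges at $u$ lie in $\Phi_j$ and hence in $\Gamma_j$, the vertex $u$ has no $\Gamma$-edges to \emph{any} of $A^{(j)}$, not merely to $A^{(j)} \cap Y_j$; thus $\deg_{\Gamma}(u) \le m - |A^{(j)}| \le (1/2 + O(\eps))m$, contradicting $\delta(\Gamma) \ge (3/4 - 2\eps)m$. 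You only use this reasoning to bound $|A^{(j)} \cap Y_j| \le (1/4 + 2\eps)m$, which is compatible with conclusion $5$, so you never notice it is dead.

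Once conclusions $2$, $4$, $5$, and the $Y_j$-side of $3$ are all eliminated, the only possibility is conclusion $3$ \emph{on the opposite side}: a $\Gamma_j$-independent set $A$ on the $V(\Gamma) \setminus Y_j$ side of size about $m/4$ with small $\Gamma_j$-neighbourhood. (For $\Gamma_2$ with the paper's bipartition $\{X_1,X_2\}$, conclusion $4$ with $i=1$ also survives, because a $\Gamma_2$-independent subset of $X_1$ is \emph{not} $\Gamma$-independent.) This is a quite different structure from your claimed $A^{(j)}$: it is an independent set concentrated on one side only, with a small neighbourhood, and extracting $A_1,B_1,A_2,B_2$ from two such sets requires a nontrivial case analysis on how the two independent sets and their neighbourhoods overlap (which the paper carries out by distinguishing $A_1 \subseteq N(A_2)$ from $A_1 \nsubseteq N(A_2)$ etc.). Your subsequent construction of the four sets assumes the wrong intermediate structure, so the rest of the argument, including the intersection-trimming step, does not go through as written.
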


\begin{proof}
Without loss of generality, we assume that $|X_1| \ge |X_2|$.
We assume that both $\Gamma_1$ and $\Gamma_2$ have no perfect matchings and make use of Lemma \ref{lem_matching_bip_technical} (with parameter $50 \eps$).
We start by considering $\Gamma_1$.
It is easy to verify that $\Gamma_1$ is $2$-connected.
Furthermore, since the vertices of $X_1$ have degree at least $(3/4 - 2\eps)$, it follows that the two last conditions cannot hold (because they imply that there are many vertices in $X_1$ of degree at most approximately $m/2$).
We conclude that the following holds.
\begin{align*}
& \text{There exists an independent set $A_2 \subseteq X_2$ of size at least $(1/4 - 200\eps)m$ such that} \\
& |N_{\Gamma_1}(A_2)| \le (1/4 + 150\eps)m.
\end{align*}

Similarly, if $\Gamma_2$ has no perfect matching, one of the two following conditions holds.
\begin{enumerate}
\item \label{itm_1_large_blue_mid_sized_red}
$X_1$ contains an independent set $Y_1$ of size at least $(1/2 - 50\eps)m$.
\item \label{itm_2_large_blue_mid_sized_red}
There exists an independent set $A_1 \subseteq X_1$ of size at least $(1/4 - 200\eps)m$ such that $|N_{\Gamma_2}(A_1)| \le (1/4 + 150\eps)m$.
\end{enumerate}

Suppose that Condition (\ref{itm_1_large_blue_mid_sized_red}) above holds.
Then every vertex in $A_2$ has at least $(1/2 - 152\eps)m$ red neighbours (in $\Gamma$) in $X_2$.
Denote 
\begin{align*}
& B_1 = Y_1 \cap N_{\Gamma_1}(A_2) \\
& A_1 = Y_1 \setminus N_{\Gamma_1}(A_2) \\
& Y_2 = N_{\Gamma_R}(A_2) \\
& B_2 = Y_2 \cap N_{\Gamma_2}(A_1)
\end{align*}
It is easy to verify that Condition \ref{itm_2_claim_large_blue_two_midsized_red} from Claim \ref{claim_large_blue_two_midsized_red} holds, completing the proof in this case.

We now suppose that Condition (\ref{itm_2_large_blue_mid_sized_red}) holds.
So we have sets $A_i \subseteq X_i$ of size at least $(1/4 - 200\eps)m$ such that $|N_{\Gamma_{3 - i}}(A_i)| \le (1/4 + 150\eps)m$. We write $N(A_i)$ for $N_{\Gamma_{3 - i}}(A_i)$ as a shorthand.

If $A_1 \nsubseteq N(A_2)$, denote $A_1' = A_1 \setminus N(A_2)$ and pick $u \in A_1'$. Then $N(u, X_2) \subseteq N_{\Gamma_2}(A_1')$ and $N(u, X_2) \cap A_2 = \emptyset$.
Since $\deg(u, X_2) \ge (1/4 - 6\eps)$, we conclude $|N_{\Gamma_2}(A_1) \setminus A_2| \ge (1/4 - 6\eps)m$.
In particular, $A_2 \nsubseteq N(A_1)$, so by the same argument, $|N_{\Gamma_1}(A_2') \setminus A_1| \ge (1/4 - 5\eps)m$, where $A_2' = A_2 \setminus N(A_1)$.
Define 
\begin{align*}
B_i = N_{\Gamma_i}(A_{3 - i}') \cap N_{\Gamma_R}(A_i').
\end{align*}
It is easy to check that Condition (\ref{itm_2_claim_large_blue_two_midsized_red}) in Claim \ref{claim_large_blue_two_midsized_red} holds.

Finally, if $A_1 \subseteq N(A_2)$ and $A_2 \subseteq N(A_1)$ denote $B_i = X_i \setminus (N(A_1) \cup N(A_2))$.
There are no edges in $\Gamma$ between $A_1$ and $B_2$ or between $A_2$ and $B_1$.
It is easy to conclude from here that Condition (\ref{itm_2_claim_large_blue_two_midsized_red}) in Claim \ref{claim_large_blue_two_midsized_red} holds.
\end{proof}

\end{proof}
The proof of Lemma \ref{lem_large_blue_non_bip} concludes our proof of Theorem \ref{thm_main}.
We remind the reader that Lemmas \ref{lem_red_graph_disconnected_1} and \ref{lem_red_disconnected_2} were used several times in the proof, and we have yet to proved them.
The next two sections, Sections \ref{sec_red_graph_disconnected} and \ref{sec_red_disconnected_2} are devoted to the proofs of Lemmas \ref{lem_red_graph_disconnected_1} and \ref{lem_red_disconnected_2} respectively.

\section{Proof of Lemma \ref{lem_red_graph_disconnected_1}} \label{sec_red_graph_disconnected}
In this section, we prove Lemma \ref{lem_red_graph_disconnected_1}. Before turning to the proof, we remind the reader of the statement.

\begin{lem*} [\ref{lem_red_graph_disconnected_1}]
Let $\frac{1}{n} \ll \eps \ll 1$ and let $G$ be a graph on $n$ vertices with $\delta(G)\ge 3n/4$ and a $2$-colouring $E(G) = E(G_B) \cup E(G_R)$.
Suppose that $S, T \subseteq V(G)$ satisfy the following conditions.
\begin{itemize}
\item
$S,T$ are disjoint and $|S|, |T| \ge (1/2-\eps)n$.
\item
$\delta(G_B[S,T]) \ge (1/4 - \eps) n$.
\item
For every $S'\subseteq S, T' \subseteq T$ with $|S'|, |T'| \ge (1/4 - 100\eps) n$, we have $e(G_B[S', T']) \ge 25 \eps n^2$.
\end{itemize}
Then $V(G)$ may be partitioned into a red cycle and a blue one.
\end{lem*}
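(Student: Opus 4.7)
The plan is to partition $V(G) = V(C_R) \cup V(C_B)$, where $C_R$ is a short red ``cycle'' (the empty set, a single vertex, or an edge qualifying as cycles per the paper's convention) and $C_B$ is a blue Hamilton cycle on the complement, extracted via Lemma \ref{lem_stability_dirac}. The first step is to classify the ``extra'' vertices $X := V(G) \setminus (S \cup T)$, where $|X| \le 2\eps n$: place $v \in X$ into $X_S$ if $\deg_B(v, T) \ge (1/4 - 50\eps)n$, else into $X_T$ if $\deg_B(v, S) \ge (1/4 - 50\eps)n$, else into $X_R$. Since $\delta(G) \ge 3n/4$, each $v \in X_R$ satisfies $\deg_R(v, S \cup T) \ge n/4$. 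Setting $S^* := S \cup X_S$ and $T^* := T \cup X_T$, one checks that $G_B[S^*, T^*]$ still has minimum degree at least $(1/4 - O(\eps))n$ and the density property, with constants only slightly weakened.

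The key step is to build a red cycle $C_R$ with $V(C_R) \supseteq X_R$ and $|V(C_R) \cap S^*| - |V(C_R) \cap T^*| = |S^*| - |T^*|$, so that the residual sides have equal size; since $|X_R|$ and $\bigl||S^*| - |T^*|\bigr|$ are both $O(\eps n)$, I may take $|V(C_R)| = O(\eps n)$. I would split into subcases: if $X_R = \emptyset$ and $|S^*| = |T^*|$, take $C_R = \emptyset$; for small imbalances $|k| \in \{1, 2\}$ with $X_R = \emptyset$, use a single vertex or a red edge inside the larger side. In general I distinguish whether $G_R[S, T]$ is sparse (with at most $\eps n^2$ edges, say) or dense. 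In the sparse case $G_B[S, T]$ is close to complete bipartite, so $G[S]$ and $G[T]$ consist almost entirely of red edges with minimum degree at least $n/4 - O(\eps n) \ge |S|/2 - O(\eps n)$, and hence are pancyclic by Theorem \ref{thm_bondy}; vertices of $X_R$ are then inserted into the side containing most of their red neighbours, and the cycle length is tuned to the prescribed imbalance. In the dense case, the abundant red $S$--$T$ edges allow the construction of cycles spanning both sides and accommodating $X_R$ flexibly.

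Having fixed $C_R$, the residual bipartite graph $G_B[S^* \setminus V(C_R), T^* \setminus V(C_R)]$ is balanced on $n' = n - |V(C_R)|$ vertices, with minimum degree at least $(1/4 - \eta)n'$ and satisfying the density condition for some $\eta \ll 1$. Applying Lemma \ref{lem_stability_dirac} produces the blue Hamilton cycle $C_B$, and together with $C_R$ this yields the required partition.

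The main obstacle is the construction of $C_R$: the red graph has no a priori connectivity between $S$ and $T$, so if $X_R$ contains both vertices whose red neighbourhood lies mostly in $S$ and others whose red neighbourhood lies mostly in $T$, linking them in a single red cycle may require red $S$--$T$ edges that are not guaranteed to exist. The dichotomy on the density of $G_R[S, T]$ is what resolves this: in the sparse regime the cycle is built inside one side via pancyclicity, while in the dense regime the abundant red $S$--$T$ edges allow cycles spanning both sides. Matching the prescribed imbalance exactly in the smallest cases $|k| \in \{0, 1, 2\}$ relies crucially on the cycle convention.
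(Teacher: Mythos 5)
Your high-level strategy — absorb the extra vertices into $S$ and $T$ according to blue degree, carve out a short red cycle that both covers the ``red-heavy'' extras and repairs the imbalance, then close the rest as a blue Hamilton cycle via Lemma \ref{lem_stability_dirac} — matches the paper's own approach exactly, so you have identified the right skeleton. However, your construction of the red cycle $C_R$ contains a concrete error and leaves the genuinely hard case untouched.

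The sparse/dense dichotomy on $G_R[S,T]$ does not give you what you need. You assert that if $G_R[S,T]$ is sparse then ``$G_B[S,T]$ is close to complete bipartite, so $G[S]$ and $G[T]$ consist almost entirely of red edges.'' This step fails twice over. First, $\delta(G)\ge 3n/4$ only forces each vertex of $S$ to have at least $\approx n/4$ neighbours in $T$, so $e(G[S,T])$ can be as small as $\approx n^2/8$ — about half of complete bipartite — and the complement consists of genuine \emph{non}-edges, not red edges, so making $G_R[S,T]$ sparse does not make $G_B[S,T]$ dense. Second, even if $G_B[S,T]$ were complete, nothing constrains the colouring of the edges \emph{inside} $S$; a vertex $v\in S$ could still send all $\approx n/4$ of its $S$-edges in blue, so $\delta(G_R[S])$ need not be large and Bondy's pancyclicity theorem does not apply. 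The correct dichotomy — the one the paper actually uses — is Erd\H{o}s--Gallai applied to $G_B[S]$ (respectively $G_B[T]$): either there is a long blue path inside $S$ (which can be spliced into the eventual blue cycle and thus \emph{corrects} the imbalance), or $G_B[S]$ is so sparse that $G_R[S]$ has near-full minimum degree, yielding short red paths and cycles of any prescribed length. This is a dichotomy on the blue edges inside one part, not on the red $S$--$T$ edges.

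More seriously, you acknowledge but do not resolve the case where the extras in $X_R$ have their red degree split awkwardly between $S$ and $T$. In the paper this surfaces as the final and hardest subcase: $|X|=2$ with both $x_1,x_2$ satisfying $\deg_R(x_i,T)\le (1/4+O(\sqrt\eps))n$ and with $N_R(x_1,T)\cap N_R(x_2,T)$ small. Here one cannot simply place both in a red cycle routed through $S$ (the imbalance might point the wrong way) nor through $T$ (they may not have a joint red neighbourhood there). The paper handles this by a delicate sequence of structural reductions — extracting a blue path forest in $T_1$, analysing the red neighbourhoods $A,B$ of $x_1,x_2$ in $T_1$, showing they must be nearly disjoint and red-complete, bounding the set $Y$ of vertices with many blue $T_1$-neighbours, and finally deriving a numerical contradiction from the sizes of $A',B',Y$. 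Your dichotomy on $G_R[S,T]$ does not reduce to or otherwise discharge this case, so as written the proof has a real gap precisely where the lemma is hard.
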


The main tool we use in the proof is Lemma \ref{lem_stability_dirac}, which is a stability version of a special case of Chv\'atal's theorem, Theorem \ref{thm_chvatal}.
Our aim would be to find a short red cycle $C$ and a short blue path $P$, whose removal from $G$ leaves a balanced bipartite graph.
We then apply Lemma \ref{lem_stability_dirac} to deduce that $P$ may be extended to a blue cycle with vertex set $V(G) \setminus V(C)$.

\begin{proof}[Proof of Lemma \ref{lem_red_graph_disconnected_1}]
We start by modifying the sets $S,T$ as follows.
\begin{align*}
&S_1 = S \cup \{v \in V(G) \setminus (S \cup T) : \deg_B(v, T) \ge 24\eps n \}. \\
&T_1 = S \cup \{v \in V(G) \setminus (S_1 \cup T) : \deg_B(v, S) \ge 24\eps n \}.\\
&X = V(G) \setminus (S_1 \cup T_1).
\end{align*}
\begin{rem}
The vertices in $X$ have red degree at least $(3/4 - 50 \eps)n$ in $G$.
\end{rem}

We will find a red cycle and a blue path with one end in $S$ and one in $T$, which are disjoint, cover $X$ and their removal from $G$ leaves a balanced bipartite graph with a large number of vertices. We will then use the following claim to obtain the required partition into a blue cycle and a red one.

\begin{claim} \label{claim_leftover_blue_grpah_ham}
Let $S' \subseteq S_1, T' \subseteq T_1$ be such that $|S'| = |T'| \ge (1/2 - 12\eps)n$.
Then $G_B[S', T']$ is Hamiltonian. Furthermore, for every $s \in S'$ and $t \in T'$, the graph $G_B[S',T']$ contains a Hamilton path with ends $s$ and $t$. 
\end{claim}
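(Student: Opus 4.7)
The plan is to apply the stability version of Chv\'atal's theorem (Lemma~\ref{lem_stability_dirac}) to the balanced bipartite graph $H := G_B[S', T']$ on $N = 2|S'|\ge (1-24\eps)n$ vertices, with parameter $\eps' = C\eps$ for a suitable constant $C$. From the definitions one has $|S_1\setminus S|, |T_1\setminus T|\le |V(G)\setminus(S\cup T)|\le 2\eps n$ and $|S_1|, |T_1|\le (1/2+\eps)n$, so $|S_1\setminus S'|, |T_1\setminus T'|\le 13\eps n$. The ``furthermore'' statement about a prescribed pair of endpoints $s\in S'$, $t\in T'$ will follow directly from the corresponding strengthening in Lemma~\ref{lem_stability_dirac} once the hypotheses are checked.

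First I verify the expansion condition. Given $A\subseteq S'$ and $B\subseteq T'$ of size at least $(1/4-3\eps')N$, set $A_0 := A\cap S$ and $B_0 := B\cap T$. The losses $|A\setminus A_0|, |B\setminus B_0|\le 2\eps n$ together with $N \ge (1-24\eps)n$ give $|A_0|, |B_0|\ge (1/4-100\eps)n$ provided $C$ is chosen large enough, and the third hypothesis of Lemma~\ref{lem_red_graph_disconnected_1} then yields $e_B(A,B)\ge e_B(A_0,B_0)\ge 25\eps n^2 \ge \eps'N^2$. Next I check the minimum degree condition: any vertex $v\in S'\cap S$ satisfies $\deg_H(v)\ge \deg_B(v,T) - |T_1\setminus T'|\ge (1/4 - 14\eps)n$ by the second hypothesis, and the case $v\in T'\cap T$ is symmetric.

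The main obstacle will be the ``atypical'' vertices $v\in S'\cap(S_1\setminus S)$ and $v\in T'\cap(T_1\setminus T)$: for these the definition of $S_1, T_1$ only guarantees $\deg_H(v)\ge 24\eps n - 13\eps n = 11\eps n$, too small for a direct application of Lemma~\ref{lem_stability_dirac}. However, there are at most $2\eps n$ such vertices on each side. I would resolve this by retracing the absorbing-path proof of Lemma~\ref{lem_stability_dirac}: build an absorbing path $Q$ inside the typical subgraph $G_B[S'\cap S,\, T'\cap T]$ (to which Lemma~\ref{lem_stability_dirac} applies with parameter $14\eps$) via Lemma~\ref{lem_absorbing_paths}, designed so that its absorbing gadgets additionally accommodate the atypical vertices --- each such vertex still sends at least $11\eps n$ blue edges to the typical side, enough to be placed inside a gadget. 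Then I would apply the Regularity Lemma to $H\setminus V(Q)$ and extract a near-perfect matching in the reduced graph (using the verified expansion and degree conditions) to produce, via Lemma~\ref{lem_connected_matching}, a blue cycle through $Q$ covering all but a small balanced remainder which $Q$ then absorbs. For prescribed endpoints $s, t$, I would take them to be the endpoints of $Q$, prepending or appending a short connecting subpath if either of $s, t$ is atypical.
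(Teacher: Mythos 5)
You correctly isolate the obstacle: the at most $2\eps n$ vertices of $S'\cap(S_1\setminus S)$ (and symmetrically in $T'$) have degree only $\Theta(\eps n)$ in $H=G_B[S',T']$, blocking a direct application of Lemma~\ref{lem_stability_dirac}. But the repair you sketch---re-opening the absorbing-path proof so that the gadgets of $Q$ can also swallow the atypical vertices---fails on a quantitative count. Lemma~\ref{lem_absorbing_paths} delivers absorbing capacity $\rho^2 n$ under the hierarchy $\rho\ll\alpha$, and for a gadget to be usable by a vertex $w$ one needs $\deg(w)\ge\alpha n$; since your atypical vertices have only $\Omega(\eps n)$ neighbours in the typical subgraph, you are forced to take $\alpha=O(\eps)$. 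That pins $\rho^2 n=o(\eps^2 n)$ (tracing the constants in Claim~\ref{claim_disjoint_pairs} one even gets $\rho^2 n=O(\alpha^3 n)$), which is asymptotically smaller than the $\Theta(\eps n)$ atypical vertices you would have to push into $Q$. No tuning of the hierarchy closes this gap, because the absorbing capacity is always polynomially small in the minimum degree of the vertices to be absorbed.

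The paper avoids absorbing the atypical vertices at all. It first \emph{greedily} builds a short path $P$ in $H$ whose vertex set contains $Y=(S'\cup T')\setminus(S\cup T)$: at each step the current atypical vertex still has $\ge 24\eps n-|V(P)|>0$ blue neighbours on the opposite side lying in $S\cup T$, and the expansion hypothesis of Lemma~\ref{lem_red_graph_disconnected_1} joins two such neighbourhoods by a path of length at most two, so each atypical vertex costs at most five fresh vertices. Arranging $P$ to start in $S$ and end in $T$, deleting its interior leaves a balanced bipartite graph $G''$ all of whose vertices lie in $S\cup T$, so $\delta(G'')\ge(1/4-25\eps)|G''|$ and the expansion hypothesis is inherited. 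Lemma~\ref{lem_stability_dirac} then applies to $G''$ \emph{as a black box} to give a Hamilton $s$--$t$ path, which concatenated with $P$ is a Hamilton cycle of $H$; the ``furthermore'' clause is obtained by routing $P$ from, or adjacent to, the prescribed endpoints. In short: both routes rest on Lemma~\ref{lem_stability_dirac}, but the low-degree vertices must be neutralised by a path-cover step \emph{before} invoking it, not by the absorbing mechanism inside it.
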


\begin{proof}
Denote $G' = G_B[S', T']$ and $Y = (S' \cup T') \setminus (S \cup T)$.
We claim that there exists a path $P$ of length at most $12\eps n$ whose vertex set contains $Y$. Indeed, we may construct $P$ greedily, by adding a vertex of $Y$ one at a time.
Suppose that we want to add the vertex $y_1 \in Y$ to a path $P$ in $G'$ of length at most $12\eps n$, one of whose ends is $y_2 \in Y$.
We may pick $z_1, z_2 \in ((S \cup T) \cap V(G')) \setminus V(P)$ such that $z_i$ is a neighbour of $y_i$ in $G'$.
We have $N_{G'}(z_i) \ge (1/4 - 11\eps)n$.
In particular, by the third assumption of the lemma, there exists a path of length at most $2$ between $N_{G'}(z_1)$ and $N_{G'}(z_2)$.
Thus we may add $y_1$ to $P$ using at most five additional vertices.
Using this process, we obtain the desired path $P$, containing the vertices of $Y$.
Denote by $s, t$ the ends of $P$ and assume that $s \in S, t \in T$ (we may need to extend $P$ slightly).

Let $G''$ be the graph obtained from $G'$ by removing the inner vertices of $P$ and denote $n'' = |G''|$ and $\eta = 25\eps$.
Then $\delta(G'') \ge (1/4 - \eta)n''$.
Let $S' \subseteq S, T' \subseteq T$ be subsets of size at least $(1/4 - 3\eta)n'' \ge (1/4 - 75\eps)(1 - 20\eps)n \ge (1/4 - 80\eps)n$.
Then by the assumptions of Lemma \ref{lem_red_graph_disconnected_1}, $e(G'[S', T']) \ge \eta n^2$.
By Lemma \ref{lem_stability_dirac}, it follows that $G'$ is Hamiltonian. The same argument may be used to show that $G'$ contains a Hamilton path with ends $s, t$ for every $s \in S', t \in T'$.
\end{proof}

We consider several cases, depending on the size of $X$ and the behaviour of the vertices in $X$.
\subsection*{Case 1: \normalfont \normalsize $X = \emptyset$}
Without loss of generality, $|S_1| \ge |T_1|$. Denote $k = |S_1| - |T_1|$. 
Suppose first that $k$ is even. We use the following claim.

\begin{claim}\label{claim_existence_even_cycles}
The graph $G[S_1]$ either contains a blue path of length $k$, or it contains a red cycle of length $k$.
\end{claim}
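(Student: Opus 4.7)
The plan is to apply Bondy's pancyclicity theorem to $G[S_1]$, extract a $k$-cycle, and then argue by contradiction: if neither the claimed red $k$-cycle nor the claimed blue $k$-path exists, then the red subgraph is simultaneously too dense (since the blue subgraph is forced to be sparse by Erd\H{o}s--Gallai) and too sparse (by Bondy--Simonovits on the Tur\'an number of $C_k$), giving a contradiction.

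First I would record the basic parameters. Since $|S_1| + |T_1| = n$, $|S_1| \ge |T_1|$ and $|T_1| \ge (1/2 - \eps)n$, we have $|S_1| = n/2 + k/2$ and hence
\begin{equation*}
\delta(G[S_1]) \ge |S_1| - n/4 = n/4 + k/2 > |S_1|/2
\end{equation*}
whenever $k > 0$. By Bondy's theorem (Theorem~\ref{thm_bondy}), $G[S_1]$ is then pancyclic, so it contains a cycle of every length $3 \le \ell \le |S_1|$; in particular it has a $k$-cycle, and if this cycle happens to be entirely red we are done. The case $k = 0$ reduces to taking the empty blue path; the case $k = 2$ is also handled directly, since a ``red cycle of length~$2$'' is a red edge under the paper's convention, and either $G[S_1]$ has a red edge or all its edges are blue and then the high minimum degree yields a blue $P_3$.

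For the main range $k \ge 4$, suppose for contradiction that $G_R[S_1]$ contains no $C_k$ and $G_B[S_1]$ contains no path of length $k$. Erd\H{o}s--Gallai (Theorem~\ref{thm_erdos_gallai}) gives $e(G_B[S_1]) \le |S_1|(k-1)/2 \le \eps n |S_1|$, so
\begin{equation*}
e(G_R[S_1]) \ge \frac{|S_1|(|S_1| - n/4)}{2} - \eps n |S_1| \ge \frac{n^2}{16} - \eps n^2.
\end{equation*}
On the other hand, the Bondy--Simonovits theorem on $\mathrm{ex}(N, C_{2t})$ yields $e(G_R[S_1]) \le O\bigl(k \cdot |S_1|^{1 + 2/k}\bigr)$, which for $k \le 2\eps n$ and large $n$ is $O(\eps n^2)$: for fixed $k$ the exponent $1 + 2/k$ is strictly below $2$, while for $k = \Theta(n)$ the factor $|S_1|^{2/k}$ tends to~$1$ and the linear dependence on $k$ itself is already $O(\eps n)$. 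Comparing, one obtains $n^2/17 \le O(\eps n^2)$, which fails for $\eps$ small, contradicting our initial supposition.

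The main obstacle will be making the Bondy--Simonovits step uniform across the whole range $k \in \{4, 6, \ldots, 2\eps n\}$, since the implicit constant in $\mathrm{ex}(N, C_{2t})$ scales with $t$. A back-up strategy, should this route be unwieldy, is to split into the regimes $k = O(\log n)$ (where the exponent $1 + 2/k$ drops the Tur\'an bound well below $n^2$) and $k = \Omega(\log n)$ (where one can delete the $O(\eps n)$ vertices of highest blue degree from $S_1$ and apply Bondy directly to the resulting red subgraph, whose minimum degree then exceeds half its order).
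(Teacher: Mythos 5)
Your proposal takes a genuinely different route from the paper. After using Erd\H{o}s--Gallai to make $G_B[S_1]$ sparse, the paper goes on to \emph{construct} the red $k$-cycle explicitly (pass to a set $U \subseteq S_1$ of low blue degree, pick $u \in U$ and $A = N_R(u) \cap U$, then either find a red path of length $k-2$ in $A$, or use low-red-degree vertices of $A$ and common neighbours in $U \setminus A$ to build the cycle). You instead argue by \emph{extremal numbers}: the absence of a red $C_k$ bounds $e(G_R[S_1])$ above via Bondy--Simonovits, and this clashes with the lower bound $e(G_R[S_1]) \ge n^2/16 - \eps n^2$. The extremal route does work; a clean way to handle the uniformity over $k \in [4, 2\eps n]$ that worries you is to split at $t \defeq k/2 = \ln |S_1|$: for $t \ge \ln|S_1|$ one has $|S_1|^{1/t} \le e$, so $100t|S_1|^{1+1/t} \le 100e\, t|S_1| \le 300\eps n^2$, while for $2 \le t < \ln|S_1|$ one has $100t|S_1|^{1+1/t} \le 100(\ln|S_1|)|S_1|^{3/2} = o(n^2)$. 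Either way the bound is below $n^2/16 - \eps n^2$ once $\eps$ is small enough, so your ``main obstacle'' is in fact surmountable.

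However, your back-up strategy is broken, and it is worth understanding why, since it is exactly the obstruction that forces the paper into an explicit construction. Suppose you delete the $m$ vertices of $S_1$ with blue degree $\ge d$ to form $U$. Then for $v \in U$,
\begin{equation*}
\deg_R(v,U) \;\ge\; (|S_1| - n/4) - m - d, \qquad |U|/2 = (|S_1| - m)/2,
\end{equation*}
so
\begin{equation*}
\deg_R(v,U) - |U|/2 \;\ge\; |S_1|/2 - n/4 - m/2 - d \;=\; k/4 - m/2 - d.
\end{equation*}
To apply Bondy to $G_R[U]$ you need this to be positive, i.e.\ $k/4 > m/2 + d$. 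But Erd\H{o}s--Gallai only gives $e(G_B[S_1]) \lesssim \eps n^2$, hence by Markov $m \lesssim \eps n^2/d$, and $\min_d(\eps n^2/d + d) = 2\sqrt{\eps}\,n$. Thus you would need $k \gtrsim \sqrt{\eps}\,n$, whereas $k \le 2\eps n \ll \sqrt{\eps}\,n$. So $\delta(G_R[U])$ falls just short of $|U|/2$ \emph{for every admissible $k$}, not merely for small $k$; Bondy's theorem is never applicable to the trimmed red graph. (This is also why the opening pancyclicity remark about $G[S_1]$ is a red herring: the cycle it produces need not be monochromatic, and the argument never uses it.) The paper's hands-on construction sidesteps precisely this shortfall. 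Your main strategy is a valid alternative, but discard the back-up.
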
 

\begin{proof}
Suppose that $G_B[S_1]$ has no path of length $k$.
It follows from Erd\H{o}s and Gallai's theorem, Theorem\ref{thm_erdos_gallai}, that $e(G_B[S_1]) \le k n/2 \le \eps n^2$ (note that $k \le 2\eps n$).
Setting $\eta = \sqrt{\eps}$, there are at most $\eta n$ vertices in $S_1$ of blue degree at least $\eta n$ in $S_1$.
Denote by $U$ the set of vertices in $S_1$ with blue degree at most $\eta n$.
Then $\delta(G_R[U]) \ge (1/4 - 2\eta)n$.

We show that $G_R[U]$ contains a cycle of length $k$. We may assume $k \ge 4$ because this assertion is trivial for $k = 0, 2$.
Pick $u \in U$ and denote $A = N_R(u) \cap U$ (so $|A| \ge (1/4 - 2\eta) n$).
If $G_R[A]$ has a path of length $k - 2$, together with the vertex $u$ it forms a red cycle in $S$ of length $k$.

Thus, we assume that $G_R[A]$ contains no path of length $k - 2$. 
It follows from Theorem \ref{thm_erdos_gallai} that $e(G_R[A]) \le |A| \cdot \eps n$.
We deduce that at most $|A|/2$ vertices in $A$ have red degree at least $4\eps n$ in $A$.
In particular, we may pick a set $B$ of $k / 2$ vertices in $A$ with red degree at most $4\eps n$ in $A$.
For every $v \in B$ we have $\deg_R(v, U \setminus A) \ge (1/4 - 4\eps - \eta)n$.
It follows that every two vertices in $B$ have at least say $n/8$ common red neighbours in $U \setminus A$.
In particular, if $B = \{b_1, \ldots, b_{k/2} \}$, we may pick distinct $c_1, \ldots, c_{k/2} \in U \setminus A$ such that $( b_1, c_1, \ldots, b_{k/2}, c_{k/2} )$ is a red cycle in $S$ of length $k$.
\end{proof}

By Claim \ref{claim_existence_even_cycles}, either $S_1$ contains a blue path $P$ of length $k$ or it contains a red cycle $C$ of length $k$.
In the first case, it is easy to verify that $P$ may be extended to a Hamilton cycle of $G_B$ by Claim \ref{claim_leftover_blue_grpah_ham}. Indeed, consider the bipartite graph $G_B[S_1 \setminus U, T_1]$ where $U$ is the set of inner vertices of $P_1$. This graph is almost balanced (namely the first set has one more vertex than the other), so it contains a Hamilton path whose ends are the ends of $P$.
In the second case, we may conclude directly from Claim \ref{claim_leftover_blue_grpah_ham} that the graph $G_B \setminus V(C)$ is Hamiltonian.

We now suppose that $k$ is odd.
If $S_1$ contains a blue path $P$ of length $k$, we continue as before.
Otherwise, if $S_1$ contains a blue edge $uv$, we may find a red cycle $C$ in $S_1 \setminus \{u, v\}$ of length $k - 1$, by the argument of Claim \ref{claim_leftover_blue_grpah_ham}.
It follows that the graph $G_B \setminus V(C)$ is Hamiltonian.
Finally, if $S_1$ has no blue edges, we have $\delta (G_R[S_1]) \ge |S_1| - n/4 > |S_1|/2$, since $|S_1| \ge (n + 1) /2$.
It follows from Bondy's theorem (\ref{thm_bondy}) that $G_R[S_1]$ is pancyclic, in particular it contains a cycle $C$ of length $k$. We proceed as before to conclude that $G_B \setminus V(C)$ is Hamiltonian.

\subsection*{Case 2: \normalfont \normalsize $|X| = 1$}
Denote $X = \{x\}$.
Again, we assume that $|S_1| \ge |T_1|$ and denote $k = |S_1| - |T_1|$.
Suppose first that $k$ is odd.
We shall use the following claim, whose proof is similar to the proof of Claim \ref{claim_existence_even_cycles}.
\begin{claim}\label{claim_existence_odd_cycles_through_x}
Either $G[S_1]$ has a blue path $P$ of length $k$, or $G[S_1 \cup \{x\}]$ has a red cycle $C$ of length $k + 1$ containing $x$.
\end{claim}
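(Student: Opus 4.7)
The plan is to mirror the argument of Claim~\ref{claim_existence_even_cycles}, using the vertex $x$ as the ``anchor'' of the red cycle in place of an arbitrary $u \in U$. Because $x$ itself contributes a vertex, the cycle's length jumps from $k$ to $k+1$, and it automatically passes through $x$. The parity works out exactly as in the previous proof: there $k$ was even and one sought a $G_R[A]$-path of length $k-2$ (even); here $k$ is odd and we instead seek a $G_R[A]$-path of length $k-1$ (even), closed through $x$.

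First, I would assume $G_B[S_1]$ has no blue path of length $k$. By Theorem~\ref{thm_erdos_gallai}, $e(G_B[S_1]) \le (k-1)n/2 \le \eps n^2$. Setting $\eta = \sqrt{\eps}$, all but at most $2\eta n$ vertices of $S_1$ have blue degree at most $\eta n$ inside $S_1$; let $U$ denote this large subset. The minimum-degree condition on $G$ together with $|T_1| + |X| \le (1/2 + 2\eps)n$ yields $\deg_G(v, S_1) \ge (1/4 - 2\eps)n$ for every $v \in S_1$, so $\delta(G_R[U]) \ge (1/4 - 3\eta)n$. The remark immediately before the claim gives $\deg_R(x) \ge (3/4 - 50\eps)n$, and therefore $A \defeq N_R(x) \cap U$ has size at least $(1/4 - 3\eta)n$.

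Next, if $G_R[A]$ contains a path of length $k-1$ (on $k$ vertices), then appending $x$ at both ends closes it to a red cycle of length $k+1$ through $x$ and we are done. Otherwise, applying Theorem~\ref{thm_erdos_gallai} to $G_R[A]$ gives $e(G_R[A]) \le (k-2)|A|/2 \le |A|\eps n$, so at least $|A|/2$ vertices of $A$ have red degree at most $4\eps n$ inside $A$. Pick $B = \{b_1, \ldots, b_{(k+1)/2}\}$ from this low-degree set. Each $b_i$ satisfies $\deg_R(b_i, U \setminus A) \ge (1/4 - 4\eta)n$, while $|U \setminus A| \le (1/4 + 3\eta)n$, so any two vertices of $B$ share at least $n/8$ common red neighbours in $U \setminus A$. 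I would then greedily choose distinct $c_1, \ldots, c_{(k-1)/2} \in U \setminus A$ with $c_i$ a common red neighbour of $b_i$ and $b_{i+1}$; this succeeds because at each step only $O(\eps n)$ vertices are forbidden. Then
\begin{equation*}
(x\, b_1\, c_1\, b_2\, c_2\, \cdots\, b_{(k-1)/2}\, c_{(k-1)/2}\, b_{(k+1)/2})
\end{equation*}
is a red cycle of length $k+1$ through $x$, as required.

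The ``main obstacle'' is really just bookkeeping the various degree losses when passing from $G$ to $G_R[U]$ to common neighbourhoods in $U \setminus A$; under the hierarchy $\frac{1}{n} \ll \eps \ll 1$ and $\eta = \sqrt{\eps}$ every quantity above remains linear in $n$. The degenerate case $k = 1$ is covered by the same recipe: $B = \{b_1\}$, no $c_i$'s are needed, and the ``cycle'' is the single red edge $xb_1$, a cycle of length $2$ under the paper's convention.
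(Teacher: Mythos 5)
Your proposal is correct and follows essentially the same route as the paper: the paper's proof of Claim~\ref{claim_existence_odd_cycles_through_x} is a one-paragraph reference back to Claim~\ref{claim_existence_even_cycles} with $A = N_R(x,S_1)$ in place of $A = N_R(u)\cap U$, and your write-up simply unfolds that reference in full, including the restriction to the low-blue-degree set $U$, the Erd\H{o}s--Gallai bound on $e(G_R[A])$, and the alternating $b$--$c$ construction in $G_R[A, U\setminus A]$; the parity bookkeeping (path of length $k-1$ on $k$ vertices, plus $x$, gives a cycle of length $k+1$) and the degenerate $k=1$ case are handled correctly.
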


\begin{proof}
If $G[S_1]$ contains no blue path of length $k$, we consider the set $A = N_R(x, S_1)$.
Recall that $|A| \ge (1/4 - 25\eps)n$.
As in Claim \ref{claim_existence_even_cycles}, we conclude that either $G[A]$ contains a red path of length $k - 1$, or $G[A, S_1 \setminus A]$ contains a red path of length $k - 1$ with both ends in $A$.
\end{proof}
If $G[S_1]$ contains a path of length $k$, the graph $G_B \setminus \{x\}$ is Hamiltonian and we may take $(x)$ to be the red cycle. Otherwise, let $C$ be a red cycle of length $k + 1$ in $S_1 \cup \{x\}$ containing $x$.

We now consider the case where $k$ is even.
Note that if $k = 0$, we can take $(x)$ to be the red cycle and $G_B[S_1, T_1]$ is Hamiltonian. Thus, we assume that $k \ge 2$.
Denote $A = N_R(x, S_1)$.
\begin{claim} \label{claim_A_has_no_red_edges}
One of the following conditions holds.
\begin{enumerate}
\item
$G[S_1]$ has a blue path of length $k$.
\item
$G[S_1 \cup \{x\}]$ contains a red cycle $C$ of length $k$ going through $x$ and a blue edge $e$ which is disjoint of $C$.
\item
$G[S_1 \cup \{x\}]$ has a red cycle $C$ of length $k + 1$ going through $x$.
\item
$G[S_1]$ has no blue edges and $G[A]$ has no red edges.
\end{enumerate}
\end{claim}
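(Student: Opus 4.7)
The plan is to assume that (1) and (4) both fail and to deduce (2) or (3). Since (1) fails and $k = |S_1| - |T_1| \le 2\eps n$, Theorem~\ref{thm_erdos_gallai} yields $e(G_B[S_1]) \le kn/2 \le \eps n^2$. Setting $\eta = \sqrt\eps$, the set $U = \{u \in S_1 : \deg_B(u, S_1) \le \eta n\}$ will satisfy $|U| \ge |S_1| - \eta n$ and, exactly as in the proof of Claim~\ref{claim_existence_even_cycles}, $\delta(G_R[U]) \ge (1/4 - 2\eta)n$. In particular $|A \cap U| \ge (1/4 - 53\eps)n$. Since (4) fails, either $G_B[S_1]$ contains a blue edge or $G_R[A]$ contains a red edge.

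First consider the case when $G_B[S_1]$ has no blue edges; then the failure of (4) forces $G_R[A]$ to contain a red edge, and $G_R[S_1]$ is the complete graph on $S_1$. Choosing any $k$ distinct vertices of $S_1$ with two of them lying in $A$ (possible since $|A| \gg k$) and putting the two $A$-vertices at the ends yields a red path of length $k-1$ in $S_1$ with both endpoints in $A$. Closing through $x$ produces a red $(k+1)$-cycle through $x$, establishing (3).

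Now consider the case when $G_B[S_1]$ contains a blue edge $uv$. I aim for (2) with $e = uv$, which reduces to finding a red path in $S_1 \setminus \{u, v\}$ of length $k-2$ between two vertices of $A$ and closing it through $x$. Applied to $G_R[A \setminus \{u, v\}]$, Theorem~\ref{thm_erdos_gallai} leaves two alternatives: either $G_R[A \setminus \{u, v\}]$ already contains a red path of length $k-2$ (and closing through $x$ is immediate), or $e(G_R[A \setminus \{u, v\}]) \le |A|\eps n$. In the latter case the set $A^\star$ of vertices in $(A \cap U) \setminus \{u, v\}$ with red degree at most $5\eps n$ in $A$ satisfies $|A^\star| \ge |A|/2 - \eta n - 2$, and every $a \in A^\star$ has $\deg_R(a, U \setminus A) \ge (1/4 - 3\eta)n$. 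A greedy zigzag analogous to the one in Claim~\ref{claim_existence_even_cycles} will then produce a red path $a_1 b_1 a_2 b_2 \ldots b_{k/2 - 1} a_{k/2}$ of length $k-2$ with $a_i \in A^\star$ and $b_i \in U \setminus A$, all distinct and disjoint from $\{u, v, x\}$; this works because any two vertices of $A^\star$ share at least $(1/4 - O(\eta))n$ common red neighbours in $U \setminus A$, while only $O(\eps n)$ vertices are ever forbidden. Closing through $x$ yields the required red $k$-cycle disjoint from $uv$, establishing (2).

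The main obstacle is essentially bookkeeping: verifying the various degree bounds such as $\delta(G_R[U]) \ge (1/4 - 2\eta)n$ and $\deg_R(a, U \setminus A) \ge (1/4 - 3\eta)n$, and checking at each step of the greedy zigzag that the number of available common red neighbours exceeds the number of forbidden vertices. All of these follow from the minimum degree condition $\delta(G) \ge 3n/4$, the size bounds $|S_1|, |T_1| \le (1/2 + \eps)n$, and $k = O(\eps n)$; the special case $k = 2$ is handled directly by choosing a red edge $xa$ with $a \in A \setminus \{u, v\}$, which exists since $|A| \gg 2$.
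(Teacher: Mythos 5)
Your overall plan (assume (1) and (4) both fail, and deduce (2) or (3) via a case split on whether $G_B[S_1]$ has a blue edge) matches the paper's, and your second case — using a zigzag construction through a low-red-degree subset $A^\star$ of $A$, analogous to Claim~\ref{claim_existence_even_cycles}, to build a red path of length $k-2$ with both ends in $A$ and closing through $x$ to get a red $k$-cycle disjoint from $e=uv$ — is sound and in the spirit of what the paper does.

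However, your first case contains a genuine error. You assert that when $G_B[S_1]$ has no blue edges, ``$G_R[S_1]$ is the complete graph on $S_1$,'' and you then choose any $k$ vertices of $S_1$ with two in $A$ and claim they form a red path. But $G$ is \emph{not} complete: it only satisfies $\delta(G)\ge 3n/4$, so $G[S_1]$ (and hence $G_R[S_1]$) has minimum degree about $n/4 + k/2$ and can be far from complete. An arbitrary choice of $k$ vertices need not induce a path, and an arbitrary greedy red path need not terminate at a vertex of $A$, since a priori $N_R(v_{k-2}, S_1) \cap A$ may be empty (the sets $N_R(v_{k-2},S_1)$ and $A$ each have size about $n/4$ inside $S_1$, whose size is about $n/2$, so they may be disjoint). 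The paper handles this with a further step: if a greedy red path $P=(v=v_1,\dots,v_{k-2})$ started at $v$ (where $uv$ is the red edge in $A$) cannot be extended by a vertex in $A$, then $A$ and $N_R(v_{k-2},S_1)$ are essentially disjoint and both of size roughly $n/4$, and since $e(G_R[A])\le\eps n^2$ while $\delta(G_R[S_1])\ge n/4$, there must be a red edge between the two sets, giving the needed two-vertex extension into $A$. Note also the parity issue if you tried to patch this with your zigzag from the second case: with $k$ even, a pure $A^\star$--$(U\setminus A)$ zigzag with both ends in $A^\star$ has an odd number of vertices, so it produces paths of length $k-2$, not $k-1$; the paper instead exploits the red edge $uv$ inside $A$ to absorb the parity mismatch.
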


\begin{proof}
It is easy to conclude, as in Claims \ref{claim_existence_even_cycles} and \ref{claim_existence_odd_cycles_through_x}, that if $G[S_1]$ has at least one blue edge,  one of the first two conditions holds.
Thus we assume that $G[S_1]$ has no blue edges.
Assuming the fourth condition does not hold, we may further assume that $G[A]$ has a red edge $uv$.
We prove that the third condition holds under these assumptions.
As before, we may assume that $e(G_R[A]) \le \eps n^2$, because otherwise $G[A]$ contains a red path of length $k - 1$ and the third condition holds.

Note that we have $\delta(G_R[S_1]) \ge |S_1| - n/4 \ge n/4$.
Thus we may construct a red path in $G[S_1]$ on $k - 2$ vertices $P = (v = v_1, \ldots, v_{k - 2})$.
If there exists a vertex $v_{k - 1} \in S_1$ which is a common red neighbour of $v_{k - 2}$ and $x$, we obtain the red cycle $C = (v_1, \ldots, v_{k - 1}, x, u)$.
Otherwise, the sets $A' = A \setminus (V(P) \cup \{u\})$ and $B = N_R(v_{k-2}, S) \setminus (V(P) \cup \{u\})$ are disjoint.
Note that $|A'|, |B| \ge (1/4 - 60\eps)n$.
Since $e(G_R[A']) \le \eps n^2$ and $\delta(G_R[S_1]) \ge n/4$, we conclude that $G_R[A', B]$ contains an edge $v_{k-1}, v_{k}$ (where $v_{k-1} \in B, v_{k} \in A'$).
It follows that $(v_1, \ldots v_{k}, x)$ is a red cycle in $S_1$ of length $k$.
\end{proof}

In each of the first three conditions of Claim \ref{claim_A_has_no_red_edges}, we may proceed as before to conclude that $G$ has the desired partition into a red cycle and a blue one.
Thus we may assume that $S_1$ spans no blue edges and $A$ spans no red edges.
In particular, $\deg_R(x, S_1) \le n/4$, hence $x$ has at least two blue neighbours in $S_1$, since $\deg(x, S_1) \ge |S_1| - (n/4 - 1) \ge n/4 + 1/2 + k/2 \ge n/4 + 3/2$.
We consider the cases $k = 2$, $k = 4$ and $k \ge 6$ separately.

Suppose first that $k = 2$. Let $u, v \in S_1$ be two blue neighbours of $x$ in $S_1$.
Let $C_1$ be a (red) cycle consisting of a single vertex in $S_1 \setminus \{u, v\}$.
We may find a blue cycle spanning $V(G) \setminus V(C_1)$ by Claim \ref{claim_leftover_blue_grpah_ham}.

Suppose now that $k = 4$.
We have $|S_1| = n/2 + 3/2$.
Thus $x$ has at least three blue neighbours $u, v, w \in S_1$.
Pick an edge $ab$ in $S_1$ such that $a$ and $b$ are distinct from $u, v, w$.
Since $\deg_R(a, S_1) \ge |S_1| - n/4 \ge n/4 + 3/2$, we conclude that $a$ and $b$ have a common red neighbour $c \in S_1$.
Let $C_1$ be the red triangles $(abc)$.
Without loss of generality $c \neq u, v$.
We proceed as before, to show that the graph $G_B \setminus \{a, b, c\}$ is Hamiltonian.

It remains to consider the case $k \ge 6$.
Fix $u, v$ to be blue neighbours of $x$ in $S_1$ and denote $S_2 = S_1 \setminus \{u, v\}$. Note $|S_2| \ge n/2 +1/2$ and $\delta (G_R[S_2]) \ge |S_2| - n/4 > |S_2|/2$.
It follows from Theorem \ref{thm_bondy}, that $G_R[S_2]$ is pancyclic. In particular, it contains a red cycle of length $k-1$.
We proceed as before.

From now on, we may assume that $|X| \ge 2$.
Set $\eta = 2\sqrt{\eps}$.
\subsection*{Case 3: \normalfont \normalsize there exist $x_1, x_2 \in X$ with $\deg_R(x_1, S), \deg_R(x_2, T) \ge (1/4 + 10\eta) n$}

Denote $X = \{x_1, \ldots, x_r\}$.
Recall that $\deg_R(x_i) \ge (3/4 - 50\eps)n$.
Thus we may pick distinct $y_i \in V(G) \setminus X$ such that $y_i$ is a common red neighbour of $x_i$ and $x_{i + 1}$ for $i \in [2, r - 1]$.
Denote $Y  =\{y_2, \ldots, y_{r - 1} \}$.
Let $S_2 = S_1 \setminus Y$ and $T_2 = T_1 \setminus Y$.

\begin{claim} \label{claim_existence_paths}
The graph $G[S_2]$ either contains a blue path of length $5\eps n$ or for every $2 \le l \le 5\eps n$ it contains a red path of length $l - 1$ whose one end is a red neighbour of $x_1$ and the other is a red neighbour of $x_r$.

Similarly, the graph $G[T_1]$ either contains a blue path of length $5 \eps n$ or for every $2 \le l \le 5\eps n$ it contains a path of length $l - 1$ a red neighbour of $x_1$ as one end and a red neighbour of $x_2$ as the other end.
\end{claim}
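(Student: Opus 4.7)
The plan is to prove the $S_2$ part in detail; the $T_2$ statement will follow by the symmetric argument with the roles of $S,T$ and of $x_1,x_r$ (respectively $x_1,x_2$) suitably interchanged. First I would assume $G[S_2]$ contains no blue path of length $5\eps n$ and apply the Erd\H{o}s--Gallai theorem (Theorem \ref{thm_erdos_gallai}) to $G_B[S_2]$ to conclude $e(G_B[S_2])\le (5\eps n/2)|S_2| \le 3\eps n^2$. Setting $\eta'=\sqrt{\eps}$, the set $U$ of vertices $v\in S_2$ with $\deg_B(v,S_2)\ge \eta' n$ then has $|U|\le 6\eta' n$ by averaging. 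Defining $W=S_2\setminus U$, the minimum degree condition $\delta(G)\ge 3n/4$ together with $|T|+|X|\le (1/2+3\eps)n$ gives $\deg(v,S_2)\ge (1/4-3\eps)n$ for every $v\in S_2$, whence $\deg_R(v,W)\ge (1/4-4\eta)n$ for every $v\in W$, where $\eta=2\sqrt{\eps}$ as earlier.

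Next I would bound the ``target sets'' $A=N_R(x_1)\cap W$ and $B=N_R(x_r)\cap W$. The Case~3 hypothesis $\deg_R(x_1,S)\ge (1/4+10\eta)n$, minus contributions $|Y|\le 2\eps n$ and $|U|\le 6\eta' n$, yields $|A|\ge (1/4+6\eta)n$. For $B$ I would combine the remark $\deg_R(x_r)\ge (3/4-50\eps)n$ with $|T|\le (1/2+\eps)n$ (and the lower-order terms from $|X|,|U|$) to obtain $|B|\ge (1/4-4\eta)n$. Since $|W|\le (1/2+3\eps)n$, inclusion--exclusion gives $|A\cap N_R(v,W)|\ge |A|+\deg_R(v,W)-|W|\ge (2\eta-3\eps)n$ for every $v\in W$, which comfortably exceeds $5\eps n$.

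With these estimates in hand I would construct the required red path greedily inside $W$. Pick $v_0\in B$ arbitrarily; for $i=1,\ldots,l-2$, choose $v_i\in N_R(v_{i-1},W)\setminus\{v_0,\ldots,v_{i-1}\}$, which is possible because $\deg_R(v_{i-1},W)-i\ge (1/4-4\eta)n-5\eps n>0$. Finally, pick $v_{l-1}\in (A\cap N_R(v_{l-2},W))\setminus\{v_0,\ldots,v_{l-2}\}$, which is non-empty since the intersection has size at least $(2\eta-3\eps)n$, well in excess of $l-1\le 5\eps n$. The resulting path $v_0v_1\cdots v_{l-1}$ is a red path in $G[S_2]$ of length $l-1$ with one end in $B\subseteq N_R(x_r)$ and the other in $A\subseteq N_R(x_1)$, as required.

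The main obstacle, and really the only genuine subtlety, is the parameter calibration in the final greedy step: one needs $|A|+\deg_R(v,W)>|W|$ to guarantee the intersection $A\cap N_R(v,W)$ is non-empty, and so $|A|$ must exceed $n/4$ by a margin strictly larger than the deficit of the typical red degree within $W$. This is precisely where the Case~3 hypothesis $\deg_R(x_1,S)\ge (1/4+10\eta)n$, with its generous $+10\eta$ slack, enters essentially; the proof would not go through on the strength of the minimum degree condition alone, which is why the argument has been split off into this separate case.
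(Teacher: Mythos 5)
Your argument is correct and matches the paper's proof essentially step for step: apply Erd\H{o}s--Gallai to conclude $G_B[S_2]$ is sparse, discard the few high blue-degree vertices to get a set $W$ with $\delta(G_R[W])\ge (1/4-O(\eta))n$, greedily grow a red path from $N_R(x_r)\cap W$, and close it into $N_R(x_1)\cap W$ by inclusion–exclusion using the $+10\eta$ slack from the Case~3 hypothesis on $\deg_R(x_1,S)$. The only differences are cosmetic (notation and the exact constants in the secondary estimates), so no further comment is needed.
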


\begin{proof}
We prove the first part of Claim \ref{claim_existence_paths}, the second part of follows similarly.
Suppose that $G[S_2]$ has no blue path of length $5 \eps n$.
It follows by Theorem \ref{thm_erdos_gallai} that $e(G[S_2]) \le 3\eps n^2$.
Denote by $U$ the set of vertices in $S'$ with blue degree at most $\eta n$ (where $\eta = 2\sqrt{\eps}$).
Then $|S_2 \setminus U| \le \eta n$ and $\delta(G_R[U]) \ge (1/4 - 3\eta)n$.
Pick $u \in N_R(x_r, U)$.
Greedily construct a red path $P = (u = u_1, \ldots, u_{l - 1})$ in $U$.
Denote $A = N_R(x_1, U \setminus V(P))$ and $B = N_R(u_{l-1}, U \setminus V(P))$.
As $|Y| + |P| \le \eta n$, we have $|A| \ge (1/4 + 6\eta)n$ (by the assumption on $x_1$) and $|B| \ge (1/4 - 4\eta)n$.
Thus $|A \cap B| \ge |A| + |B| - |U| \ge \eta n$.
It follows that we may pick $u_l \in A \cap B$.
The path $(x_r, u_1, \ldots, u_l, x_1)$ satisfies the requirements of Claim \ref{claim_existence_paths}.

\end{proof}

Without loss of generality, we assume that $|S_2| \ge |T_2|$.
Denote $k = |S_2| - |T_2|$, so $k \le 4\eps n$.
It is easy to conclude from Claim \ref{claim_existence_paths} that we may find vertex-disjoint paths $P_1 \in G_B[S_2]$ and $Q_1 \in G_R[S_2]$ such that the following holds.
\begin{itemize}
\item
Either $P_1$ has length $k$ and $Q_1$ is a singleton, or $P_1$ is the empty path and $Q_1$ has length $k$.
\item
One end of $Q_1$ is a red neighbour of $x_1$ and the other end is a red neighbour of $x_r$ (if $Q_1$ is a singleton, then it is a common red neighbour of $x_1, x_r$).
\end{itemize}
Indeed, if $G[S_2]$ contains a blue path of length $5\eps n$, we may find such $P_1, Q_1$ where $P_1$ has length $k - 1$ and $Q_1$ is any common red neighbour of $x_1, x_r$ in $S_2$ (note that such a common neighbour exists).
Otherwise, $G[S_2]$ contains a red path of length $k - 1$ whose ends are a red neighbour of $x_1$ and a red neighbour of $x_2$.

Similarly, we may pick $P_2, Q_2$ to be a blue and a red path in $G[T_2]$ as follows.
\begin{itemize}
\item
Either $P_2$ is the empty path and $Q_2$ has length $1$ or $P_2$ has length $1$ and $Q_2$ is a singleton.
\item
$Q_2$ has one end which is a red neighbour of $x_1$ and the other is a red neighbour of $x_2$.
\end{itemize}
We take $C$ to be the red cycle $(Q_1 x_1 Q_2 x_2 y_2 x_3 \ldots x_l)$.
The paths $P_1, P_2$ may be extended to a Hamilton cycle of $G_B \setminus V(C)$ by Claim \ref{claim_leftover_blue_grpah_ham}.

Without loss of generality, we may now assume the following. 
\begin{equation}\label{eqn_deg_x_to_T}
\text{$\deg_R(x, T) \le (1/4 + 10 \eta)n$ for every $x \in X$}.
\end{equation}
It follows that $\deg(x, S) \ge (1/2 - 11\eta)n$ for every $x \in X$.
\subsection*{Case 4: \normalfont \normalsize some $x_1, x_2 \in X$ have at least $3\eta n$ common red neighbours in $T$}

We proceed similarly to the previous case.
For $2 \le i \le r - 1$, pick $y_i$ to be a common red neighbour of $x_i$ and $x_{i + 1}$ such that the $y_i$'s are distinct and do not belong to $X$.

Let $S_2 = S_1 \setminus Y$ and $T_2 = T_1 \setminus Y$.
Note that $\deg_R(x_i, S_1) \ge (1/2 - 11\eta)n$ for $i \in [r]$ by (\ref{eqn_deg_x_to_T}).
Consider the set $D$ of common red neighbours of $x_1, x_r$ in $S'$. Then $|D| \ge (1/2 - 23\eta)n$. Clearly, $D$ has either a blue path or a red one of length $5\eps n$.
\begin{claim} \label{claim_existence_even_path_case_4}
The graph $G[T_2]$ either has a blue path of length $5\eps n$ or it contains a red path of length $l$ with ends which are neighbours of $x_1, x_2$ respectively, for every even $2 \le l \le 5\eps n$.
\end{claim}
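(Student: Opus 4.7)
The argument parallels that of Claim \ref{claim_existence_paths}, but must overcome the absence of the slack $|A| \ge (1/4 + 6\eta)n$ available in Case 3. Assuming $G[T_2]$ has no blue path of length $5\eps n$, Theorem \ref{thm_erdos_gallai} yields $e(G_B[T_2]) \le 3\eps n^2$. Define $U \subseteq T_2$ to consist of the vertices of blue degree at most $\eta n$ inside $T_2$; a standard averaging argument gives $|T_2 \setminus U| \le \eta n$. Combining $\delta(G) \ge 3n/4$ with the bounds $|S| \le (1/2 + \eps)n$, $|X| \le 2\eps n$, and $|Y| \le 2\eps n$, one deduces $\delta(G_R[U]) \ge n/4 - O(\eta n)$ while $|U| \le (1/2 + O(\eps))n$; in particular, $G_R[U]$ sits right at the Dirac threshold.

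Next, by (\ref{eqn_deg_x_to_T}) and $\deg(x_i) \ge 3n/4$, we have $\deg_R(x_i, T) \ge (1/4 - O(\eps))n$, so $|A_i| := |N_R(x_i, U)| \ge (1/4 - O(\eta))n$ for $i = 1, 2$. The distinguishing feature of Case 4 is the common-neighbour hypothesis $|N_R(x_1, T) \cap N_R(x_2, T)| \ge 3\eta n$, which forces $|A_1 \cap A_2| \ge 2\eta n$. The plan is to pick a pivot vertex $v \in A_1 \cap A_2$ and to construct the desired red path of length $l$ starting at $v$ (whose membership in $A_1$ automatically handles one endpoint condition) and ending at some vertex in $A_2$.

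Greedy extension inside $G_R[U]$ handles every intermediate step, since $\delta(G_R[U]) \ge n/4 - O(\eta n)$ and only $l \le 5\eps n$ vertices get used. The main obstacle, and the reason for the even-length hypothesis, is the final step: demanding $u_l \in N_R(u_{l-1}, U) \cap A_2$ fails by na\"ive inclusion-exclusion, since $|N_R(u_{l-1}, U)| + |A_2| - |U|$ can be as small as $-O(\eta n)$. I would resolve this by splitting according to whether $G_R[U]$ is far from bipartite. If it is, then $G_R[U]$ contains many short odd cycles, and a P\'osa-rotation/Chv\'atal-type argument (Theorem \ref{thm_chvatal}) allows one to prescribe the endpoint of a red path of any given length. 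If $G_R[U]$ is close to bipartite, it admits a near-partition $\{V_1, V_2\}$ on which $A_2$ concentrates on one class, say $V_2$; since $v \in A_2 \subseteq V_2$, the parity of any red path starting from $v$ and alternating between classes forces the endpoint to lie in $V_2$ exactly when its length is even, so the even-length constraint matches the parity that $A_2$ imposes, and the path can be completed to land in $A_2$. Either way, this yields the required red path of length $l$ from $A_1$ to $A_2$, which is the content of the claim.
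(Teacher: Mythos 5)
Your diagnosis of the obstacle is off-target, which leads you to a more complicated (and not fully justified) argument than is needed. You state that the final step requires landing in $A_2 = N_R(x_2, U)$, and that inclusion-exclusion against $|U|$ then fails. But the path is started at $v \in A_1 \cap A_2$, so $v$ is simultaneously a red neighbour of $x_1$ \emph{and} of $x_2$; the other endpoint therefore only needs to lie in $A_1 \cup A_2$, not in $A_2$. This relaxes the closing condition considerably, and suggests the right case split, which is on the size of $A_1 \cup A_2$ rather than on whether $G_R[U]$ is close to bipartite. When $|A_1 \cup A_2| \ge (1/4 + 10\eta)n$, inclusion-exclusion against $|U| \le (1/2 + \eps)n$ does succeed, and greedy extension completes any length $l$ with no parity considerations at all. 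When $|A_1 \cup A_2| \le (1/4 + 10\eta)n$, one has $|A_1 \cap A_2| \ge (1/4 - 14\eta)n$, and the paper then argues exactly as in Claim~\ref{claim_existence_even_cycles}: either $G_R[A_1 \cap A_2]$ already contains a path of length $l$ (both endpoints then trivially in $A_1 \cap A_2$), or, by Theorem~\ref{thm_erdos_gallai}, $A_1 \cap A_2$ is nearly independent in $G_R$, so one finds $l/2 + 1$ low-degree vertices of $A_1 \cap A_2$ and threads them together through common neighbours in $U \setminus (A_1 \cap A_2)$, producing an alternating path with both ends in $A_1 \cap A_2$. The evenness hypothesis is used \emph{only} here, because the alternating path has even length automatically.

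Beyond missing this simplification, the two branches of your proposed proof each have genuine problems. The P\'osa-rotation/Chv\'atal step in the far-from-bipartite case is not spelled out; Theorem~\ref{thm_chvatal} is a Hamiltonicity criterion and does not directly prescribe the endpoint of a red path of given short length $l \le 5\eps n$. In the near-bipartite case, the assertion that $A_2$ must concentrate on one class of the near-bipartition $\{V_1, V_2\}$ is unjustified: $x_2$ lies outside $U$, so nothing in the bipartite structure of $G_R[U]$ constrains how $N_R(x_2, U)$ distributes across $V_1$ and $V_2$, and generically one would expect it to split between the two classes. So the parity mechanism you describe would not be available. The paper's actual argument avoids both of these issues.
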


\begin{proof}
Suppose that $G[T_2]$ has no blue path of length $5\eps n$.
Denote by $U$ the set of vertices of $T_2$ with at most $\eta n$ blue neighbours.
Then $|T_2 \setminus U| \le \eta n$ and $\delta(G_R[U]) \ge (1/4 - 3\eta)n$.
Denote $A = N_R(x_1, U)$ and $B = N_R(x_2, U)$.
Note that $|A|, |B| \ge (1/4 - 2\eta)n$ and $U \cap A \cap B \neq \emptyset$, by the assumptions on $x_1$ and $x_2$, thus we may pick $u_1 \in U \cup A \cup B$. 

If $|A \cup B| \ge (1/4 + 10 \eta) n$, we may find a red path of length $l$ as follows.
Greedily pick a red path $P = (u_1, \ldots, u_l)$ in $U$.
As in Claim \ref{claim_existence_paths}, there exists $u_{l + 1} \in (A \cup B) \backslash V(P)$ which is a red neighbour of $u_l$.
The path $(u_1, \ldots, u_{l + 1})$ satisfies the requirements.

If $|A \cup B| \le (1/4 + 10 \eta)n$, it follows that $|A \cap B| \ge |A| + |B| - |A \cup B| \ge (1/4 - 14\eta)n$.
If $A \cap B$ contains a red path of length $l$, we are done.
Otherwise, we may continue as in Claim \ref{claim_existence_even_cycles} to conclude that the graph $G_R[A \cap B, U \setminus (A \cap B)]$ has a path of length $l$ with ends in $A \cap B$.
\end{proof}

Denote $k = |S_2| - |T_2|$.
Pick $2 \le l_1, l_2\le 5\eps n$ such that $l_2$ is even and $l_1 - l_2 = k$.
Similarly to the previous case, we may pick vertex-disjoint paths $P_1 \in G_B[S_2]$, $Q_1 \in G_R[S_2]$, $P_2 \in G_B[T_2]$ and $Q_2 \in G_R[T_2]$ with the following properties.
\begin{itemize}
\item
$Q_1$ has ends which are a red neighbour of $x_1$ and a red neighbour of $x_r$. Similarly, $Q_2$ has ends which are a red neighbour of $x_2$ and a red neighbour of $x_1$.
\item
Either $P_i = \emptyset$ and $Q_i$ has length $l_i$ or $P_i$ has length $l_i$ and $Q_i$ is a singleton, for $i \in [2]$.
\end{itemize}
As before, we take $C$ to be the red cycle $(Q_1 x_1 Q_2 x_2 y_2 x_3 \ldots x_r)$. The paths $P_1, P_2$ may be extended to a Hamilton cycle in $G_B \setminus V(C)$.

Note that if $|X| \ge 3$, one of Cases 3 and 4 holds (perhaps with the roles of $S$ and $T$ reversed).
Thus we may assume that $|X| = 2$, and denote $X = \{x_1, x_2\}$.
\subsection*{Case 5: \normalfont \normalsize $\deg_R(x_i, T) \le (1/4 + 10\eta)n$ and $|N_R(x_1, T) \cap N_R(x_2, T)| \le 3\eta n$}

Denote $$\text{$A = N_R(x_1, T_1)$, $B = N_R(x_2, T_1)$ and $D = N_R(x_1, S_1) \cap N_R(x_2, S_1)$}.$$
Note that $|D| \ge (1/2 - 23\eta)n$.

If $|S_1| \ge |T_1| + 2$, denote $k = |S_1| - |T_1|$,  so $2 \le k \le 2\eps n$.
As usual, $G[D]$ either contains a blue path $P$ or a red path $Q$ of length $k - 2$.
In the former case, pick $C$ to be a $4$-cycle consisting of $x, y$ and two vertices in $D \setminus V(P)$.
In the latter case, extend $Q$ to a cycle $C$ through $x, y$ using an additional vertex of $D$. For convenience denote $P = \emptyset$.
In both cases, the path $P$ may be extended to a Hamilton cycle of $G_B \setminus V(C)$.
From now on, we may assume the following.
\begin{equation*}
|S_1| \le |T_1| + 1.
\end{equation*}

Denote $k = |T_1| - |S_1|$ (so $-1 \le k \le 2\eps n$).
A \emph{path forest} is a collection of vertex disjoint paths.
If $G_B[T_1]$ contains a path forest $H$ with $k + 2$ edges, we can finish the proof as follows. Pick $C$ to be any red $4$-cycle consisting of $x_1, x_2$ and two vertices from $D$. Then $G_B \setminus V(C)$ has a Hamilton cycle extending $H$.
This can be seen by connecting the paths of $H$ with paths in $G_B[S_1, T_1]$ of length at most $6$ and using Claim \ref{claim_leftover_blue_grpah_ham}.
Thus we may assume the following. 
\begin{align}
\text{$G_B[T_1]$ has no path forest with $k + 2$ edges. In particular, $e(G_B[T_1]) \le \eps n^2$}.
\label{eqn_no_large_path_forest}
\end{align}

Suppose that $G_R[T_1]$ contains a path $P$ with one end in $A$ and the other in $B$, of length $l$, where $0, k \le l \le 3\eps n$ (by a path of length $0$ we mean a single vertex). Then we may find the desired cycle partition as follows.
$G[D]$ contains a path $Q$ of length $l - k$ which is either red or blue.
If $Q$ is red, take $C$ to be the red cycle $(x_1 P x_2 Q)$, and the remaining graph $G_B \setminus V(C)$ is Hamiltonian.
If $Q$ is blue, take $C = (x_1 P x_2 u)$ where $u \in D \setminus V(Q)$.  The leftover graph $G_B \setminus V(C)$ has a Hamilton path extending $Q$.
Thus from now on we assume the following.
\begin{equation}\label{eqn_red_A-B_paths}
\text{$G_R[T_1]$ has no path of length $l$, where $0, k \le l \le 3\eps n$, with ends in $A$ and $B$}. 
\end{equation}
In particular, $e(G_R[A, B \setminus A]) \le \eps n^2$, 
implying that $G_R[A], G_R[B]$ are almost complete (recall that $A \cap B$ have a small intersection).
Suppose that $x_1$ has two blue neighbours $u, v \in T_1$.
It follows from the previous assumption that $G_R[B]$ contains a path $P$ on $k + 1$ vertices. Form a red cycle $C$ by adding the vertex $x_2$ to $P$.
Then the graph $G_B \setminus V(C)$ is Hamiltonian, since the graph $G_B \setminus (V(C) \cup \{x_1\})$ has a Hamilton path with ends $u, v$.
Thus we may assume that both $x_1$ and $x_2$ have at most one blue neighbour in $T_1$.
In particular, 
\begin{equation}\label{eqn_size_A_B}
|A|, |B| \ge |T_1| - n/4.
\end{equation}

We can now finish the proof in case $|S_1| - |T_1| \in \{0, 1\}$. Note that we have $e(G_B[T_1]) \le 2$ by Assumption (\ref{eqn_no_large_path_forest}). 
It is easy to conclude, using Assumption (\ref{eqn_size_A_B}) that one of the following conditions holds.
\begin{itemize}
\item
$A \cap B \neq \emptyset$.
\item
$A \cap B = \emptyset$ and $e(G_R[A, B]) \ge 1$.
\item
There exists a vertex $u \in T_1$ which has red neighbours in both $A$ and $B$.
\end{itemize}
It follows that there exists a red path in $T_1$ with one end in $A$ and the other in $B$ of length at most $2$, contradiction Assumption (\ref{eqn_red_A-B_paths}).
Thus the desired monochromatic cycle partition exists.

We may now assume $k = |T_1| - |S_1| \ge 1$.
Denote $Y = \{u \in T_1: \deg_B(u, T_1) \ge \eta n\}$.
Then 
\begin{align}\label{eqn_upperbound_Y}
|Y| \le (k + 1)/2
\end{align}
because otherwise $G_B[T_1]$ contains at least $(k + 2)/2$ vertex-disjoint paths of length $2$, contradicting Assumption (\ref{eqn_no_large_path_forest}).

Denote $A' = A \setminus Y$ and $B' = B \setminus Y$.
If there exists a path of length at most $2$ in $G_R[T_1]$ with one end in $A'$ and the other in $B'$, this path may be extended to a red path between $A'$ and $B'$ of length $k + 2$, contradicting assumption (\ref{eqn_red_A-B_paths}).
Thus we assume the following. 
\begin{align}
& A' \cap B' = \emptyset. \label{eqn_A'_intersect_B'_empty}\\
& e(G_R[A', B']) = 0. \label{eqn_A'-B'_edges} \\
& \text{No vertex in $T_1$ has red neighbours in both $A'$ and $B'$} \label{eqn_A'_B'_have_no_common_red_neighs}. 
\end{align}
It is not hard to reach a contradiction from here, thus finishing the proof.
By (\ref{eqn_A'_intersect_B'_empty}), $A \cap B \subseteq Y$.
In particular,
\begin{equation}\label{eqn_size_Y}
|Y| \ge |A \cap B| \ge |A| + |B| - |T_1| \ge |T_1| - n/2 = (k - 2)/2.
\end{equation}
Also, by (\ref{eqn_upperbound_Y}),
\begin{equation}\label{eqn_size_A'_B'}
|A'|, |B'| \ge |T_1| - n/4 - |Y| \ge n/4 - 3/2.
\end{equation}
If $|Y| \le (k - 1)/2$, it follows that $|A'|, |B'| \ge n/4 - 1/2$. By the minimum degree condition, we have that $\delta(G[A', B']) \ge 1$, and it is easy to deduce that $G[A', B']$ has a (blue) path forest on at least four edges.
By (\ref{eqn_size_Y}), we may complete this into a blue path forest in $T_1$ with at least $k + 2$ edges, a contradiction to (\ref{eqn_no_large_path_forest}).

Thus we assume that $|Y| \ge k/2$. It follows from (\ref{eqn_no_large_path_forest}) that 
\begin{equation}\label{eqn_blue_edges_outside_Y}
e(G_B \setminus Y) \le 1.
\end{equation}
If $|A'| > n/4 - 1$, the graph $G[A', B']$ has at least two (blue) edges, a contradiction.
Thus we assume that $|A'|, |B'| \le n/4 - 1$.
It follows that $|A \cup B| \le |A'| + |B'| + |Y| < |T_1|$.
Pick $u \in T_1 \setminus (A \cup B)$.
By (\ref{eqn_size_A'_B'}, \ref{eqn_A'-B'_edges}, (\ref{eqn_blue_edges_outside_Y})), all but at most two vertices of $A' \cup B'$ are connected to $u$, contradicting (\ref{eqn_A'_B'_have_no_common_red_neighs}).

\end{proof}

\section{Proof of Lemma \ref{lem_red_disconnected_2}}
\label{sec_red_disconnected_2}

In this section, we prove Lemma \ref{lem_red_disconnected_2}. We first remind the reader of the statement.

\begin{lem*} [\ref{lem_red_disconnected_2}]
Let $\frac{1}{n} \ll \eps \ll 1$ and let $G$ be a graph on $n$ vertices with $\delta(G)\ge 3n/4$ and a $2$-colouring $E(G) = E(G_B) \cup E(G_R)$.
Suppose that there exists a partition $\{S, T, X\}$ of $V(G)$ with the following properties.
\begin{itemize}
\item
$|S|, |T| \ge (1/2 - \eps)n$.
\item
$|X| \le 2$ and if $|X| = 2$, there exists $u \in X$ such that $\deg_R(x, S) \le \eps n$ or $\deg_R(x, T) \le \eps n$.
\item
The sets $S$ and $T$ belong to different components of $G_R \setminus X$.
\end{itemize}
Then $V(G)$ may be partitioned into a red cycle and a blue one.
\end{lem*}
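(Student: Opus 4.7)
The plan is to adapt the template of the proof of Lemma~\ref{lem_red_graph_disconnected_1} from Section~\ref{sec_red_graph_disconnected}, exploiting the disconnection hypothesis to verify its density condition automatically and handling the one or two vertices of $X$ by combining a short red cycle through them with a blue Hamilton cycle in the remaining balanced bipartite graph.

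First, because $S$ and $T$ lie in different components of $G_R \setminus X$ and are disjoint from $X$, no edge of $G_R$ joins $S$ to $T$, so every edge of $G$ between $S$ and $T$ is blue. Combined with $\delta(G) \ge 3n/4$ and $|S|,|T| \ge (1/2-\eps)n$, this gives $\delta(G_B[S,T]) \ge (1/4-3\eps)n$; moreover for any $S' \subseteq S$ and $T' \subseteq T$ of size at least $(1/4-100\eps)n$ there are at least $(1/4-O(\eps))|S'||T'| \gg 25\eps n^2$ blue edges between them. Thus $G_B[S,T]$ already satisfies the hypotheses of Lemma~\ref{lem_red_graph_disconnected_1}, which resolves the case $X=\emptyset$ immediately, and by the same absorbing argument as in Lemma~\ref{lem_stability_dirac}, for any balanced $S^* \subseteq S\cup X$ and $T^* \subseteq T\cup X$ of equal size at least $(1/2-O(\eps))n$, the graph $G_B[S^*,T^*]$ has a blue Hamilton path between any prescribed pair of opposite-side endpoints, mirroring Claim~\ref{claim_leftover_blue_grpah_ham}.

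For $|X|=1$, write $X=\{u\}$, suppose without loss of generality $|S|\ge|T|$, and let $k=|S|-|T|$. Following the dichotomy in Claims~\ref{claim_existence_even_cycles} and~\ref{claim_existence_odd_cycles_through_x}, I would show that either $G_B[S]$ contains a blue path of length $k$, or $G[S\cup\{u\}]$ contains a red cycle through $u$ of the length required to leave a balanced bipartite residue. In the first case the red cycle is just the singleton $(u)$ and the blue path is spliced into a blue Hamilton cycle of $G_B$ via the Hamilton-path extension above. In the second case the red cycle produced is kept as the red side of the partition, and again the residue is balanced and admits a blue Hamilton cycle. The dichotomy itself rests on Theorem~\ref{thm_erdos_gallai}: the absence of a long blue path in $S$ forces $G_R[S]$ to be very dense on a large subset, where Theorem~\ref{thm_bondy} or a direct greedy construction supplies the needed red cycle through $u$.

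For $|X|=2$, write $X=\{u,v\}$ with (without loss of generality) $\deg_R(u,T) \le \eps n$, so $\deg_B(u,T) \ge (1/4-O(\eps))n$ and $u$ behaves like an $S$-side vertex towards $T$ in the blue bipartite graph. I would reduce to the $|X|=1$ picture by treating $S\cup\{u\}$ as one blue-bipartite side, $T$ as the other, and $\{v\}$ as the effective cut, and repeat the dichotomy, now seeking the auxiliary red cycle inside $G_R[S\cup X]$ and arranging that it passes through both $u$ and $v$. The main obstacle, paralleling the latter cases of Lemma~\ref{lem_red_graph_disconnected_1}, will be the tight parity regime where $|S|\approx|T|$, neither $G_B[S]$ nor $G_B[T]$ contains a blue path of the required length, and the forced red structure must be a short cycle through both $u$ and $v$ of a specific length; the hypothesis $\deg_R(u,T)\le\eps n$ is precisely what rules out the bad configuration, since it confines any red path through $u$ to $S\cup\{v\}$ and so lets one finish the construction using the edge $uv$ (if present) together with red edges from $v$ into $S$.
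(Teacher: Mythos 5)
Your proposal breaks down at the very first step, and the gap is not a technicality: it is precisely what makes this lemma hard. You claim that because every $S$--$T$ edge is blue, for any $S'\subseteq S$ and $T'\subseteq T$ of size at least $(1/4-100\eps)n$ one has $e(G_B[S',T'])\ge(1/4-O(\eps))|S'||T'|\gg 25\eps n^2$, and that therefore Lemma~\ref{lem_red_graph_disconnected_1} applies directly. This is false. The degree bound $\delta(G)\ge 3n/4$ only tells you that a vertex $s\in S$ has fewer than $n/4$ non-neighbours; since $|T'|$ may be as small as $(1/4-100\eps)n<n/4$, the set $T'$ can lie entirely in the non-neighbourhood of $s$, and more generally $\sum_{s\in S'}|N(s)\cap T'|\ge |S'|(|T'|-n/4)$ is a vacuous bound. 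Indeed, if $S$ and $T$ each split into two quarter-sized classes $S_1,S_2$ and $T_1,T_2$ with $G[S_1,T_2]$ and $G[S_2,T_1]$ empty, one still has $\delta(G)\approx 3n/4$ while $e(G[S_1,T_2])=0$. Such configurations are not pathological; they are exactly the extremal structures depicted in Section~\ref{sec_extremal_examples}, and they are the reason Lemma~\ref{lem_red_graph_disconnected_1} carries its own density hypothesis rather than deducing it from the degree condition.

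Consequently your treatment of $X=\emptyset$ ``immediately by Lemma~\ref{lem_red_graph_disconnected_1}'' does not close the case, your Claim~\ref{claim_leftover_blue_grpah_ham}-analogue for the Hamilton paths rests on the same unavailable density assumption (Lemma~\ref{lem_stability_dirac} also needs it), and the entire case analysis downstream inherits the hole. The paper's proof begins by reducing to the complementary regime --- assuming that there \emph{do} exist $S'\subseteq S$, $T'\subseteq T$ of the stated sizes with $e(G[S',T'])\le 25\eps n^2$ --- and then builds a refined decomposition $\{S_1,S_2\}$, $\{T_1,T_2\}$ in which $G_B[S_2,T_1]$ is near-empty while $G_B[S_i,T_i]$ are near-complete, running a long case analysis (Cases~1.1--1.4 for $X=\emptyset$ and Cases~2.1--2.6 for $|X|=1$) over parities $|S_i|-|T_i|$ and over where $X$'s red/blue degrees fall. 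That extremal analysis is the content of the lemma; your outline implicitly assumes it away. The high-level idea of splitting off a short red cycle through $X$ and filling the balanced blue bipartite residue with a Hamilton cycle is the right shape, but without the four-part refinement and the delicate path/cycle constructions matching both parity and location constraints, the proof does not go through.
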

The idea of the proof is as follows. By Lemma \ref{lem_red_graph_disconnected_1}, we may assume that there exist subsets $S' \subseteq S, T' \subseteq T$ of size almost $n/4$ such that $G[S', T']$ is close to being empty, implying that the subgraphs $G_B[S', T \setminus T']$ and $G_B[S \setminus S', T']$ are almost complete.
We aim, similarly to the proof of Lemma \ref{lem_red_graph_disconnected_1} to find a red cycle $C$, and two blue paths $P_1, P_2$, whose removal from $G$ leaves two balanced bipartite subgraphs of the aforementioned graphs. We then find Hamilton paths in the remainder subgraphs which together with $P_1, P_2$ form a blue cycle with vertex set $V(G) \setminus V(C)$.
We remark that we run into some technical difficulties when $X$ is non-empty.
 
\begin{proof}[Proof of Lemma \ref{lem_red_disconnected_2}]

We abuse notation and slightly change the definition of $S$, $T$ and $X$ as follows.
Denote 
\begin{equation}\label{eqn_defn_x_y}
X = \{x, y\}
\end{equation}
(if $|X| < 2$, add vertices to $X$ arbitrarily).
Recall that by the conditions of Lemma \ref{lem_red_disconnected_2}, we may assume that $\deg_R(x, S) \le \eps n$ or $\deg_R(x, T) \le \eps n$.
If the former holds, we move $x$ from $X$ to $T$, otherwise we move $x$ from $X$ to $S$.
Similarly, if $\deg_B(y, S) \ge n / 32$ we move $y$ from $X$ to $T$ and otherwise, if $\deg_R(y, T) \le n / 32$, we put $y$ in $S$.
After the modification, we have either $X = \emptyset$, or $X = \{y\}$ and $\deg_B(y) \le n / 16$.
Furthermore, the only red edges in $G[S, T]$ are adjacent to $x$ or $y$.

We may assume that there exist subsets $S' \subseteq S, T' \subseteq T$ of size at least $(1/4 - 100\eps)n$ such that $e(G[S', T']) \le 25\eps n^2$, because otherwise the proof can be completed immediately by Lemma \ref{lem_red_graph_disconnected_1}.
It is easy to deduce the following claim (we omit the exact details of the proof).
\begin{claim}
The following holds for some $\eta = \eta(\eps) \ge \eps$.
There exist partitions $\{S_1, S_2\}$ of $S$ and $\{T_1, T_2\}$ of $T$ with the following properties.
\begin{itemize}
\item
$|S_1|, |S_2|, |T_1|, |T_2| \ge (1/4 - \eta) n$.
\item
All but at most $\eta n$ vertices of $S_2 \cup T_1$ have degree at most $\eta n$ in $G_B[S_2, T_1]$.
\item
The graphs $G_B[S_i, T_i]$ have minimum degree at least $n / 64$. Furthermore, all but at most $\eta n$ vertices in these graphs have degree at least $(1/4 - \eta)n$.
\item
All but at most $\eta n$ vertices in the graphs $G[S_1, S_2]$ and $G[T_1, T_2]$ have degree at least $(1/4 - \eta) n$.
\end{itemize}
\end{claim}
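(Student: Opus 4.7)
The plan is to build the partitions $\{S_1,S_2\}$ of $S$ and $\{T_1,T_2\}$ of $T$ directly from the pair $(S',T')$ supplied by the hypothesis and then clean up by moving a small number of exceptional vertices. A key preliminary observation is that, by averaging, a typical $v \in S'$ satisfies $\deg_G(v,T')\le 25\eps n^2 / |S'| = O(\eps n)$, which combined with the $3n/4$ minimum degree (limiting $v$'s total non-neighbours to at most $n/4$) forces $|T'|\le (1/4+O(\sqrt{\eps}))n$, and symmetrically $|S'|\le (1/4+O(\sqrt{\eps}))n$. Hence both $|S'|$ and $|T'|$ lie in the narrow window $[(1/4-100\eps)n,(1/4+O(\sqrt{\eps}))n]$.

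First I would take the tentative partition $\tilde S_2 := S'$, $\tilde T_1 := T'$, $\tilde S_1 := S\setminus S'$, $\tilde T_2 := T\setminus T'$. The size bounds of the first bullet then follow for $\eta=C\sqrt{\eps}$. The second bullet, that all but $\eta n$ vertices of $\tilde S_2 \cup \tilde T_1$ have blue degree at most $\eta n$ in $G_B[\tilde S_2,\tilde T_1]=G_B[S',T']$, is a standard averaging consequence of $e(G[S',T'])\le 25\eps n^2$.

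Next I would derive the degree conditions on $G_B[\tilde S_1,\tilde T_1]$ and $G_B[\tilde S_2,\tilde T_2]$. Since after the modification essentially all edges in $G[S,T]$ are blue, we have $e(G_B[S,\tilde T_1])\ge |\tilde T_1|(1/4-\eps)n - O(\eps n^2) \ge (1/16-O(\eps))n^2$, while the contribution from $S'$ to this sum is at most $25\eps n^2$. Hence the average of $\deg_B(v,\tilde T_1)$ over $v \in \tilde S_1$ is $(1/4-O(\sqrt{\eps}))n$, and Markov's inequality shows that all but $\eta n$ vertices satisfy $\deg_B(v,\tilde T_1) \ge (1/4-\eta)n$. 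An analogous argument applies to $G_B[\tilde S_2,\tilde T_2]$: here one uses that a vertex $v \in S'$ with small blue degree in $T'$ has essentially used up its non-neighbour budget inside $T'$, so $\deg_B(v,\tilde T_2) \ge (1/4-O(\sqrt{\eps}))n$. For $G[S_1,S_2]$ and $G[T_1,T_2]$ the fourth bullet follows because each vertex $v\in S$ has $\deg_G(v)\ge 3n/4$, most of which falls outside $T$, and distributes between $S_1$ and $S_2$ in proportion to their (near-equal) sizes, up to an additive $O(\sqrt{\eps} n)$ error.

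Finally I would clean up the partition to obtain the minimum-degree $\ge n/64$ condition on $G_B[S_i,T_i]$: move each vertex $v \in \tilde S_1$ with $\deg_B(v,\tilde T_1)<n/64$ into $\tilde S_2$, and perform the analogous moves for the other three parts. A second application of the averaging bound from the previous paragraph shows there are only $O(\sqrt{\eps} n)$ bad vertices per part, so the size lower bounds are preserved (with a slightly larger constant inside $\eta$), and the relocated vertices trivially satisfy the low-degree condition for $G_B[S_2,T_1]$. The main obstacle is juggling all four properties simultaneously, since a relocation made to fix one could in principle harm another; this is resolved by noting that each move changes any one count by at most $O(\sqrt{\eps} n)$, well within the $\eta n$ slack, so a single polynomial choice of $\eta$ (say $\eta = 10^3\sqrt{\eps}$) suffices for every condition at once.
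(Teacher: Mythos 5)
Your overall strategy---take $S_2 := S'$, $T_1 := T'$, $S_1 := S\setminus S'$, $T_2 := T\setminus T'$, verify the claimed properties by edge-counting and Markov, then clean up a small set of exceptional vertices---is a sound route to the claim (the paper omits the argument), and most of the details you give work: the near-equal sizes, the second bullet, and the degree bounds for $G_B[S_1,T_1]$ and $G_B[S_2,T_2]$ all follow along the lines you describe. The cleanup does need to be performed in one shot with a slightly higher threshold (say $n/32$), and one should record that no vertex can have blue degree below $n/64$ into both $T_1$ and $T_2$, since its total blue degree into $T$ is at least $(1/4-\eps)n-O(1)$; phrased this way the moves do not oscillate and all four bullets survive with a mildly larger $\eta$.

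The one genuine gap is the justification of the fourth bullet. You say a vertex's $S$-neighbours \emph{distribute between $S_1$ and $S_2$ in proportion to their (near-equal) sizes}, but nothing forces proportional distribution for an individual $v\in S_1$: since $\deg_G(v,S)$ can be as small as about $n/4$, the whole $S$-neighbourhood of $v$ could a priori sit inside $S_1$. The correct route is a short chain of counting steps rather than a single averaging. From $e(G[S',T'])\le 25\eps n^2$, Markov gives that most $v\in T_1$ have $\deg_G(v,S')=O(\sqrt\eps)n$; since $|S'|\approx n/4$ this exhausts nearly all of $v$'s non-neighbour budget, so $\deg_G(v,S_1)\ge |S_1|-O(\sqrt\eps)n$. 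Summing over $T_1$ yields $e(G[S_1,T_1])\ge |S_1||T_1|-O(\sqrt\eps)n^2$; a second Markov on the $S_1$ side shows that most $v\in S_1$ have at most $O(\eps^{1/4})n$ non-neighbours in $T_1$; and since $|T_1|\ge (1/4-100\eps)n$ this again nearly exhausts the budget $n/4$, leaving at most $O(\eps^{1/4})n$ non-neighbours of $v$ in $S_2$. The symmetric argument handles $v\in T_2$, while the $S_2$ and $T_1$ sides follow from the single budget argument you already invoke. Note this pushes $\eta$ up to order $\eps^{1/4}$ rather than $\sqrt\eps$, which is still $\ge\eps$ as the statement requires.
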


We shall also use the following claim which may be easily verified by Theorem \ref{thm_chvatal}.
\begin{claim} \label{claim_red_disconnected_2_ham}
Let $S' \subseteq S_i$ and $T' \subseteq T_i$ be sets of equal size such that $|(S_i \cup T_i) \setminus (S' \cup T')| \le 10 \eta n$.
Then $H = G_B[S', T']$ is Hamiltonian. Furthermore, for every $s \in S', t \in T'$, $H$ contains a Hamilton path with ends $s$ and $t$.
\end{claim}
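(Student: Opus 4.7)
The plan is to apply Corollary \ref{cor_chvatal_bip} (the bipartite form of Chv\'atal's theorem) directly to $H = G_B[S', T']$, and to derive the Hamilton-path statement by attaching two pendant vertices and running the argument a second time.

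\textbf{Degree bookkeeping and verification of Chv\'atal's condition.} Set $m := |S'| = |T'|$. Since at most $10\eta n$ vertices are removed from $S_i \cup T_i$, every vertex of $H$ loses at most $10\eta n$ neighbours relative to its degree in $G_B[S_i, T_i]$; thus the minimum degree of $H$ is at least $n/64 - 10\eta n \ge n/128$, and on each side all but at most $\eta n$ vertices have degree at least $(1/4-11\eta)n$. The bounds $|S_i|, |T_i| \le (1/4 + 2\eta)n$ (forced by the opposite part having size at least $(1/4-\eta)n$ inside $S$, $T$ of total size at most $(1/2 + O(\eps))n$) give $m \le (1/4 + 2\eta)n$, so the typical degree is at least $m - 13\eta n$. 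Let $x_1 \le \dots \le x_m$ and $y_1 \le \dots \le y_m$ be the two degree sequences. For $i \le m - 13\eta n$ the inequality $x_i \ge i+1$ follows from the minimum-degree bound when $i \le \eta n$ (using $n/128 \ge \eta n + 1$) and from the typical-degree bound when $\eta n < i \le m - 13\eta n$; for $i > m - 13\eta n$ we have $m - i \le 13\eta n$, so the symmetric dichotomy applied to $y_{m-i}$ yields $y_{m-i} \ge m - i + 1$. Corollary \ref{cor_chvatal_bip} then produces a Hamilton cycle in $H$.

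\textbf{Hamilton paths with prescribed ends.} For the second assertion, fix $s \in S'$ and $t \in T'$, and introduce auxiliary vertices $s^*$ on the $T'$-side with $N(s^*) = \{s\}$ and $t^*$ on the $S'$-side with $N(t^*) = \{t\}$. The augmented graph $H^*$ is balanced bipartite on $m + 1$ vertices per side, and the degree estimates above remain valid up to a negligible shift in the constants, so a second application of Corollary \ref{cor_chvatal_bip} yields a Hamilton cycle in $H^*$. Such a cycle is forced to traverse the unique edges incident to $s^*$ and $t^*$, namely $s^*s$ and $t^*t$; deleting $s^*$ and $t^*$ gives the desired Hamilton $s$--$t$ path in $H$.

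The only delicate point is the bookkeeping at the transition $i \approx m - 13\eta n$, which is routine given the explicit gap between the typical degree $(1/4-11\eta)n$ and the upper bound on $m$; no conceptual obstacle arises, and the auxiliary-vertex trick for prescribed endpoints is standard.
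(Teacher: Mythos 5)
Your Hamilton-cycle half is fine and matches what the paper has in mind (it simply says the claim "may be easily verified by Theorem \ref{thm_chvatal}"), but your gadget for the Hamilton path with prescribed ends does not work. You attach two pendant vertices $s^*$ and $t^*$ with $N(s^*)=\{s\}$ and $N(t^*)=\{t\}$, and then ask Corollary \ref{cor_chvatal_bip} for a Hamilton cycle in the resulting $H^*$. But $s^*$ and $t^*$ have degree $1$, and a degree-$1$ vertex can never lie on a cycle, so $H^*$ has \emph{no} Hamilton cycle whatsoever; no degree-sequence theorem can produce one. Concretely, the Chv\'atal check also fails at $i=1$: $x_1 = \deg(t^*) = 1 < 2$, and $y_{m'-1} \ge m'$ would require two vertices on the $T'$-side adjacent to all of $S' \cup \{t^*\}$, which is impossible since $t$ is the only vertex adjacent to $t^*$.

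The standard repair that keeps the graph balanced bipartite is to add \emph{two degree-$2$} auxiliary vertices: $z_1$ on the $T'$-side adjacent to $s$ and to $z_2$, and $z_2$ on the $S'$-side adjacent to $t$ and to $z_1$. Then $H^{**}$ is balanced bipartite on $m+1$ vertices per side, $\deg(z_1)=\deg(z_2)=2$, and any Hamilton cycle of $H^{**}$ is forced to traverse $s\,z_1\,z_2\,t$ (since all four edges at $z_1,z_2$ must be used); deleting $z_1,z_2$ yields the Hamilton $s$--$t$ path in $H$. With this gadget your degree bookkeeping goes through essentially unchanged: $x_1 = y_1 = 2$, so $i=1$ is fine, and for $2 \le i$ the computation is the same as in your cycle case up to shifts of $O(1)$. (Equivalently one can add a single vertex $z$ adjacent to $s$ and $t$ and apply Theorem \ref{thm_chvatal} to the non-bipartite graph on $2m+1$ vertices, but then one must redo the degree-sequence check in that setting rather than in the bipartite corollary.)
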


Without loss of generality, we assume $|S| \ge |T|$. Denote $k = |S| - |T|$.
We consider two cases according to the size of $X$.
\subsection*{Case 1: \normalfont \normalsize $X = \emptyset$}

Let $A$ be the set of red neighbours of $x$ in $G[S, T]$ and $B$ the set of red neighbours of $y$ in $G[S, T]$.
Then 
\begin{equation}\label{eqn_size_A_B}
\text{$|A| \le n/16$ and $|B| \le (1/2 - 1/64)n$}.
\end{equation}

If $k \le 1$, we may find a partition of $V(G)$ into a blue cycle and a red one as follows.
Pick any red cycle $C_1$ in $S$ of length $k$, so $C_1$ is either the empty set or a vertex.
Denote $S' = S \setminus V(C_1)$ and consider the balanced bipartite graph $H = G_B[S', T]$.
Then
\begin{align*}
\deg_{H_B}(u) \ge \left\{ \begin{array}{ll}
n / 64 & u \in \{x, y\}\\
(n - k)/2 - (n/4 - 1) - 2 = n/4 - k/2 - 1 & u \in A\\
n/4 - k/2 & u \in B \setminus A\\
n/4 - k/2 + 1 & \text{otherwise} 
\end{array} \right .
\end{align*}
It follows by Theorem \ref{thm_chvatal} that $H$ is Hamiltonian (note that $B$ is a subset of either $S$ or $T$), implying that the desired partition of $V(G)$ into a red cycle and a blue exists.
\subsubsection*{Case 1.1: \normalfont \normalsize $|T_1| < |S_1|$ and $|T_2|  < |S_2|$}
Recall that $k \ge 2$, so
\begin{equation} \label{eqn_size_S1}
|S| = (n + k)/2 \ge n/2 + 1.
\end{equation}

\begin{claim} \label{claim_two_matching}
The graph $G_B[S_1, T_2] \cup G_B[S_2, T_1]$ contain a (blue) matching of size $2$.
\end{claim}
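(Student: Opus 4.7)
The plan is to argue by contradiction. Suppose $H \defeq G_B[S_1, T_2] \cup G_B[S_2, T_1]$ has no matching of size $2$. Since $\{S_1, S_2\}$ partitions $S$ and $\{T_1, T_2\}$ partitions $T$, any edge in $G_B[S_1, T_2]$ is automatically vertex-disjoint from any edge in $G_B[S_2, T_1]$; so if both blocks contained an edge we would already be done. Hence one of the two blocks has no edges, and by König's theorem applied to the other block (viewed as bipartite with its natural bipartition), all its edges pass through a single vertex $w$. Up to relabelling, I may assume that $G_B[S_2, T_1]$ is empty and that every edge of $G_B[S_1, T_2]$ meets $w$.

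The key feature of the lemma's setup is that the only red edges between $S$ and $T$ are incident to $x$ or $y$. Consequently, after deleting $\{w, x, y\}$ from $V(G)$, both bipartite graphs
\begin{equation*}
G\bigl[S_1 \setminus \{w, x, y\},\; T_2 \setminus \{x, y\}\bigr] \quad \text{and} \quad G\bigl[S_2 \setminus \{x, y\},\; T_1 \setminus \{x, y\}\bigr]
\end{equation*}
are edgeless. Pick any $v \in S_2 \setminus \{x, y\}$, which exists because $|S_2| \ge (1/4-\eta)n$. Then $v$ is non-adjacent to every vertex of $T_1 \setminus \{x, y\}$, contributing at least $|T_1| - 2$ non-neighbours; since $\delta(G) \ge 3n/4$ allows $v$ at most $n/4 - 1$ non-neighbours, we must have $|T_1| \le n/4 + 1$. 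Symmetrically, taking a vertex in $S_1 \setminus \{w, x, y\}$ yields $|T_2| \le n/4 + 1$. Combined with $|T_1| + |T_2| = (n-k)/2$ and $|T_1|, |T_2| \ge (1/4 - \eta)n$, this pins both $|T_1|$ and $|T_2|$ into a window of width $O(1)$ about $(n-k)/4$.

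From here I refine the degree count. The same bound tells us that each $v \in S_2 \setminus \{x, y\}$ has at most $n/4 - |T_1| + 1$ non-neighbours outside $T_1$; in particular $\deg(v, T_2) \ge |T_2| - (n/4 - |T_1| + 1) = n/4 - k/2 - 1$, and symmetrically $\deg(u, T_1) \ge n/4 - k/2 - 1$ for every $u \in S_1 \setminus \{w, x, y\}$. Summing these lower bounds and comparing against the trivial upper bounds $e(G[S_2, T_2]) \le |S_2||T_2|$ and $e(G[S_1, T_1]) \le |S_1||T_1|$, together with the Case 1.1 strict inequalities $|S_1| \ge |T_1| + 1$ and $|S_2| \ge |T_2| + 1$, I will extract an inequality that is violated once $|T_1|$ and $|T_2|$ are confined to the $O(1)$ window established above. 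The main obstacle is exactly that the slack in this final inequality is only $O(1)$, arising from the ``$+1$'' gap in $|S_i| \ge |T_i| + 1$; so the argument has to track carefully the contribution of the three exceptional vertices $w, x, y$, together with the few red edges between $S$ and $T$ incident to $x$ or $y$, to ensure that none of them can absorb the deficit.
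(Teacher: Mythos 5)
Your reduction is clean and correct up to the bound $|T_1|, |T_2| \le n/4 + 1$, but the concluding argument has two gaps.

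First, the claimed ``window of width $O(1)$'' does not follow from what you wrote. From $|T_1|+|T_2| = (n-k)/2$ and $|T_1|, |T_2| \le n/4+1$ you only get $|T_i| \ge n/4 - k/2 - 1$, so the window has width $\Theta(k)$, and in the setting of Lemma \ref{lem_red_disconnected_2} the quantity $k = |S|-|T|$ can be as large as $\Theta(\eps n)$. To genuinely pin the sizes you would also need $|S_1|, |S_2| \le n/4 + 1$: a vertex $t \in T_1 \setminus \{x,y\}$ has no blue neighbours in $S_2$ (that block is empty) and at most two red ones (only $x, y$ can carry red $S$--$T$ edges), so it is non-adjacent to at least $|S_2| - 2$ vertices, forcing $|S_2| \le n/4 + 1$; similarly for $|S_1|$ via a vertex of $T_2 \setminus \{w,x,y\}$. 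Then $(n+k)/2 = |S_1|+|S_2| \le n/2 + 2$ gives $k \le 4$, which is what actually confines everything to an $O(1)$ window. You did not derive this.

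Second, and more seriously, the edge-counting inequality you plan to extract is vacuous. Your lower bound on $e(G[S_2,T_2])$ is $(|S_2|-2)(n/4-k/2-1)$ and your upper bound is $|S_2||T_2|$; rearranging gives $|S_2|\bigl(n/4 - k/2 - 1 - |T_2|\bigr) \le 2(n/4-k/2-1)$, which holds automatically whenever $|T_2| \ge n/4 - k/2 - 1$ --- and that inequality is forced by $|T_1| \le n/4+1$ together with $|T_1|+|T_2|=(n-k)/2$. So no contradiction appears, and the symmetric count for $e(G[S_1,T_1])$ fails in the same way. Tracking the three exceptional vertices $w,x,y$ more carefully, as you suggest, will not rescue this count; you would need a genuinely different final step.

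The paper's proof is shorter and direct rather than by contradiction: it cases on whether $|S_2| > n/4+1$ (resp. $|S_1| > n/4+1$), in which case every vertex of the large set $T_1 \setminus A$ (where $A = N_R(x)$, of size at most $n/16$) has at least three neighbours in $S_2$ of which at most one is red, yielding the matching at once; in the remaining case $|S_1|,|S_2| \le n/4+1$ it deduces $|S_1|,|S_2| \ge n/4$ and again uses $T_1 \setminus A$ to show both blocks of $H$ carry a blue edge. Your structural reduction to ``one block empty, the other a star'' is a nice and valid start, but as written the argument does not close.
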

\begin{proof}
If $|S_2| > n/4 + 1$, every vertex in $T_1$ has at least three neighbours in $S_2$, thus every vertex in $T_1 \setminus A$ has at least two blue neighbours in $S_2$, implying that we may find the desired matching. 
Similarly, if $|S_1| > n/4 + 1$, the graph $G_B[S_1, T_2]$ contains a matching of size $2$.
Thus we may assume that $|S_1|, |S_2| \le n/4 + 1$, implying that $|S_1|, |S_2| \ge n/4$.

Without loss of generality, $|S_1| \ge n/4, |S_2| > n/4$.
Hence every vertex in $T_1 \setminus A$ has at least two neighbours in $S_2$. If $y \notin S_2$, these two neighbours are both blue, and we may find the required matching in $G_B[S_2, T_1]$. If $y \in S_2$, we conclude similarly that both graphs $G[S_1, T_2]$ and $G[S_2, T_1]$ contain at least one blue edge, implying that the desired matching exists.
\end{proof}

Without loss of generality, we shall assume the following. 
\begin{equation}
\text{$s_1t_1, s_2t_2 $ is a blue matching in $G[S_2, T_1]$, where $s_1, s_2 \in S_2$ and $t_1, t_2 \in T_1$}.
\end{equation}
It is easy to verify that one of the following holds.
\begin{enumerate}
\item\label{itm_1_claim_S1}
$G[S_1]$ contains a blue path of length $5\eta n$.
\item\label{itm_2_claim_S1}
$G[S_1, T_2]$ contains a blue path of length $5 \eta n$.
\item\label{itm_3_claim_S1}
$G[S_1, S_2]$ contains a blue path of length $5 \eta n$.
\item\label{itm_4_claim_S1}
$G[S_1]$ contains a red cycle of length $l$ for every $l \le 5\eta n$, and $G[S_2]$ contains a blue path of length $5\eta n$.
\item\label{itm_5_claim_S1}
For every $l_1, l_2 \le 5\eta n$, $G[S_1, S_2]$ contains a red cycle with $l_1$ vertices from $S_1$ and $l_2$ vertices from $S_1$.
\end{enumerate}
Indeed, if the first three conditions do not hold, by Theorem \ref{thm_erdos_gallai}, the graphs $G_R[S_1]$ and $G_R[S_1, S_2]$ are almost complete, so either the third condition holds or $G_R[S]$ is almost complete.

In Case (\ref{itm_1_claim_S1}), we conclude that $G[S_1 \cup T_1]$ contains a blue Hamilton path $Q_1$ with ends $t_1, t_2$. 
Indeed, let $P_1$ be a path in $G_B[S_1]$ on $|S_1| - |T_1| + 2$ vertices (note that this quantity is smaller than $5\eta n$).
By Claim \ref{claim_red_disconnected_2_ham}, we may extend $P_1$ to a Hamilton path of $G_B[S_1\cup T_1]$ with ends $t_1$ and $t_2$.
Now consider $G[S_2]$. Note that $G[S_2]$ is almost complete (as $G[S_2, T_1]$ is almost empty), thus it either contains a blue path of length $|S_2| - |T_1| - 2$ or a red cycle of length $|S_2| - |T_1| - 1$.
In any case, we may partition $S_2 \cup T_2$ into a red cycle $C$ and a blue path $Q_2$ with ends $s_1, s_2$ (where in the former case the red cycle is empty).
By joining $Q_1$ and $Q_2$ using the edges $s_1t_1$ and $s_2t_s$, we obtain a blue cycle on the vertices $V(G) \setminus V(C)$.

In Case (\ref{itm_2_claim_S1}), let $P_1$ be a path in $G_B[S_1, T_2]$ with ends $s \in S_1, t \in T_2$ with exactly $|S_1| - |T_1| + 1$ vertices from $S_1$. 
Denote by $U$ the set of inner vertices of $P_1$ and let $S_1' = S_1 \setminus U$ and $T_2' = T_2 \setminus U$.
By Claim \ref{claim_red_disconnected_2_ham} the graph $G_B[S_1', T_1]$ has a Hamilton path $Q_1$ with ends $s, t_1$.
Consider the graph $G[S_2 \cup T_2']$. 
We conclude as before that $G[S_1, T_1']$ can be partitioned into a blue path with ends $s_1, t$ and a red cycle, completing the required partition of $V(G)$ into a blue cycle and a red one.

We assume that Condition (\ref{itm_2_claim_S1}) does not hold, implying that $G[S_1, T_2]$ is almost empty and so $G[T_2]$ is almost complete.
Suppose that Condition (\ref{itm_3_claim_S1}) holds.
Let $P_1$ be a path in $G_B[S_1, S_2]$ with ends $s_3 \in S_1, s_4 \in S_2$ and exactly $|S_1| - |T_1| + 1$ vertices from $S_1$.
Define $U$ to be the set of inner vertices of $P_1$ and let $S_i' = S_i \setminus U$.
It can be shown as before that $G_B[S_1', T_1]$ has a Hamilton path with ends $s_3, t_1$ and that $G_B[S_2', T_2]$ can be partitioned into a blue path with ends $s_1,s_4$ and a red cycle. Note that it may happen that $|T_2| \ge |S_2'|$ in which case the red cycle is contained in $T_2$ (here we use the assumption that $G[T_2]$ is almost complete).

Now suppose that Condition (\ref{itm_4_claim_S1}) holds.
It follows that $G[S_1 \cup T_1]$ may be partitioned into a blue path with ends $t_1, t_2$ and a red cycle, and that $G[S_2 \cup T_2]$ contains a blue Hamilton path with ends $s_1, s_2$.

Finally, if Condition (\ref{itm_5_claim_S1}) holds, let $C$ be a red cycle consisting of $|S_1| - |T_1| + 1$ vertices from $S_1$ and $|S_2| - |T_2| - 1$ vertices from $S_2$.
As before, the graphs $G[S_1 \setminus V(C), T_1]$ and $G[S_2 \setminus V(C), T_2]$ contain blue Hamilton paths with ends $t_1, t_2$ and $s_1, s_2$ respectively. 

\subsubsection*{Case 1.2: \normalfont \normalsize $|S_1| = |T_1|$ or $|S_2| = |T_2|$}

Suppose that $|S_1| = |T_1|$.
Similarly to Claim \ref{claim_two_matching}, we claim that $G[S_2, T_1]$ contains a blue matching of size $2$.
Indeed, if either $|S_2| > n/4 + 1$ or $|T_1| = |S_1| > n/4 + 1$ we may find the required matching as in the proof of the claim.
Otherwise, we have $n/4 \le |S_2|, |S_1| \le n/4 + 1$ and without loss of generality $|S_2| > n/4$ and $|T_1| = |S_1| \ge n/4$, and again we may find the required matching as in Claim \ref{claim_two_matching}.

Similarly, if $|S_2| = |T_2|$, it follows that $G[S_1, T_2]$ contains a blue matching of size $2$ and we proceed as before.

\subsubsection*{Case 1.3: \normalfont \normalsize $|S_1| < |T_1|$}

In order to obtain the required balanced subgraph of $G[S_1, T_1]$, we use the following claim.
\begin{claim} \label{claim_paths_S2_T1}
There exist vertex-disjoint paths $Q_1$, $Q_2$ in $G_B[S, T]$ satisfying the following properties.
\begin{itemize}
\item
$|Q_1|, |Q_2| \le 20\eta n$.
\item
$Q_i$ has one end in $S_2$ and the other in $T_1$.
\item
Denote $U = V(Q_1) \cup V(Q_2)$. Then 
\begin{align*}
|U \cap T_1| - |U \cap S_1| = |T_1| - |S_1| + 1.
\end{align*}
\end{itemize}
\end{claim}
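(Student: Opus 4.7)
\emph{Plan.} The plan is to build $Q_1,Q_2$ in two phases: first a pair of short ``base paths'' from $S_2$ to $T_1$, then an extension that tunes the vertex counts to satisfy $|U \cap T_1| - |U \cap S_1| = \delta + 1$, where $\delta := |T_1| - |S_1|$.

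\emph{Phase 1 (short base paths).} In Case 1 the vertices $x,y$ have been absorbed into $S \cup T$, so $G[S,T]$ is essentially entirely blue and $\delta(G_B[S,T]) \ge (1/4-\eps)n$. Because $G_B[S_2,T_2]$ is almost complete, every $s \in S_2$ has $\Theta(n)$ blue neighbours in $T_2$. From here I distinguish two sub-cases by double counting. If $e(G_B[S_1,T_2]) \ge \eps n^2$, I can greedily pick two disjoint paths of the form $s \in S_2 \to t' \in T_2 \to s' \in S_1 \to t \in T_1$, where the last step uses the near-completeness of $G_B[S_1,T_1]$. Otherwise, the deficit between $\delta(G_B[S,T])$ and the density in $G_B[S_1,T_2]$ forces the exceptional vertices of $G_B[S_2,T_1]$ (at most $\eta n$ of them, but possibly very high degree) to carry many blue edges, from which I extract two disjoint length-$1$ crossings. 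In either case I obtain vertex-disjoint blue paths $R_1,R_2$ of bounded length from $S_2$ to $T_1$.

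\emph{Phase 2 (tuning the vertex count).} Letting $U_0 = V(R_1) \cup V(R_2)$, the initial imbalance $|U_0 \cap T_1| - |U_0 \cap S_1|$ is a small integer (at most $4$). I must raise it to $\delta + 1 \le 2\eta n + 1$. A key observation is that appending an alternating segment inside the dense bipartite graph $G_B[S_1,T_1]$ adds equal numbers of $S_1$- and $T_1$-vertices and so leaves the difference fixed. To shift the difference upward by $1$, I would splice in a ``zigzag'' of the form $t \to s \to t'$ with $t,t' \in T_1$ and $s \in S_2$, using two rare $G_B[S_2,T_1]$ edges; this inserts one extra $T_1$-vertex and one extra $S_2$-vertex without touching $S_1$. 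Performing $\delta + 1$ such zigzags (of total length $O(\eta n)$), padded with $G_B[S_1,T_1]$-alternation so that everything stays vertex-disjoint and of length at most $20\eta n$, yields $Q_1,Q_2$ with the prescribed imbalance.

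\emph{Main obstacle.} The delicate point is to find $\Omega(\delta)$ zigzag edges inside $G_B[S_2,T_1]$ (or an equivalent mechanism such as blue edges inside $T_1$) while preserving vertex-disjointness and the length bound $20\eta n$. Since $\delta \le 2\eta n$ and the ``exceptional'' vertex set of $G_B[S_2,T_1]$ has up to $\eta n$ vertices with potentially very high degree, the budget is tight but feasible. If $G_B[S_2,T_1]$ is essentially empty, then the blue degree condition $\delta(G_B[S,T]) \ge (1/4-\eps)n$ forces $G_B[S_1,T_2]$ to carry the missing edges, and a symmetric zigzag through $S_1 \cup T_2$ works instead. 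If neither mechanism provides enough shifts, one argues that $G$ has a rigid structure (e.g.\ $G_B[S,T]$ is close to the disjoint union of $K_{|S_1|,|T_1|}$ and $K_{|S_2|,|T_2|}$), in which case a direct fallback to Lemma~\ref{lem_red_graph_disconnected_1} applied to appropriate subsets, or a short ad hoc construction using $x$ or $y$, completes Case~1.3.
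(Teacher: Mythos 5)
Your Phase 2 tuning step has a directional error, and the gap it leaves is precisely what the paper's Hall argument is there to fill. A ``zigzag'' that inserts a vertex of $S_2$ and a vertex of $T_1$ into a path shifts $|U\cap T_1|-|U\cap S_1|$ up by one, which is what you want; but your proposed fallback---a ``symmetric zigzag through $S_1\cup T_2$''---inserts vertices of $S_1$ (and $T_2$), so it moves the imbalance \emph{down}, which is the wrong direction in Case~1.3 where $|S_1|<|T_1|$. So when $G_B[S_2,T_1]$ is sparse, your mechanism has no working replacement. Moreover, the final escape hatch of ``applying Lemma~\ref{lem_red_graph_disconnected_1} to appropriate subsets'' is circular: we are inside the case where that lemma's density hypothesis has already been observed to fail for $(S,T)$, which is exactly why the structured sets $S_1,S_2,T_1,T_2$ were introduced.

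More fundamentally, you need roughly $2(|T_1|-|S_1|+1)$ edges of $G_B[S_2,T_1]$ arranged in a path-forest, but you never prove they exist; the raw degree conditions don't give it directly because a vertex of $T_1$ can have all of its blue $S$-neighbours inside $S_1$. The paper sidesteps this with a global argument: take $H=G_B[T_1,S\setminus\{u\}]$ with $u\in S_1$, verify Hall's condition, and note that any $T_1$-saturating matching must push at least $|T_1|-|S_1|+1$ edges into $S_2$ simply because $S_1\setminus\{u\}$ is too small to absorb them. A second Hall application (after deleting edges induced on $V(M_1)$) gives a second such matching edge-disjoint from the first, and the union is automatically a collection of paths of length at most $3$, which are then joined by short connectors. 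This pigeonhole-via-Hall step is the missing engine in your proposal; without it, the existence of enough $G_B[S_2,T_1]$ edges for your zigzags is an unproved assumption.
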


\begin{proof}
Consider the bipartite graph $H = G_B[T_1, S']$ where $S' = S \setminus \{u\}$ for some fixed $u \in S_1$.
Recall that $|S'| = |S| - 1 \ge n/2$ (\ref{eqn_size_S1}).
We show that we there exist two edge-disjoint matchings $M_1, M_2$ of size $|T_1| - |S_1| + 1$ in $G[S_2, T_1]$ whose union contains no cycles.
To that end, we show first that $H$ contains a matching saturating $T_1$, by showing that $H$ satisfies Hall's condition, namely that for every $W \subseteq T_1$, we have $|N_H(W)| \ge |W|$.

Recall that $x, y$ are the vertices that were in $X$ originally (\ref{eqn_defn_x_y}), and $A, B$ are their red neighbourhoods in $G[S, T]$.
We consider four ranges for the size of $W$.
\begin{itemize}
\item
$|W| \le 2$. Here Hall's condition holds trivially because the minimum degree of a vertex from $T_1$ is larger than $2$.
\item
$3 \le |W| \le n/4 - 1$.
Recall that every vertex in $T_1$, except for possibly $x$ and $y$, has degree at least $n/2 - (n/4 + 1) = n/4 - 1$ in $H$.
Thus, in this case we have $$|N_H(W)| \ge n/4 - 1 \ge |W|.$$
\item
$n/4 - 1 < |W| \le n/4$.
By the lower bound on the size of $A$, there exists $w \in W \setminus (A \cup \{x, y\})$.
$w$ has at most one red neighbour in $S'$ (namely $y$), thus $$|N_H(W)| \ge \deg_B(w, S') \ge n/2 - n/4 = n/4 \ge |W|.$$
\item
$|W| > n/4$.
In this case, every vertex in $S' \setminus (A \cup \{x, y\})$ has a neighbour in $W$, so $$|N_H(W)| \ge |S'| - (n/16 + 2) \ge |T_1| \ge |W|.$$
\end{itemize}
It follows that there exists a matching in $G[S_1' \cup S_2, T_1]$ saturating $T_1$. We take $M_1$ to be a sub-matching in $G_B[T_1, S_2]$ of size $|T_1| - |S_1| + 1 \le 5\eta n$.

Consider the graph $H'$ obtained from $H$ by removing the edges of $H$ spanned by $V(M_1)$.
It is easy to check by a similar analysis that $H'$ contains a matching saturating $T_1$. Indeed, if $1 \le |W| \le 5\eta n + 2$, we clearly have $|N_H(W)| \ge |W|$.
If $5 \eta n + 2 < |W| \le n/4 - 1$, we have $|N_H(W)| \ge n/4 - 1 \ge |W|$.
If $n/4 - 1 < |W| \le n/4$, we may pick a vertex $w \in W \setminus (A \cup V(M_1) \cup \{x, y\}$ and continue as above.
Finally, if $|W| > n/4$, every vertex in $S' \setminus (V(M_1) \cup A \cup \{x, y\})$ has a blue neighbour in $W$, so $|N_H(W)| > |W|$.
So there exists a matching $M_2$ in $G_B[T_1, S_2]$, edge-disjoint of $M_1$, of size $|T_1| - |S_1| + 1$.

Let $F$ be the graph $(V(G), E(M_1) \cup E(M_2))$.
Note that by the choice of $H'$, $F$ contains no cycles(in fact it contains no paths of length larger than $3$), hence it is a collection of vertex-disjoint paths.

We obtain paths $Q_1, Q_2$ as follows.
Start with the collection of paths in $F$.
In each stage, if there are two paths with an end in $S_2$, connect them using a path in $G_B[S_2, T_2]$ of length at most $4$, such that it remains vertex-disjoint of all other paths constructed so far.
Similarly, if there are two paths with an end in $T_1$ we join them using a path of length at most $4$ in $G_B[S_1, T_1]$.
Note that whenever there are at least three paths, we may continue the process.
We stop when exactly two paths remain.

Denote $U = V(Q_1) \cup V(Q_2)$, and let $r_i$ be the number of vertices in $T_1$ of degree $i$ in $F$, where $i \in [2]$.
Then
\begin{align*}
|U \cap T_1| - |U \cap S_1| 
& = r_2 + (r_1 - 2)/2 + 2 = 
(2r_2 + r_1)/2 + 1 \\
& = |E(F)|/2 + 1 = 
|T_1| - |S_1| + 2.
\end{align*}
By the definition of $Q_1, Q_2$, the set of edges of $G_B[S_2, T_1]$ which are contained in one of $Q_1, Q_2$ is exactly $E(F)$, so in particular, it has even size.
It follows that either both paths $Q_1, Q_2$ have one end in $S_2$ and the other in $T_1$ or one of them has both ends in $S_2$ and the other has both ends in $T_1$.
Recall that by the way the paths $Q_1, Q_2$ were constructed, the first and last edges in each of them are in $G_B[S_2, T_1]$.
Hence, by either removing the first and last edges from one of the paths or by removing the first edge from each of them, we obtain paths $Q_1', Q_2'$ satisfying the requirements of Claim \ref{claim_paths_S2_T1}.
\end{proof}

Let $Q_1, Q_2$ be paths as in Claim \ref{claim_paths_S2_T1}.
Denote by $s_i \in S_2, t_i \in T_1$ the ends of $Q_i$, let $U'$ be the set of inner vertices of these paths, and let $S_i' = S_i \setminus U'$ and $T_i' = T_i \setminus U'$.
It is easy to verify that $G_B[S_1', T_1']$ has a Hamilton path with ends $t_1, t_2$ and that $G[S_2' \cup T_2']$ may be partitioned into a blue path with ends $s_1, s_2$ and a red cycle (note that $|S_2'| \ge |T_2'|$, and we use the fact that $G[S_2]$ is almost complete).

\subsubsection*{Case 1.4: \normalfont \normalsize $|S_2| < |T_2|$}
Here we have $|S_1| \ge |T_1| + 2$.
If $G[S_1, T_2]$ contains a blue path of length $10\eta n$, we may proceed as in Condition (\ref{itm_2_claim_S1}) from Case 1.1.
Otherwise, $G[S_1, T_2]$ has few edges, so $G[S_1]$ contains many edges and we may proceed as in the previous case.

\subsection*{Case 2: \normalfont \normalsize $X = \{y\}$}
We start by showing that if $k \le 2$, we may partition $V(G)$ into a red cycle and a blue one.
Consider $A = N_R(y, S)$ and recall that $|A| \ge n /16$.
If $k = 0$, we pick the red cycle $C_1$ to be $(y)$.
If $k = 1$, we pick the red cycle $C_1$ to be a red edge $(yz)$, where $z \in A$.
If $k = 2$ and $A$ contains a red edge $uv$, we pick the red cycle $C_1$ to be the triangle $(uvy)$.

In each of these cases, the remainder graph $H = G[S', T]$, where $S' = S \setminus V(C_1)$ is a balanced bipartite graph.
To obtain the desired partition of $V(G)$ into a red cycle and a blue one, we show that $H_B$ is Hamiltonian.
Indeed, denote $B = N_{H_R}(x)$. Then
\begin{align*}
\deg_{H_B}(u) \ge \left\{ \begin{array}{ll}
(n - k - 1)/2 - (n/4 - 1) - n/16 = 3n/16 + 1/2 - k/2 & v = x\\
(n - k - 1)/2 - (n/4 - 1) - 1 = n/4 - k/2 - 1/2 & v \in B\\
(n - k - 1)/2 - (n/4 - 1) = n/4 - k/2 + 1/2 & \text{otherwise} 
\end{array} \right .
\end{align*}
It follows from Theorem \ref{thm_chvatal} that indeed, $H_B$ is Hamiltonian.

It remains to consider the case where $k = 2$ and $A$ contains no red edges.
Suppose first that $y$ has two blue neighbours in $S$. Consider the graph $H = G[S', T']$, where $S' = S \setminus \{u\}$ for some $u \in S$ and $T' = T \cup \{y\}$.
We claim that $H_B$ is Hamiltonian. The desired partition may thus be obtained by letting $(u)$ be the red cycle.
Indeed, let $B = N_{H_R}(x)$. Then
\begin{align*}
\deg_{H_B}(v) \ge \left\{ \begin{array}{ll}
2 & v = y \\
(n - 1)/2 - n/4  - n/16 = 3n/16 - 1/2 & v = x\\
(n - 1)/2 - (n/4 + 1) = n/4 - 3/2 & v \in B\\
(n - 1)/2 - n/4 = n/4 - 1/2 & v \in S' \setminus (B \cup \{x, y\}) \\
(n - 1)/2 - (n/4 - 1) = n/4 + 1/2 & v \in T' \setminus (B \cup \{x, y\})
\end{array} \right .
\end{align*}
By Theorem \ref{thm_chvatal}, $H_B$ is indeed Hamiltonian.

We may now assume that $y$ has at most one blue neighbour in $S$. It follows that $\deg_R(y, S) = |A| \ge |S| - n/4 = n/4 + 1/2$. Hence, $G[A]$ contains an edge $uv$. Recall that we assumed that $A$ contains no red edges, thus $uv$ is a blue edge.
Let $w \in A \setminus \{u, v\}$.
Consider the graph $H = G[S', T]$ where $S' = S \setminus \{w\}$.
We claim that $H_B$ contains a blue Hamilton path with ends $u, v$.
Indeed, let $H'$ be the graph obtained from $G_B[S', T]$ by adding a vertex $z$ to $T$ which is connected only to $u, v$.
Clearly, $H$ contains a Hamilton path with ends $u, v$ if and only if $H'$ is Hamiltonian. As in the previous argument, it follows from Theorem \ref{thm_chvatal} that $H'$ is Hamiltonian.

From now on, we assume that $k \ge 3$, so $|S| \ge n/2 + 1$.
Denote 
\begin{equation} \label{eqn_defn_A_i_B_i}
\text{$A_i = N_R(y, S_i)$ and $B_i = N_R(y, T_i)$}.
\end{equation}
Recall that $\deg_B(y) \le n / 16$. It follows that at most one of the sets $A_1, A_2, B_1, B_2$ can have size at most $n / 16$.

\subsubsection*{Case 2.1: \normalfont \normalsize $e(G_B[S_1, T_2]) \ge 10\eta n^2$}

Suppose first that $|A_2| \ge n / 16$. 
This case can be treated similarly to previous cases where $X = \emptyset$.
If $|S_1| > |T_1|$, we may continue similarly to Condition (\ref{itm_2_claim_S1}) from Case 1.1.
If $|S_1| = |T_1|$ we continue similarly to Case 1.2 and if $|S_1| < |T_1|$, we continue as in Case 1.3.
The only difference in the arguments is that when considering the graph $G[S_2' \cup T_2']$ we partition it into a blue path with suitable ends and a red path contained in $A_2 = N_R(y, S_2)$ rather than a red cycle (by the lower bound on $A_2$, we have that $G[A_2]$ is almost complete, hence we may indeed do so).

We now suppose that $|A_2| \le n / 16$.
This implies that $|A_1|, |B_1|, |B_2| \ge n / 16$.
Suppose first that $G_B[S_1, S_2]$ contains a path of length $10 \eta n$.
We may pick vertex disjoint paths $P_1 \subseteq G_B[S_1, S_2]$ and $P_2 \subseteq G_B[S_1, T_2]$ of length at most $10 \eta n$ such that the following assertions hold.
\begin{itemize}
\item
$P_1$ has ends $s_1 \in S_1, s_2 \in S_2$ and $P_2$ has ends $s_3 \in S_1, t_3 \in T_2$.
\item
$|S_i \setminus V(P_1)| < |T_i|$ for $i \in [2]$.
\item
Denote by $U$ the set of the inner vertices from the paths $P_1, P_2$ and $S_i' = S_i \setminus U$, $T_i' = T_i \setminus U$.
Then $|S_1'| = |T_1'|$ (and thus $|S_2'| < |T_2'|$).
\end{itemize}
Indeed, to construct the paths $P_1, P_2$, first pick $P_1 \in G_B[S_1, S_2]$ to be long enough so that the second condition holds. Then by Theorem \ref{thm_erdos_gallai}, the graph $G[S_1, T_2] \setminus V(P_1)$ contains a path of length $10 \eta n$ (recall that we assume $e(G_B[S_1, T_2]) \ge 10\eta n^2$), and we may take $P_2$ to be a subpath satisfying the third condition.
We continue as usual, by finding a Hamilton path in $G_B[S_2', T_2']$ with ends $s_2, t_3$ and partitioning $G[S_1', T_1']$ into a blue path with ends $s_1, s_3$ and a red path contained in $B_1$.

We now suppose that $G_B[S_1, S_2]$ does not have a path of length $10 \eta n$, so $G_R[S_1, S_2]$ is almost complete.
It follows that we may pick $u ,v \in A_1$ such that the set $D = N_R(u, S_2) \cap N_R(v, S_2)$ has size at least $n / 8$.
Define $S_1' = S_1 \setminus \{u, v\}$ and $S' = S \setminus \{u, v\}$.
We shall now continue as before, partitioning the graph $G[S' \cup T]$ into a blue cycle and a red path with ends in $D$.

If $|S_1'| > |T_1|$, we continue as in condition (\ref{itm_2_claim_S1}) of Case 1.1, picking say two paths in $G_B[S_1', T_2]$ with one end in $S_1'$ and the other in $T_2$, whose removal from $G_B[S_1', T_1]$ leaves a balanced graph. Again, the difference is that when we consider the remainder of the graph $G[S_2 \cup T_2]$, we partition it into a blue path with suitable ends and a red path contained in $D$.
Note that in order for this path to complete the path $(uyv)$ into a cycle, it has to contain at least one vertex. Recall that $|S| \ge |T| + 3$ so this is indeed possible.

If $|S_1'| = |T|$, we proceed similarly. It is easy to check that $G_B[S_2, T_1]$ contains an edge, since either $|S_2| \ge n/4 - 1/2$ or $|S_1'| = |T_1| \ge n/4 - 1/2$.
Thus we may pick one edge from $G_B[S_1', T_2]$ and another from $G_B[S_2, T_1]$ and continue as before.

Finally, we need to consider the case $|S_1'| < |T_1|$.
Here we can follow the argument of Case 1.3.
Note that we need to ensure that a version of Claim \ref{claim_paths_S2_T1} holds for the graph $G[S', T]$.
Indeed, take $H = G_B[S'', T_1]$, where $S''$ is obtained from $S'$ by removing a vertex from $S_1'$.
We claim that $H$ has a matching saturating $T_1$.
Indeed, let $W \subseteq T_1$. We consider four ranges for the size of $W$.
\begin{itemize}
\item
$|W| = 1$. Clearly, $|N_H(W)| \ge |W|$.
\item
$2 \le |W| \le n/4 - 2$.
Since any vertex of $W$ other than $x$ has at most one red neighbour in $G[S'', T]$, we have
$$|N_H(W)| \ge |S''| - n/4 = |S| - n/4 - 3 \ge n/4 - 2 \ge |W|.$$
\item
$n/4 - 2 < |W| \le n/4 - 1$. There exists a vertex in $W$ which has no red neighbours in $G[S'', T]$, thus $|N_H(W)| \ge n/4 - 1 \ge |W|$.
\item
$|W| > n/4 - 1$. Many vertices of $S''$ have at least one neighbour in $W$, implying that $|N_H(W)| > |T_1| \ge |W|$.
\end{itemize}
Thus we may pick a matching $M_1$ from in $G_B[S_2, T_1]$ of size $|T_1| - |S_1'| + 1$. Repeating almost the same argument, we deduce that the graph $H'$ obtained from $H$ by removing all edges spanned by $V(M)$, has a matching saturating $T_1$. From here we may continue as in Claim \ref{claim_paths_S2_T1} to obtain paths $Q_1, Q_2$.
Finally, as explained before, when considering the graph remaining from $G[S_2, T_2]$, we partition it into a blue path with suitable ends and a red path, on at least one vertex, contained in $D$.

From now on we may assume that $e(G_B[S_1, T_2]) \le 10\eta n^2$, so $S_1, S_2$ become practically interchangeable.

\subsubsection*{Case 2.2: \normalfont \normalsize $G_B[S_1, S_2]$ has a path of length $20 \eta n$}

Recall that $A_i = N_R(y, S_i)$ and $B_i = N_R(y, T_i)$.

If $|S_1| > |T_1|$ and $|A_2|, |B_2| \ge n / 16$, we continue as in Condition (\ref{itm_3_claim_S1}) in Case 1.1.
The difference is that when considering the graph remaining from $G[S_2 \cup T_2]$ we partition it into a blue path with the given ends and a red path (which may be empty) contained in either $A_2$ or $B_2$.
If $|S_1| = |T_1|$ and $|A_2|, |B_2| \ge n / 16$, it is easy to check that $G_B[S_2, T_1]$ is non empty, so we may proceed as before.
The case $|S_2| \ge |T_2|$ and $|A_1|, |B_1| \ge n /16$ follows analogously.

Since at most one of the sets $A_1, A_2, B_1, B_2$ has size at most $n / 16$, it remains to consider the case where $|S_1| < |T_1|$ or $|S_2| < |T_2|$.
Without loss of generality, $|S_1| < |T_1|$, so $|S_2| > |T_2|$. If $|A_1|, |B_1| \ge n /16$, we are done. Thus we may assume $|A_2| \ge n / 16$.
But then we may proceed as in Case 1.3, to partition $V(G)$ into a blue cycle and a red path contained in $A_2$.

From now on, we may assume that $G_B[S_1, S_2]$ has no path of length $20 \eta n$, implying that $G_R[S_1, S_2]$ is almost complete.
Also, without loss of generality, $$|A_2| \ge n / 16.$$

\subsubsection*{Case 2.3: \normalfont \normalsize $|S_1| \ge |T_1| + 1$ and $|S_2| \ge |T_2| + 3$}
It is easy to check that in this case $G[S_1, T_2] \cup G[S_2, T_1]$ contains a blue matching of size $2$.
Denote the matching by $\{e_1, e_2\}$ and without loss of generality suppose $e_i$ has ends $s_i \in S_2, t_i \in T_1$.
If $G[S_1]$ has a blue path of length $20 \eta n$, the graph $G_B[S_1 \cup T_1]$ has a Hamilton path with ends $t_1, t_2$.
We proceed as before, to partition $G[S_2 \cup T_2]$ into a blue path with ends $s_1, s_2$ and a red path contained in $A_2$.

We may thus assume that $G_R[S_1]$ is almost complete.
If $e(G_B[S_2]) \ge 10\eta n^2$, pick $u, v \in A_2$ such that the set $D = N_R(u, S_1) \cap N_R(v, S_1)$ has size at least $n/8$.
Denote $S_2' = S_2 \setminus \{u, v\}$.
It is easy to verify that $G_B[S_2', T_2]$ has a Hamilton path with ends $s_1, s_2$.
We form a red cycle by picking a red path $P_2$ in $D$ of length $|S_1| - |T_1|$ and joining it to $(uyv)$ (note that this is indeed a cycle since $|S_1| - |T_1| \ge 1$). The graph $G_B[S_1 \setminus V(P_2), T_1]$ has a Hamilton path with ends $t_1, t_2$, completing the required partition.

Finally, if $e(G_B[S_2]) \le 10\eta n^2$, we have that $G_R[S]$ is almost complete, hence we may pick a red cycle $C$ in $S \cup \{y\}$ with the following properties.
\begin{itemize}
\item
$y \in V(C)$.
\item
$|V(C) \cap S_1| = |S_1| - |T_1| + 1$.
\item
$|V(C) \cap S_2| = |S_2| - |T_2| - 1$.
\end{itemize}
It is easy to verify that the graph $G_B \setminus V(C)$ is Hamiltonian.

\subsubsection*{Case 2.4: \normalfont \normalsize $|S_1| = |T_1|$, $|S_2| = |T_2| + 2$ or $|S_2| = |T_2| + 1$}
These cases may be dealt with similarly to the previous one.
Denote $H_1 = G_B[S_1, T_2]$, $H_2 = G_B[S_2, T_1]$ and $H = H_1 \cup H_2$.

If $|S_1| = |T_1|$, if we may pick a $2$-matching in $H$ with at least one edge from $H_2$, we may continue as in the previous case.
Namely, we need to show that $H_2$ is non-empty.
This follows if $|S_2| > n/4 - 1$ or $|T_1| > n/4 - 1$, so we may assume that $|S_2|, |T_1| \le n/4 - 1$, implying that $|S| = |S_1| + |S_2| = |T_1| + |S_2| \le n/2 - 2$, a contradiction.

If $|S_2| = |T_2| + 2$ (so $|S_1| \ge |T_1| + 1$) we need to show that $H_1$ is non-empty.
If it is empty, we have $|S_1|, |T_2| \le n/4 - 1$, implying that $|S| \le n/2$, a contradiction.

If $|S_2| = |T_2| + 1$, we need to show that $H_1$ contains a $2$-matching.
If not, we have $|S_1|, |T_2| \le n/4$, and in addition either $|S_1| \le n/4 - 1$ or $|T_2| \le n/4 - 1$. In particular, $|S_1| + |T_2| \le n/2 - 1$, hence $|S| \le n/2$, a contradiction.

\subsubsection*{Case 2.5: \normalfont \normalsize $|S_1| < |T_1|$}
We proceed as in Case 1.3, partitioning the graph remaining from $G[S_2 \cup T_2]$ into a blue path with suitable ends and a red path contained in $A_2$.

\subsubsection*{Case 2.6: \normalfont \normalsize $|S_2| \le |T_2|$}
We may continue as in the last part of Case 2.1.
We pick $u, v \in A_2$ such that $|N_R(u, v) \cap S_1| \ge n/8$ and proceed to partition $G[S' \cup T]$ (where $S' = S \setminus \{u, v\}$) into a blue cycle and a red path contained in the red neighbourhood of $u, v$ in $S_1$, using an analogue of Claim \ref{claim_paths_S2_T1}.

\end{proof}

The proof of Lemma \ref{lem_red_disconnected_2} concludes the proof of our main Theorem, Theorem \ref{thm_main}. We finish this paper with some concluding remarks.

\section{Concluding Remarks} \label{sec_conclusion}

\subsection*{More than two colours - results and future work}
As a further line of research, one may consider colourings of $K_n$ with more than two colours.
Gy\'arf\'as \cite{gyarfas2} conjectured that for every $r$-colouring of $K_n$, the vertex set may be partitioned into at most $r$ monochromatic paths.
Erd\H{o}s, Gy\'arf\'as and Pyber \cite{erdos_gyarfas_pyber} considered partitions into monochromatic cycles rather than paths. 
The defined $c(r)$ to be the smallest $t$ such that whenever a complete graph is $r$-coloured, it may be partitioned into $c(r)$ monochromatic cycles.
They proved that $c(r)$ is bounded, and furthermore $c(r) \le cr^2 \log r$ for some constant $c$.
Note that Bessy and Thomass\'e's result \cite{bessy_thomasse}, mentioned in the introduction, implies that $c(2) = 2$.

Gy\'arf\'as, Ruszink\'o, S\'ark\"ozy and Szem\'eredi \cite{ruszinko_et_al_improved_c_r} proved that $c(r) \le c r \log r$, which is the best known upper bound on $c(r)$ so far.
The same authors \cite{ruszinko_et_al} proved an approximate result of the last conjecture for $r = 3$. Furthermore, they showed that for large enough $n$, if $K_n$ is $3$-coloured, it may be partitioned into $17$ monochromatic cycles.
However, it turns out that the full conjecture is false, even for $r = 3$, as shown by Pokrovskiy \cite{pokrovskiy}.
Nevertheless, in the same paper, he proved Gy\'arf\'as' conjecture for $r = 3$, so it may still be the case that Gy\'arf\'as' conjecture holds in general. In addition, the counter examples given in \cite{pokrovskiy} are $3$-colourings of $K_n$ for which all but one vertex may be covered by vertex-disjoint monochromatic paths.
This raises the following question: is it true that for every $r$-colouring of $K_n$ all but at most $c=c(r)$ vertices may be covered by $r$ vertex-disjoint monochromatic paths?

Finally, it is natural to consider the Schelp-type version of these problems, namely for graphs with large minimum degree rather than for complete graph. An example for a concrete question of this type is: what is the smallest value of $c$ such that any $3$-coloured graph $G$ on $n$ vertices and minimum degree $\delta(G) \ge cn$ can be partitioned into three monochromatic paths?

We believe that the methods we have employed for the proof of Theorem \ref{thm_main} may prove useful in resolving the latter questions, as well as many others, regarding partitions of $r$-coloured graphs into paths and cycles.

\bibliographystyle{amsplain}
\bibliography{cycle}

\providecommand{\bysame}{\leavevmode\hbox to3em{\hrulefill}\thinspace}
\providecommand{\MR}{\relax\ifhmode\unskip\space\fi MR }
\providecommand{\MRhref}[2]{%
  \href{http://www.ams.org/mathscinet-getitem?mr=#1}{#2}
}
\providecommand{\href}[2]{#2}
\begin{thebibliography}{10}

\bibitem{allen}
P.~Allen, \emph{Covering two-edge-coloured complete graphs with two disjoint
  monochromatic cycles}, Combin. Probab. Comput. \textbf{17} (2008), 471--486.

\bibitem{ayel}
J.~Ayel, \emph{Sur {l'existence} de deux cycles suppl\'ementaires unicolores,
  disjoints et de couleurs diff\'erentes dans un graphe complet bicolore},
  Ph.D. thesis, Universit\'e Joseph-Fourier - Grenoble I, 1979.

\bibitem{balogh_et_al}
J.~Balogh, J.~Bar\'at, D.~Gerbner, A.~Gy\'arf\'as, and G.~S\'ark\"ozy,
  \emph{Partitioning 2-edge-colored graphs by monochromatic paths and cycles},
  Combinatorica, To appear.

\bibitem{benevides_et_al}
F.~Benevides, T.~{\L}uczak, J.~Skokan, A.~Scott, and M.~White,
  \emph{Monochromatic cycles in 2-coloured graphs}, Combin. Probab. Comput.
  \textbf{21} (2012), 57--87.

\bibitem{bessy_thomasse}
S.~Bessy and S.~Thomass\'e, \emph{Partitioning a graph into a cycle and an
  anticycle, a proof of {Lehel's} conjecture}, J. Combin. Theory Ser. B
  \textbf{100} (2010), 176–--180.

\bibitem{bondy}
A.~J. Bondy, \emph{Pancyclic graphs {I}}, J. Combin. Theory, Ser. B \textbf{11}
  (1971), no.~1, 80 -- 84.

\bibitem{chvatal}
V.~Chv\'atal, \emph{On {Hamilton's} ideals}, J. Combin. Theory Ser. B
  \textbf{12} (1972), 163 -- 168.

\bibitem{debiasio}
L.~{DeBiasio} and L.~{Nelsen}, \emph{{Monochromatic cycle partitions of graphs
  with large minimum degree}}, Preprint, arXiv:1409.1874.

\bibitem{dudek}
A.~Dudek and A.~Pra\l{}at, \emph{An alternative proof of the linearity of the
  size-{R}amsey number of paths}, Combin. Probab. Comput., To appear.

\bibitem{erdos_gallai}
P.~Erd\H{o}s and T.~Gallai, \emph{On maximal paths and circuits of graphs},
  Acta Mathematica Academiae Scientiarum Hungarica \textbf{10} (1959),
  no.~3--4, 337--356.

\bibitem{erdos_gyarfas_pyber}
P.~{Erd\H{o}s}, A.~Gy\'arf\'as, and L.~Pyber, \emph{Vertex coverings by
  monochromatic cycles and trees}, J. Combin. Theory, Ser. B \textbf{51}
  (1991), no.~1, 90--95.

\bibitem{gyarfas}
A.~Gy\'arf\'as, \emph{Vertex coverings by monochromatic paths and cycles},
  Journal of Graph Theory \textbf{7} (1983), 131--135.

\bibitem{gyarfas2}
\bysame, \emph{Covering complete graphs by monochromatic paths}, Irregularities
  of Partitions, Algorithms and Combinatorics 8, vol.~8, Springer Berlin
  Heidelberg, 1989, pp.~89--91.

\bibitem{gyarfas-gerencser}
A.~Gy\'arf\'as and L.~Gerencs\'er, \emph{On {R}amsey-type problems}, Ann. Univ.
  E\"otv\"os Sect. Math. \textbf{10} (1967), 167--170.

\bibitem{ruszinko_et_al_improved_c_r}
A.~Gy\'arf\'as, M.~Ruszink\'o, G.~S\'ark\"ozy, and E.~Szemer\'edi, \emph{An
  improved bound for the monochromatic cycle partition number}, J. Combin.
  Theory Ser. B \textbf{96} (2006), no.~6, 855 -- 873.

\bibitem{ruszinko_et_al}
\bysame, \emph{Partitioning 3-colored complete graphs into three monochromatic
  cycles}, Electron. J. Combin. \textbf{18} (2011).

\bibitem{gyarfas_sarkozy}
A.~Gy\'arf\'as and G.~S\'ark\"ozy, \emph{Star versus two stripes {R}amsey
  numbers and a conjecture of {Schelp}}, Combin. Probab. Comput. \textbf{21}
  (2012), 179--186.

\bibitem{blowup}
J.~Koml\'os, G.~N. S\'ark\"ozy, and E.~Szemer\'edi, \emph{Blow-up {L}emma},
  Combinatorica \textbf{17} (1997), 109 -- 123.

\bibitem{regularity}
J.~Koml\'os and M.~Simonovits, \emph{{Szemer\'edi's} {R}egularity {L}emma and
  its applications in graph theory}, Combinatorics, Paul Erd{\H{o}}s is Eighty,
  vol.~2, Bolyai Society Mathematical Studies, 1996, pp.~295–--352.

\bibitem{levitt_et_al_absorbing}
I.~Levitt, G.~S\'ark\"ozy, and E.~Szemer\'edi, \emph{How to avoid using the
  {R}egularity {L}emma: {P\'osa's} conjecture revisited}, Discrete Math.
  \textbf{310} (2010), 630--641.

\bibitem{luczak}
T.~{\L{}uczak}, \emph{${R}({C}_n,{C}_n,{C}_n) < (4 + o(1))n$}, J. Combin.
  Theory Ser. B \textbf{75} (1999), no.~2, 174--187.

\bibitem{luczak_rodl_szemeredi}
T.~{\L}uczak, V.~R\"odl, and E.~Szemer\'edi, \emph{Partitioning two-colored
  complete graphs into two monochromatic cycles}, Combin. Probab. Comput.
  \textbf{7} (1998), 423--436.

\bibitem{pokrovskiy}
A.~Pokrovskiy, \emph{Partitioning edge-coloured complete graphs into
  monochromatic cycles and paths}, J. Combin. Theory, Ser. B \textbf{106}
  (2014), 70--97.

\bibitem{posa}
L.~P\'osa, \emph{Hamiltonian circuits in random graphs}, Discr. Math.
  \textbf{14} (1976), 359--364.

\bibitem{rodl_et_al_absorbing}
V.~R\"odl, A.~Ruci\'nski, and E.~Szemer\'edi, \emph{A {D}irac-type theorem for
  3-uniform hypergraphs}, Combin. Probab. Comput. \textbf{15} (2006), 229--251.

\bibitem{schelp}
R.~Schelp, \emph{Some {R}amsey-{T}ur\'an type problems and related questions},
  Disc. Math. \textbf{312} (2012), 2158--2161.

\end{thebibliography}

\end{document}